\documentclass[11pt,reqno]{amsart}

\usepackage{import}
\usepackage{amsmath}
\usepackage{calc}
\PassOptionsToPackage{hyphens}{url}\usepackage[hidelinks,hypertexnames=false]{hyperref}
\usepackage{graphicx}

\addtolength{\footskip}{0.5\baselineskip}

\usepackage[british]{babel}
\usepackage[noabbrev, nameinlink]{cleveref}
\usepackage{amsthm}
\usepackage{amssymb}
\usepackage{fullpage}
\usepackage{dsfont}
\usepackage{enumerate}
\usepackage{mathtools}
\usepackage{comment}
\usepackage{xcolor}
\usepackage{centernot}

\usepackage{thmtools}
\usepackage{thm-restate}

\renewcommand{\complement}{\mathsf{c}}
\newcommand{\EE}{\mathbb{E}}
\newcommand{\PP}{\mathbb{P}}
\newcommand{\NN}{\mathbb{N}}
\newcommand{\RR}{\mathbb{R}}
\newcommand{\ZZ}{\mathbb{Z}}
\newcommand{\Gnp}{\mathbb{G}}

\newcommand{\link}[2]{\underline{\partial}_{#2} #1}
\newcommand{\ilink}[2]{\overline{\partial}_{#2} #1}
\newcommand{\uppi}[1]{\overset{\gets}{#1}}
\newcommand{\normcomp}{\mathbin{\hat{\circ}}}
\newcommand{\cal}[1]{\mathcal{#1}}

\newcommand{\frakI}{\mathfrak{I}}

\renewcommand{\le}{\leqslant}
\renewcommand{\ge}{\geqslant}

\usepackage{stackengine}
\stackMath
\newcommand\xxrightarrow[2][]{\mathrel{%
  \setbox2=\hbox{\stackon{\scriptstyle#1}{\scriptstyle#2}}%
  \stackunder[0pt]{%
    \xrightarrow{\makebox[\dimexpr\wd2\relax]{$\scriptstyle#2$}}%
  }{%
   \scriptstyle#1\,%
  }%
}}

\crefname{thm}{theorem}{theorems}
\crefname{lem}{lemma}{lemmas}
\crefname{obs}{observation}{observations}
\crefname{defi}{definition}{definitions}
\Crefname{thm}{Theorem}{Theorems}
\Crefname{prop}{Proposition}{Propositions}
\Crefname{obs}{Observation}{Observations}
\Crefname{claim}{Claim}{Claims}
\Crefname{defi}{Definition}{Definitions}
\crefname{subsection}{subsection}{subsections}

\crefformat{enumi}{#2item~(#1)#3}
\Crefformat{enumi}{#2Item~(#1)#3}
\crefmultiformat{enumi}{items~#2(#1)#3}%
  { and~#2(#1)#3}{, #2(#1)#3}{ and~#2(#1)#3}
\crefrangeformat{enumi}{items~#3(#1)#4--#5(#2)#6}

\newenvironment{claimproof}
{\proof}
{\endproof}

\title{An exponential upper bound for induced Ramsey numbers}

\begin{otherlanguage}{brazilian}
  \author{Lucas Arag\~ao \and Marcelo Campos \and Gabriel Dahia \and \\ Rafael Filipe \and Jo\~ao Pedro Marciano}

\address{UERJ, R. São Francisco Xavier, 524 - Maracan\~a, Rio de Janeiro, Brasil}
\email{lucas.aragao@uerj.br}

\address{IMPA, Estrada Dona Castorina 110, Jardim Bot\^anico, Rio de Janeiro, 22460-320, Brasil}
\email{\{marcelo.campos, gabriel.dahia, rafael.santos, joao.marciano\}@impa.br}

  \thanks{
    During this work, Marcelo Campos was supported by Serrapilheira (grant R-2412-51283), Rafael Filipe was supported by CNPq, and Jo\~{a}o Pedro Marciano was supported by a FAPERJ Bolsa Nota 10.
  }
\end{otherlanguage}

\usepackage[numbers,longnamesfirst]{natbib}

\newtheorem{thm}{Theorem}[section]
\newtheorem{lem}[thm]{Lemma}

\newtheorem{obs}[thm]{Observation}
\newtheorem{claim}[thm]{Claim}

\theoremstyle{definition}
\newtheorem{defi}[thm]{Definition}
\newtheorem{remark}[thm]{Remark}

\newtheoremstyle{named}{}{}{\itshape}{}{\bfseries}{.}{.5em}{\thmnote{#3}}
\theoremstyle{named}

\begin{document}

\begin{abstract}
  The induced Ramsey number $R_{\mathrm{ind}}(H; r)$ of a graph $H$ is the minimum number $N$ such that there exists a graph with $N$ vertices for which all $r$-colourings of its edges contain a monochromatic induced copy of $H$.
  Our main result is the existence of a constant $C > 0$ such that, for every graph $H$ on $k$ vertices, these numbers satisfy
  \begin{equation*}
    R_{\mathrm{ind}}(H; r) \le r^{C r k}.
  \end{equation*}
  When $r = 2$, this resolves a conjecture of Erd\H{o}s from 1975.
  For $r > 2$, it answers a question of Conlon, Fox and Sudakov in a strong form.
\end{abstract}

\maketitle

\section{Introduction}

The Ramsey number of a graph $H$, denoted by $R(H)$, is the minimum number $N$ for which every red/blue colouring of the edges of $K_N$, the complete graph on $N$ vertices, contains a monochromatic copy of $H$.
\citet{Ram29} showed that $R(H)$ is finite for every graph $H$, with \citet{ES35} providing the first explicit upper bound in~\citeyear{ES35}.
In an influential paper from~\citeyear{Erd47}, \citet{Erd47} proved an exponential lower bound for the case $H = K_k$, which established
\begin{equation}\label{eq:erdosBoundsForRamsey}
  2^{k / 2} \le R(K_k) \le 4^k.
\end{equation}
Since then, the lower bound of \citeauthor{Erd47} has only been improved by a constant factor~\cite{Spe75}.
On the other hand, the upper bound in \eqref{eq:erdosBoundsForRamsey} has seen several improvements over the years~\cite{Tho88,Con09,Sah23}, culminating in an exponential improvement by \citet{CGMS25}.
A second (much shorter) proof of this result was given in~\cite{BBCGHMST24+}, which also extended it to the setting of $r$-colourings, though only for fixed $r$ and sufficiently large $k$.

Another major open problem in the area is the natural extension of $R(H)$ to the setting of induced subgraphs.
We will write $G \xrightarrow{\mathrm{ind}} H$ to denote the following property: for any red/blue colouring of the edges of $G$, there exists an induced monochromatic copy of $H$ (that is, a copy of $H$ which is induced in $G$, and all of its edges have the same colour).
We then define
$$R_{\mathrm{ind}}(H) = \min \big\{ v(G): G \xrightarrow{\mathrm{ind}} H \big\}.$$
In particular, observe that we have $R_\mathrm{ind}(K_k) = R(K_k)$ for every $k \in \mathbb{N}$, since every copy of $K_k$ in a graph $G$ is also an induced subgraph of $G$.
For general graphs $H$, on the other hand, \citet{Erd84} remarked that even ``the existence of [the induced Ramsey number] is not at all obvious.''

\citet{Deu75}, \citet{EHP75} and \citet{Rod73} independently established in the 1970s that $R_\mathrm{ind}(H)$ is finite for every graph $H$.
While none of these works provide an explicit dependency on $k$, the number of vertices of $H$, \citet{Erd84} later remarked that the best bound that one can deduce from the proofs in~\cite{Deu75,EHP75,Rod73} is of the form
\[R_\mathrm{ind}(H) \le 2^{2^{k^{\scalebox{0.45}{$1 + o(1)$}}}}.\]
Nevertheless, \citeauthor{Erd84}~\cite{Erd75, Erd84} conjectured, first implicitly in \citeyear{Erd75} and then explicitly in \citeyear{Erd84}, that the function $R_\mathrm{ind}(H)$ should grow at most exponentially as a function of $v(H) = k$ for every $H$.
Note that, if true, this would be best possible, since we have $R_\mathrm{ind}(K_k) = R(K_k) \ge 2^{k / 2}$ by~\eqref{eq:erdosBoundsForRamsey}.

Using the techniques of \citet{Rod73}, one can prove \citeauthor{Erd84}' conjecture for bipartite $H$.
The problem is much harder for non-bipartite $H$, however, and the next significant advance was not obtained until almost 25 years later, by~\citet{KPR98}.
By taking $G$ (the host graph) to be a random graph built using projective planes, they showed, among other results, that
\begin{equation}\label{eq:KPR98}
  R_{\mathrm{ind}}(H) \le k^{O(k \log k)}
\end{equation}
for every graph $H$ with $k$ vertices.
\citet{FS08} later provided an explicit, pseudorandom graph attaining the bound in~\eqref{eq:KPR98}.
To achieve this, they pioneered a versatile approach to Ramsey-type theorems in the setting where one fixed $H$ is forbidden as an induced subgraph, which became influential for other notorious conjectures, like the Erd\H{o}s--Hajnal problem (see e.g.\ \cite{BNSS24, NSS23+, NSS23+b}).

A few years later, \citet{CFS12} removed a factor of $\log k$ from the exponent in \eqref{eq:KPR98} and showed, using an explicit graph, that
\begin{equation}\label{eq:CFS12}
  R_{\mathrm{ind}}(H) \le k^{O(k)}.
\end{equation}
In order to prove \eqref{eq:CFS12}, the authors of~\cite{CFS12} developed a general method for proving Ramsey-type theorems using pseudorandom properties of the host graph $G$.
For example, their method also allowed them to improve a celebrated result of \citet{GRR01} on the Ramsey numbers of bounded degree graphs.

The main result of this paper confirms \citeauthor{Erd84}' conjecture for all graphs $H$.

\begin{thm}\label{stmt:main}
  There exists a constant $C > 0$ such that
  \begin{equation*}
    R_{\mathrm{ind}}(H) \le 2^{Ck}
  \end{equation*}
  for every graph $H$ with $k$ vertices.
\end{thm}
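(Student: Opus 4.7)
The plan is to take the host graph to be the binomial random graph $G = \Gnp(N, 1/2)$ on $N = 2^{Ck}$ vertices for a suitably large constant $C$. The choice $p = 1/2$ matches the symmetry required for induced embeddings: every edge of $H$ must correspond to an edge of $G$ and every non-edge of $H$ must correspond to a non-edge of $G$, and both events occur with probability $1/2$ for each pair. Given a red/blue colouring of $E(G)$, finding an induced monochromatic copy of $H$ then reduces to finding an injection $\phi \colon V(H) \to V(G)$ that preserves adjacencies and non-adjacencies and that maps all edges of $H$ to edges of $G$ of the same colour.

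My proposed strategy is a vertex-by-vertex embedding that processes $V(H) = \{v_1, \dots, v_k\}$ in some order, maintaining at step $i$ a \emph{candidate set} $S_i \subseteq V(G)$ of vertices that are correctly adjacent (in the chosen colour) to the images of the already-embedded $H$-neighbours of $v_i$ and correctly non-adjacent (in $G$) to the images of the non-neighbours. For a typical $w \in V(G)$, the constraints imposed by one already-embedded vertex reduce the probability of remaining a candidate by a factor of roughly four: a factor of two for the edge/non-edge event in $\Gnp(N, 1/2)$, and an additional factor of two from the colour choice restricted to edges. A naive analysis of this greedy procedure would then require $N \gtrsim 4^k$, which is consistent with the target bound $2^{Ck}$.

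The central obstacle is that an adversarial colouring can foil any naive greedy embedding: it can make the two colour classes look very unbalanced from the perspective of a specific candidate set, so that $|S_i|$ decays much faster than the ideal $4^{-i}$ rate and collapses well before step $k$. To overcome this, I would adopt an adaptive strategy in which the colour (and possibly the order in which the remaining vertices of $H$ are processed) is chosen at each step based on the current candidate set, ensuring that $|S_i|$ only drops by a constant factor per step. Recent Ramsey-type advances, in particular the book-building and density-increment techniques underlying the exponential improvement to $R(K_k)$, suggest that within any colouring one can locate a substructure in which one colour behaves pseudorandomly enough to continue the embedding without loss. The most delicate aspect, and where I expect the proof to concentrate its effort, is controlling the non-edge constraints alongside the edge constraints throughout this procedure; this extra bookkeeping is the key difference between the induced problem and the ordinary Ramsey problem, and it is precisely what has historically held the induced bound above $k^{O(k)}$.
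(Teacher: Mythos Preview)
Your starting point matches the paper's --- host graph $G\sim\Gnp(N,1/2)$ and an embedding built one vertex at a time --- but the sketch breaks down exactly where the difficulty lies. The assertion that an adaptive colour choice keeps $|S_i|$ shrinking by only a constant factor per step is the content of the theorem, not a lemma you can cite. The obstacle is that the colouring is chosen \emph{after} $G$ is revealed and may depend on it arbitrarily, so you cannot analyse the greedy procedure by treating the edge/non-edge events in $G$ as random while holding the colouring fixed. Your appeal to book-building and density-increment from the $R(K_k)$ breakthrough is not a mechanism here: those arguments live entirely inside $K_N$, where every pair is an edge and there are no non-edge constraints to carry alongside the coloured-edge constraints. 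Earlier embedding schemes in pseudorandom hosts (Fox--Sudakov, Conlon--Fox--Sudakov) achieve only $k^{O(k)}$ precisely because the interaction between colour and non-edge constraints costs an extra factor at each step; you offer no device to remove it.

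The paper's route is structurally different from a refined greedy. It strengthens the induction hypothesis: in every colouring of every large $W\subset V(G)$, some colour must contain not just one but a \emph{well-distributed} family of induced copies of $H_i^-$, formalised as the hypergraph $\frakI_{H_i^-,G_i,G}[W]$ being ``$(p,R)$-Janson'' (\Cref{def:janson}). To extend such a family by one vertex, the proof reveals $G[U]$ on a set $U$ of size $\approx\delta N$, takes a brute-force union bound over all $r^{O(\delta^2 N^2)}$ colourings of $G[U]$, and then exploits the still-random edges between $U$ and $V(G)\setminus U$. The colouring of those cross-edges is far too rich to union-bound away; instead it is controlled by a new hypergraph container theorem (\Cref{stmt:containersForNonJanson}) tailored to sets that fail the Janson condition. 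That container argument --- not any variant of book methods --- is the technical engine, and nothing in your proposal stands in for it.
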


We will also consider the induced Ramsey number for $r$-colourings.
Define $R_\mathrm{ind}(H; r)$ to be the \emph{$r$-colour induced Ramsey number} of a graph $H$; that is, the minimum number of vertices of a graph $G$ such that every $r$-colouring $c \colon E(G) \to [r]$ of the edges of $G$ contains an induced monochromatic copy of $H$.
The techniques used in \cite{CFS12} and \cite{KPR98} do not work in this more general setting, and provide no bounds when $r \ge 3$, but \citet{FS09} introduced a different approach in 2009, which can be used to show that\footnote{The authors of~\cite{FS09} only state the weaker bound $R_{\mathrm{ind}}(H; r) \le r^{O(rk^3)}$, because their focus in that paper was on fixed $k$ and large $r$, but their proof can easily be modified to give the  bound~\eqref{eq:bestBoundForRColours}.}
\begin{equation}\label{eq:bestBoundForRColours}
  R_{\mathrm{ind}}(H; r) \le r^{O(r k^2)}.
\end{equation}
The large gap between \eqref{eq:bestBoundForRColours} and the known bounds for the $r = 2$ case motivated \citet*[Problem 3.5]{CFS15} to ask if one could show that, for fixed $r \in \NN$,
\[
  R_{\mathrm{ind}}(H; r) \le 2^{k^{1+o(1)}}.
\]
We resolve this problem in a very strong form, obtaining a bound that generalises \Cref{stmt:main} for every $r \ge 2$.
In fact, our proof works directly in this more general setting.

\begin{thm}\label{stmt:multicolour}
  There exists a constant $C > 0$ such that
  \begin{equation}\label{eq:multicolour:thm}
    R_{\mathrm{ind}}(H; r) \le r^{C r k}
  \end{equation}
  for every $r \ge 2$ and every graph $H$ with $k$ vertices.
\end{thm}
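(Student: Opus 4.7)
My plan is to take the host graph $G$ to be a pseudorandom graph on $N = r^{Crk}$ vertices, such as a random graph $G(N,p)$ with density chosen so that every $k$-vertex graph appears as an induced subgraph in many places. For each colour $i \in [r]$, write $G_i^{\star}$ for the spanning subgraph of $G$ whose edges are those receiving colour $i$; the goal is to show that for any $r$-colouring one of the graphs $G_i^{\star}$ contains an induced copy of $H$, that is, an embedding in which edges of $H$ map to edges of colour $i$ and non-edges of $H$ map to non-edges of $G$.

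I would try to embed the $k$ vertices of $H$ one at a time, maintaining a candidate set $A \subseteq V(G)$ of possible images for the remaining vertices of $H$. At each step, $A$ must lie in the common $G$-neighbourhood of the previously embedded images of neighbours of the current vertex in $H$, and in the common $G$-non-neighbourhood of the previously embedded images of non-neighbours. A naive pigeonhole over colours at each step loses a factor of roughly $r$ per vertex, giving a bound of order $r^{O(k^2)}$, essentially matching~\eqref{eq:bestBoundForRColours}. To drop the exponent from $k^2$ to $k$, the argument must amortise the colour pigeonholing: this is the role of a \emph{book argument} in the spirit of~\cite{UBramsey, multicolourramsey}, where a large set of vertices sharing a common adjacency type to an anchor set $U$ is used to embed many vertices of $H$ at essentially the same ``colour cost''.

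The principal obstacle, and the feature specific to induced Ramsey, is that the induced condition demands control of both edges \emph{and} non-edges of $G$: the book one seeks must be two-sided, that is, a pair $(U,W)$ for which $G$ restricted to $U \cup W$ retains enough quasirandomness that both adjacencies and non-adjacencies can be read off, and on which the colouring of the $U$-$W$ edges is sufficiently structured with respect to one fixed colour. I would try to set up a dichotomy argument in the style of~\cite{FoxSudakov, ConlonFoxSudakov}: either one can locate such a two-sided book (allowing a recursive embedding of a block of vertices of $H$ at amortised cost), or the colouring restricted to the current candidate set is so unstructured that a direct probabilistic embedding succeeds. The main technical difficulty I anticipate is calibrating the parameters of the book --- the sizes of $U$ and $W$, the density thresholds required of $G$, and the colour-bias needed to trigger the recursion --- so that the total number of recursive steps is $O(k)$ rather than $O(k^2)$, while the shrinkage of the candidate set per step remains a bounded power of $r$.
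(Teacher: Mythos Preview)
What you have written is a research plan, not a proof, and the plan itself has a fundamental gap. You correctly identify that naive vertex-by-vertex embedding with colour pigeonholing gives only $r^{O(k^2)}$, and you propose to remedy this with a book-type amortisation in the spirit of the clique Ramsey breakthroughs. But the book arguments in~\cite{UBramsey,multicolourramsey} exploit the symmetry of cliques in an essential way: every vertex of $K_k$ plays the same role, so one can process a block of vertices simultaneously at cost $r^{O(1)}$ per block. For an arbitrary $H$, each vertex has a distinct adjacency pattern to the already-embedded part, so there is no obvious analogue of a ``book'' that lets you embed several vertices of $H$ for the price of one. Your proposal to seek a ``two-sided book'' $(U,W)$ does not address this: you would still need, for each new vertex $v$ of $H$, to intersect $W$ with a set determined by the specific neighbourhood pattern of $v$ in $H$, and the cost of doing this across $k$ vertices is exactly the $k^2$ you are trying to avoid. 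You acknowledge that calibrating the parameters is ``the main technical difficulty,'' but you offer no mechanism for it, and I do not see one along these lines.

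The paper's route is entirely different and does not use book arguments at all. The host is $G \sim \Gnp(N,1/2)$, and the induction does add vertices one at a time, but the inductive invariant is not a single partial embedding: it is a \emph{well-distributed collection} of monochromatic induced copies of $H_i^-$ (for every colour $i$), formalised by a ``$(p,R)$-Janson'' condition on the hypergraph of copies. The union bound over colourings is taken only over edges inside a set $U$ of size $\approx \delta N$ (hence $r^{O(\delta^2 N^2)}$ colourings), and the colourings between $U$ and $V(G)\setminus U$ are handled not by pigeonhole but by a new hypergraph container theorem for sets that fail the Janson condition. It is this container step, applied to an auxiliary hypergraph on vertex set $U\times\{0,1\}$ encoding potential extensions, that replaces the amortisation you are looking for and brings the exponent down to $O(rk)$.
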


We remark that, up to the value of the constant $C$, the bound~\eqref{eq:multicolour:thm} matches the classical upper bound of \citet{ES35} on the $r$-colour Ramsey numbers $R(K_k;r) = R_{\mathrm{ind}}(K_k; r)$, which was only recently improved by a small exponential factor in~\cite{BBCGHMST24+}.
The best known lower bound is of the form $R(K_k;r) \ge 2^{\Omega(rk)}$, see~\cite{Abb72,CF21,Wig21,Saw22}.
Our method will moreover imply the stronger statement that there exists a graph $G$ with $N = r^{C r k}$ vertices such that every $r$-colouring of the edges of $G$ contains an induced monochromatic copy of \emph{every} graph $H$ on $k$ vertices.
In fact, we will show that almost every graph $G$ with $N$ vertices has this property.

\subsection{An overview of our approach}\label{sec:ideasInTheProof}

Unlike the earlier approaches in~\cite{KPR98, CFS12, FS08}, where the authors developed ingenious deterministic algorithms to embed $H$ in a pseudorandom host graph $G$, we will instead use a relatively simple vertex-by-vertex embedding strategy in a truly random graph $G$.
Specifically, we consider the Erd\H{o}s--R\'{e}nyi random graph $\Gnp(N, 1/2)$, where each edge of $K_N$ is included independently at random with probability $1/2$; equivalently, we can choose a (labelled) graph $G$ on $N$ vertices uniformly at random.

Our strategy will crucially exploit the fact that the edges of $\Gnp(N, 1/2)$ are chosen randomly, rather than relying on pseudorandom properties that are also satisfied by $G \sim \Gnp(N, 1/2)$.
We will show that, with (extremely) high probability, every $r$-colouring of the edges of $G \sim \Gnp(N, 1/2)$ contains an induced monochromatic copy of an arbitrary $k$-vertex graph $H$.

One way to approach the problem is via an Erd\H{o}s--Szekeres-type induction.
In other words, we might generalise to the setting in which we want to find an induced copy of $H_i$ in colour $i$, apply the induction hypothesis inside a subset $U \subset V(G)$ of size $\delta N$ to find an induced copy of $H_i^-$ (that is, $H_i$ minus a vertex) in colour $i$ for some $i \in [r]$, and then attempt to use the randomness between $U$ and the rest of the vertices to extend this copy of $H_i^-$ to a copy of $H_i$.

There are several major obstacles to utilizing such a strategy.
First, and most obviously, there may be no edges of colour $i$ between $U$ and $V(G) \setminus U$, in which case we have no chance of extending $H_i^-$ to a copy of $H_i$ in colour $i$.
We can easily deal with this issue, however, by instead using the induction hypothesis to find an induced copy of $H_i^- \subset G[U]$ in colour $i$ for \emph{every} $i \in [r]$, and then considering the colour that is used most often between $U$ and $V(G) \setminus U$.

A second (and more challenging) issue is that the colouring of the edges inside the set $U$ is allowed to depend on the (random) edges between $U$ and $V (G) \setminus U$.
We will deal with this problem by taking a union bound over all possible colourings of the edges of $G[U]$.
However, this does not come for free: it requires us to prove an extremely strong bound on the probability of failure in each step of the induction.

In order to prove such a strong bound, we must strengthen our induction hypothesis.
Indeed, if we only have one copy of $H_i^-$, the probability that no vertex of $V(G) \setminus U$ extends this fixed copy of $H_i^-$  to an induced copy of $H_i$ in $G$ is essentially $(1 - 2^{-k})^N$, which is much too large to beat the roughly $r^{\delta^2N^2}$ choices in our union bound.
In order to reduce this failure probability, we will need to instead find many copies of $H_i^-$ in $G[U]$ that are moreover ``well-distributed'' in a certain precise sense, which we define in \Cref{sec:reductionToKey} (see \Cref{def:janson}).
We say that a hypergraph is \emph{$(p, R)$-Janson} if its edges are well-distributed in this sense, since this definition resembles (and was inspired by) the condition in Janson's inequality.

There is still, however, one further (and even more critical) obstruction to this approach: we must also handle all colourings of the edges between $U$ and $V(G) \setminus U$, and in this case we cannot do so simply by taking a union bound, since there are too many possible colourings.
In order to deal with this more serious obstacle, we will use the method of hypergraph containers.

\subsection{Hypergraph containers}

The method of hypergraph containers, which was introduced in 2015 by \citet{BMS15} and \citet{ST15}, is a flexible technique for controlling the probability that a random set avoids some forbidden substructure (see the surveys~\cite{BMS18, Sam15}).
Roughly speaking, the basic container lemma implies that the sets that avoid these substructures are ``clustered'', in the sense that they can be covered by a relatively small number of sets that contain only few copies of the forbidden substructures.

The container method has been used by several different sets of authors to prove Ramsey theoretic properties of random graphs.
For example, it was used by \citet{NS16} and \citet{MNS20} to find monochromatic copies of fixed subgraphs in $r$-colourings of sparse random graphs, and by \citet*{CDLfRS17} and \citet{BS20} to bound the induced Ramsey numbers of graphs and hypergraphs.

In particular, the authors of~\cite{CDLfRS17} significantly improved the best-known upper bound for the induced Ramsey number of an arbitrary $s$-uniform hypergraph with $k$ vertices when $s \ge 3$.
Their result was then improved when $s = 2$ (that is, for graphs) in~\cite{BS20}, where the authors introduced a new container lemma with a dramatically improved dependency on the size of the forbidden structure, and applied it to give a bound of the form
\begin{equation}\label{eq:wojtekReprovedMulticolour}
R_{\mathrm{ind}}(H; r) \le r^{O(r k^2)}.
\end{equation}
However, further improvements to the dependency of the container lemma would not advance the bound beyond \eqref{eq:wojtekReprovedMulticolour}, and we must therefore take a different approach.

Our first significant departure from these earlier applications of the container method is in how we apply this method.
In~\cite{BS20,CDLfRS17,MNS20,NS16}, the authors embed the whole of $H$ in a single step, encoding the edge sets of induced copies of $H$ using a hypergraph whose vertex set consists of $r$ copies of $E(K_n)$.
In contrast, we will have not just one, but roughly $r^{|U|^2}$ different hypergraphs, one for each of the possible colouring of $G[U]$.
Our hypergraphs will encode the vertex sets (not the edge sets) of monochromatic induced copies of $H_i^-$ in $G[U]$ and our aim will be to find a $(p,R)$-Janson collection of (also monochromatic and induced) copies of $H_i$ in an arbitrary $r$-colouring of the edges between $U$ and $V(G) \setminus U$.

Although our final goal is to find a $(p,R)$-Janson collection of copies of $H_i$, applying the method of containers to find a single copy is already instructive.
This will require introducing a correspondence between independent sets in our hypergraph and pairs of neighbourhoods $\mathrm{N}_G(v)$ and $\mathrm{N}_{G_i}(v)$ (where $G_i$ is the subgraph of $i$-coloured edges) that do not extend any copy of $H_i^-$.
In this simplified setting, applying a standard hypergraph container lemma is sufficient to conclude that the probability that a vertex does not extend any copies of $H_i^-$ is exponentially small.

Changing from extending a single copy to finding a $(p, R)$-Janson collection of copies of $H_i$ introduces significant new difficulties, which further set our argument apart from those in~\cite{BS20,CDLfRS17,MNS20, NS16}.
The general strategy will be to have a second induction on the Janson parameter $R$: we will show that adding a vertex to $U$ increases $R$ with very high probability, which results in a ``richer'' family of copies of $H_i$.
In this way, after adding $\delta N$ vertices, $R$ will be as large as we need.

However, even the first step of this incrementing argument requires containers for sets where the hypergraph is not $(p,R)$-Janson, rather than the classical container lemma from~\cite{BMS15,ST15} or the ``efficient'' container lemma proved in~\cite{BS20}, see \Cref{stmt:containersForNonJansonGeneral}.
For this purpose, our main tool will be a strengthening of the container method, proved recently by~\citet{CS24+}.
We will combine this result with an ``efficient'' container lemma in a novel and surprising way, resulting in a flexible method to prove new, more powerful container theorems.

\subsection{Containers for sets that are not well-distributed}

One of the main challenges of carrying out the above outline is that our induction hypothesis is a \emph{global property} (being $(p, R)$-Janson) of the hypergraph that encodes monochromatic induced copies of $H_i$ in colour $i$, whereas previous container lemmas are only able to handle \emph{local properties} of the forbidden substructures.

Our most important technical contribution is a novel method that allows one to prove container \namecrefs{stmt:containersForNonJansonGeneral} that can handle such global properties that depend on local behaviour.
The first step of this method is to represent the global property as edges, of potentially linear size, in a hypergraph.
We then decompose the independent sets of this hypergraph into independent sets of a small collection of \emph{container hypergraphs}.
Together, the container hypergraphs encode all of the local obstructions to this global property, which effectively separates the local obstructions from the global obstructions.
This separation leads to a more favourable setting: to deal with local obstructions, we have a vast array of tools at our disposal, including traditional container theorems.

We illustrate this method by proving a general container \namecref{stmt:containersForNonJansonGeneral} for sets that are not $(p, R)$-Janson, \Cref{stmt:containersForNonJansonGeneral}, which we moreover expect to have further applications.
The proof of this result starts with an application of the aforementioned result of \citet{CS24+} (see \Cref{stmt:containersHardcovers}), yielding a decomposition of the sets that are not $(p,R)$-Janson into a collection of independent sets in container hypergraphs.
Roughly speaking, \Cref{stmt:containersHardcovers} states that each container hypergraph that does not have a ``large local part'' has the following property: a random independent set $I$ with $q |V|$ vertices looks very similar to a binomial random subset $V_q$ of the vertices of the hypergraph.
More precisely, all sets $L$ that are not edges of the container hypergraph satisfy
\begin{equation}\label{eq:approxInd}
\PP\big( L \subset I \big) \approx \PP\big( L \subset V_q \big).
\end{equation}
From \eqref{eq:approxInd}, we will be able to deduce that the random independent set $I$ is $(p,R)$-Janson with positive probability, contradicting the fact that independent sets are not $(p,R)$-Janson.
It then follows that all container hypergraphs have a large local part, and we can apply another container theorem (\Cref{stmt:containersCoversButJanson}, also proved in \cite{CS24+}) to the local part of each container hypergraph to finish the proof.

The next \namecref{sec:reductionToKey} provides key definitions for the rest of the paper and an important reduction.
In particular, we define two events, one encoding the induction hypothesis and the other encoding the failure to contain a $(p, R)$-Janson collection of copies of $H_i$.
We then state our key probabilistic \namecref{stmt:key}, which says that the probability of these two events happening simultaneously is very small, and prove that \Cref{stmt:multicolour} follows from it by induction.
In the end of \Cref{sec:reductionToKey}, we discuss the structure of the remainder of the paper.

\section{Reduction to a key lemma}\label{sec:reductionToKey}

This \namecref{sec:reductionToKey} is devoted to the reduction of \Cref{stmt:multicolour} to a key probabilistic \namecref{stmt:key}.
To do that, we will first give two important definitions, \Cref{def:hypergraphEncodingCopies,def:janson}.
The first will allow us to reason about induced copies of a graph $H$ as edges in a hypergraph, while the other is the key notion of how ``well-distributed'' the edges of a hypergraph are.
This notion will underpin our container \namecref{stmt:containersForNonJansonGeneral}, so immediately afterwards, we note a few simple properties of this latter notion that will be useful later.

Using these two \namecrefs{def:hypergraphEncodingCopies}, we will further define an event to represent our inductive assumption, \Cref{def:mainEvent}, and another to stand for the failure of completing an induction step, \Cref{def:badEvent}.
We follow this with the statement of the key probabilistic \namecref{stmt:key}, \Cref{stmt:key}, which bounds the probability of these events happening simultaneously, and a proof that \Cref{stmt:multicolour} can be deduced from \Cref{stmt:key}.
This \namecref{sec:reductionToKey} concludes with a summary for the rest of the paper, where we provide the tools that we will use to prove the key \namecref{stmt:key} in \Cref{sec:proofOfKey}.

\subsection{Preliminaries}

The first definition that we need is that of the hypergraph which encodes copies of some graph $F \subset G'$ that is also induced in $G$.
We will use \Cref{def:hypergraphEncodingCopies} with $G'$ being the subgraph defined by the edges in some colour $i \in [r]$, and $G$ being the underlying (random) graph.
In what follows and in the rest of the paper, we will identify a hypergraph with its edge set.

\begin{defi}\label{def:hypergraphEncodingCopies}
  Given graphs $F, G'$ and $G$ such that $G' \subset G$, define $\frakI_{F, G', G}$ to be the $v(F)$-uniform hypergraph with vertex set $V(G)$, and
  \begin{equation*}
    \frakI_{F, G', G} = \big\{L \subset V(G) : \, F \cong G'[L] = G[L] \big\}.
  \end{equation*}
  That is, each hyperedge of $\frakI_{F, G', G}$ corresponds to a copy of $F \subset G'$ that is induced in $G$.
\end{defi}

As we will apply the container method to a hypergraph that is closely related to $\frakI_{F, G', G}$, we emphasise that it has $V(G)$ for its vertex set.
This is in contrast with using $E(G)$ for the vertices of the auxiliary hypergraph, the more common definition in applications of the container method.

We now formalise what it means for a hypergraph to be ``well-distributed.''
A crucial aspect of this \namecref{def:janson} is that we let ourselves assign weights to the edges of the hypergraph using measures.
We say that a measure $\nu$ is \emph{supported} on a hypergraph $\cal{G}$ if $\nu$ is non-zero only on the edges of $\cal{G}$.
A hypergraph $\cal{G}$ is $(p, R)$-Janson when there exists a measure $\nu$ with certain properties that is supported on $\cal{G}$.

We call this property $(p, R)$-Janson because, under some extra assumptions, one can apply Janson's inequality and conclude that a $p$-random subset of the hypergraph's vertices is an independent set with probability at most $\exp(-R)$.
Even though this is the original motivation for the \namecref{def:janson}, we will not require these extra assumptions or use Janson's inequality.

\begin{defi}\label{def:janson}
  Let $\cal{G}$ be a hypergraph and $p > 0$.
  For measures $\nu : \cal{G} \to \RR_{\ge 0}$, we define
  \begin{equation}\label{eq:defEdgesAndLambda}
    e(\nu) = \sum_{E \in \cal{G}} \nu(E) \qquad \text{and} \qquad \Lambda_p(\nu) = \sum_{\substack{L \subset V(\cal{G}) \\ |L| \ge 2}} d_\nu(L)^2 \, p^{-|L|},
  \end{equation}
  where
  \begin{equation*}
    d_\nu(L) = \sum_{L \subset E \in \cal{G}} \nu(E)
  \end{equation*}
  is the degree of the set $L$ in the measure $\nu$.
  For every $R > 0$, we say that $\cal{G}$ is $(p, R)$-Janson if there exists a measure $\nu : \cal{G} \to \RR_{\ge 0}$ such that
  \begin{equation}\label{eq:def:janson}
    \Lambda_p(\nu) < \frac{e(\nu)^2}{R}.
  \end{equation}
  If $R = 0$, then every hypergraph is $(p, R)$-Janson.
\end{defi}

We mention three very simple, but useful \namecrefs{stmt:normalisedJansonWitnesses} about this property, for future reference.
The first is so simple that we omit its proof.

\begin{obs}\label{stmt:jansonParametersMonotone}
  If $\cal{G}$ is a $(p, R)$-Janson hypergraph for $p > 0$ and $R \ge 0$, then for all $p' \ge p$ and $R' \le R$, it is also $(p', R')$-Janson.
\end{obs}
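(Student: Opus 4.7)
The plan is to observe that the same measure witnessing $(p,R)$-Janson also witnesses $(p',R')$-Janson, thanks to two separate monotonicities built into the definition~\eqref{eq:def:janson}. First, I would dispose of the degenerate case $R' = 0$, where the conclusion is immediate from the convention stated at the end of \Cref{def:janson}. So I may assume $R' > 0$, which in particular forces $R > 0$. Fix a measure $\nu : \cal{G} \to \RR_{\ge 0}$ satisfying $\Lambda_p(\nu) < e(\nu)^2 / R$, and note that $e(\nu) = \sum_{E \in \cal{G}} \nu(E)$ does not depend on either $p$ or $R$, so reusing $\nu$ is natural.

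The key step is to compare $\Lambda_{p'}(\nu)$ and $\Lambda_p(\nu)$ term by term. Since $p' \ge p > 0$ and every exponent $-|L|$ appearing in the sum~\eqref{eq:defEdgesAndLambda} satisfies $-|L| \le -2 < 0$, we have $(p')^{-|L|} \le p^{-|L|}$ for every $L$; multiplying by the nonnegative weights $d_\nu(L)^2$ and summing gives $\Lambda_{p'}(\nu) \le \Lambda_p(\nu)$. Combined with $R' \le R$, this yields
\[
  \Lambda_{p'}(\nu) \;\le\; \Lambda_p(\nu) \;<\; \frac{e(\nu)^2}{R} \;\le\; \frac{e(\nu)^2}{R'},
\]
which is precisely condition~\eqref{eq:def:janson} with parameters $(p', R')$, so $\nu$ witnesses that $\cal{G}$ is $(p', R')$-Janson.

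There is no real obstacle in this argument: both monotonicities follow directly from the form of the definition, and the witness $\nu$ transfers without modification, which is why the authors feel comfortable omitting the proof.
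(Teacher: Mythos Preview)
Your argument is correct and is exactly the intended one; the paper in fact omits the proof entirely as being obvious, and your write-up fills in the two straightforward monotonicities (in $p$ via the negative exponents in $\Lambda_p$, and in $R$ via the right-hand side of~\eqref{eq:def:janson}) together with the degenerate case $R'=0$.
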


The next \namecref{stmt:normalisedJansonWitnesses} is that we can normalise any measure $\nu$ satisfying \eqref{eq:def:janson} to choose the value it takes in $e(\nu)$.

\begin{obs}\label{stmt:normalisedJansonWitnesses}
  Let $p, R > 0$, and let $\cal{G}$ be a hypergraph that is $(p, R)$-Janson.
  For every $y > 0$, there exists $\nu : \cal{G} \to \RR_{\ge 0}$ such that
  \begin{equation*}
    e(\nu) = y \qquad \text{and} \qquad \Lambda_p(\nu) < \frac{y^2}{R}.
  \end{equation*}
\end{obs}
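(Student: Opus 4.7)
The plan is a one-step scaling argument that exploits the homogeneity of the two quantities in~\eqref{eq:defEdgesAndLambda}: as a function of $\nu$, the total mass $e(\nu) = \sum_E \nu(E)$ is linear, while $\Lambda_p(\nu)$ is quadratic, because each $d_\nu(L)$ is linear in $\nu$ and these appear squared. Consequently the ratio $\Lambda_p(\nu)/e(\nu)^2$ that governs the defining inequality~\eqref{eq:def:janson} is invariant under positive scaling of $\nu$, so any witness can be rescaled to have total mass exactly $y$.

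In detail, I would first invoke the $(p,R)$-Janson hypothesis to produce some witness $\mu : \cal{G} \to \RR_{\ge 0}$ with $\Lambda_p(\mu) < e(\mu)^2/R$. Before rescaling, I need to verify that $e(\mu) > 0$: if instead $e(\mu) = 0$, then $\mu \equiv 0$ (since $\mu$ takes non-negative values), which gives $\Lambda_p(\mu) = 0$, and then the strict inequality $0 < 0/R = 0$ fails, using here the assumption $R > 0$. Hence $c := y/e(\mu)$ is a well-defined positive real.

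Now set $\nu := c\mu$, which is again a non-negative measure on $\cal{G}$. Linearity of $e(\cdot)$ gives $e(\nu) = c \, e(\mu) = y$. For the second bound, $d_\nu(L) = c \, d_\mu(L)$ for every $L \subset V(\cal{G})$ with $|L| \ge 2$, so summing the squares against the weights $p^{-|L|}$ yields $\Lambda_p(\nu) = c^2 \Lambda_p(\mu)$. Combining with the witness inequality gives
\[
\Lambda_p(\nu) = c^2 \Lambda_p(\mu) < c^2 \cdot \frac{e(\mu)^2}{R} = \frac{\bigl(c\,e(\mu)\bigr)^2}{R} = \frac{y^2}{R},
\]
as required. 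Since the argument reduces to a one-line rescaling, there is no substantive obstacle; the only subtle point is ruling out $e(\mu) = 0$, which is handled above.
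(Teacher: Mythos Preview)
Your proof is correct and follows exactly the same scaling argument as the paper: take a witness $\tilde\nu$, observe $e(\tilde\nu)>0$ from the strict inequality and $\Lambda_p(\cdot)\ge 0$, and set $\nu = y\tilde\nu/e(\tilde\nu)$.
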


\begin{proof}
  As $\cal{G}$ is $(p, R)$-Janson, there exists $\tilde{\nu} : \cal{G} \to \RR_{\ge 0}$ such that $\Lambda_p(\tilde{\nu}) < e(\tilde{\nu})^2/R$, which implies that $e(\tilde{\nu}) > 0$ by $\Lambda_p(\cdot) \ge 0$.
  One can now verify that taking $\nu = y \tilde{\nu} / e(\tilde{\nu})$ completes the proof.
\end{proof}

Finally, we observe being $(p,R)$-Janson is monotone with respect to taking subhypergraphs.

\begin{obs}\label{stmt:JansonIsDownset}
  Let $R \ge 0$ and $p>0$, and let $\cal{G}$ and $\cal{G}'$ be hypergraphs satisfying $\cal{G}' \subset \cal{G}$.
  If $\cal{G}'$ is $(p, R)$-Janson, then $\cal{G}$ is also $(p, R)$-Janson.
\end{obs}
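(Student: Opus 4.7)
The plan is to transport the witnessing measure for $\cal{G}'$ to one on $\cal{G}$ by extension by zero. Since $\cal{G}'$ is $(p,R)$-Janson, \Cref{def:janson} supplies a measure $\nu' : \cal{G}' \to \RR_{\ge 0}$ with $\Lambda_p(\nu') < e(\nu')^2 / R$. I would define $\nu : \cal{G} \to \RR_{\ge 0}$ by $\nu(E) = \nu'(E)$ for $E \in \cal{G}'$ and $\nu(E) = 0$ for $E \in \cal{G} \setminus \cal{G}'$, and then check that $\nu$ also satisfies the Janson inequality.

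The verification is essentially by inspection of the formulas in~\eqref{eq:defEdgesAndLambda}. Every quantity entering \Cref{def:janson} is obtained by summing $\nu(E)$ over edges $E$ (possibly containing a given set $L$), and assigning $\nu$-mass zero to the new edges $\cal{G} \setminus \cal{G}'$ makes them invisible to these sums. Concretely, $e(\nu) = e(\nu')$, and for every $L \subset V(\cal{G})$ with $|L| \ge 2$ we have
\begin{equation*}
  d_\nu(L) = \sum_{\substack{E \in \cal{G} \\ L \subset E}} \nu(E) = \sum_{\substack{E \in \cal{G}' \\ L \subset E}} \nu'(E) = d_{\nu'}(L),
\end{equation*}
so $\Lambda_p(\nu) = \Lambda_p(\nu')$. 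Substituting into \eqref{eq:def:janson} then gives $\Lambda_p(\nu) < e(\nu)^2/R$, which witnesses that $\cal{G}$ is $(p,R)$-Janson.

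There is no real obstacle to overcome: the definition has been set up precisely so that containment becomes monotone via the zero extension. The only minor bookkeeping point is that $V(\cal{G}')$ and $V(\cal{G})$ might differ, but this is harmless because any $L$ not lying inside the common support of the edges of $\cal{G}'$ automatically contributes $d_\nu(L) = 0$.
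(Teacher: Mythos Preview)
Your proposal is correct and follows exactly the paper's approach: extend the witnessing measure by zero on $\cal{G} \setminus \cal{G}'$ and observe that $e(\cdot)$ and $\Lambda_p(\cdot)$ are unchanged. The paper's proof is a one-line version of precisely this argument.
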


\begin{proof}
  The observation follows immediately from the fact that any $\nu$ supported on $\cal{G}'$ is also supported on $\cal{G}$, by assigning measure zero to every $E \in \cal{G} \setminus \cal{G}'$.
\end{proof}

\subsection{The key lemma}

As outlined in \Cref{sec:ideasInTheProof}, our proof will be by induction on $k$, and we will need a very strong bound on probability of each inductive step failing, in order to allow for a union bound over colourings of $G[U]$ for some set $U$ of size $\delta N$.
We will now define the event representing our induction hypothesis, \Cref{def:mainEvent}, and the event that represents the failure of completing an induction step, \Cref{def:badEvent}.
Once these are defined, we will state the key lemma, \Cref{stmt:key}, and prove that our main \namecref{stmt:multicolour} is a straightforward consequence of it.

To avoid repetition, we will let, in this \namecref{sec:reductionToKey} and also throughout the paper\footnote{Except in our container theorems, where we will let $p$ vary in an interval that includes this value.}
\begin{equation}\label{eq:fixedParams}
  C = 300, \qquad \delta = \delta(r) = \frac{1}{r^{50}} \qquad \text{and} \qquad p = p(r, k) = \frac{1}{ 2^{25} k^{2} r^{4}}.
\end{equation}
Whenever the specific value of these constants is important, we will remind the reader of them.

As previously mentioned, we have a very strong induction hypothesis.
To state it, we introduce some notation.
Given $c : E(G) \to [r]$ and a colour $i \in [r]$, we will denote by $G_i^{(c)}$ the subgraph consisting of the edges of $G$ that are coloured $i$ by $c$.
We will omit the dependency of $G^{(c)}_i$ on $c$ when the colouring is evident from context, writing simply $G_i$.

We will assume that for every sufficiently large subset $W \subset V(G)$, and every collection of graphs $F_1, \ldots, F_r$ such that
\begin{equation*}
  v(F_1),\ldots, v(F_r) \le k \qquad \text{ and } \qquad \sum_{i = 1}^r v(F_i) = r k - 1,
\end{equation*}
the following holds: in any $r$-colouring of $G[W]$, there is a colour $i \in [r]$ for which the copies of $F_i \subset G_i[W]$ are well-distributed.
To be precise, and using the \namecrefs{def:janson} that we have just introduced, we will have that in this colour $i \in [r]$, the hypergraph $\frakI_{F_i, G_i, G}[W]$ is $(p, p|W|)$-Janson.

\begin{restatable}{defi}{mainEvent}\label{def:mainEvent}
  Given $r \in \NN$ and $s_1, \ldots, s_r \in \NN$, let $\mathbf{s} = (s_i)_{i \in [r]}$ and let $\cal{E}(\mathbf{s})$ be the family of graphs $G$ with the following property.
  For all graphs $F_1, \ldots, F_r$ satisfying
  \begin{equation*}
    \sum_{i = 1}^r v(F_i) = \sum_{i = 1}^r s_i - 1 \qquad \text{ and } \qquad v(F_i) \le s_i ~ \text{ for each } i \in [r],
  \end{equation*}
  for every $W \subset V(G)$ with
  \begin{equation*}
    |W| \ge \frac{\delta}{8r} \, v(G),
  \end{equation*}
  and every colouring $c : E(G[W]) \to [r]$, there is $i \in [r]$ such that $\frakI_{F_i, G_i, G}[W]$ is $(p, p|W|)$-Janson.
\end{restatable}

We now define the event that corresponds to the failure of the induction step, $\cal{B}(\mathbf{H})$.

\begin{restatable}{defi}{badEvent}\label{def:badEvent}
  Given a collection of graphs $H_1, \ldots, H_r$, let $\mathbf{H} = (H_i)_{i \in [r]}$ and let $\cal{B}(\mathbf{H})$ be the family of graphs $G$ with the following property.
  There exists a colouring $c : E(G) \to [r]$ such that, for every $i \in [r]$, the hypergraph $\frakI_{H_i, G_i, G}$ is not $(p, p \, v(G))$-Janson.
\end{restatable}

To simplify the notation, whenever all the $(H_i)_{i \in [r]}$ are equal to a single graph $H$, we denote this by $\cal{B}(H; r)$.
Generalising the notation used in the introduction, we write
\begin{equation*}
  G \xxrightarrow[r]{\mathrm{ind}} H
\end{equation*}
to denote that, for any $r$-colouring of the edges of $G$, there exists an (induced) monochromatic copy of $H$.
Observe that if $\PP\big(G \in \cal{B}(H; r) \big) < 1$, then there exists a graph $G$ such that $G \xxrightarrow[r]{\mathrm{ind}} H$.

We can now state the result from which we will deduce \Cref{stmt:multicolour}.

\begin{restatable}{lem}{keyStmt}\label{stmt:key}
  For all $k, r \in \NN$ with $r \ge 2$ and for all graphs $H_1, \ldots, H_r$ with at most $k$ vertices, if $N \in \NN$ satisfies
  \begin{equation}\label{eq:boundOnNInKey}
    N \ge r^{C(k + t)}
  \end{equation}
  for $t = \sum_{i = 1}^{r} v(H_i)$, then
  \begin{equation*}
    \PP\big(G \in \cal{B}(\mathbf{H}) \cap \cal{E}(\mathbf{s})\big) \le 2^{- \delta^2 N^2},
  \end{equation*}
  where  $G \sim \Gnp(N, 1/2)$, $\mathbf{H} = (H_i)_{i \in [r]}$ and $\mathbf{s} = \big(v(H_i)\big)_{i \in [r]}$.
\end{restatable}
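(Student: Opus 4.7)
The plan is to expose $G \sim \Gnp(N, 1/2)$ in two stages: first all edges inside a suitably small set $U \subset V(G)$, then the remaining edges. Once $G[U]$ is revealed, the assumption $\cal{E}(\mathbf{s})$ supplies many well-distributed induced copies of some $H_{i_*}^-$ in colour $i_*$ inside $U$; I then try to extend these to induced monochromatic copies of $H_{i_*}$ in $G_{i_*}$ using vertices of $T := V(G) \setminus U$. The main work is to control this extension uniformly over the adversarial colouring of $E(U, T)$, via the new container \Cref{stmt:containersForNonJansonGeneral} for non-Janson hypergraphs.

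Concretely, I would choose $|U|$ to be a small fraction of $N$ (small enough that the $r^{\binom{|U|}{2}}$ colourings of $G[U]$ can be union-bounded inside the target $\exp(-\delta^2 N^2)$, yet comfortably larger than $(\delta/8r)^r N$, so that $\cal{E}(\mathbf{s})$ applies to $W = U$ with each $F_i := H_i^-$ and $t' = t - r$). This yields, for every colouring $c|_{G[U]}$, a colour $i_* \in [r]$ and a measure $\mu$ witnessing that $\frakI_{H_{i_*}^-, G_{i_*}, G}[U]$ is $(p, p|U|)$-Janson. I would then define the candidate witness
\begin{equation*}
  \nu(J) = \sum_{\substack{(L, v) : \, L \cup \{v\} = J \\ L \subset U,\, v \in T}} \mu(L) \cdot \mathbf{1}\bigl[(L, v) \text{ is a valid } H_{i_*}\text{-extension}\bigr]
\end{equation*}
on $\frakI_{H_{i_*}, G_{i_*}, G}$, where validity means that the edges from $v$ to $L$ realise the correct neighbourhood pattern of the missing vertex of $H_{i_*}$ in $G$ and are all coloured $i_*$. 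If, for the extending colouring of $E(U, T)$ actually chosen by the adversary, $\nu$ certifies $(p, pN)$-Janson, then $c \notin \cal{B}(\mathbf{H})$ in colour $i_*$, contradicting $G \in \cal{B}(\mathbf{H})$.

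The container method enters to handle the adversarial extension of the colouring to $E(U, T)$. I would apply \Cref{stmt:containersForNonJansonGeneral} (flagged in the overview as the paper's main technical innovation) to decompose the ``bad'' colourings of $E(U, T)$ — those for which $\nu$ fails to be a $(p, pN)$-Janson witness — into a family of at most $r^{o(\delta^2 N^2)}$ containers, each described by local degree constraints on the coloured bipartite graph between $U$ and $T$. Within each container, the independent random $G$-edges across the $U$-$T$ cut supply a Janson/large-deviation concentration estimate showing that the true configuration falls inside the container with probability $\exp(-\omega(\delta^2 N^2))$. Summing over the $r$ choices of $i_*$, the colourings of $G[U]$, and the container family then yields the bound claimed in \Cref{stmt:key}.

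The main obstacle is the last step: tracking how the $(p, p|U|)$-parameter of $\mu$ upgrades to the required $(p, pN)$-parameter of $\nu$, and balancing the size of the container family against the concentration exponent so that both beat $\exp(-\delta^2 N^2)$. The delicate point is that $\Lambda_p(\nu)$ contains cross-terms from pairs $(L, v), (L', v')$ sharing vertices, which must be bounded using both the Janson property of $\mu$ and the degree constraints inside each container; this is precisely the kind of intertwined local/global obstruction that \Cref{stmt:containersForNonJansonGeneral} was designed to separate. The specific values $C = 300$, $\delta = r^{-50}$, and $p = (2^{25} k^2 r^4)^{-1}$ are what make this numerical balance close.
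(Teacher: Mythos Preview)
Your high-level two-stage exposure and union-bound scheme matches the paper's architecture, but there is a genuine gap in the colour-selection step. Applying $\cal{E}(\mathbf{s})$ with $F_i := H_i^-$ for all $i$ guarantees only that \emph{some} colour $i_*$ has well-distributed copies of $H_{i_*}^-$ inside $U$. But $i_*$ is fixed by $c|_{G[U]}$, and the adversary then chooses how to colour $E(U,T)$; nothing stops them from using only colours $\neq i_*$ on those edges, which makes your extension measure $\nu$ identically zero. The paper avoids this by leveraging the bad event itself: since on $\cal{B}(\mathbf{H})$ (after a reduction) the hypergraph $\frakI_{H_j, G_j, G}$ is non-Janson for \emph{every} $j$, one applies $\cal{E}(\mathbf{s})$ with $F_i = H_i^-$ and $F_j = H_j$ for $j \neq i$, which forces the surviving colour to be $i$, separately for each $i$ (this is \Cref{stmt:existenceOfTupleForEachG}). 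That yields the $H_i^-$-Janson property in \emph{every} colour, after which one may safely pigeonhole on the majority colour of the crossing edges.

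Two further structural points. First, the paper explicitly remarks that the Section~\ref{sec:simplerContainerForNonJanson} container theorem (your \Cref{stmt:containersForNonJansonGeneral}) is not strong enough for \Cref{stmt:key}; the proof uses the tailored \Cref{stmt:containersForNonJanson}, which takes as input both the existing copies $\cal{F} = \frakI_{H_\ell, \tilde G', \tilde G}$ and the new copies through $\pi_v$, and is applied once per extending vertex $v$ (via \Cref{stmt:containersConsequenceRestated}) rather than globally to the colouring of $E(U,T)$. Second, the paper does not jump from $(p,p|U|)$-Janson to $(p,pN)$-Janson in a single step. It first passes from $\cal{B}(\mathbf{H})$ to a weaker event $\cal{B}'(\mathbf{H})$ by double counting over subsets (\Cref{stmt:badEventContainedInBadEventPrime}), and then builds $U$ greedily so that each $\frakI_{H_i, G_i, G}[U]$ is $(p, R_i)$-Janson with $\sum_i R_i$ maximal (\Cref{stmt:maximalTupleForG}); the maximality is what turns the failure of extension into conditionally independent events over the vertices $v$, giving the $\exp(-\Theta(|U||S|))$ factor that actually beats the union bound over colourings of $G[U]$.
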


To prove that $\PP\big(G \in \cal{B}(H; r)\big) < 1$, we will find, for each $G \in \cal{B}(H; r)$, a large set $W \subset V(G)$ and graphs $H_1, \ldots, H_r$ such that $G[W] \in \cal{B}(\mathbf{H}) \cap \cal{E}(\mathbf{s})$.
Then, we will apply \Cref{stmt:key} to each $G[W]$ and take a union bound over choices of $W$ and $H_1, \ldots, H_r$ to complete the proof.

\begin{proof}[Proof that \Cref{stmt:key} implies \Cref{stmt:multicolour}]
  Fix
  \begin{equation}\label{eq:proofOfMain:choiceOfN2}
    N = r^{5 C r k},
  \end{equation}
  a choice which makes the constant in \Cref{stmt:multicolour} equal to $5 C$.
  Our goal is to show that there exists $G$ on $N$ vertices such that $G \xxrightarrow[r]{\mathrm{ind}} H$.
  As previously observed, it will suffice to prove that
  \begin{equation}\label{eq:probabilisticGoalForMainResult}
    \PP\big(G \in \cal{B}(H; r)\big) < 1,
  \end{equation}
  for $G \sim \Gnp(N, 1/2)$.
  The next \namecref{stmt:unionBoundOverKeyEvent} will let us bound this probability with a union bound.

  \begin{claim}\label{stmt:unionBoundOverKeyEvent}
    For every $G \in \cal{B}(H; r)$, there are $W \subset V(G)$ and $H_1, \ldots, H_r \subset K_k$ such that
    \begin{equation}\label{eq:sizeOfWInSubevent}
      |W| \ge \left(\frac{\delta}{8r}\right)^{rk - t} N
    \end{equation}
    for $t = \sum_{i = 1}^r v(H_i)$, and
    \begin{equation}\label{eq:subeventCondition}
      G[W] \in \cal{E}(\mathbf{s}) \cap \cal{B}(\mathbf{H})
    \end{equation}
    where $\mathbf{H} = (H_i)_{i \in [r]}$ and $\mathbf{s} = \big( v(H_i) \big)_{i \in [r]}$.
  \end{claim}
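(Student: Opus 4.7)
The plan is to construct $(W, \mathbf{H})$ by a greedy descent, starting from the trivial pair $(W_0, \mathbf{H}^{(0)}) = (V(G), (H, \ldots, H))$ and repeatedly invoking the negation of $\cal{E}$ until $\cal{E}$ holds. Writing $t_j = \sum_{i=1}^r v(H_i^{(j)})$, this initial pair has $t_0 = rk$, satisfies $|W_0| = N = (\delta/(8r))^{rk - t_0} N$, and also $G[W_0] \in \cal{B}(H; r) = \cal{B}(\mathbf{H}^{(0)})$ by the assumption on $G$.

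At stage $j$, if $G[W_j] \in \cal{E}(\mathbf{s}^{(j)})$ we stop and return $(W_j, \mathbf{H}^{(j)})$. Otherwise, unwinding the negation of \Cref{def:mainEvent} furnishes $t' < t_j$, graphs $F_1, \ldots, F_r$ with $v(F_i) \le v(H_i^{(j)})$ and $\sum_i v(F_i) = t'$, a subset $W' \subset W_j$ with $|W'| \ge (\delta/(8r))^{t_j - t'}|W_j|$, and a colouring $c : E(G[W']) \to [r]$ such that, for every $i \in [r]$, the hypergraph $\frakI_{F_i, G_i^{(c)}, G[W_j]}[W']$ is not $(p, p|W'|)$-Janson. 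Setting $W_{j+1} = W'$ and $\mathbf{H}^{(j+1)} = (F_i)_i$, the identity
\[
\frakI_{F_i, G_i^{(c)}, G[W_j]}[W'] \;=\; \frakI_{F_i, G_i^{(c)}, G[W']},
\]
which holds because $G[W_j][L] = G[W'][L] = G[L]$ for every $L \subset W'$, shows that the colouring $c$ witnesses $G[W_{j+1}] \in \cal{B}(\mathbf{H}^{(j+1)})$; the size estimate $|W_{j+1}| \ge (\delta/(8r))^{rk - t_{j+1}} N$ then follows by telescoping the inductive bound on $|W_j|$.

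Since $t_j$ strictly decreases and is non-negative, the descent terminates, and it must do so via the $\cal{E}$ condition rather than by exhausting $t$. Indeed, at every stage $j$ the membership $G[W_j] \in \cal{B}(\mathbf{H}^{(j)})$ forces $t_j \ge 2$: otherwise each hypergraph $\frakI_{H_i^{(j)}, G_i, G[W_j]}$ either equals $\{\emptyset\}$ or consists entirely of singletons, so no $L$ with $|L| \ge 2$ lies in any edge and $\Lambda_p(\nu) = 0$ for every supported measure $\nu$; putting positive mass on any single edge then furnishes a $(p, p|W_j|)$-Janson witness for every $i$, contradicting $G[W_j] \in \cal{B}(\mathbf{H}^{(j)})$. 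Hence the descent halts at some $j^\star$ with $G[W_{j^\star}] \in \cal{E}(\mathbf{s}^{(j^\star)}) \cap \cal{B}(\mathbf{H}^{(j^\star)})$, and together with the monotonicity $v(H_i^{(j^\star)}) \le v(H_i^{(0)}) = k$ this delivers both \eqref{eq:sizeOfWInSubevent} and \eqref{eq:subeventCondition}. The only genuinely conceptual step is confirming that the data supplied by the negation of $\cal{E}$ on $G[W_j]$ really certifies $\cal{B}$ on the smaller host $G[W_{j+1}]$, which reduces to the displayed hypergraph identity; once that is in place, the remainder is routine bookkeeping together with the small-$t$ termination argument.
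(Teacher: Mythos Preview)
Your proof is correct and takes essentially the same approach as the paper's: both start from the trivial pair $(V(G),(H,\ldots,H))$ and descend to a pair with minimal $t'$, the paper phrasing this as ``take $(W,\mathbf{H})$ minimising $t'$ subject to the size bound and $G[W]\in\cal{B}(\mathbf{H})$'' while you carry out the descent iteratively. Your hypergraph identity is exactly the step the paper glosses over when asserting $G[W']\in\cal{B}(\mathbf{H}')$, and your termination argument via $t_j\ge 2$ is correct but unnecessary, since the loop halts precisely when $\cal{E}$ holds.
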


  \begin{claimproof}
    Choosing $H_1, \ldots, H_r = H$ and $W = V(G)$ trivially satisfies \eqref{eq:sizeOfWInSubevent} and $G[W] \in \cal{B}(\mathbf{H})$ by $G \in \cal{B}(H; r)$, which is a choice of $\mathbf{H}$ and $W$ that satisfies the assumptions in the \namecref{stmt:unionBoundOverKeyEvent}.
    The existence of one such choice means that we can take $\mathbf{H} = (H_i)_{i \in [r]}$ and $W \subset V(G)$ to minimise $t = \sum_{i = 1}^r v(H_i)$ for $H_1, \ldots, H_r \subset K_k$ among the choices that satisfy $G[W] \in \cal{B}(\mathbf{H})$ and \eqref{eq:sizeOfWInSubevent}.
    We claim that this $\mathbf{H}$ and $W$ also satisfies $G[W] \in \cal{E}(\mathbf{s})$, and therefore \eqref{eq:subeventCondition}.

    Indeed, if $G[W] \not \in \cal{E}(\mathbf{s})$, then, by definition, there are graphs $H_1', \ldots, H_r'$ with
    \begin{equation}\label{eq:tDoublePrimeIsLeqT}
      \sum_{i = 1}^r v(H_i') = t - 1,
    \end{equation}
    a set $W' \subset W$ with
    \begin{equation*}
      |W'| \ge \frac{\delta}{8r} \, |W|,
    \end{equation*}
    and a colouring $c : G[W'] \to [r]$ such that $\frakI_{H_i', G_i, G}[W']$ is not $(p, p|W'|)$-Janson for all $i \in [r]$.
    But then $W'$ and $\mathbf{H}' = (H_i')_{i \in [r]}$ also satisfy $G[W'] \in \cal{B}(\mathbf{H}')$ and
    \begin{equation*}
      |W'| \ge \frac{\delta}{8r} \, |W| \ge \left(\frac{\delta}{8r}\right)^{rk - (t - 1)} N,
    \end{equation*}
    since $W$ satisfies \eqref{eq:sizeOfWInSubevent}.
    Therefore, $W'$ would also satisfy \eqref{eq:sizeOfWInSubevent}, and the existence of $W'$ and $\mathbf{H}'$ would, by \eqref{eq:tDoublePrimeIsLeqT}, contradict the minimality of $t$ in the original choice of $W$ and $\mathbf{H}$, so $G[W] \in \cal{E}(\mathbf{s})$.
  \end{claimproof}

  Applying \Cref{stmt:unionBoundOverKeyEvent}, we can take a union bound over the choices of $\mathbf{H}$ and $W \subset V(G)$ in that statement, obtaining as a result
  \begin{equation}\label{eq:firstprobabilityBound}
    \PP\big(G \in \cal{B}(H; r)\big) \le \sum_\mathbf{H} \sum_{\substack{W \subset V(G) \\ |W| \ge \delta^{2 r k}N}} \PP\big(G[W] \in \cal{B}(\mathbf{H}) \cap \cal{E}(\mathbf{s}_\mathbf{H})\big),
  \end{equation}
  where $\mathbf{s}_\mathbf{H} = \big( v(H_i) \big)_{i \in [r]}$, and we have bounded $(\delta / 8r)^{rk - t} N \ge \delta^{2 rk} N$ for all $t \ge 0$ by $r \ge 2$ and our choice of $\delta = r^{-50} \le 1/(8r)$.

  Now, with the goal of bounding each term in \eqref{eq:firstprobabilityBound} by applying \Cref{stmt:key}, fix $W \subset V(G)$ with $|W| \ge \delta^{2 r k} N$ and $\mathbf{H} = (H_i)_{i \in [r]}$, a collection of graphs, each with at most $k$ vertices.
  Observe that letting $G' = G[W]$, we have $G' \sim \Gnp(N', 1/2)$ where $N' = |W|$, and also that $\mathbf{H}$ trivially satisfies
  \begin{equation}\label{eq:tPrimeLessThanRk}
    t = \sum_{i = 1}^r v(H_i) \le r k.
  \end{equation}
  The only assumption in \Cref{stmt:key} that remains to be checked is that $N'$ satisfies \eqref{eq:boundOnNInKey}, which it does because
  \begin{equation*}
    N' \ge \delta^{2 r k} N \ge r^{-100 r k} r^{5C r k} \ge r^{4 C r k} \ge r^{C(k + t)}
  \end{equation*}
  by our choice of $N = r^{5 C r k}$ in \eqref{eq:proofOfMain:choiceOfN2}, the fixed values of $\delta = r^{-50}$ and $C = 300$ in \eqref{eq:fixedParams}, and \eqref{eq:tPrimeLessThanRk}.

  Applying \Cref{stmt:key} to $G' = G[W]$ and $\mathbf{H}$, we conclude that
  \begin{equation}\label{eq:probabilityBoundForEachBfH}
    \PP\big(G[W] \in \cal{B}(\mathbf{H}) \cap \cal{E}(\mathbf{s}_\mathbf{H})\big) \le 2^{- \delta^2 |W|^2} \le 2^{- \delta^{5 r k} N^2},
  \end{equation}
  for all $|W| \ge \delta^{2 r k}N$.
  As $W$ and $\mathbf{H}$ were arbitrary, \eqref{eq:probabilityBoundForEachBfH} holds for all terms in \eqref{eq:firstprobabilityBound}.

  Replacing \eqref{eq:probabilityBoundForEachBfH} in \eqref{eq:firstprobabilityBound}, bounding the choices for $\mathbf{H}$ by $k^r 2^{r \binom{k}{2}}$ and for $W \subset V(G)$ by $2^N$, we have
  \begin{equation}\label{eq:probThatIsActuallyExp}
    \PP\big(G \in \cal{B}(H; r)\big) \le k^r 2^{r \binom{k}{2}} 2^{N} 2^{- \delta^{5 r k} N^2} \le 2^{- \delta^{5 r k} N^2 / 2} < 1
  \end{equation}
  where the final inequalities hold since
  \begin{equation*}
    \frac{\delta^{5 r k} N^2}{2} > N + r k + r \binom{k}{2} > 0
  \end{equation*}
  by our choice of $N = r^{5 C r k}$ and $\delta = r^{-50}$.
  We have established \eqref{eq:probabilisticGoalForMainResult} and completed the proof.
\end{proof}

\begin{remark}\label{stmt:universality}
  Observe that the bound in \eqref{eq:probThatIsActuallyExp} is sufficiently small for us to take another union bound, this one over all graphs $H$ with $k$ vertices, for which there are at most $2^{\binom{k}{2}}$ choices.
  The conclusion is that, with high probability, if $G \sim \Gnp(N, 1/2)$, then $G \xxrightarrow[r]{\mathrm{ind}} H$ for all such $H$.
\end{remark}

\subsection{Structure of the paper}

The bulk of the paper is concerned with proving \Cref{stmt:key}.
However, before we proceed to the proof of \Cref{stmt:key}, we will need some tools.

First, we will have two expository \namecrefs{sec:warmUpExtension} that contain many of the ideas that we use later, but in simpler settings.
In \Cref{sec:warmUpExtension}, this simpler setting corresponds to showing that the probability that a vertex extends no copy of $H_i^-$ in a fixed colour $i \in [r]$ is exponentially small.
This section shows how to map neighbourhoods that fail to extend a copy of $H_i^-$ to independent sets in a certain hypergraph $\cal{H}$, which will illustrate how the method of hypergraph containers can be applied to this problem.
It is also where we explain how our induction hypothesis, the event $\cal{E}(\mathbf{s})$, yields a supersaturation result for our application(s) of containers.

In \Cref{sec:simplerContainerForNonJanson}, we present the proof of a container theorem for ``sets that are not $(p, R)$-Janson'' (see \Cref{stmt:containersForNonJansonGeneral}, and also \Cref{stmt:containersForNonJansonWithFingerprints}, which implies it).
While this result is not sufficiently strong to prove \Cref{stmt:key}, its proof already presents the general argument that we will later adapt to prove \Cref{stmt:containersForNonJanson}, the container \namecref{stmt:containersForNonJanson} that we do use in the proof of \Cref{stmt:key}.
We see that argument as a general method to reduce a problem about sets avoiding a global property to a simpler problem, where the sets only need to avoid a local property.
In this particular instance, we deal with this local property by using a standard hypergraph container lemma (\Cref{stmt:containersCoversButJanson}).

\Cref{sec:extensionOfJansonCollection} contains the proof that if the hypergraph $\frakI_{H_i,G_i,G}[U]$ is $(p, R)$-Janson, then adding a new vertex to $U$ produces a $(p, R+1)$-Janson hypergraph with probability $1 - \exp(-\gamma |U|)$ for some constant $\gamma = \gamma(r) > 0$ that only depends on $r$.
The proof of this statement mirrors the proof in \Cref{sec:warmUpExtension}, but replacing the hypergraph container lemma there by the (yet unproven) \Cref{stmt:containersForNonJanson}, the container theorem tailored to our application.
We postpone the proof of \Cref{stmt:containersForNonJanson} to the last section since it is the most technical part of the paper.

In \Cref{sec:proofOfKey}, we prove \Cref{stmt:key} in two stages.
The first and easier stage uses a double counting argument to show that if $\frakI_{H_i,G_i,G}[S]$ is roughly $(p, \delta p N / r)$-Janson for all subsets $S$ with $|S| \ge \delta^{2/3}N$, then $\frakI_{H_i,G_i,G}$ is $(p,p N)$-Janson.
The second stage proves \Cref{stmt:key} by induction on the Janson parameter of the family $\frakI_{H_i,G_i,G}[U]$.
To prove the inductive step, we use \Cref{stmt:extensionOfJansonCollection} to show that the probability one cannot increase this Janson parameter by adding a vertex to $U$ is very small.
This is also the part where we take a union bound over colourings of $G[U]$, which ends up being somewhat technical because we must be careful with the randomness outside $G[U]$.

The last section, \Cref{sec:containersForNonJanson}, contains the proof of \Cref{stmt:containersForNonJanson}.
As we mentioned before, this proof follows the blueprint of \Cref{sec:simplerContainerForNonJanson}, but there are extra details.
One is that we want to combine the family of copies of $H_i$ containing $v$ with the copies that are already in $U$, which does not contain $v$.
The second is that the hypergraph $\cal{H}$ that we use to deal with the extensions, defined in \Cref{sec:warmUpExtension}, is not the same as $\frakI_{H_i,G_i,G}$, the hypergraph appearing in the event whose probability we must bound.
Related to this is the fact that the hypergraph defined by $\cal{H}$ corresponds to copies of $H_i^-$, and we are interested in copies of $H_i$.
These complications make the proof more technical, but it turns out the approach we developed in \Cref{sec:simplerContainerForNonJanson} is sufficiently flexible to deal with them.

\section{Warm-up to the extension lemma}\label{sec:warmUpExtension}

In this \namecref{sec:warmUpExtension}, we give an essentially complete proof of a weak ``extension \namecref{stmt:warmUpExtension}'', \Cref{stmt:warmUpExtension}.
Very roughly, this result bounds the probability that a fixed vertex of $G$ fails to complete any copy of a graph $F$.
We will describe the setting for this result in \Cref{sec:extensionLemmas}, connecting it with the application of its stronger variant in the proof of \Cref{stmt:key}.

Of central importance in the proof of \Cref{stmt:warmUpExtension} is a hypergraph container \namecref{stmt:containersCoversButJanson}, here stated as \Cref{stmt:containersCoversButJanson}, which motivates many of the definitions that follow.
The most significant of these is that of the auxiliary hypergraph $\cal{H}$, defined in \Cref{sec:auxiliaryHypergraph}; the other definitions will, above all, assist in reasoning about it.

As we will later see, each edge of this auxiliary hypergraph represents one way to extend, using a fixed vertex $v$, an (induced) copy of $F^-$ to a copy of $F$.
\Cref{stmt:iotaIndepWarm} then fulfils a crucial requirement to apply the container method, establishing that independent sets in $\cal{H}$ correspond to pairs of graphs $(G', G)$ in which the neighbourhood of $v$ fails to extend every $F^-$ in a fixed set $U$.

\subsection{Extension lemmas}\label{sec:extensionLemmas}

The setting of \Cref{stmt:warmUpExtension} is that $\tilde{G}'$ and $\tilde{G}$ are $m$-vertex graphs such that $\tilde{G}' \subset \tilde{G}$ and $\frakI_{F^{-}, \tilde{G}', \tilde{G}}[W]$ is $(p, 2 p m)$-Janson for every $W \subset V(\tilde{G})$ with $|W| \ge m / 16$.
Let $U = V(\tilde{G})$ and let $G$ be the random graph $\Gnp(m + 1, 1/2)$ conditioned on $G[U] = \tilde{G}$.
The \namecref{stmt:warmUpExtension} bounds the probability that $G$ contains a subgraph $G'$ with the following properties:
\begin{enumerate}[(a)]
  \item $G'[U] = \tilde{G}'$,
  \item the vertex $v \in V(G) \setminus U$ does not have small degree in $G'$, and
  \item $v$ does not extend any copy of $F^- \subset \tilde{G}'$ to an induced $F$ in $G$ using the edges in $G'$.
\end{enumerate}

In our application, $U$ will be a set of vertices for which the colouring $c : E(G) \to [r]$ is fixed, $v$ will be a vertex not in $U$ such that $d_{G_i}(v,U) \ge |U|/4r$ for some colour $i \in [r]$, and we will set $\tilde{G}=G[U]$ and $\tilde{G}'=G_i[U]$.
In this setting, we could use this \namecref{stmt:warmUpExtension} (when $r = 2$) to conclude that it is extremely likely that $v$ extends some $i$-coloured copy of $F^- := H_i^-$ to a copy of $F = H_i$.

\begin{lem}\label{stmt:warmUpExtension}
  Let $m, s, k \in \NN$ with $s < k \le m$, $p = 2^{-20} k^{-2}$ and $F$ be a graph on $s + 1$ vertices.
  Further let $\tilde{G}'$ and $\tilde{G}$ be graphs on $m$ vertices satisfying $\tilde{G}' \subset \tilde{G}$.

  If $\frakI_{F^{-}, \tilde{G}', \tilde{G}}[W]$ is $(p, 2 p m)$-Janson for every $W \subset U = V(\tilde{G})$ with $|W| \ge m / 16$, then
  \begin{equation}\label{eq:probOfGPrimeWarm}
    \PP
    \left(
        \exists\, G' \subset G \\
      :
      \begin{array}{@{}c@{}}
        G'[U] = \tilde{G}',  ~ d_{G'}(v) \ge m / 8 \\
         \text{ and } ~ \frakI_{F, G', G}=\emptyset
      \end{array}
      \
      \middle \vert \ G[U] = \tilde{G}
    \right) \le 2^{-m / 32},
  \end{equation}
  where $G \sim \Gnp(m + 1, 1/2)$ and $V(G) = U \cup \{v\}$.
\end{lem}

We refer to \Cref{stmt:warmUpExtension} as a weak extension lemma because it involves the event $\{\frakI_{F, G', G} = \emptyset\}$, whereas our main extension result, \Cref{stmt:extensionOfJansonCollection}, involves instead $\{\frakI_{F, G', G}$ is not $(p, R)$-Janson$\}$.
The proof of \Cref{stmt:warmUpExtension} presents most of the ideas that we will require in \Cref{sec:extensionOfJansonCollection}, while avoiding some technicalities from dealing with this more complicated property of $\frakI_{F, G', G}$.

\subsection{The auxiliary hypergraph}\label{sec:auxiliaryHypergraph}

We want to bound the probability in \eqref{eq:probOfGPrimeWarm} using the method of hypergraph containers.
To accomplish that, the first step is to represent the graphs $G' \subset G$ such that $G'[U] = \tilde{G}'$ and $\frakI_{F, G', G} = \emptyset$ as independent sets in an auxiliary hypergraph $\cal{H}$.
This auxiliary hypergraph will be a function of $\tilde{G}'$ and $\tilde{G}$ only, which crucially means that $\cal{H}$ does not depend on the edges of $G$ (or $G' \subset G$) between $v$ and $U$.

Define $w \in V(F)\setminus V(F^{-})$ and note that the only randomness in the event inside the probability in \eqref{eq:probOfGPrimeWarm} comes from the edges of $G$ between $U$ and $v$, since we condition on the event $\big\{G[U]=\tilde{G}\big\}$.
Our goal is to have each edge of $\cal{H}$ correspond to one way of extending a copy of $F^{-}$ in $\tilde{G}'$, which is also induced in $\tilde{G}$, to a (still induced) copy of $F$, by adding the vertex $v$ in the role of $w$.
To be precise, for each edge $L \in \frakI_{F^{-},\tilde{G}',\tilde{G}}$, fix a bijection $\phi_L : L \to V(F^{-})$ such that $F^{-} \cong_{\phi_L} \tilde{G}'[L] = \tilde{G}[L]$.
Now define $\cal{H}$ to be the hypergraph with vertex set $U \times \{0, 1\}$ and edge set
\begin{equation}\label{eq:defOfCalHwarm}
  \cal{H} = \Big\{E_L : L \in \frakI_{F^-, \tilde{G}', \tilde{G}}\Big\},
\end{equation}
where
\begin{equation*}
  E_L = \bigg\{\Big(u, \mathds{1}\big[\phi_L(u) \in \mathrm{N}_F(w)\big]\Big) : u \in L\bigg\},
\end{equation*}
for every $L \in \frakI_{F^{-},\tilde{G}',\tilde{G}}$.
In words, the vertex set of $\cal{H}$ is two copies of $U$, corresponding to the neighbours and non-neighbours of $w$ in $F$, and its edge set has the following property: if $u \in \mathrm{N}_{G'}(v)$ for every $(u,1) \in E_L$ and $u \not \in \mathrm{N}_G(v)$ for every $(u,0) \in E_L$, then $L \cup \{v\} \in \frakI_{F, G', G}$.

To formalise that independent sets in $\cal{H}$ correspond to pairs of graphs $(G', G)$ of our interest, denote the non-neighbours of $v \not \in U$ by
\begin{equation}
  \mathrm{N}_G(v)^\complement = U \setminus \mathrm{N}_G(v)
\end{equation}
and, for each $A \subset V(\cal{H})$ and $i \in \{0, 1\}$, let
\begin{equation*}
  A^{(i)} = \big\{u \in U : (u, i) \in A\big\}.
\end{equation*}
We repeat the following property, previously stated without these definitions, for future reference.

\begin{obs}\label{stmt:defOfCalHIsCorrect}
  Let $G'$ and $G$ be graphs satisfying $G' \subset G$, $G'[U] = \tilde{G}'$ and $G[U] = \tilde{G}$.
  If $L \in \frakI_{F^-, \tilde{G}', \tilde{G}}$ and $E = E_L \in \cal{H}$, then we can extend $L$ to a copy of $F \subset G'$ induced in $G$, i.e.\ $L \cup \{v\} \in \frakI_{F,G',G}$, whenever
  \begin{equation*}
    E^{(0)} \subset \mathrm{N}_G(v)^\complement \qquad \text{and} \qquad E^{(1)} \subset \mathrm{N}_{G'}(v).
  \end{equation*}
\end{obs}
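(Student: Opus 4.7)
The plan is to show that the bijection $\phi_L : L \to V(F^-)$ extends to a graph isomorphism $\phi : L \cup \{v\} \to V(F)$ witnessing $L \cup \{v\} \in \frakI_{F,G',G}$ simply by setting $\phi(v) = w$. Since the statement is essentially built into the \namecref{def:hypergraphEncodingCopies} of $\cal{H}$ and of $E_L$, the work consists entirely of unpacking the definitions and checking two things: (i) that $G'[L \cup \{v\}] = G[L \cup \{v\}]$ as graphs, and (ii) that $\phi$ preserves edges and non-edges between $F$ and $G'[L \cup \{v\}]$.

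For (i), I would split the edges of the induced subgraphs into those inside $L$ and those incident to $v$. For pairs inside $L$, the hypothesis $G'[U] = \tilde{G}'$, $G[U] = \tilde{G}$ combined with $L \in \frakI_{F^-,\tilde{G}',\tilde{G}}$ (which gives $\tilde{G}'[L] = \tilde{G}[L]$) yields $G'[L] = G[L]$. For pairs $\{u,v\}$ with $u \in L$, I would case-split on whether $\phi_L(u) \in \mathrm{N}_F(w)$: if yes, then $(u,1) \in E_L = E$ and the hypothesis $E^{(1)} \subset \mathrm{N}_{G'}(v)$ places $uv$ in $G' \subset G$; if no, then $(u,0) \in E$ and $E^{(0)} \subset \mathrm{N}_G(v)^\complement$ removes $uv$ from $G$ and hence from $G' \subset G$. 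Either way $uv \in E(G')$ iff $uv \in E(G)$.

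For (ii), I would use the same case split to verify that $\phi$ preserves incidences with $w$: the case analysis shows that $\phi_L(u) \in \mathrm{N}_F(w)$ if and only if $uv \in E(G'[L \cup \{v\}])$, and edges not incident to $w$ are handled by the fact that $\phi_L$ is already an isomorphism $F^- \to \tilde{G}'[L] = G'[L]$. Combined with (i), this gives $F \cong_\phi G'[L \cup \{v\}] = G[L \cup \{v\}]$, i.e.\ $L \cup \{v\} \in \frakI_{F,G',G}$.

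There is no real obstacle here beyond bookkeeping: the only mild subtlety is remembering that the hypothesis $G' \subset G$ is what lets the two one-sided conditions $E^{(1)} \subset \mathrm{N}_{G'}(v)$ and $E^{(0)} \subset \mathrm{N}_G(v)^\complement$ suffice to pin down the edges of $v$ to $L$ in both graphs simultaneously, so that the induced subgraphs of $G'$ and $G$ on $L \cup \{v\}$ agree.
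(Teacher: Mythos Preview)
Your proposal is correct; the paper states this observation without proof, treating it as immediate from the definition of $E_L$. Your argument is exactly the natural definitional check that the paper leaves implicit.
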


The most useful consequence of this fact, which allows applying the method of hypergraph containers, is more conveniently stated with some additional notation.
Let
\begin{equation}\label{eq:eventOfGPrimeWarm}
  \Gamma_G = \left\{
    G' \subset G :
  \begin{array}{@{}c@{}}
    G'[U] = \tilde{G}', ~ d_{G'}(v) \ge m / 8 \\
    \text{and }\frakI_{F,G',G} = \emptyset
  \end{array}
  \right\}
\end{equation}
be the collection of $G'$ whose existence is the event in the probability of \eqref{eq:probOfGPrimeWarm}.
To index vertex subsets of $\cal{H}$ by graphs $G'$ and $G$, further let
\begin{equation}\label{eq:definitionIotaWarm}
  \iota(G', G) = \big\{(u,0) : u  \in \mathrm{N}_G(v)^{\complement}\big\} \cup \big\{(u,1) : u \in \mathrm{N}_{G'}(v)\big\},
\end{equation}
and note that if $I=\iota(G',G)$, then
\begin{equation}\label{eq:projectionOfIotaWarm}
   I^{(0)} = \mathrm{N}_G(v)^\complement \qquad \text{and} \qquad I^{(1)} = \mathrm{N}_{G'}(v).
\end{equation}
To justify one of the premises in \Cref{stmt:iotaIndepWarm}, recall that we have conditioned the distribution of $G \sim \Gnp(m + 1, 1/2)$ on $\big\{G[U] = \tilde{G}\big\}$ in \eqref{eq:probOfGPrimeWarm}.

\begin{obs}\label{stmt:iotaIndepWarm}
  If $G[U] = \tilde{G}$ and $G' \in \Gamma_G$, then $\iota(G', G)$ is an independent set in $\cal{H}$.
\end{obs}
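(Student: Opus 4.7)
The plan is to establish \Cref{stmt:iotaIndepWarm} by a direct contrapositive argument: I would show that if $\iota(G',G)$ \emph{were} to contain some edge of $\cal{H}$, then $\frakI_{F,G',G}$ would be non-empty, violating the membership $G' \in \Gamma_G$. This is essentially a packaging of \Cref{stmt:defOfCalHIsCorrect} and should not require any new content.

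More concretely, I would proceed as follows. Suppose, for contradiction, that $\iota(G',G)$ is not independent in $\cal{H}$. By the definition of $\cal{H}$ in~\eqref{eq:defOfCalHwarm}, there must exist some $L \in \frakI_{F^-, \tilde{G}', \tilde{G}}$ such that the corresponding edge $E_L$ satisfies $E_L \subset \iota(G',G)$. The key observation is then that $E_L$ naturally splits along the second coordinate: $E_L^{(0)} = \{u \in L : \phi_L(u) \notin \mathrm{N}_F(w)\}$ and $E_L^{(1)} = \{u \in L : \phi_L(u) \in \mathrm{N}_F(w)\}$, and the containment $E_L \subset \iota(G',G)$ is equivalent to the pair of containments $E_L^{(0)} \subset \iota(G',G)^{(0)}$ and $E_L^{(1)} \subset \iota(G',G)^{(1)}$. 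Combining this with the identification~\eqref{eq:projectionOfIotaWarm} of the two layers of $\iota(G',G)$ yields
\[
  E_L^{(0)} \subset \mathrm{N}_G(v)^\complement \qquad \text{and} \qquad E_L^{(1)} \subset \mathrm{N}_{G'}(v).
\]

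At this point \Cref{stmt:defOfCalHIsCorrect} applies verbatim, provided its hypotheses $G' \subset G$, $G'[U] = \tilde{G}'$ and $G[U] = \tilde{G}$ are in force. The first two are part of the definition~\eqref{eq:eventOfGPrimeWarm} of $\Gamma_G$, and the third is the conditioning assumption in the statement. The conclusion of \Cref{stmt:defOfCalHIsCorrect} then gives $L \cup \{v\} \in \frakI_{F, G', G}$, which contradicts the defining requirement $\frakI_{F, G', G} = \emptyset$ from $G' \in \Gamma_G$.

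I do not expect any real obstacle here: the statement is essentially the contrapositive of \Cref{stmt:defOfCalHIsCorrect} wrapped with the indexing map $\iota$, so the only care required is to verify that the two-layer decomposition of $\iota(G',G)$ is compatible with the two-layer structure of the edges $E_L$. That bookkeeping is transparent from the definitions and does not involve any new ideas.
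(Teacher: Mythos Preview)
Your proposal is correct and follows essentially the same approach as the paper's proof: assume some $E_L \subset \iota(G',G)$, use \eqref{eq:projectionOfIotaWarm} to translate this into $E_L^{(0)} \subset \mathrm{N}_G(v)^\complement$ and $E_L^{(1)} \subset \mathrm{N}_{G'}(v)$, and then invoke \Cref{stmt:defOfCalHIsCorrect} to contradict $\frakI_{F,G',G} = \emptyset$. If anything, your write-up is slightly more careful in explicitly verifying the hypotheses of \Cref{stmt:defOfCalHIsCorrect} and in writing $L \cup \{v\}$ rather than $L$.
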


\begin{proof}
  Let $E = E_L \in \cal{H}$, and suppose by contradiction that $E \subset I = \iota(G',G)$.
  It then follows from \eqref{eq:projectionOfIotaWarm} that $E^{(1)} \subset \mathrm{N}_{G'}(v)$ and $E^{(0)} \subset \mathrm{N}_G(v)^\complement$, so \Cref{stmt:defOfCalHIsCorrect} implies that $L \cup \{v\} \in \frakI_{F,G',G}$.
  Therefore, the hypergraph $\frakI_{F,G',G}$ is not empty, and we could not have $G' \in \Gamma_G$ by definition, \eqref{eq:eventOfGPrimeWarm}.
  This contradiction means that $E \not \subset I$, which, as $E$ was an arbitrary edge of $\cal{H}$, means that $I = \iota(G',G)$ is an independent in $\cal{H}$.
\end{proof}

\subsection{Containers}

Having connected independent sets in $\cal{H}$ to pairs of graphs $(G', G)$ such that $G' \in \Gamma_G$, we can proceed to the next step of the proof, which is applying a container \namecref{stmt:containersCoversButJanson} to $\cal{H}$.
In this section, we use a simpler version of the container \namecref{stmt:containersCoversButJanson} than the one required in \Cref{sec:extensionOfJansonCollection}, since this avoids technical complications that arise in the proof of \Cref{stmt:extensionOfJansonCollection}.

Even though a previous result of \citet[Theorem 2.1]{BS20} would most likely suffice to prove \Cref{stmt:warmUpExtension}, we prefer to adapt (in Appendix~\ref{app:containersCoversButJanson}) the mildly stronger \namecref{stmt:containersCoversButJanson} of \citet{CS24+}.
This result provides a small family of containers for the independent sets of $\cal{H}$, with each container inducing a subhypergraph that is not $(p, R)$-Janson.

\begin{restatable}[{\citet[modified Theorem A]{CS24+}}]{thm}{containersCoversButJanson}\label{stmt:containersCoversButJanson}
  Let $\cal{G}$ be an $s$-uniform hypergraph with $n$ vertices.
  For all $0 < \zeta \le 1$ and $0 < p \le \zeta/(8 s^2)$, there is a family $\cal{S} \subset 2^{V(\cal{G})}$ and functions
  \begin{equation}
    \phi : \cal{I}(\cal{G}) \to \cal{S} \quad \text{ and } \quad \psi : \cal{S} \to 2^{V(\cal{G})}
  \end{equation}
  such that:
  \begin{enumerate}[\normalfont (i)]
    \item For each $I \in \cal{I}(\cal{G})$, we have $\phi(I) \subset I \subset \psi(\phi(I))$. \label{item:containersCoversHaveFingerprints}
    \item Each $S \in \cal{S}$ has at most $8 s^2 p n / \zeta$ elements. \label{item:containersCoversHaveSmallFingerprints}
    \item \label{item:containersAreNonJanson} For every $S \in \cal{S}$, letting $X = \psi(S)$, $\cal{G}[X]$ is not $(p, \zeta p |X|)$-Janson.
  \end{enumerate}
\end{restatable}

We will apply \Cref{stmt:containersCoversButJanson} with $\cal{G} = \cal{H}$ and define $\cal{X} = \{\psi(S) : S \in \cal{S}\}$ to be our container family.
Combining \cref{item:containersCoversHaveFingerprints} in the statement of that \namecref{stmt:containersCoversButJanson} with \Cref{stmt:iotaIndepWarm} guarantees that whenever $G'$ and $G$ satisfy $G[U]=\tilde{G}$ and $G' \in \Gamma_G$, there is some container $X \in \cal{X}$ for which $\iota(G', G) \subset X$.
Our goal is now to show that $|X^{(0)}| \le (1 - \gamma)|U|$ for some constant $\gamma > 0$ using \cref{item:containersAreNonJanson} in \Cref{stmt:containersCoversButJanson} and a supersaturation result.
This will be sufficient to obtain the probability bound in \Cref{stmt:warmUpExtension}, because $\mathrm{N}_G(v)^\complement \subset X_0$ and this non-neighbourhood is a binomial random set, due to $G \sim \Gnp(m + 1, 1/2)$.

To prove the required supersaturation result, we will use our assumption that $\frakI_{F^-, \tilde{G}', \tilde{G}}$ is $(p, 2 p m)$-Janson for every $W \subset U$ with $|W| \ge m / 16$.
However, to connect this assumption with the fact that $\cal{H}[X]$ is not $(p, p |X|)$-Janson for a container $X \in \cal{X}$, we must relate the Janson properties of the hypergraphs $\cal{H}$ and $\frakI_{F^-, \tilde{G}', \tilde{G}}$.

Towards that goal, let $\pi : U \times \{0, 1\} \to U$ be the projection of each pair $(u, i)$ onto its first coordinate $u$.
Observe that if $E = E_L \in \cal{H}$ for some $L \in \frakI_{F^-, \tilde{G}', \tilde{G}}$, then $\pi(E) = L$.
Moreover, for every $L \in \frakI_{F^-, \tilde{G}', \tilde{G}}$, there is $E \in \cal{H}$ such that $\pi(E) = L$.
We conclude that $\pi(\cal{H}) = \frakI_{F^{-}, \tilde{G}',\tilde{G}}$, where we extend the application of $\pi$ from a single vertex to hypergraphs $\cal{G}$ by
\begin{equation*}
  V\big(\pi(\cal{G})\big) = \pi(V(\cal{G})) \qquad \text{and} \qquad E\big(\pi(\cal{G})\big) = \big\{\pi(E) : E \in \cal{G}\big\}.
\end{equation*}

\begin{obs}\label{stmt:piCalHIsHypergraphOfInducedCopies}
  If $\pi : U \times \{0, 1\} \to U$ is the projection onto the first coordinate, then $$\pi(\cal{H}) = \frakI_{F^{-}, \tilde{G}',\tilde{G}}.$$
\end{obs}

The next \namecref{stmt:wholeCopiesAreJanson} will allow us to prove that if $\frakI_{F^{-},\tilde{G}',\tilde{G}}$ is $(p, R)$-Janson, then so is $\cal{H}$.
We state it in greater generality than we need, but it is a trivial exercise to observe that $\pi$ satisfies this more general condition.
\Cref{stmt:wholeCopiesAreJanson} is in fact a corollary of \Cref{stmt:pullbackPreservesLambdaProperties}, and we will therefore defer its proof to when that other result is proven (see \Cref{sec:containersForNonJanson} and Appendix~\ref{app:propertiesOfMeasures}).

\begin{lem}\label{stmt:wholeCopiesAreJanson}
    Let $R \ge 0$ and $p > 0$.
    Further let $\cal{G}$ be a hypergraph and $\pi : V(\cal{G}) \to U$ be a function satisfying
    \begin{equation}\label{eq:piAssumptionInWarmUp}
        |\pi(E)| = |E| \qquad \text{for every } E \in \cal{G}.
    \end{equation}
    If $\cal{G}$ is not $(p,R)$-Janson, then $\pi(\cal{G})$ is also not $(p,R)$-Janson.
\end{lem}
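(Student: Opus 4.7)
The plan is to construct a witness measure for $\cal{G}$ being $(p,R)$-Janson by pulling back the witness $\nu' : \pi(\cal{G}) \to \RR_{\geq 0}$ guaranteed by the hypothesis. First I would fix an arbitrary section $\sigma : \pi(\cal{G}) \to \cal{G}$ of the map $E \mapsto \pi(E)$, so that $\pi(\sigma(L)) = L$ for every $L \in \pi(\cal{G})$. Then I would define $\nu(\sigma(L)) = \nu'(L)$ for each $L \in \pi(\cal{G})$ and $\nu(E) = 0$ for every other $E \in \cal{G}$. It is immediate from this definition that $e(\nu) = e(\nu')$, so the crux of the proof is to show $\Lambda_p(\nu) \le \Lambda_p(\nu')$, from which $\Lambda_p(\nu) < e(\nu')^2/R = e(\nu)^2/R$ follows at once.

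The key consequence of the assumption $|\pi(E)| = |E|$ is that $\pi|_{\sigma(L)}$ is a bijection onto $L$ for every $L \in \pi(\cal{G})$. In particular, any $L' \subset V(\cal{G})$ with $d_\nu(L') > 0$ must itself satisfy that $\pi|_{L'}$ is injective, since the only edges of $\cal{G}$ carrying positive $\nu$-mass are the sets $\sigma(L)$. For such $L'$, letting $L^* = \pi(L')$, a hyperedge $\sigma(L)$ contains $L'$ if and only if $L \supset L^*$ and the unique preimage $\pi|_{\sigma(L)}^{-1}(L^*)$ equals $L'$. This condition partitions the set $\{L \in \pi(\cal{G}) : L \supset L^*\}$ according to which $L'$ the preimage happens to be, and therefore the values $d_\nu(L')$, as $L'$ ranges over the $\pi$-injective subsets of $V(\cal{G})$ projecting to $L^*$, partition the total mass $d_{\nu'}(L^*) = \sum_{L \supset L^*} \nu'(L)$.

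Applying the elementary inequality $\sum_j y_j^2 \le (\sum_j y_j)^2$ for non-negative $y_j$, one obtains
\begin{equation*}
    \sum_{L'\,:\,\pi(L') = L^*,\ \pi|_{L'}\text{ inj.}} d_\nu(L')^2 \,\le\, d_{\nu'}(L^*)^2
\end{equation*}
for each $L^* \subset U$ with $|L^*| \ge 2$. Multiplying by $p^{-|L^*|} = p^{-|L'|}$ (which is valid because $\pi|_{L'}$ injective forces $|L'| = |L^*|$) and summing over $L^*$ yields $\Lambda_p(\nu) \le \Lambda_p(\nu')$, completing the argument.

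There are no substantive obstacles to this strategy; the only delicate point is verifying that the partitioning of $\{L \supset L^*\}$ is well-defined, which amounts to noting that $|\sigma(L)| = |L|$ forces $\pi|_{\sigma(L)}^{-1}$ to be single-valued on every subset of $L$. I expect the same pullback-by-section construction to be the backbone of the more general \Cref{stmt:pullbackPreservesLambdaProperties} advertised in the paper, with the target hypergraph $\pi(\cal{G})$ replaced by whatever abstract image is appropriate there.
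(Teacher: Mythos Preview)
Your argument is correct, and the core inequality $\Lambda_p(\nu)\le\Lambda_p(\nu')$ is proved exactly as in the paper: group the subsets $L'\subset V(\cal{G})$ by their image $L^*=\pi(L')$, observe that the corresponding degrees partition $d_{\nu'}(L^*)$, and apply $\sum_j y_j^2\le(\sum_j y_j)^2$.

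The only genuine difference is in how the witness measure on $\cal{G}$ is defined. You choose a section $\sigma$ and place all of $\nu'(L)$ on the single edge $\sigma(L)$; the paper instead defines a canonical pullback $\nu'\normcomp\pi$ that spreads $\nu'(L)$ uniformly over \emph{all} $E\in\cal{G}$ with $\pi(E)=L$. For the purpose of this lemma your choice is marginally cleaner (no need to count preimages), and the downstream inequality goes through identically. The paper's uniform pullback is introduced because that specific measure is reused later in the proof of \Cref{stmt:containersForNonJanson}, so your expectation that ``the same pullback-by-section construction'' underlies \Cref{stmt:pullbackPreservesLambdaProperties} is slightly off: the paper genuinely needs the equal-distribution version there, not an arbitrary section. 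One small omission: the case $R=0$ is trivially true by \Cref{def:janson} and should be dispatched separately, since your argument tacitly assumes $R>0$ when invoking a witness with $\Lambda_p(\nu')<e(\nu')^2/R$.
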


We now record the trivial observation that $\pi$ satisfies \eqref{eq:piAssumptionInWarmUp} when $\cal{G} = \cal{H}$ for future reference.

\begin{obs}\label{stmt:piSatisfiesAssumption}
   $|\pi(E)| = |E|$ for every $E \in \cal{H}$.
\end{obs}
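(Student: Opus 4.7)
The approach is to unpack the definition of $E_L$ from \eqref{eq:defOfCalHwarm} and verify both cardinalities directly. Fix an arbitrary edge $E \in \cal{H}$; by the construction of $\cal{H}$, there exists some $L \in \frakI_{F^{-},\tilde{G}',\tilde{G}}$ such that
\begin{equation*}
  E = E_L = \Big\{\big(u, \mathds{1}[\phi_L(u) \in \mathrm{N}_F(w)]\big) : u \in L\Big\}.
\end{equation*}

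First I would argue that $|E| = |L|$. Consider the map $u \mapsto \big(u, \mathds{1}[\phi_L(u) \in \mathrm{N}_F(w)]\big)$ from $L$ to $E_L$. It is surjective by the very definition of $E_L$, and it is injective because distinct $u, u' \in L$ are sent to pairs with distinct first coordinates. Hence this map is a bijection, and so $|E| = |L|$. Second, since $\pi : U \times \{0,1\} \to U$ is projection onto the first coordinate, applying $\pi$ to $E_L$ simply strips off the second coordinate of each pair, giving $\pi(E) = \{u : u \in L\} = L$, and therefore $|\pi(E)| = |L|$.

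Combining these two identities yields $|\pi(E)| = |L| = |E|$, which is the claim. The observation is essentially immediate once the definitions are written out, so no real obstacle arises; the only point worth being careful about is that the first coordinates of the pairs forming $E_L$ are pairwise distinct, which is what ensures that neither taking the pairs nor projecting them back introduces any collapse.
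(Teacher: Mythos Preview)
Your proof is correct and matches the paper's treatment: the paper records this as a trivial observation without proof, having already noted just above it that $\pi(E_L) = L$, and your argument simply fills in the details by checking that the map $u \mapsto (u,\mathds{1}[\phi_L(u)\in \mathrm{N}_F(w)])$ is a bijection from $L$ onto $E_L$.
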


By \cref{item:containersAreNonJanson} in \Cref{stmt:containersCoversButJanson} and \Cref{stmt:wholeCopiesAreJanson}, it follows that $\pi(\cal{H}[X])$ is not $(p, p |X|)$-Janson for each container $X$.
However, our assumption in \Cref{stmt:warmUpExtension} is that $\pi(\cal{H})[W]$ is $(p, 2 p m)$-Janson for all large sets $W \subset U$, and this does not immediately imply anything about the size of $X$.
For example, we might try to apply this assumption with $W = \pi(X)$, and hope that
\begin{equation}\label{eq:idealContainment}
  \pi(\cal{H})[W] \subset \pi(\cal{H}[X]),
\end{equation}
but unfortunately \eqref{eq:idealContainment} does not always hold: if $X = U \times \{0\}$, then we have $\pi(\cal{H})[\pi(X)] = \pi(\cal{H})$ and $\cal{H}[X] = \emptyset$.
In order to deal with this issue, we will instead apply our assumption to the set $W = X^{(0)}\cap X^{(1)}$.
By the following \namecref{stmt:containmentHypergraphs}, this choice \emph{does} satisfy \eqref{eq:idealContainment}.

\begin{lem}\label{stmt:containmentHypergraphs}
  For all $Y \subset U \times \{0, 1\}$, we have
  \begin{equation}
    \pi(\cal{H})[W] \subset \pi(\cal{H}[Y]),
  \end{equation}
  where $W = Y^{(0)} \cap Y^{(1)}$.
\end{lem}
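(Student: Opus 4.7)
The plan is to unpack the definitions and chase an element. Fix an arbitrary edge $L \in \pi(\cal{H})[W]$; by definition this means $L \subset W$ and $L = \pi(E)$ for some $E \in \cal{H}$. Our goal is to exhibit $E' \in \cal{H}$ with $\pi(E') = L$ and $E' \subset Y$, which would place $L$ inside $\pi(\cal{H}[Y])$ and thus establish the desired containment.

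The natural candidate is to take $E' = E$ itself. Recall that every edge of $\cal{H}$ has the form $E_L = \{(u, \mathds{1}[\phi_L(u) \in \mathrm{N}_F(w)]) : u \in L\}$, so for each $u \in L$ there is a specific coordinate $i_u \in \{0,1\}$ such that $(u, i_u) \in E$. The key observation is the following: since $u \in L \subset W = Y^{(0)} \cap Y^{(1)}$, by the definitions of $Y^{(0)}$ and $Y^{(1)}$ we have both $(u, 0) \in Y$ and $(u, 1) \in Y$. In particular, $(u, i_u) \in Y$ regardless of the value of $i_u$. Doing this for every $u \in L$ shows that $E \subset Y$, hence $E \in \cal{H}[Y]$, and since $\pi(E) = L$ we conclude $L \in \pi(\cal{H}[Y])$.

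Since $L$ was arbitrary, this proves $\pi(\cal{H})[W] \subset \pi(\cal{H}[Y])$. There is no real obstacle here: the statement is essentially a definitional check, and the only subtlety is recognising that requiring $u$ to lie in both $Y^{(0)}$ and $Y^{(1)}$ is precisely what guarantees that \emph{whichever} lift $(u, i_u)$ the edge $E$ happens to use, it lies in $Y$. This is exactly why the lemma restricts to the intersection $W = Y^{(0)} \cap Y^{(1)}$ rather than to $\pi(Y) = Y^{(0)} \cup Y^{(1)}$.
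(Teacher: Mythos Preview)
Your proof is correct and follows essentially the same approach as the paper: take $L \in \pi(\cal{H})[W]$, pick $E \in \cal{H}$ with $\pi(E) = L$, and show $E \subset Y$ using that each $u \in L$ lies in both $Y^{(0)}$ and $Y^{(1)}$. The only cosmetic difference is that the paper phrases the last step via $E \subset \pi^{-1}(L) \subset Y$ rather than checking each $(u,i_u)$ individually, but the content is identical.
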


\begin{proof}
  Take $L \in \pi(\cal{H})[W]$ with the goal of proving that $L \in \pi(\cal{H}[Y])$.
  By definition of $W$,
  \begin{equation}\label{eq:containmentOfL}
    L \subset Y^{(0)} \cap Y^{(1)}.
  \end{equation}
  Moreover, since $L \in \pi(\cal{H})$, there is $E \in \cal{H}$ such that $\pi(E) = L$.
  Therefore,
  \begin{equation*}
      E \subset \pi^{-1}(L) = \big\{(u,a):u\in L,~a\in\{0,1\}\big\},
  \end{equation*}
  but we also have, by \eqref{eq:containmentOfL}, that
  \begin{equation*}
    \pi^{-1}(L) \subset \big(Y^{(0)} \times \{0\}\big) \cup \big(Y^{(1)} \times \{1\}\big) = Y,
  \end{equation*}
  and so $E \subset Y$.
  We conclude that $E \in \cal{H}[Y]$ and also $L = \pi(E) \in \pi(\cal{H}[Y])$, as required.
\end{proof}

Now, taking $W = X^{(0)}\cap X^{(1)}$ for some container $X \in \cal{X}$ and applying \Cref{stmt:containmentHypergraphs}, we have that $\pi(\cal{H})[W] \subset \pi(\cal{H}[X])$.
Using \cref{item:containersAreNonJanson} of \Cref{stmt:containersCoversButJanson}, \Cref{stmt:piCalHIsHypergraphOfInducedCopies}, \Cref{stmt:wholeCopiesAreJanson} and \Cref{stmt:JansonIsDownset}, we will conclude that $\frakI_{F^-, \tilde{G'}, \tilde{G}}[W]$ is not $(p, 2 p m)$-Janson (see \Cref{stmt:WisnotJanson}).
Our assumption in \Cref{stmt:warmUpExtension} then implies that $|W| < m / 16$, so every $X \in \cal{X}$ satisfies
\begin{equation*}
  \big|X^{(0)} \cap X^{(1)}\big| < \frac{m}{16},
\end{equation*}
by definition of $W$, and, since $X^{(0)} \cup X^{(1)} \subset U$, it follows that
\begin{equation}\label{eq:smallIntersection}
  \big|X^{(0)}\big| + \big|X^{(1)}\big| = \big|X^{(0)} \cup X^{(1)}\big| + \big|X^{(0)} \cap X^{(1)}\big| < \Big(1 + \frac{1}{16}\Big) m.
\end{equation}
To establish that $X^{(0)}$ is not large when $I = \iota(G', G) \subset X$, recall that it follows from $G' \in \Gamma_G$ that
\begin{equation}\label{eq:X1IsLarge}
  |X^{(1)}| \ge d_{G'}(v) \ge \frac{m}{8}
\end{equation}
where the first inequality holds because $\mathrm{N}_{G'}(v) \subset X^{(1)}$ by \eqref{eq:projectionOfIotaWarm}.
Combining \eqref{eq:smallIntersection} and \eqref{eq:X1IsLarge} yields
\begin{equation*}
  |X^{(0)}| \le \Big(1 - \frac{1}{16}\Big)m,
\end{equation*}
which, together with
\begin{equation}\label{eq:eventWithTinyProbability}
  \mathrm{N}_G(v)^\complement \subset X^{(0)}
\end{equation}
will allow us to use the randomness in the distribution of $G \sim \Gnp(m + 1, 1/2)$ to bound the probability of \eqref{eq:eventWithTinyProbability} by an exponentially small term.
The resulting upper bound is sufficiently strong to overcome the union bound over all $X \in \cal{X}$, so we can now formalise the proof of \Cref{stmt:warmUpExtension}.

\begin{proof}[Proof of \Cref{stmt:warmUpExtension}]
  Applying \Cref{stmt:containersCoversButJanson} to $\cal{H}$, defined in \eqref{eq:defOfCalHwarm}, with $\zeta = 1$ we obtain a family $\cal{S}$ and functions $\phi$, $\psi$ satisfying \cref{item:containersCoversHaveFingerprints,item:containersCoversHaveSmallFingerprints,item:containersAreNonJanson} in \Cref{stmt:containersCoversButJanson}, so let
  \begin{equation*}
    \cal{X} = \{\psi(S) : S \in \cal{S}\}.
  \end{equation*}

  With the goal of taking a union bound over $X \in \cal{X}$, fix $G$ on $m + 1$ vertices satisfying $G[U] = \tilde{G}$, and observe that $\iota(G', G) \in \cal{I}(\cal{H})$ for all $G' \in \Gamma_G$ by \Cref{stmt:iotaIndepWarm}.
  Now, it follows from \cref{item:containersCoversHaveFingerprints} of \Cref{stmt:containersCoversButJanson} that there exists $X \in \cal{X}$ such that $\iota(G', G) \subset X$, and therefore
  \begin{equation}\label{eq:neighbourhoodsAndNonNeighborhoodsOfVInContainerWarm}
    \mathrm{N}_G(v)^\complement = I^{(0)} \subset X^{(0)} \qquad \text{ and } \qquad \mathrm{N}_{G'}(v) = I^{(1)} \subset X^{(1)},
  \end{equation}
  where $I = \iota(G', G)$.
  Using \eqref{eq:neighbourhoodsAndNonNeighborhoodsOfVInContainerWarm}, we further refine $\cal{X}$ to
  \begin{equation*}
    \cal{X}' = \big\{X \in \cal{X} : |X^{(1)}| \ge m / 8 \big\}
  \end{equation*}
  and preserve the property that for every $G' \in \Gamma_G$, there is $X \in \cal{X}'$ such that $\iota(G', G) \subset X$, because
  \begin{equation}\label{eq:X1IsLargeWarm}
    |X^{(1)}| \ge d_{G'}(v) \ge \frac{m}{8}
  \end{equation}
  follows from $G' \in \Gamma_G$, defined in \eqref{eq:eventOfGPrimeWarm}.
  That is, if $X \in \cal{X}$ contains some $\iota(G', G)$, then it is also in $\cal{X}'$ by \eqref{eq:X1IsLargeWarm}, and crucially, for every $G' \in \Gamma_G$, there exists $X \in \cal{X}'$ such that
  \begin{equation}\label{eq:nonNeighbourhoodInContainer}
    \mathrm{N}_G(v)^\complement \subset X^{(0)}.
  \end{equation}

  Taking a union bound over $\cal{X}'$ then yields, for the probability in \eqref{eq:probOfGPrimeWarm},
  \begin{equation}\label{eq:upperBoundViaContainersWarm}
    \PP\big(\exists G' \subset G : G' \in \Gamma_G \mid G[U] = \tilde{G}\big) \le \sum_{X \in \cal{X}'} \PP\big(\mathrm{N}_G(v)^\complement \subset X^{(0)}\big),
  \end{equation}
  where we used \eqref{eq:nonNeighbourhoodInContainer} to bound the probability of the event $\{\iota(G',G)\subset X\}$ by that of the event $\{\mathrm{N}_G(v)^\complement \subset X^{(0)}\big\}$.
  We shift our focus to obtaining an upper bound for the probability of this event that holds for every $X \in \cal{X}'$, so we fix $X \in \cal{X}'$ and set $W = X^{(0)} \cap X^{(1)}$.
  The following claim, together with the assumption that $\frakI_{F^{-}, \tilde{G}', \tilde{G}}[W']$ is $(p, 2 p m)$-Janson for every $W' \subset U$ with $|W'| \ge m / 16$, will allow us to bound the size of $X^{(0)}$ from above.

  \begin{claim}\label{stmt:WisnotJanson}
    $\frakI_{F^{-}, \tilde{G}', \tilde{G}}[W]$ is not $(p, 2 p m)$-Janson.
  \end{claim}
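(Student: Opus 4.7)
\textbf{Proof proposal for \Cref{stmt:WisnotJanson}.} The plan is to argue by contradiction using the chain of lemmas the authors have just laboriously set up; this is really where all those pieces ($\pi$, the projection $\pi(\cal{H}) = \frakI_{F^-, \tilde{G}', \tilde{G}}$, \Cref{stmt:containmentHypergraphs}, \Cref{stmt:wholeCopiesAreJanson}, \Cref{stmt:JansonIsDownset}, \Cref{stmt:jansonParametersMonotone}) snap together. So I suppose for contradiction that $\frakI_{F^{-}, \tilde{G}', \tilde{G}}[W]$ is $(p, 2^{-5} p m)$-Janson and aim to contradict \cref{item:containersAreNonJanson} of \Cref{stmt:containersCoversButJanson}, which guarantees that $\cal{H}[X]$ is not $(p, 2^{-8} p |X|)$-Janson.

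The first step is to align the Janson parameters. Since $V(\cal{H}) = U \times \{0,1\}$, any container $X$ satisfies $|X| \le 2m$, so
\begin{equation*}
  2^{-8} p |X| \le 2^{-7} p m \le 2^{-5} p m.
\end{equation*}
Hence, by \Cref{stmt:jansonParametersMonotone}, the assumed $(p, 2^{-5} p m)$-Janson property of $\frakI_{F^-, \tilde{G}', \tilde{G}}[W]$ upgrades to a $(p, 2^{-8} p |X|)$-Janson property. Using $\pi(\cal{H}) = \frakI_{F^-, \tilde{G}', \tilde{G}}$, we restate this as: $\pi(\cal{H})[W]$ is $(p, 2^{-8} p |X|)$-Janson.

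Now I pull this property back to $\cal{H}[X]$ in two stages. By the choice $W = X^{(0)} \cap X^{(1)}$ and \Cref{stmt:containmentHypergraphs}, we have $\pi(\cal{H})[W] \subset \pi(\cal{H}[X])$, so \Cref{stmt:JansonIsDownset} implies that $\pi(\cal{H}[X])$ is $(p, 2^{-8} p |X|)$-Janson. Then \Cref{stmt:wholeCopiesAreJanson}, combined with \Cref{stmt:piSatisfiesAssumption} (which supplies the hypothesis $|\pi(E)| = |E|$ for every edge of $\cal{H}[X]$), upgrades this further to the statement that $\cal{H}[X]$ itself is $(p, 2^{-8} p |X|)$-Janson. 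This directly contradicts \cref{item:containersAreNonJanson} applied to $X = \psi(S)$, completing the proof.

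There is no real obstacle here beyond bookkeeping: the genuine work has been done in assembling \Cref{stmt:containersCoversButJanson}, the projection lemma \Cref{stmt:wholeCopiesAreJanson}, and the containment lemma \Cref{stmt:containmentHypergraphs}. The only place a numerical check is needed is the inequality $2^{-8} p |X| \le 2^{-5} p m$, for which the bound $|X| \le 2m$ is more than sufficient.
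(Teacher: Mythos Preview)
Your proof is correct and follows essentially the same route as the paper's: both argue by contradiction, push the Janson property from $\frakI_{F^-,\tilde{G}',\tilde{G}}[W]=\pi(\cal{H})[W]$ through \Cref{stmt:containmentHypergraphs} and \Cref{stmt:JansonIsDownset} to $\pi(\cal{H}[X])$, then through \Cref{stmt:wholeCopiesAreJanson} (via \Cref{stmt:piSatisfiesAssumption}) to $\cal{H}[X]$, and finally use $|X|\le 2m$ and \Cref{stmt:jansonParametersMonotone} to contradict \cref{item:containersAreNonJanson}. The only cosmetic difference is that you apply the parameter monotonicity at the start rather than at the end.
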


  \begin{claimproof}
    By \Cref{stmt:piCalHIsHypergraphOfInducedCopies} and \Cref{stmt:containmentHypergraphs}, we have $$\frakI_{F^{-}, \tilde{G}', \tilde{G}}[W] = \pi(\cal{H})[W] \subset \pi(\cal{H}[X]).$$
    Therefore, if we establish that the hypergraph $\pi(\cal{H}[X])$ is not $(p, 2 p m)$-Janson, then we are done, because being $(p, 2 p m)$-Janson is increasing with respect to inclusion, by \Cref{stmt:JansonIsDownset}.

    To show this, observe first that $\cal{H}[X]$ is not $(p, p |X|)$-Janson by \cref{item:containersAreNonJanson} in \Cref{stmt:containersCoversButJanson} and our choice of $\zeta = 1$.
    Since $|X| \le 2m = v(\cal{H})$, and recalling \Cref{stmt:jansonParametersMonotone}, it follows that the hypergraph $\cal{H}[X]$ is also not $(p, 2 p m)$-Janson.
    Thus, by \Cref{stmt:wholeCopiesAreJanson}, we deduce that $\pi(\cal{H}[X])$ is not $(p, 2 p m)$-Janson, and, by our previous reasoning, that neither is $\frakI_{F^{-}, \tilde{G}', \tilde{G}}[W]$, as claimed.
  \end{claimproof}

  Since we know, by assumption, that $\frakI_{F^{-}, \tilde{G}', \tilde{G}}[W']$ is $(p, 2 p m)$-Janson for every $W' \subset U$ with $|W'| \ge m / 16$, \Cref{stmt:WisnotJanson} implies that $|W| < m / 16$ and therefore
  \[|X^{(0)}| + |X^{(1)}| = \big|X^{(0)} \cup X^{(1)}\big| + \big|X^{(0)} \cap X^{(1)}\big| < m + \frac{m}{16}.\]
  As $X \in \cal{X}'$, we have $|X^{(1)}| \ge m / 8$, and therefore
  \[|X^{(0)}| < \Big(1+ \frac{1}{16}\Big)m - |X^{(1)}| \le m + \frac{m}{16} - \frac{m}{8} = \Big(1 - \frac{1}{16}\Big) m,\]
  that is,
  \begin{equation}\label{eq:containerMissesU}
    |U \setminus X^{(0)}| \ge \frac{m}{16}.
  \end{equation}
  We conclude from \eqref{eq:containerMissesU} that
  \begin{equation}\label{eq:boundInProbabilityOfEachX}
    \PP\Big(\mathrm{N}_G(v)^\complement \cap \big(U\setminus X^{(0)}\big) = \emptyset\Big) = 2^{-|U \setminus X^{(0)}|} \le 2^{-m / 16},
  \end{equation}
  where we used that $G \sim \Gnp(m + 1, 1/2)$ and $v \not \in U$.
  Replacing \eqref{eq:boundInProbabilityOfEachX} in \eqref{eq:upperBoundViaContainersWarm}, we obtain
  \begin{equation}\label{eq:afterTheUnionBound}
    \sum_{X \in \cal{X}'} \PP\big(\mathrm{N}_G(v)^\complement \subset X^{(0)}\big) \le |\cal{X}'| \,  2^{-m / 16}.
  \end{equation}

  To bound the size of $\cal{X}'$ by $|\cal{X}|$, we enumerate the latter using that each container $X$ is a function of $S \in \cal{S}$.
  As each $S \in \cal{S}$ satisfies
  \[|S| \le 16 p s^2 m \le 2^{-16} m\]
  by \cref{item:containersCoversHaveSmallFingerprints} in \Cref{stmt:containersCoversButJanson} and our choice of $\zeta = 1$, where the last inequality holds by our choice of $p = 2^{-20} k^{-2}$ and the assumption that $v(F) = s \le k$, we have
  \begin{equation}\label{eq:sizeOfCalXPrime}
    |\cal{X}'| \le |\cal{X}| \le \sum_{t = 0}^{2^{-16} m} \binom{2 m}{t} \le 2^{m / 32},
  \end{equation}
  where, recall, $\cal{H}$ has $2 m$ vertices.
  Combining \eqref{eq:sizeOfCalXPrime} and \eqref{eq:afterTheUnionBound}, we complete the proof:
  \begin{equation*}
    \PP\big(\exists G' \subset G : G' \in \Gamma_G \mid G[U] = \tilde{G}\big) \le 2^{m / 32} \, 2^{-m / 16} \le 2^{-m / 32}
  \end{equation*}
  as required.
\end{proof}

\section{A general container theorem for non-Janson sets}\label{sec:simplerContainerForNonJanson}

In this \namecref{sec:simplerContainerForNonJanson}, we prove a preliminary version of our main technical result.

\begin{thm}\label{stmt:containersForNonJansonGeneral}
  Let $s, n \in \NN$ with $s \le n$, and let $q, p, R, \eta > 0$ satisfy
  \begin{equation*}
    q \le \dfrac{1}{16}, \qquad p \le \frac{q}{2^{10} s^2}, \qquad R \ge 2^{-6} p n \qquad \text{ and } \qquad \eta = 2^{-2s - 2}.
  \end{equation*}
  For every $s$-uniform hypergraph $\cal{H}$ with $n$ vertices, there exists a family $\cal{X} \subset 2^{V(\cal{H})}$ with
  \begin{equation}\label{eq:sizeOfContainerFamilyForNonJansonGeneral}
    |\cal{X}| \le \bigg(\frac{\,2\,}{q}\bigg)^{8 q n}
  \end{equation}
  such that the following hold.

  \begin{enumerate}[\normalfont (i)]
      \smallskip
    \item If $L \subset V(\cal{H})$ and $\cal{H}[L]$ is not $(p/q,\eta R)$-Janson, then $L \subset X$ for some $X \in \cal{X}$. \label{item:containersForNonJansonGeneral:containerInclusion}
      \smallskip
    \item For each $X \in \cal{X}$, the hypergraph $\cal{H}[X]$ is not $(p, R)$-Janson. \label{item:containersFornonJansonGeneral:containersAreNotJanson}
  \end{enumerate}
\end{thm}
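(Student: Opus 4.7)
The plan is to follow the blueprint of Section~\ref{sec:ideasInTheProof}, combining a decomposition-style container result of~\cite{containers2024}, which I will call \Cref{stmt:containersHardcovers}, with the efficient container lemma, \Cref{stmt:containersCoversButJanson}, so as to separate the \emph{global} obstruction of being non-$(p/q,\eta R)$-Janson from the \emph{local} structure of $\cal{H}$. The first step is to encode non-Janson as an independence condition. Because being $(p/q,\eta R)$-Janson is upward-closed in the vertex set (a witness measure on $\cal{H}[L]$ is also a witness on $\cal{H}[L']$ for any $L \subset L'$), the sets $L$ for which $\cal{H}[L]$ is not $(p/q,\eta R)$-Janson are exactly the independent sets of an auxiliary hypergraph $\cal{K}$ on $V(\cal{H})$ whose edges are (minimal) supports of Janson-witnesses. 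Note that these edges may have size up to $n$, so no classical container lemma applies to $\cal{K}$ directly.

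I would next apply \Cref{stmt:containersHardcovers} to $\cal{K}$ at density $q$, decomposing every non-$(p/q,\eta R)$-Janson set into an independent set of a member of a small family $\{\cal{G}_\alpha\}$ of container hypergraphs. The result comes with a dichotomy for each $\cal{G}_\alpha$: either it has a ``large local part'', a sub-hypergraph $\cal{H}_\alpha$ of $\cal{H}$ carrying the local obstructions, or a uniformly random $qn$-sized independent set $I$ of $\cal{G}_\alpha$ satisfies $\PP(L \subset I) \approx q^{|L|} = \PP(L \subset V_q)$ for every $L$ that is not an edge of $\cal{G}_\alpha$.

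The key step—and in my view the main obstacle—is to rule out the second case of the dichotomy. Suppose it held for some $\cal{G}_\alpha$. Then the random independent set $I$ of $\cal{G}_\alpha$ is also independent in $\cal{K}$, so $\cal{H}[I]$ is not $(p/q,\eta R)$-Janson. To derive a contradiction, take $\nu$ to be the restriction of the all-ones measure on $\cal{H}$ to edges contained in $I$, and use the approximate independence to compute $\EE\, e(\nu) \approx q^s |\cal{H}|$ and $\EE\, \Lambda_{p/q}(\nu) \approx q^{2s}\,\Lambda_p(\mathbf{1}_\cal{H})$: the rescaling of the Janson parameter from $p$ to $p/q$ exactly cancels the $q^{|L|}$ coming from approximate independence, so the ratio $\EE[e(\nu)]^2/\EE[\Lambda_{p/q}(\nu)]$ reduces to the corresponding ratio for $\cal{H}$ itself. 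A second-moment (or concentration) argument then converts this into a deterministic choice of $I$ witnessing $(p/q,\eta R)$-Janson of $\cal{H}[I]$, the required contradiction. The hardest part of the argument will be tuning the constants: $\eta = 2^{-2s-2}$ must absorb $s$-uniform overheads and the slack in both approximations, while the hypotheses $p \le q/(2^{10}s^2)$ and $R \ge 2^{-6}pn$ control the remaining losses.

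Once every $\cal{G}_\alpha$ is known to have a large local part $\cal{H}_\alpha$, I would apply \Cref{stmt:containersCoversButJanson} to $\cal{H}_\alpha$ at parameter $p$, obtaining a small family of containers $Y$ with $\cal{H}_\alpha[Y]$ not $(p,2^{-8}p|Y|)$-Janson. Since $\cal{H}_\alpha$ captures all the Janson-relevant structure seen by $\cal{G}_\alpha$, this translates into $\cal{H}[Y]$ not $(p,2^{-8}p|Y|)$-Janson; combined with $|Y| \le n$, $R \ge 2^{-6}pn > 2^{-8}p|Y|$ and \Cref{stmt:jansonParametersMonotone}, such $Y$ are also not $(p,R)$-Janson, establishing (ii). Taking $\cal{X}$ to be the union of all these $Y$ over $\alpha$ gives (i), and the bound $|\cal{X}| \le 4(2/q)^{8qn}$ follows by multiplying the $(2/q)^{O(qn)}$ count of $\{\cal{G}_\alpha\}$ supplied by \Cref{stmt:containersHardcovers} with the fingerprint count $\binom{n}{2^{11}s^2 pn} \le (2/q)^{2qn}$ from \Cref{stmt:containersCoversButJanson}, which is small thanks to $p \le q/(2^{10}s^2)$.
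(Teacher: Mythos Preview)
Your high-level plan---encode non-Janson sets as independent sets of an auxiliary hypergraph, apply \Cref{stmt:containersHardcovers}, then \Cref{stmt:containersCoversButJanson}---matches the paper's, but the way you combine the two container results has a real gap.

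The central problem is the step ``take $\nu$ to be the restriction of the all-ones measure on $\cal{H}$ to edges contained in $I$''. Nothing in the hypotheses tells you that the all-ones measure (or any fixed measure) is a Janson witness for $\cal{H}$: the ratio $e(\mathbf{1}_{\cal H})^2/\Lambda_p(\mathbf{1}_{\cal H})$ can be arbitrarily small (e.g.\ if all edges share a common pair), so showing that the restricted ratio ``reduces to the corresponding ratio for $\cal H$ itself'' proves nothing. Relatedly, your final implication ``$\cal H_\alpha[Y]$ not $(p,2^{-8}p|Y|)$-Janson $\Rightarrow$ $\cal H[Y]$ not $(p,2^{-8}p|Y|)$-Janson'' goes the wrong way: by \Cref{stmt:JansonIsDownset} being Janson is \emph{upward}-closed in the hypergraph, so for $\cal H_\alpha\subset\cal H$ the non-Janson property does not transfer up. (In fact the local hypergraph the paper works with, $\cal C'_T=\langle\cal C_T\rangle_{=s}$, is not a subhypergraph of $\cal H$ at all.)

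The paper repairs both issues by \emph{not} separating ``rule out the non-local case'' from ``apply the local container''. It applies \Cref{stmt:containersHardcovers} and then \Cref{stmt:containersCoversButJanson} unconditionally to obtain the container $X$, and only then argues that $\cal H[X]$ is not $(p,R)$-Janson by fixing an \emph{arbitrary} measure $\nu$ on $\cal H[X]$ and splitting it as $\nu=\nu'+\nu''$ according to whether an edge lies in $\cal C'_T$. If $e(\nu')\ge e(\nu)/2$, the non-Janson property of $\cal C'_T[X]$ applied to $\nu'$ (which \emph{is} supported there) gives $\Lambda_p(\nu)\ge e(\nu)^2/R$ directly. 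If instead $e(\nu'')\ge e(\nu)/2$, the approximate-independence bound from \Cref{stmt:containersHardcovers} applies to every edge in the support of $\nu''$ (since those edges lie outside $\cal C_T$), and one runs your random-restriction computation on $\nu''$---not on $\mathbf{1}_{\cal H}$---conditionally on $V_q$ being independent in the link, reaching the same conclusion. So the approximate-independence and the local-container information are used on complementary pieces of the \emph{given} measure $\nu$, rather than in two disjoint phases of the argument.
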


We now discuss the proof of \Cref{stmt:containersForNonJansonGeneral}.
The first step is defining the auxiliary hypergraph
\begin{equation*}
  \cal{J} = \big\{ L \subset V : \cal{H}[L] \text{ is $(p/q, \eta R)$-Janson} \big\},
\end{equation*}
where $V = V(\cal{H})$.
It follows immediately from the definition that sets $L \subset V$ for which $\cal{H}[L]$ is not $(p/q, \eta R)$-Janson are not edges of $\cal{J}$, but, more importantly, \Cref{stmt:JansonIsDownset} implies that each such $L$ is an independent set in $\cal{J}$.

We can now see that \cref{item:containersForNonJansonGeneral:containerInclusion} in \Cref{stmt:containersForNonJansonGeneral} could be equivalently phrased as ``for all $I \in \cal{I}(\cal{J})$, there exists $X \in \cal{X}$ such that $I \subset X$''.
This formulation suggests applying a container \namecref{stmt:containersCoversButJanson} to this hypergraph, but the edges of $\cal{J}$ could have size comparable to $n = |V|$.
To avoid that issue, we first reduce to an alternate setting involving independent sets in $s$-uniform hypergraphs.

To state the \namecref{stmt:containersHardcovers} that we use in that reduction, we need two definitions.
The first is standard: we say that a hypergraph $\cal{C}$ covers another hypergraph $\cal{G}$ when $\cal{G} \subset \langle \cal{C} \rangle$, where $$\langle \cal{C} \rangle = \{L \subset V(\cal{C}) : \exists A \in \cal{C}, A \subset L\}$$ is the up-set of $\cal{C}$.
The second is that of the non-strict link of a hypergraph, which is deceptively similar to the ordinary notion of hypergraph link.

\begin{defi}
  For a hypergraph $\cal{G}$ and a set $Y$, let
  \begin{equation*}
    \link{\cal{G}}{Y} = \big\{E \setminus Y : E \in \cal{G}\big\}
  \end{equation*}
  denote the non-strict link of $\cal{G}$ with respect to $Y$.
\end{defi}

Interestingly, this reduction to $s$-uniform hypergraphs is proven by applying a container \namecref{stmt:containersHardcovers} (\Cref{stmt:containersHardcovers}, below) that has no dependency on the uniformity of the hypergraph.
It is not immediate that Theorem~E in \cite{CS24+} implies \Cref{stmt:containersHardcovers}, so we give a short (and, in fact, self-contained) proof of the result we will use in Appendix~\ref{app:containersHardcovers}.
In the statement and in the rest of the paper, when $0 \le q \le 1$, we write $V_q$ to denote a $q$-random subset of $V$.

\begin{restatable}[{\citet[modified Theorem E]{CS24+}}]{thm}{containersHardcovers}\label{stmt:containersHardcovers}
  Let $\cal{G}$ be a hypergraph with vertex set $V$.
  For all $\alpha, q \in \RR$ satisfying $0 < q \le \alpha < 1$, there exists a family $\cal{T} \subset 2^V$ and a function $\varphi : \cal{I}(\cal{G}) \to \cal{T}$ such that:
  \begin{enumerate}[\normalfont (a)]
      \smallskip
    \item For each $I \in \cal{I}\big(\cal{G}\big)$, we have $\varphi(I) \subset I$. \label{item:containersHardcoversHaveFingerprints}
      \smallskip
    \item Each $T \in \cal{T}$ has at most $q n / \alpha$ elements, where $n = |V|$. \label{item:containersHardcoversHaveSmallFingerprints}
      \smallskip
    \item For every $T \in \cal{T}$, there exists a hypergraph $\cal{C}_T$ with vertex set $V$ that covers $\cal{G}$ and satisfies
      \begin{equation}\label{eq:conditionalProbOfLInQSet}
        \PP\big(L \subset V_q \mid V_q \in \cal{I}(\link{\cal{G}}{T})\big) > (1 - \alpha)^{|L|} q^{|L|}
      \end{equation}
      for all $L \not \in \cal{C}_T$; moreover, for all $I \in \cal{I}(\cal{G})$ such that $T = \varphi(I)$, we have $I \in \cal{I}(\cal{C}_T)$.\label{item:containersHardcoversAreCoverable}
  \end{enumerate}
\end{restatable}

We will apply \Cref{stmt:containersHardcovers} to $\cal{J}$, obtaining as a result a family of sets $T \in \cal{T}$, each with a corresponding hypergraph $\cal{C}_T$.
Now, for each $I \in \cal{I}(\cal{J})$, there is some $T \in \cal{T}$ such that $I \in \cal{I}(\cal{C}_T)$ by \cref{item:containersHardcoversAreCoverable} in \Cref{stmt:containersHardcovers}.
Therefore,
\begin{equation}\label{eq:coveringIndenpendentSets}
  \cal{I}(\cal{J}) \subset \bigcup_{T \in \cal{T}} \cal{I}(\cal{C}_T).
\end{equation}
However, we also want the hypergraphs in the right-hand side of \eqref{eq:coveringIndenpendentSets} to be $s$-uniform.
The first step is replacing each $\cal{C}_T$ by its up-set.

\begin{obs}\label{stmt:independentSetsWithUpsets}
  Let $\cal{G}$ be a hypergraph.
  If $I \in \cal{I}(\cal{G})$, then $I \in \cal{I}(\langle \cal{G} \rangle)$.
\end{obs}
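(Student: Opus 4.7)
The statement is essentially an unpacking of the definition of the up-set, so the proof should be a short direct argument by contrapositive.

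The plan is to assume, towards a contradiction, that $I \notin \cal{I}(\langle \cal{G} \rangle)$ and deduce $I \notin \cal{I}(\cal{G})$. Under the assumption, there exists some $L \in \langle \cal{G} \rangle$ with $L \subset I$. By the definition $\langle \cal{C} \rangle = \{L \subset V(\cal{C}) : \exists A \in \cal{C},\ A \subset L\}$ recalled just before the observation, applied to $\cal{C} = \cal{G}$, there exists an edge $A \in \cal{G}$ with $A \subset L$. Chaining the two containments gives $A \subset L \subset I$, so $I$ contains the edge $A$ of $\cal{G}$, contradicting $I \in \cal{I}(\cal{G})$.

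No further machinery is needed; in particular we do not invoke any of the container theorems or the Janson definitions. There is no main obstacle: the only thing to be careful about is to write the argument using the exact definition of $\langle \cdot \rangle$ stated in the excerpt (so that it is manifest that an edge of the up-set sitting inside $I$ forces an edge of the original hypergraph to sit inside $I$). I would present the proof in a single short paragraph, closing with the contradiction and concluding $I \in \cal{I}(\langle \cal{G} \rangle)$.
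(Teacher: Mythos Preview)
Your proposal is correct and matches the paper's proof essentially verbatim: assume $I \notin \cal{I}(\langle \cal{G} \rangle)$, take $E \in \langle \cal{G} \rangle$ with $E \subset I$, use the definition of the up-set to find $E' \in \cal{G}$ with $E' \subset E \subset I$, and conclude $I \notin \cal{I}(\cal{G})$.
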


\begin{proof}
  Assume that $I \not \in \cal{I}(\langle \cal{G} \rangle)$.
  Hence, there is $E \in \langle \cal{G} \rangle$ such that $E \subset I$, and by definition of the up-set, there is also $E' \in \cal{G}$ such that $E' \subset E \subset I$, so $I \not \in \cal{I}(\cal{G})$.
\end{proof}

The trivial observation that completes the reduction to $s$-uniform hypergraphs says that \Cref{stmt:independentSetsWithUpsets} also holds if we replace $\cal{C}_T$ by a subhypergraph.
In our case, the implication is that if $\cal{C}'_T \subset \langle \cal{C}_T \rangle$ for all $T \in \cal{T}$, then $$\cal{I}(\cal{J}) \subset \bigcup_{T \in \cal{T}} \cal{I}(\cal{C}'_T).$$
The subhypergraph $\cal{C}'_T$ that we will take is simply the set of edges with size $s$, i.e.\ $$\cal{C}'_T = \langle \cal{C}_T \rangle_{= s}$$ where we define $$\cal{G}_{= s} = \{E \in \cal{G} : |E| = s\}$$ for any hypergraph $\cal{G}$ and $s \in \NN$.

Having reduced the problem to a collection of hypergraphs with edges of size $s$, we can apply another container \namecref{stmt:containersCoversButJanson} to each $\cal{C}'_T$.
Recall that we have previously used the next statement in \Cref{sec:warmUpExtension}, and that its easy deduction from a result in the literature \cite[Theorem A]{CS24+} can be found in Appendix~\ref{app:containersCoversButJanson}.

\containersCoversButJanson*

Consider the following recap of our overview so far.
First, we take an $I \in \cal{I}(\cal{J})$, and from \Cref{stmt:containersHardcovers} and \Cref{stmt:independentSetsWithUpsets} we obtain $\varphi(I) = T \subset I$ and $\cal{C}_T'$ such that $I \in \cal{I}(\cal{C}'_T)$.
Then, applying \Cref{stmt:containersCoversButJanson} with $\cal{G} = \cal{C}'_T$ yields a family $\cal{S}_T$ from which we retrieve a container $X \supset I$.
This container has the property that $\cal{C}'_T[X]$ is not $(p, 2^{-8} p |X|)$-Janson, and we want to take $X$ to be the container for this fixed $I$ in \Cref{stmt:containersForNonJansonGeneral}.
It is easy to deduce the claimed bound~\eqref{eq:sizeOfContainerFamilyForNonJansonGeneral} on the number of such containers from the bounds on $|\cal{T}|$ and $|\cal{S}_T|$ given by the corresponding container \namecrefs{stmt:containersHardcovers}.
However, it is not yet clear how to deduce that $\cal{H}[X]$ is not $(p, R)$-Janson when we only know the Janson properties of $\cal{C}'_T[X]$.

To show that $\cal{H}[X]$ is not $(p, R)$-Janson, we fix an arbitrary $\nu : \cal{H}[X] \to \RR_{\ge 0}$ with the objective of establishing that
\begin{equation}\label{eq:goalInOverview}
  \Lambda_{p}(\nu) \ge \frac{e(\nu)^2}{R}
\end{equation}
which, recall, is the definition of what it means to not be $(p, R)$-Janson.
We will consider two cases, depending on the relation between $\nu$ and $\nu'$, the restriction of $\nu$ to $\cal{C}'_T[X]$, where $T = \varphi(I)$.
The first (easier) case is when $e(\nu') \ge e(\nu)/2$.
Here, we can easily show that \eqref{eq:goalInOverview} follows from $\cal{C}'_T[X]$ not being $(p, 2^{-8} p|X|)$-Janson (see \Cref{stmt:nuPrimeHasLessThanHalfTheMeasureOfNu}).

In the other, more delicate case, we will have that $\nu'' = \nu - \nu'$ satisfies
\begin{equation}\label{eq:nuDoublePrimeMass}
  e(\nu'') \ge \frac{e(\nu)}{2}.
\end{equation}
Our goal is now to obtain bounds relating $\Lambda_p(\nu)$ and $e(\nu'')^2$ -- it will be easy to see that combining them with \eqref{eq:nuDoublePrimeMass} will reach \eqref{eq:goalInOverview}.
Concretely, we will show that
\begin{equation}\label{eq:chainOfInequalitiesInOverview}
  2^{2 s}\Lambda_p(\nu) > \EE\big[\Lambda_{p/q}(\nu''_q) \mid V_q \in \cal{I}(\link{\cal{J}}{T})\big] \ge \frac{\EE\big[e(\nu''_q)^2 \mid V_q \in \cal{I}(\link{\cal{J}}{T})\big]}{\eta R} \ge \frac{e(\nu'')^2}{\eta R}
\end{equation}
where $\nu''_q : \cal{H}[X] \to \RR_{\ge 0}$ is defined by
\begin{equation*}
  \nu''_q(E) =  \frac{ \mathds{1}[E \subset V_q] }{\PP\big(E \subset V_q \mid V_q \in \cal{I}(\link{\cal{J}}{T})\big)} \, \nu''(E).
\end{equation*}

The simple proof of \Cref{stmt:nuDoublePrimeQLowerBound} will establish, by inspecting the definitions, that
\begin{equation*}
  \EE\big[e(\nu''_q) \mid V_q \in \cal{I}(\link{\cal{J}}{T})\big] = e(\nu''),
\end{equation*}
which implies the rightmost inequality of \eqref{eq:chainOfInequalitiesInOverview} by convexity.
To prove the second inequality, we will in fact show that (deterministically) whenever $V_q \in \cal{I}(\link{\cal{J}}{T})$, we have
\begin{equation}\label{eq:assumeVqAndConcludeIneq}
  \Lambda_{p/q}(\nu''_q) \ge \frac{e(\nu''_q)^2}{\eta R}.
\end{equation}
Proving \eqref{eq:assumeVqAndConcludeIneq} will require a trivial observation about independent sets in the non-strict link of hypergraphs with respect to a set $T$.

\begin{obs}\label{stmt:independentInLinkImpliesIndependentInOriginal}
  Let $\cal{G}$ be a hypergraph and $T \subset V(\cal{G})$.
  If $I \in \cal{I}(\link{\cal{G}}{T})$, then $I \in \cal{I}(\cal{G})$.
\end{obs}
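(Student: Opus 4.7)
The claim to establish is a simple monotonicity-type statement about the non-strict link, so the plan is a one-line contrapositive argument: I would assume $I \notin \cal{I}(\cal{G})$ and produce an edge of $\link{\cal{G}}{T}$ contained in $I$, contradicting $I \in \cal{I}(\link{\cal{G}}{T})$.

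More concretely, suppose there exists $E \in \cal{G}$ with $E \subset I$. By the definition of the non-strict link, $E \setminus T$ is an edge of $\link{\cal{G}}{T}$. Since $E \setminus T \subset E \subset I$, this shows that $I$ contains an edge of $\link{\cal{G}}{T}$, contradicting the hypothesis that $I$ is independent in the link. Therefore every edge of $\cal{G}$ fails to be contained in $I$, i.e.\ $I \in \cal{I}(\cal{G})$.

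There is essentially no obstacle here: the only subtle point is that the non-strict link is defined so that \emph{every} edge of $\cal{G}$ contributes an edge to $\link{\cal{G}}{T}$ (even $E \subset T$ gives the empty edge, and in that case every set would fail to be independent, but then $\cal{I}(\link{\cal{G}}{T})$ is empty anyway and the implication is vacuous), so the forward implication runs through without any side conditions. No quantitative calculation, no union bound, and no appeal to previous lemmas is needed — the observation is purely a consequence of unwrapping the two definitions.
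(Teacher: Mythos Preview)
Your argument is correct and is essentially the same as the paper's: both observe that for every $E \in \cal{G}$ the set $E \setminus T$ is an edge of $\link{\cal{G}}{T}$ contained in $E$, so no independent set of the link can contain an edge of $\cal{G}$. The paper packages this as $\cal{G} \subset \langle \link{\cal{G}}{T} \rangle$ and then cites the earlier observation that $\cal{I}(\cal{G}) \subset \cal{I}(\langle \cal{G} \rangle)$, whereas you run the contrapositive directly; the content is identical.
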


\begin{proof}
  For each $E \in \cal{G}$, observe that  $E \setminus T \subset E$ and $E \setminus T \in \link{\cal{G}}{T}$ by definition, so $\cal{G} \subset \langle \link{\cal{G}}{T} \rangle$.
  The statement now follows from \Cref{stmt:independentSetsWithUpsets}.
\end{proof}

With \Cref{stmt:independentInLinkImpliesIndependentInOriginal}, we will see that the definition of $\cal{J}$ and the monotonicity of being $(p/q, \eta R)$-Janson will imply \eqref{eq:assumeVqAndConcludeIneq}, see \Cref{stmt:lambdaPropertiesWithConditionalExpectation}.
The remaining inequality, \Cref{stmt:finalBoundLambdaPQNuDoublePrimeQ}, establishes that
\begin{equation}
  2^{2 s}\Lambda_p(\nu) > \EE\big[\Lambda_{p/q}(\nu''_q) \mid V_q \in \cal{I}(\link{\cal{J}}{T})\big].
\end{equation}
To prove it, we will crucially rely on the fact that, for all $E \in \cal{H}[X] \setminus \cal{C}'_T$,
\begin{equation*}
  \PP\big(E \subset V_q \mid V_q \in \cal{I}(\link{\cal{J}}{T})\big) > \left(\frac{\,q\,}{2}\right)^{|E|}
\end{equation*}
since $\cal{H}[X] \setminus \cal{C}'_T$ and $\cal{C}_T$ are disjoint, and $\cal{C}_T$ satisfies \cref{item:containersHardcoversAreCoverable} of \Cref{stmt:containersHardcovers} with $\alpha=1/2$.

The preceding overview and proof strategy in fact proves \Cref{stmt:containersForNonJansonWithFingerprints}, which strengthens the original statement and adds the characterization of sets $L$ for which $\cal{H}$ is not $(p/q, \eta R)$-Janson as independent sets in the auxiliary hypergraph $\cal{J}$.
To obtain this stronger statement, we will redefine $\cal{J}$ in \eqref{eq:containersForNonJansonWithFingerprints:definitionOfJansonHypergraph} to consider instead sets $L$ for which $\cal{H}[L]$ is $(p/(q - p), \eta R)$-Janson.
We will then deduce \Cref{stmt:containersForNonJansonGeneral} from \Cref{stmt:containersForNonJansonWithFingerprints} by applying the latter with $q$ being $q + p$.

\begin{thm}\label{stmt:containersForNonJansonWithFingerprints}
  Let $s, n \in \NN$ with $s \le n$, and let $p, q, \alpha, R, \eta > 0$ satisfy
  \begin{equation}\label{eq:containersForNonJansonWithFingerprints:assumptions}
    p \le \frac{1}{2^{11}s^2}, \qquad 2p \le q \le \alpha < 1, \qquad R \ge 2^{-6} p n \qquad \text{ and } \qquad \eta \le \dfrac{(1-\alpha)^{2s}}{4}.
  \end{equation}
  Furthermore, for every $s$-uniform hypergraph $\cal{H}$ with $n$ vertices, let
  \begin{equation}\label{eq:containersForNonJansonWithFingerprints:definitionOfJansonHypergraph}
    \cal{J} = \big\{ L \subset V(\cal{H}) : \cal{H}[L] \text{ is $(p/(q-p), \eta R)$-Janson} \big\}.
  \end{equation}
  There exists a family $\cal{Y} \subset 2^{V(\cal{H})} \times 2^{V(\cal{H})}$ and functions
  \begin{equation*}
    g : \cal{I}(\cal{J}) \to \cal{Y} \qquad \text{and} \qquad f : \cal{Y} \to 2^{V(\cal{H})}
  \end{equation*}
  such that:
  \begin{enumerate}[\normalfont (1)]
    \item For every $I \in \cal{I}\big(\cal{J}\big)$, if $g(I) = (S, T)$, then $S \cup T \subset I \subset f(S, T)$. \label{item:containersForNonJansonWithFingerprints:containerAndFingerprintInclusions}
    \item \label{item:containersForNonJansonWithFingerprints:sizeOfFingerprints} Each $(S, T) \in \cal{Y}$ satisfies
      \begin{equation}\label{eq:containersForNonJansonWithFingerprints:sizeOfFingerprints}
        |S| \le 2^{11} p s^2 n \qquad \text{ and } \qquad |T| \le q n/\alpha.
      \end{equation}
    \item For all $Y \in \cal{Y}$, the hypergraph $\cal{H}[X]$ is not $(p, R)$-Janson, where $X = f(Y)$. \label{item:containersForNonJansonWithFingerprints:containerHypergraphIsNotJanson}
  \end{enumerate}
\end{thm}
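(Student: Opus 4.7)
The plan is to realise the two-stage container strategy outlined for \Cref{stmt:containersForNonJansonGeneral}, with the only twist that the global obstruction $\cal{J}$ is encoded with parameter $p/(q-p)$ rather than $p/q$, which is what allows the fingerprints to be retained. First, observe that \Cref{stmt:JansonIsDownset} makes $\cal{J}$ an up-set: if $L \in \cal{J}$ and $L \subset L'$, then $\cal{H}[L] \subset \cal{H}[L']$ inherits the $(p/(q-p), \eta R)$-Janson property.

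The first stage applies \Cref{stmt:containersHardcovers} to $\cal{J}$ with the given $q$ and $\alpha$, yielding a map $\varphi : \cal{I}(\cal{J}) \to \cal{T}$ with $|T| \le qn/\alpha$ and, for each $T \in \cal{T}$, a cover $\cal{C}_T$ of $\cal{J}$ satisfying \eqref{eq:conditionalProbOfLInQSet}. Restricting to the $s$-uniform layer $\cal{C}'_T := \langle \cal{C}_T \rangle_{=s}$, and using \Cref{stmt:independentSetsWithUpsets} together with $\cal{C}'_T \subset \langle \cal{C}_T \rangle$, every $I$ with $\varphi(I) = T$ is also independent in $\cal{C}'_T$. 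The second stage applies \Cref{stmt:containersCoversButJanson} to each $\cal{C}'_T$ (legal since $p \le 1/(2^{11} s^2)$), producing fingerprints $\phi_T(I) = S$ of size at most $2^{11} p s^2 n$ and a container $X = \psi_T(S)$ for which $\cal{C}'_T[X]$ is not $(p, 2^{-8} p |X|)$-Janson. Setting $g(I) = (\phi_T(I), \varphi(I))$ and $f(S, T) = \psi_T(S)$ makes properties \cref{item:containersForNonJansonWithFingerprints:containerAndFingerprintInclusions,item:containersForNonJansonWithFingerprints:sizeOfFingerprints} immediate from the two container theorems.

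The substantive work lies in showing \cref{item:containersForNonJansonWithFingerprints:containerHypergraphIsNotJanson}: $\cal{H}[X]$ is not $(p, R)$-Janson. Fix an arbitrary measure $\nu$ on $\cal{H}[X]$ and decompose $\nu = \nu' + \nu''$, where $\nu'$ is the restriction of $\nu$ to $\cal{C}'_T[X]$. If $e(\nu') \ge e(\nu)/2$, then since $\cal{C}'_T[X]$ is not $(p, 2^{-8} p |X|)$-Janson, the monotonicity $\Lambda_p(\nu) \ge \Lambda_p(\nu')$ together with $R \ge 2^{-6} p n \ge 2^{-6} p |X|$ already yields $\Lambda_p(\nu) \ge e(\nu)^2/R$. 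Otherwise $e(\nu'') \ge e(\nu)/2$, and I would introduce the random measure
$\nu''_q(E) := \nu''(E) \mathds{1}[E \subset V_q] / \PP(E \subset V_q \mid V_q \in \cal{I}(\link{\cal{J}}{T}))$
and establish the chain
\begin{equation*}
  (1-\alpha)^{-2s}\Lambda_p(\nu) \,\ge\, \EE\big[\Lambda_{p/(q-p)}(\nu''_q) \,\big|\, V_q \in \cal{I}(\link{\cal{J}}{T})\big] \,\ge\, \frac{\EE[e(\nu''_q)^2 \mid \cdots]}{\eta R} \,\ge\, \frac{e(\nu'')^2}{\eta R}.
\end{equation*}
The middle step holds because $V_q \in \cal{I}(\link{\cal{J}}{T}) \subset \cal{I}(\cal{J})$ (by \Cref{stmt:independentInLinkImpliesIndependentInOriginal}) forces $\cal{H}[V_q]$ not to be $(p/(q-p), \eta R)$-Janson; the rightmost uses convexity and $\EE[e(\nu''_q) \mid \cdots] = e(\nu'')$. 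Combining the chain with $e(\nu'')\ge e(\nu)/2$ gives $\Lambda_p(\nu) \ge e(\nu)^2/\big(4 (1-\alpha)^{-2s} \eta R\big) \ge e(\nu)^2/R$ by the hypothesis $\eta \le (1-\alpha)^{2s}/4$.

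The main obstacle will be the leftmost inequality of the chain. Every $E$ supporting $\nu''$ lies outside $\cal{C}'_T$ and hence outside $\cal{C}_T$ (since the size-$s$ elements of $\cal{C}_T$ already sit in $\cal{C}'_T$), so \eqref{eq:conditionalProbOfLInQSet} gives $\PP(E \subset V_q \mid \cdots) \ge ((1-\alpha)q)^{s}$, while FKG applied to the decreasing event $\{V_q \in \cal{I}(\link{\cal{J}}{T})\}$ controls the joint probability $\PP(E_1 \cup E_2 \subset V_q \mid \cdots) \le q^{|E_1 \cup E_2|}$. Expanding $\EE[d_{\nu''_q}(L)^2 \mid \cdots]$ as a double sum over $E_1, E_2 \supset L$ and swapping with the outer sum over $L$, the ratios pick up at most a factor $(1-\alpha)^{-2s} q^{-|E_1 \cap E_2|}$. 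The key combinatorial check is the inequality
\begin{equation*}
q^{-m} \sum_{k=2}^{m} \binom{m}{k}\!\bigg(\frac{q-p}{p}\bigg)^{\!k} \,\le\, p^{-m} \,\le\, \sum_{k=2}^{m} \binom{m}{k} p^{-k}
\end{equation*}
for every $m = |E_1 \cap E_2|$, which follows from the identity $(1 + (q-p)/p)^m = (q/p)^m$. This collapses the entire expression to at most $(1-\alpha)^{-2s} \Lambda_p(\nu'') \le (1-\alpha)^{-2s} \Lambda_p(\nu)$, closing the chain.
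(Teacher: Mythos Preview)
Your proposal is correct and follows essentially the same approach as the paper: the two-stage container construction (first \Cref{stmt:containersHardcovers} on $\cal{J}$, then \Cref{stmt:containersCoversButJanson} on each $\cal{C}'_T = \langle \cal{C}_T\rangle_{=s}$), the same case split on $e(\nu')$ versus $e(\nu'')$, the same random measure $\nu''_q$, and the same chain of inequalities closed via Harris/FKG and the binomial identity $(1+(q-p)/p)^m=(q/p)^m$. The only cosmetic difference is that you record the final combinatorial step as the two-sided bound $q^{-m}\sum_{k\ge 2}\binom{m}{k}((q-p)/p)^k \le p^{-m} \le \sum_{k\ge 2}\binom{m}{k}p^{-k}$, whereas the paper passes through $\sum_{|E_1\cap E_2|\ge 2}\nu(E_1)\nu(E_2)p^{-|E_1\cap E_2|}\le \Lambda_p(\nu)$ directly; these are the same computation.
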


Before proving \Cref{stmt:containersForNonJansonWithFingerprints}, let us quickly observe that it implies \Cref{stmt:containersForNonJansonGeneral}.

\begin{proof}[Proof that \Cref{stmt:containersForNonJansonWithFingerprints} implies \Cref{stmt:containersForNonJansonGeneral}]
  Apply \Cref{stmt:containersForNonJansonWithFingerprints} to $\cal{H}$ with $\alpha = 1/2$ and $q$ replaced by $q + p$.
  Note that $$p \le \frac{q}{2^{10} s^2} \le \frac{1}{ 2^{11} s^2 } \qquad \text{and} \qquad 2p \le q + p \le \alpha,$$ since $q \le 1/16$.
  As a result, we obtain functions $g, f$ and a family $\cal{Y}$, so let
  \begin{equation*}
    \cal{X} = \{f(S, T) : (S, T) \in \cal{Y}\}.
  \end{equation*}
  Observe that
  \begin{equation}\label{eq:sizeOfContainerReferenceForTheFirstTime}
    |\cal{X}| \le \sum_{m = 0}^{2^{11} p s^2 n} \binom{n}{m} \sum_{t = 0}^{2 (q + p) n} \binom{n}{t} \le \bigg(\frac{\,2\,}{q}\bigg)^{8 q n}
  \end{equation}
  by enumerating every possible $S$ and $T$, and combining the bound on their sizes, \eqref{eq:containersForNonJansonWithFingerprints:sizeOfFingerprints}, with
  \begin{equation}\label{eq:boundOnTheSizesOfFingerprintsForBinomial}
    2^{11}p s^{2}n \le  2(q + p)n \le 4qn \le \frac{n}{4}.
  \end{equation}
  This choice of $\cal{X}$ therefore satisfies the bound on the size of the family in \Cref{stmt:containersForNonJansonGeneral}.

  Take an arbitrary $L \subset V(\cal{H})$ such that $\cal{H}[L]$ is not $(p/q, \eta R)$-Janson, and note that $L \not \in E(\cal{J})$, by our choice of $q$ as $q + p$, and hence $L \in \cal{I}(\cal{J})$, by \Cref{stmt:JansonIsDownset}.
  Applying \cref{item:containersForNonJansonWithFingerprints:containerAndFingerprintInclusions} of \Cref{stmt:containersForNonJansonWithFingerprints} implies that there is $(S, T) \in \cal{Y}$ such that $L \subset f(S, T)$, which establishes \namecref{item:containersForNonJansonGeneral:containerInclusion}~\eqref{item:containersForNonJansonGeneral:containerInclusion} of \Cref{stmt:containersForNonJansonGeneral}.
  \nameCref{item:containersFornonJansonGeneral:containersAreNotJanson}~\eqref{item:containersFornonJansonGeneral:containersAreNotJanson} of \Cref{stmt:containersForNonJansonGeneral} follows from our choice of $\cal{X}$ and \cref{item:containersForNonJansonWithFingerprints:containerHypergraphIsNotJanson} of \Cref{stmt:containersForNonJansonWithFingerprints}.
\end{proof}

We now proceed to prove \Cref{stmt:containersForNonJansonWithFingerprints}.

\begin{proof}[Proof of \Cref{stmt:containersForNonJansonWithFingerprints}]
  Apply \Cref{stmt:containersHardcovers} with $\cal{G} = \cal{J}$ and parameters $q$ and $\alpha$ to obtain $\cal{T}$ and $\varphi$.
  Now, for each $T \in \cal{T}$, there is $\cal{C}_T$ satisfying \cref{item:containersHardcoversAreCoverable} in \Cref{stmt:containersHardcovers}, so we let $\cal{C}'_T = \langle \cal{C}_T \rangle_{= s}$ be the edges of $\langle \cal{C}_T \rangle$ with size $s$.
  As $\cal{C}'_T$ is $s$-uniform and $p \le 1/(2^{11} s^2)$ by \eqref{eq:containersForNonJansonWithFingerprints:assumptions}, we can apply \Cref{stmt:containersCoversButJanson} with $\cal{G} = \cal{C}'_T$ and $\zeta = 2^{-8}$, obtaining as a result $\cal{S}_T$, $\psi_T$ and $\phi_T$.

  Fix an $I \in \cal{I}(\cal{J})$ and note that if $T = \varphi(I)$, then it follows from the ``moreover'' part in \cref{item:containersHardcoversAreCoverable} of \Cref{stmt:containersHardcovers} that $I \in \cal{I}(\cal{C}_T)$.
  Combining this with \Cref{stmt:independentSetsWithUpsets} and $\cal{C}'_T \subset \langle \cal{C}_T \rangle$, we conclude that $I \in \cal{I}(\cal{C}'_T)$.
  As we have applied \Cref{stmt:containersCoversButJanson} with $\cal{G} = \cal{C}'_T$ for every $T \in \cal{T}$, and since $I \in \cal{I}(\cal{C}'_T)$, we obtain, by \cref{item:containersCoversHaveFingerprints}, sets $S = \phi_T(I) \in \cal{S}_T$ and $X = \psi_T(S)$, where $T = \varphi(I)$, such that
  \begin{equation}\label{eq:containerAndFingerprintInclusions}
    S \cup T \subset I \subset X.
  \end{equation}
  We then define $g(I) = (S, T)$ and $f(S, T) = X$ for $T = \varphi(I)$ and $S = \phi_T(I)$ and set
  \begin{equation*}
    \cal{Y} = \{g(I) : I \in \cal{I}(\cal{J})\},
  \end{equation*}
  which, by \eqref{eq:containerAndFingerprintInclusions} and the fact that $I$ was arbitrary, is a definition that satisfies \cref{item:containersForNonJansonWithFingerprints:containerAndFingerprintInclusions} of \Cref{stmt:containersForNonJansonWithFingerprints}.
  Moreover, each $T \in \cal{T}$ and $S \in \cal{S}_T$ satisfy
  \begin{equation}\label{eq:sizeOfFingerprintsInGeneralContainerJanson}
    |T| \le q n / \alpha  \qquad \text{and} \qquad |S| \le 2^{11} s^2 p n
  \end{equation}
  by \cref{item:containersHardcoversHaveSmallFingerprints} in \Cref{stmt:containersHardcovers} and \cref{item:containersCoversHaveSmallFingerprints} in \Cref{stmt:containersCoversButJanson} with our choice of $\zeta = 2^{-8}$, which proves \cref{item:containersForNonJansonWithFingerprints:sizeOfFingerprints}.
  It remains only to show that \cref{item:containersForNonJansonWithFingerprints:containerHypergraphIsNotJanson} holds.

  Take an arbitrary $(S, T) \in \cal{Y}$ with $X = f(S, T)$ with the goal of showing that $\cal{H}[X]$ is not $(p, R)$-Janson.
  To do so, it suffices to show that, for any $\nu : \cal{H}[X] \to \RR_{\ge 0}$, we have
  \begin{equation}\label{eq:goal}
    \Lambda_p(\nu) \ge \frac{e(\nu)^2}{R},
  \end{equation}
  so we fix a measure $\nu : \cal{H}[X] \to \RR_{\ge 0}$ and want to establish \eqref{eq:goal}.

  Recall that we have defined $\cal{C}'_T = \langle \cal{C}_T \rangle_{= s}$, where $\cal{C}_T$ is the hypergraph given by \cref{item:containersHardcoversAreCoverable} in \Cref{stmt:containersHardcovers} for $T \in \cal{T}$.
  Let $\nu'$ be the restriction of $\nu$ to $\cal{H}[X] \cap \cal{C}'_T[X]$, i.e. for each $E \in \cal{H}[X]$, let
  \begin{equation*}
    \nu'(E) = \begin{cases*}
      \nu(E) \quad \text{if } E \in \cal{C}'_T[X], \\
      0 \phantom{(E)} \quad \text{otherwise.}
    \end{cases*}
  \end{equation*}

  \begin{claim}\label{stmt:nuPrimeHasLessThanHalfTheMeasureOfNu}
    If the measure $\nu'$ satisfies
    \begin{equation}\label{eq:converseOfWhatIsTrue}
      e(\nu') \ge \frac{e(\nu)}{2},
    \end{equation}
    then \eqref{eq:goal} holds.
  \end{claim}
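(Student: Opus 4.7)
The plan is to exploit the fact that the container $X = f(S,T) = \psi_T(S)$ was produced by applying \Cref{stmt:containersCoversButJanson} to the $s$-uniform hypergraph $\cal{C}'_T$. By \cref{item:containersAreNonJanson} of that \namecref{stmt:containersCoversButJanson}, we know that $\cal{C}'_T[X]$ is not $(p, 2^{-8} p |X|)$-Janson, and we want to transfer this non-Janson property from $\cal{C}'_T[X]$ to $\cal{H}[X]$ using the assumption \eqref{eq:converseOfWhatIsTrue} that most of the mass of $\nu$ sits on edges of $\cal{C}'_T[X]$.

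First, I would observe that $\nu'$ is a measure supported on $\cal{C}'_T[X] \subset \cal{H}[X]$, and that the two hypergraphs share the same vertex set $X$. Consequently, $e(\nu')$ and $\Lambda_p(\nu')$ take the same value whether we view $\nu'$ as a measure on $\cal{C}'_T[X]$ or on $\cal{H}[X]$. Applying the non-Janson property of $\cal{C}'_T[X]$ directly to $\nu'$ gives
\begin{equation*}
  \Lambda_p(\nu') \ge \frac{e(\nu')^2}{2^{-8} p |X|} = \frac{2^8 \, e(\nu')^2}{p |X|}.
\end{equation*}

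Next, I would use the pointwise monotonicity of $\Lambda_p$ in the measure: since $\nu(E) \ge \nu'(E)$ for every $E \in \cal{H}[X]$, the degrees satisfy $d_\nu(L) \ge d_{\nu'}(L)$ for every $L \subset X$, and squaring preserves this inequality on $\RR_{\ge 0}$, so $\Lambda_p(\nu) \ge \Lambda_p(\nu')$. Combined with the assumption $e(\nu') \ge e(\nu)/2$, this yields
\begin{equation*}
  \Lambda_p(\nu) \ge \frac{2^8 \, e(\nu')^2}{p |X|} \ge \frac{2^8 \, (e(\nu)/2)^2}{p |X|} = \frac{2^6 \, e(\nu)^2}{p |X|}.
\end{equation*}

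Finally, using the assumptions $|X| \le n$ and $R \ge 2^{-6} p n$ from \eqref{eq:containersForNonJansonWithFingerprints:assumptions}, we have $2^{-6} p |X| \le 2^{-6} p n \le R$, so
\begin{equation*}
  \Lambda_p(\nu) \ge \frac{e(\nu)^2}{2^{-6} p |X|} \ge \frac{e(\nu)^2}{R},
\end{equation*}
which is exactly \eqref{eq:goal}. There is no real obstacle here: the whole proof is essentially a bookkeeping exercise that reduces the statement about $\cal{H}[X]$ to the non-Janson property of $\cal{C}'_T[X]$ furnished by \Cref{stmt:containersCoversButJanson}, and the only mild care needed is in checking that $\Lambda_p$ is monotone under the pointwise order on measures and that restricting to the subhypergraph $\cal{C}'_T[X]$ does not change the value of $\Lambda_p(\nu')$.
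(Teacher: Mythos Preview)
Your proof is correct and follows essentially the same approach as the paper's: use monotonicity $\Lambda_p(\nu) \ge \Lambda_p(\nu')$, apply the non-Janson property of $\cal{C}'_T[X]$ from \cref{item:containersAreNonJanson} of \Cref{stmt:containersCoversButJanson} to $\nu'$, and then combine $e(\nu') \ge e(\nu)/2$ with $R \ge 2^{-6}pn \ge 2^{-6}p|X|$; the paper merely applies these bounds in a slightly different order. One minor slip: you write ``$\nu'$ is a measure supported on $\cal{C}'_T[X] \subset \cal{H}[X]$'', but $\cal{C}'_T[X]$ need not be contained in $\cal{H}[X]$ --- what is true (and all you need) is that $\nu'$ is supported on $\cal{H}[X]\cap\cal{C}'_T[X]$, hence can be viewed as a measure on $\cal{C}'_T[X]$, so the non-Janson inequality applies.
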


  \begin{claimproof}
    Assume that \eqref{eq:converseOfWhatIsTrue} holds.
    We have
    \begin{equation*}
      \Lambda_p(\nu) \ge \Lambda_{p}(\nu') \ge \frac{2^8 e(\nu')^2}{p |X|} \ge \frac{4 e(\nu')^2}{R}  \ge \frac{e(\nu)^2}{R},
    \end{equation*}
    first because $\nu' \le \nu$ and $\Lambda_p(\cdot)$ is monotone increasing, second since $\cal{C}'_T[X]$ is not $(p, 2^{-8} p |X|)$-Janson by \cref{item:containersAreNonJanson} of \Cref{stmt:containersCoversButJanson} and our choice of $\zeta = 2^{-8}$, then because $R \ge 2^{-6} p n$ by \eqref{eq:containersForNonJansonWithFingerprints:assumptions}, and the last step is due to \eqref{eq:converseOfWhatIsTrue}.
  \end{claimproof}

  We now define the measure $\nu'' = \nu - \nu'$, which corresponds to the restriction of $\nu$ to the hypergraph ${\cal{H}' := \cal{H}[X] \setminus \cal{C}'_T} = \cal{H}[X] \setminus \cal{C}'_T[X]$.
  By \Cref{stmt:nuPrimeHasLessThanHalfTheMeasureOfNu}, we may assume that
  \begin{equation}\label{eq:nuDoublePrimeHasHalfTheMeasureOfNu}
    e(\nu'') = e(\nu) - e(\nu') > \frac{e(\nu)}{2},
  \end{equation}
  otherwise we are done.

  With the goal of defining a random measure $\nu''_q$ on the hypergraph induced by the random set $X_q = V_q \cap X$, where $V_q$ is a $q$-random subset of $V$, we introduce some notation.
  First, let
  \begin{equation}\label{eq:definitionOfP}
    P_q(E) = \PP\big(E \subset V_q \mid V_q \in \cal{I}(\link{\cal{J}}{T})\big)
  \end{equation}
  and observe that
  \begin{equation}\label{eq:lowerBoundOnPOfE}
    P_q(E) > (1-\alpha)^{|E|}q^{|E|}
  \end{equation}
  for all $E \in \cal{H}'$, by \eqref{eq:conditionalProbOfLInQSet}, since the hypergraphs $\cal{H}'$ and $\cal{C}_T$ are disjoint.
  Indeed, $\cal{H}$ is $s$-uniform, which means that so are $\cal{H}'$ and $\cal{H}' \cap \cal{C}_T$.
  The only edges of $\cal{C}_T$ that could be edges of $\cal{H}'$ thus have size exactly equal to $s$.
  But every $s$-sized edge of $\cal{C}_T$ is also an edge of $\cal{C}'_T = \langle \cal{C}_T \rangle_{= s}$, and is therefore not in $\cal{H}' = \cal{H}[X] \setminus \cal{C}'_T$.

  Now, let $\nu_q'' : \cal{H}' \to \RR_{\ge 0}$ be defined by
  \begin{equation*}
    \nu''_q(E) = \frac{\mathds{1}\big[E \subset V_q\big] }{\PP\big(E \subset V_q \mid V_q \in \cal{I}(\link{\cal{J}}{T})\big)} \, \nu''(E).
  \end{equation*}
  We will first show that
  \begin{equation*}
    \EE\big[e(\nu''_q) \mid V_q \in \cal{I}(\link{\cal{J}}{T})\big] = \sum_{E \in \cal{H}'} \nu''(E) = e(\nu''),
  \end{equation*}
  which will allows us to relate $e(\nu_q'')^2$ to $e(\nu)^2$.

  \begin{claim}\label{stmt:nuDoublePrimeQLowerBound}
    \begin{equation*}
      \EE\big[e(\nu''_q)^2 \mid V_q \in \cal{I}(\link{\cal{J}}{T})\big] \ge e(\nu'')^2.
    \end{equation*}
  \end{claim}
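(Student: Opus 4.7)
The plan is to prove the bound in two short steps: first compute the conditional expectation $\EE[e(\nu''_q) \mid V_q \in \cal{I}(\link{\cal{J}}{T})]$ by linearity and show it equals $e(\nu'')$, then apply the trivial inequality $\EE[X^2] \ge (\EE X)^2$ (that is, Jensen's inequality for $x \mapsto x^2$) to the random variable $X = e(\nu''_q)$.

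For the first step, note that by definition $e(\nu''_q) = \sum_{E \in \cal{H}'} \nu''_q(E) = \sum_{E \in \cal{H}'} \nu''(E)\,\mathds{1}[E \subset V_q]/P_q(E)$, where $P_q(E) > 0$ by the lower bound~\eqref{eq:lowerBoundOnPOfE} (so the division is well-defined). Taking conditional expectation and pulling the deterministic factors $\nu''(E)/P_q(E)$ outside the expectation, I obtain
\begin{equation*}
  \EE\big[e(\nu''_q) \mid V_q \in \cal{I}(\link{\cal{J}}{T})\big] = \sum_{E \in \cal{H}'} \frac{\nu''(E)}{P_q(E)} \cdot \PP\big(E \subset V_q \mid V_q \in \cal{I}(\link{\cal{J}}{T})\big) = \sum_{E \in \cal{H}'} \nu''(E) = e(\nu''),
\end{equation*}
using the definition of $P_q(E)$ in~\eqref{eq:definitionOfP} to cancel the numerator and denominator.

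For the second step, Jensen's inequality applied to $x \mapsto x^2$ on the conditional probability space gives
\begin{equation*}
  \EE\big[e(\nu''_q)^2 \mid V_q \in \cal{I}(\link{\cal{J}}{T})\big] \ge \Big(\EE\big[e(\nu''_q) \mid V_q \in \cal{I}(\link{\cal{J}}{T})\big]\Big)^2 = e(\nu'')^2,
\end{equation*}
which is exactly the desired conclusion.

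There is really no substantive obstacle here: the whole point of the reweighting by $1/P_q(E)$ in the definition of $\nu''_q$ is precisely to make it an unbiased estimator of $e(\nu'')$, and once that identity is in place the claim is just the variance-is-nonnegative inequality. The only minor subtlety worth flagging is that $P_q(E) > 0$ for every $E \in \cal{H}'$, which follows from \eqref{eq:lowerBoundOnPOfE} and justifies interpreting $\nu''_q$ as an honest nonnegative measure.
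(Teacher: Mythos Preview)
Your proof is correct and matches the paper's argument essentially line for line: compute $\EE[e(\nu''_q)\mid V_q\in\cal{I}(\link{\cal{J}}{T})]=e(\nu'')$ by linearity and the definition of $P_q(E)$, then apply Jensen's inequality for $x\mapsto x^2$. Your remark that $P_q(E)>0$ via \eqref{eq:lowerBoundOnPOfE} is a welcome sanity check that the paper leaves implicit.
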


  \begin{claimproof}
    The definitions of $e(\nu''_q)$ and $\nu''_q$,
    \begin{equation*}
      e(\nu''_q) = \sum_{E \in \cal{H}'} \nu''_q(E) = \sum_{E \in \cal{H}'} \frac{\nu''(E) \mathds{1}\big[E \subset V_q\big]}{P_q(E)},
    \end{equation*}
    imply that
    \begin{equation}\label{eq:stepBeforeJensen}
      \EE\big[e(\nu''_q) \mid V_q \in \cal{I}(\link{\cal{J}}{T})\big] = \sum_{E \in \cal{H}'} \nu''(E) =  e(\nu''),
    \end{equation}
    since, for all $E \in \cal{H}'$, we have that
    \begin{equation*}
      \EE\big[\mathds{1}[E \subset V_q] \mid V_q \in \cal{I}(\link{\cal{J}}{T})\big] = P_q(E)
    \end{equation*}
    from \eqref{eq:definitionOfP}, the definition of $P_q(E)$.
    The \namecref{stmt:nuDoublePrimeQLowerBound} now follows from \eqref{eq:stepBeforeJensen} by Jensen's inequality.
  \end{claimproof}

  The next step is relating $\Lambda_{p/(q-p)}(\nu''_q)$ and $e(\nu''_q)$ when $V_q \in \cal{I}(\link{\cal{J}}{T})$.

  \begin{claim}\label{stmt:lambdaPropertiesWithConditionalExpectation}
    If $V_q \in \cal{I}(\link{\cal{J}}{T})$, then
    \begin{equation*}
      \Lambda_{p/(q-p)}(\nu''_q) \ge \frac{e(\nu''_q)^2}{\eta R}.
    \end{equation*}
  \end{claim}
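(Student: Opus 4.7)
The plan is to derive the claim by unwinding the definition of $\cal{J}$ and invoking the contrapositive of Definition~\ref{def:janson}. The key observation is that the hypothesis $V_q \in \cal{I}(\link{\cal{J}}{T})$ transfers, via Observation~\ref{stmt:independentInLinkImpliesIndependentInOriginal}, to the statement $V_q \in \cal{I}(\cal{J})$. Since $V_q \subset V_q$ trivially, independence of $V_q$ in $\cal{J}$ forces $V_q$ itself not to be an edge of $\cal{J}$; by the definition of $\cal{J}$ in~\eqref{eq:containersForNonJansonWithFingerprints:definitionOfJansonHypergraph}, this says exactly that $\cal{H}[V_q]$ is not $(p/(q-p), \eta R)$-Janson.

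Next, I would extend $\nu_q''$ by zero to a measure on $\cal{H}[V_q]$. This is legitimate because every $E$ in the support of $\nu_q''$ lies in $\cal{H}' \subset \cal{H}[X] \subset \cal{H}$ and additionally satisfies $E \subset V_q$ (the other edges contribute zero to the formula defining $\nu_q''$), so each such $E$ is an edge of $\cal{H}[V_q]$. The extension preserves both $e(\nu_q'')$ and $\Lambda_{p/(q-p)}(\nu_q'')$, since the new edges are assigned zero mass.

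Finally, the contrapositive of Definition~\ref{def:janson} states that if $\cal{H}[V_q]$ is not $(p/(q-p), \eta R)$-Janson, then \emph{every} measure $\mu : \cal{H}[V_q] \to \RR_{\ge 0}$ satisfies $\Lambda_{p/(q-p)}(\mu) \ge e(\mu)^2 / (\eta R)$. Applying this to the extended $\nu_q''$ yields the claimed inequality. There is no real obstacle here; the proof is essentially a direct unpacking of the definitions, with the only subtle point being the verification that $\nu_q''$'s support lies inside $\cal{H}[V_q]$, which is immediate from how $\cal{H}'$ and $\nu_q''$ were constructed.
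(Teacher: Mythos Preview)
Your proposal is correct and follows essentially the same route as the paper's proof: pass from $V_q \in \cal{I}(\link{\cal{J}}{T})$ to $V_q \in \cal{I}(\cal{J})$ via Observation~\ref{stmt:independentInLinkImpliesIndependentInOriginal}, deduce that $\cal{H}[V_q]$ is not $(p/(q-p),\eta R)$-Janson from the definition of $\cal{J}$, and then apply the contrapositive of Definition~\ref{def:janson} to $\nu''_q$ viewed as a measure on $\cal{H}[V_q]$. You are in fact slightly more careful than the paper in justifying why $\nu''_q$ is supported on $\cal{H}[V_q]$ (the paper just writes ``as $\nu_q''$ is also a measure in $\cal{H}[V_q]$ by $\cal{H}' \subset \cal{H}$'').
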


  \begin{claimproof}
    It follows from $V_q \in \cal{I}(\link{\cal{J}}{T})$ and \Cref{stmt:independentInLinkImpliesIndependentInOriginal} that $V_q \in \cal{I}(\cal{J})$, and hence $\cal{H}[V_q]$ is not $(p/(q-p), \eta R)$-Janson, by the definition of $\cal{J}$, \eqref{eq:containersForNonJansonWithFingerprints:definitionOfJansonHypergraph}.
    In particular, it follows that
    \begin{equation*}
      \Lambda_{p/(q-p)}(\nu''_q) \ge \frac{e(\nu''_q)^2}{\eta R}
    \end{equation*}
    as $\nu_q''$ is also a measure supported on $\cal{H}[V_q]$ by $\cal{H}' \subset \cal{H}$.
  \end{claimproof}

  Our final inequality bounds $\Lambda_{p/(q-p)}(\nu''_q)$ in expectation by $\Lambda_p(\nu)$, up to an exponential factor.

  \begin{claim}\label{stmt:finalBoundLambdaPQNuDoublePrimeQ}
    \begin{equation*}
      \EE\big[\Lambda_{p/(q-p)}(\nu''_q) \mid V_q \in \cal{I}(\link{\cal{J}}{T})\big] < (1-\alpha)^{-2 s} \Lambda_p(\nu).
    \end{equation*}
  \end{claim}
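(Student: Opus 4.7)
The plan is a direct calculation: expand the conditional expectation on the right-hand side using the definition of $\nu_q''$, bound the resulting ratios of conditional probabilities using the lower bound \eqref{eq:lowerBoundOnPOfE} from \Cref{stmt:containersHardcovers} together with Harris's (FKG) correlation inequality, then swap the order of summation and compare term-by-term with $\Lambda_p(\nu)$.

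First I would expand
\begin{equation*}
\EE\big[d_{\nu_q''}(L)^2 \,\big|\, V_q\in\cal{I}(\link{\cal{J}}{T})\big] = \sum_{\substack{E_1, E_2 \in \cal{H}' \\ L \subset E_1 \cap E_2}} \frac{\nu''(E_1)\nu''(E_2)\, P_q(E_1\cup E_2)}{P_q(E_1)\, P_q(E_2)},
\end{equation*}
where $P_q$ is as in \eqref{eq:definitionOfP}. Two bounds on these conditional probabilities drive the argument. On the one hand, since $\cal{H}$ is $s$-uniform and every $E \in \cal{H}'$ lies outside $\cal{C}_T$, \eqref{eq:lowerBoundOnPOfE} gives $P_q(E) > (1-\alpha)^{s} q^{s}$, so $P_q(E_1) P_q(E_2) > (1-\alpha)^{2s} q^{2s}$. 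On the other hand, $\{V_q \in \cal{I}(\link{\cal{J}}{T})\}$ is a decreasing event in $V_q$ while $\{E_1\cup E_2 \subset V_q\}$ is an increasing event, so Harris's inequality gives $P_q(E_1\cup E_2) \le q^{|E_1\cup E_2|}$. Combining these two bounds yields
\begin{equation*}
\frac{P_q(E_1\cup E_2)}{P_q(E_1)P_q(E_2)} < (1-\alpha)^{-2s}\, q^{\,|E_1\cup E_2|-2s} = (1-\alpha)^{-2s}\, q^{-|E_1\cap E_2|}.
\end{equation*}

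Next, I would multiply through by $(p/(q-p))^{-|L|} = ((q-p)/p)^{|L|}$, sum over $L$ with $|L|\ge 2$, and interchange the two sums to obtain
\begin{equation*}
\EE\big[\Lambda_{p/(q-p)}(\nu_q'') \mid V_q \in \cal{I}(\link{\cal{J}}{T})\big] < (1-\alpha)^{-2s}\!\! \sum_{E_1, E_2 \in \cal{H}'}\!\! \nu''(E_1)\nu''(E_2)\, q^{-|E_1\cap E_2|} \sum_{\substack{L \subset E_1\cap E_2 \\ |L|\ge 2}} \Big(\frac{q-p}{p}\Big)^{|L|}.
\end{equation*}
The inner sum is bounded above by the full binomial expansion $(1 + (q-p)/p)^{|E_1\cap E_2|} = (q/p)^{|E_1\cap E_2|}$, and it vanishes unless $|E_1\cap E_2|\ge 2$. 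Substituting this bound makes the powers of $q$ cancel, leaving
\begin{equation*}
\EE\big[\Lambda_{p/(q-p)}(\nu_q'') \mid V_q \in \cal{I}(\link{\cal{J}}{T})\big] < (1-\alpha)^{-2s}\!\!\! \sum_{\substack{E_1, E_2 \in \cal{H}' \\ |E_1\cap E_2|\ge 2}}\!\! \nu''(E_1)\nu''(E_2)\, p^{-|E_1\cap E_2|}.
\end{equation*}

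The final step is a term-by-term comparison with $\Lambda_p(\nu)$. Rewriting
\begin{equation*}
\Lambda_p(\nu) = \sum_{E_1, E_2 \in \cal{H}[X]} \nu(E_1)\nu(E_2) \sum_{\substack{L \subset E_1\cap E_2 \\ |L|\ge 2}} p^{-|L|},
\end{equation*}
and retaining only the top-order term $L = E_1\cap E_2$ in the inner sum (valid when $|E_1\cap E_2|\ge 2$), gives
\begin{equation*}
\Lambda_p(\nu) \ge \sum_{\substack{E_1, E_2 \in \cal{H}[X] \\ |E_1\cap E_2|\ge 2}} \nu(E_1)\nu(E_2)\, p^{-|E_1\cap E_2|},
\end{equation*}
and the right-hand side dominates the previous display since $\cal{H}' \subset \cal{H}[X]$ and $\nu'' \le \nu$. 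Chaining the two inequalities proves the claim. The only non-mechanical ingredient is the Harris-inequality step; the rest is bookkeeping, although one must be slightly careful with the edge case in which $e(\nu'') = 0$, but that case reduces to $e(\nu) = 0$ (by \eqref{eq:nuDoublePrimeHasHalfTheMeasureOfNu}) and so the overall goal \eqref{eq:goal} is trivial.
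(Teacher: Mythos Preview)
Your proof is correct and follows essentially the same approach as the paper's own proof: expand the squared degree, use the lower bound \eqref{eq:lowerBoundOnPOfE} on $P_q(E_i)$ together with Harris's inequality on $P_q(E_1\cup E_2)$, swap the order of summation, bound the inner sum over $L$ by the binomial expansion $(q/p)^{|E_1\cap E_2|}$, and finish with the term-by-term comparison against $\Lambda_p(\nu)$. The organisation differs only cosmetically (the paper first bounds $\EE[d_{\nu_q''}(L)^2\mid\cdot]$ and then sums over $L$, whereas you assemble the ratio $P_q(E_1\cup E_2)/P_q(E_1)P_q(E_2)$ directly), but the ingredients and the arithmetic are identical.
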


  \begin{claimproof}
    Recall that $d_{\nu''_q}(L)$ is defined as
    \begin{equation*}
      d_{\nu''_q}(L) = \sum_{L \subset E \in \cal{H}'} \nu_q''(E) = \sum_{L \subset E \in \cal{H}'} \frac{\nu''(E) \mathds{1}\big[E \subset V_q\big]}{P_q(E)},
    \end{equation*}
    where the last equality is using the definition of $\nu''_q$, and hence we can write, for every $L \subset V$,
    \begin{equation*}
      d_{\nu''_q}(L)^2 = \sum_{L \subset E_1 \in \cal{H}'} \frac{\nu''(E_1)}{P_q(E_1)} \sum_{L \subset E_2 \in \cal{H}'} \frac{\nu''(E_2)}{P_q(E_2)} \, \mathds{1}\big[E_1 \cup E_2 \subset V_q\big].
    \end{equation*}
    Observe that the event $V_q \in \cal{I}(\link{\cal{J}}{T})$ is decreasing and also that, for every $E_1, E_2 \in \cal{H}'$, the event $E_1 \cup E_2 \subset V_q$ is increasing.
    We can therefore use Harris' inequality to bound, for $E_1, E_2 \in \cal{H}'$,
    \begin{equation}\label{eq:applicationOfHarris}
      \PP\big(E_1 \cup E_2 \subset V_q \mid V_q \in \cal{I}(\link{\cal{J}}{T})\big) \le \PP(E_1 \cup E_2 \subset V_q) = q^{|E_1 \cup E_2|} = q^{2 s - |E_1 \cap E_2|}
    \end{equation}
    because $\cal{H}'$ is $s$-uniform.
    Taking the conditional expectation and applying \eqref{eq:applicationOfHarris}, we obtain
    \begin{equation}\label{eq:finalBoundDegreeNuDoublePrimeQSquared}
      \EE\big[d_{\nu''_q}(L)^2 \mid V_q \subset \cal{I}(\link{\cal{J}}{T})\big] < (1 - \alpha)^{-2 s} \sum_{L \subset E_1 \in \cal{H}'} \nu(E_1) \sum_{L \subset E_2 \in \cal{H}'} \nu(E_2) \, q^{-|E_1 \cap E_2|}
    \end{equation}
    where we used $P_q(E) > (1-\alpha)^s q^s$, by \eqref{eq:lowerBoundOnPOfE} and since $\cal{H}'$ is $s$-uniform, and the fact that $\nu'' \le \nu$.

    By the definition \eqref{eq:defEdgesAndLambda} of $\Lambda_{p/(q-p)}(\nu''_q)$,
    \begin{equation}\label{eq:consequenceOfDefinitionOfLambda}
      \EE\big[\Lambda_{p/(q-p)}(\nu''_q) \mid V_q \in \cal{I}(\link{\cal{J}}{T})\big] = \sum_{\substack{L \subset V \\ |L| \ge 2}} \EE\big[d_{\nu''_q}(L)^2 \mid V_q \in \cal{I}(\link{\cal{J}}{T})\big] \bigg(\frac{q - p}{p}\bigg)^{|L|},
    \end{equation}
    and then applying \eqref{eq:finalBoundDegreeNuDoublePrimeQSquared} to each $d_{\nu''_q}(L)$ term in \eqref{eq:consequenceOfDefinitionOfLambda} yields
    \begin{equation*}
      \EE\big[\Lambda_{p/(q-p)}(\nu''_q) \mid V_q \in \cal{I}(\link{\cal{J}}{T})\big] < (1-\alpha)^{-2 s} \sum_{\substack{L \subset V \\ |L| \ge 2}} \sum_{L \subset E_1 \in \cal{H}'} \sum_{L \subset E_2 \in \cal{H}'} \frac{\nu(E_1) \nu(E_2)}{q^{|E_1 \cap E_2|}} \bigg(\frac{q-p}{p}\bigg)^{|L|}
    \end{equation*}
    or, equivalently,
    \begin{equation}\label{eq:readyForBinomialTheorem}
      \EE\big[\Lambda_{p/(q-p)}(\nu''_q) \mid V_q \in \cal{I}(\link{\cal{J}}{T})\big] < (1-\alpha)^{-2 s} \sum_{\substack{E_1, E_2 \in \cal{H}' \\ |E_1 \cap E_2| \ge 2}} \frac{ \nu(E_1) \nu(E_2) }{q^{|E_1 \cap E_2|}} \sum_{\ell = 2}^s \bigg(\frac{q-p}{p}\bigg)^{\ell} \binom{|E_1 \cap E_2|}{\ell}
    \end{equation}
    by first choosing $E_1, E_2 \in \cal{H}'$ and then $L \subset E_1 \cap E_2$, grouping terms according to $\ell = |L|$.
    Bounding the innermost sum in \eqref{eq:readyForBinomialTheorem} for fixed $E_1, E_2 \in \cal{H}'$ with $|E_1 \cap E_2| \ge 2$ then yields
    \begin{equation*}
      \sum_{\ell = 2}^s \bigg(\frac{q-p}{p}\bigg)^{\ell} \binom{|E_1 \cap E_2|}{\ell} \le \bigg(\frac{\, q \,}{p}\bigg)^{|E_1 \cap E_2|},
    \end{equation*}
    which replaced in \eqref{eq:readyForBinomialTheorem} and simplified, results in
    \begin{equation}\label{eq:readyForFinalSubstitution}
      \EE\big[\Lambda_{p/(q-p)}(\nu''_q) \mid V_q \in \cal{I}(\link{\cal{J}}{T})\big] < (1 - \alpha)^{-2 s} \sum_{\substack{E_1, E_2 \in \cal{H}' \\ |E_1 \cap E_2| \ge 2}} \frac{\nu(E_1) \nu(E_2)}{p^{|E_1 \cap E_2|}}.
    \end{equation}

    To complete the proof, note that
    \begin{equation*}
      \sum_{\substack{E_1, E_2 \in \cal{H}' \\ |E_1 \cap E_2| \ge 2}} \frac{\nu(E_1) \nu(E_2)}{p^{|E_1 \cap E_2|}} \le \sum_{\substack{L \subset V \\ |L| \ge 2}} \, \sum_{L \subset E_1 \in \cal{H}}  \sum_{L \subset E_2 \in \cal{H}} \frac{\nu(E_1) \nu(E_2)}{p^{|L|}} = \sum_{\substack{L \subset V \\ |L| \ge 2}} d_\nu(L)^2 p^{-|L|},
    \end{equation*}
    where the last term is equal to $\Lambda_p(\nu)$ by definition, so we obtain, replacing it back in \eqref{eq:readyForFinalSubstitution}, the inequality that we wanted.
  \end{claimproof}

  Observe that \Cref{stmt:lambdaPropertiesWithConditionalExpectation} implies that
  \begin{equation}\label{eq:lambdaPropertiesWithConditionalExpectationInProof}
    \EE\big[\Lambda_{p/(q-p)}(\nu''_q) \mid V_q \in \cal{I}(\link{\cal{J}}{T})\big] \ge \frac{\EE\big[e(\nu''_q)^2 \mid V_q \in \cal{I}(\link{\cal{J}}{T})\big]}{\eta R}.
  \end{equation}
  Combining \Cref{stmt:finalBoundLambdaPQNuDoublePrimeQ} and \Cref{stmt:nuDoublePrimeQLowerBound} with \eqref{eq:lambdaPropertiesWithConditionalExpectationInProof} yields
  \begin{equation*}
    (1-\alpha)^{-2 s} \Lambda_p(\nu) \ge \frac{e(\nu'')^2}{\eta R}
  \end{equation*}
  and thus, since we are in the case where \eqref{eq:nuDoublePrimeHasHalfTheMeasureOfNu} holds, it follows that
  \begin{equation}\label{eq:goalAccomplished}
    \Lambda_p(\nu) \ge \frac{e(\nu)^2}{R}
  \end{equation}
  by our choice of $\eta$ satisfying $4 \eta \le (1-\alpha)^{2 s}$.
  As \eqref{eq:goalAccomplished} was exactly our goal, \eqref{eq:goal}, and $\nu$ was arbitrary, we conclude that $\cal{H}[X]$ is not $(p, R)$-Janson.
  Moreover, our choice of $(S, T) \in \cal{Y}$ was also arbitrary, so we have established that \cref{item:containersForNonJansonWithFingerprints:containerHypergraphIsNotJanson} holds, and the proof is complete.
\end{proof}

\section{Extending collections of copies}\label{sec:extensionOfJansonCollection}

In this \namecref{sec:extensionOfJansonCollection}, we use a novel container \namecref{stmt:containersForNonJanson} to prove the core statement that we need in the proof of \Cref{stmt:key}.
We defer the proof of this container theorem to \Cref{sec:containersForNonJanson}, since that is the most technical part of the entire argument.

The setting is very similar to \Cref{stmt:warmUpExtension}, but now we will be able to extend many copies of $F^{-}$ to $F$ by adding a single vertex $v$ to $U = V(\tilde{G}) = V(\tilde{G}')$.
Moreover, we will be able to show that the set of copies created by adding $v$ to $U$ will be well-distributed in relation to the copies of $F$ fully contained in $U$.
To obtain this stronger conclusion, we assume that, besides $\frakI_{F^{-}, \tilde{G}', \tilde{G}}[W]$ being $(p, R)$-Janson for every $W \subset U = V(\tilde{G})$ with $|W| \ge m / (8r)$, we also have that $\frakI_{F,G',G}[U]$ is $(p,R')$-Janson.
Under these circumstances, the \namecref{stmt:extensionOfJansonCollection} states that, when $G$ is distributed as $\Gnp(m + 1, 2)$ conditioned on $\{G[U] = \tilde{G}\}$, the following holds with extremely high probability: for every choice of $G' \subset G$ such that $\mathrm{N}_{G'}(v)$ is not too small, the hypergraph $\frakI_{F,G',G}$ is $(p,R'+1)$-Janson.

\begin{lem}\label{stmt:extensionOfJansonCollection}
  Let $m, k, r, s \in \NN$ with $r \ge 2$, $s < k \le m$, and let
  \begin{equation}\label{eq:paramsInExtensionOfJansonCollection}
    p = \frac{1}{2^{25}k^2r^4}, \qquad m \ge r^{Ck}, \qquad R = 2^{-5} r^{-1} p m \qquad \text{and} \qquad 0 \le R' \le \frac{R}{16}.
  \end{equation}
  Further let $F$, $\tilde{G}'$ and $\tilde{G}$ be graphs such that $v(F) = s + 1 < m$, $\tilde{G}' \subset \tilde{G}$ and $v(\tilde{G}) = m$.

  If $\frakI_{F, \tilde{G}', \tilde{G}}$ is $(p, R')$-Janson and $\frakI_{F^{-}, \tilde{G}', \tilde{G}}[W]$ is $(p, R)$-Janson for every $W \subset U = V(\tilde{G})$ with $|W| \ge m / (8r)$, then
  \begin{equation}\label{eq:probOfGPrime}
    \PP
    \left(
        \exists \, G' \subset G
      \,
      :
      \,
      \begin{array}{@{}c@{}}
        G'[U] = \tilde{G}', ~ d_{G'}(v) \ge m / (4r) \text{ and} \\
        \frakI_{F, G', G}
        \text{ is not }  (p, R' + 1)\text{-Janson}
      \end{array}
      ~
      \middle \vert \ G[U] = \tilde{G}
    \right) \le 2^{- m /(32 r)},
  \end{equation}
  where $G \sim \Gnp(m + 1, 1/2)$ and $V(G) = U \cup \{v\}$.
\end{lem}

The first change that we need to make to the proof in \Cref{sec:warmUpExtension} is to replace $\Gamma_G$, defined in \eqref{eq:eventOfGPrimeWarm}, by $\Psi_G$, which is just the collection of $G' \subset G$ satisfying the event in \eqref{eq:probOfGPrime}:
\begin{equation}\label{eq:eventOfGPrime}
  \Psi_G = \left\{
    G' \subset G :
  \begin{array}{@{}c@{}}
    G'[U] = \tilde{G}', ~ d_{G'}(v) \ge m / (4r) \text{ and } \\
     \frakI_{F, G', G} \text{ is not }  (p, R'+1)\text{-Janson}
  \end{array}
  \right\}.
\end{equation}
That is, we now require that $\frakI_{F, G', G}$ is not $(p, R'+1)$-Janson, instead of requiring to it be empty as in \Cref{stmt:warmUpExtension}.
The argument follows very closely the one in \Cref{sec:warmUpExtension}, so we briefly summarise the ideas in the proof of \Cref{stmt:warmUpExtension}, referring to some of the definitions in that section as we progress.

Recall that we defined
\begin{equation*}
  \cal{H} = \Big\{E_L \subset U \times \{0, 1\} : L \in \frakI_{F^-, \tilde{G}', \tilde{G}}\Big\},
\end{equation*}
where
\begin{equation*}
  E_L = \bigg\{\Big(u, \mathds{1}\big[\phi_L(u) \in \mathrm{N}_F(w)\big]\Big) : u \in L\bigg\},
\end{equation*}
with $\phi_L : L \to V(F^-)$ being a fixed bijection and $w$ being the unique vertex in $V(F) \setminus V(F^-)$.
We then defined
\begin{equation*}
  \iota(G', G) = \big\{(u,0) : u  \in \mathrm{N}_G(v)^{\complement}\big\} \cup \big\{(u,1) : u \in \mathrm{N}_{G'}(v)\big\}
\end{equation*}
in \eqref{eq:definitionIotaWarm}, proved \Cref{stmt:iotaIndepWarm}, which states that if $G[U] = \tilde{G}$ and $G' \in \Gamma_G$, then $\iota(G', G) \in \cal{I}(\cal{H})$, and applied a container \namecref{stmt:containersCoversButJanson} to bound the probability of that event.
However, this is not immediately possible here, since \Cref{stmt:iotaIndepWarm} is not true if we replace $G' \in \Gamma_G$ by $G' \in \Psi_G$, the analogous collection for this \namecref{sec:extensionOfJansonCollection}: requiring $\frakI_{F, G', G}$ to not be $(p, R' + 1)$-Janson instead of $\frakI_{F, G', G} = \emptyset$ means that we are not interested in independent sets in $\cal{H}$, but in vertex subsets $I$ such that $\cal{H}[I]$ is not $(p, R' + 1)$-Janson.

We remedy that situation by relying on a container \namecref{stmt:containersForNonJanson} for such sets, like the one that we proved in \Cref{sec:simplerContainerForNonJanson}.
However, \Cref{stmt:containersForNonJansonGeneral} is not adequate for several reasons, which we discuss while introducing some notation and new definitions.
We then prove the analogue of \Cref{stmt:iotaIndepWarm} for this section, \Cref{stmt:motivationForJprime}, and state the container \namecref{stmt:containersForNonJanson} that we end up using, \Cref{stmt:containersForNonJanson}.

It will be helpful to partition the copies of $F \subset G'[U \cup \{v\}]$ in two natural classes.
The first one consists of the copies of $F$ that use $v$, which correspond to copies of $F^- \subset \tilde{G}'[U]$ that are extended with the addition of $v \not \in U$.
Every other copy of $F \subset G'$, i.e.\ those that do not use $v$, belong in the second class, and are contained in $\tilde{G}' = G'[U]$.
The next \namecref{def:edgeWiseInclusion} will relate the hypergraph of copies of $F^-$ that can be extended with $v$ and the hypergraph of the resulting copies of $F$.

\begin{defi}\label{def:edgeWiseInclusion}
  For a hypergraph $\cal{G}$ and a vertex $v$ not in $V(\cal{G})$, let
  \begin{equation*}
    \ilink{\cal{G}}{v} = \big\{E \cup \{v\} : E \in \cal{G}\big\}
  \end{equation*}
  denote the edge-wise inclusion of $v$ in $\cal{G}$.
\end{defi}

Like in \Cref{sec:warmUpExtension}, let $\pi : V(\cal{H}) \to U$ be the projection onto the first coordinate and define $\pi_v = \ilink{\circ \pi}{v}$.
Recalling \Cref{stmt:piCalHIsHypergraphOfInducedCopies}, that is, $$\pi(\cal{H}) = \frakI_{F^-, \tilde{G}', \tilde{G}},$$ we now relate $\frakI_{F, G', G}$ to both $\frakI_{F, \tilde{G}', \tilde{G}}$ and $\cal{H}$ when $G[U]=\tilde{G}$ and $G'[U]=\tilde{G}'$.
We will use this fact to conclude that, if $\frakI_{F, G', G}$ is not $(p, R' + 1)$-Janson, then neither is $\pi_v\big(\cal{H}[I]\big) \cup \frakI_{F, \tilde{G}', \tilde{G}}$ when $I = \iota(G',G)$.

\begin{obs}\label{stmt:motivationForJprime}
  For all graphs $G'$ and $G$ such that $G[U] = \tilde{G}$, $G' \subset G$ and $G'[U] = \tilde{G}'$, if $I = \iota(G',G)$, then
  \begin{equation}\label{eq:linkOfProjectionIsInducedCopiesWithV}
    \pi_v\big(\cal{H}[I]\big) \cup \frakI_{F, \tilde{G}', \tilde{G}} \subset \frakI_{F, G', G}.
  \end{equation}
\end{obs}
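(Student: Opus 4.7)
The plan is to verify the two containments $\frakI_{F, \tilde{G}', \tilde{G}} \subset \frakI_{F, G', G}$ and $\pi_v(\cal{H}[I]) \subset \frakI_{F, G', G}$ separately. Both follow directly from the definitions established earlier and the correspondence between independent-like sets in $\cal{H}$ and pairs of neighbourhoods recorded in \Cref{stmt:defOfCalHIsCorrect}.

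For the first containment, I would take $L \in \frakI_{F, \tilde{G}', \tilde{G}}$. By \Cref{def:hypergraphEncodingCopies} this means $L \subset U$ and $F \cong \tilde{G}'[L] = \tilde{G}[L]$. Because $G'[U] = \tilde{G}'$ and $G[U] = \tilde{G}$, restricting to $L \subset U$ gives $G'[L] = \tilde{G}'[L]$ and $G[L] = \tilde{G}[L]$, so $F \cong G'[L] = G[L]$, i.e.\ $L \in \frakI_{F, G', G}$, as required.

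For the second containment, I would take an arbitrary edge $A \in \pi_v(\cal{H}[I])$. Unpacking the definitions of $\pi_v$ and $\ilink{\cdot}{v}$, one may write $A = \pi(E) \cup \{v\}$ for some $E \in \cal{H}[I]$. Since $E \in \cal{H}$, the definition of $\cal{H}$ in \eqref{eq:defOfCalHwarm} provides some $L \in \frakI_{F^-, \tilde{G}', \tilde{G}}$ with $E = E_L$; by construction of $E_L$ we have $\pi(E_L) = L$, so $A = L \cup \{v\}$. Moreover, $E_L \subset I = \iota(G', G)$ together with the identities $I^{(0)} = \mathrm{N}_G(v)^\complement$ and $I^{(1)} = \mathrm{N}_{G'}(v)$ from \eqref{eq:projectionOfIotaWarm} yields $E_L^{(0)} \subset \mathrm{N}_G(v)^\complement$ and $E_L^{(1)} \subset \mathrm{N}_{G'}(v)$. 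These are precisely the hypotheses of \Cref{stmt:defOfCalHIsCorrect}, which delivers $L \cup \{v\} \in \frakI_{F, G', G}$, that is $A \in \frakI_{F, G', G}$.

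Honestly, there is no significant obstacle here: this is a short bookkeeping \namecref{stmt:motivationForJprime} that serves to align the three hypergraphs appearing in the main argument. The only mild care needed is in parsing the composition $\pi_v = \ilink{\circ \pi}{v}$, so that each edge of $\pi_v(\cal{H}[I])$ is correctly identified with a set of the form $L \cup \{v\}$ with $L$ witnessing a copy of $F^-$ inside $\tilde{G}'$ whose corresponding edge $E_L$ of $\cal{H}$ is contained in $I$, after which \Cref{stmt:defOfCalHIsCorrect} does all the work.
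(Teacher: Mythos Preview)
Your proposal is correct and matches the paper's own proof essentially line for line: both split into the two containments, dispatch $\frakI_{F,\tilde G',\tilde G}\subset\frakI_{F,G',G}$ immediately from $G'[U]=\tilde G'$ and $G[U]=\tilde G$, and for $\pi_v(\cal{H}[I])\subset\frakI_{F,G',G}$ unwind an edge to $E_L$ with $\pi_v(E_L)=L\cup\{v\}$ and then invoke \Cref{stmt:defOfCalHIsCorrect} via the identities for $I^{(0)},I^{(1)}$. The only cosmetic difference is that the paper cites the restated display \eqref{eq:projectionOfIota} rather than \eqref{eq:projectionOfIotaWarm}.
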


The inclusion in \Cref{stmt:motivationForJprime} is in fact an equality, but we will not use that fact, and therefore avoid giving its (trivial) proof for the sake of brevity.
As we will see, \Cref{stmt:motivationForJprime} follows easily from expanding the definitions, especially after we recall \eqref{eq:projectionOfIotaWarm}, that is, if $I=\iota(G',G)$, then
\begin{equation}\label{eq:projectionOfIota}
   I^{(0)} = \mathrm{N}_G(v)^\complement \qquad \text{and} \qquad I^{(1)} = \mathrm{N}_{G'}(v).
\end{equation}

\begin{proof}[Proof of \Cref{stmt:motivationForJprime}]
  It follows immediately from
  \begin{equation}\label{eq:assumptionInMotivationForJPrime}
    G'[U] = \tilde{G}' \qquad \text{and} \qquad G[U] = \tilde{G}
  \end{equation}
  that $\frakI_{F, \tilde{G}', \tilde{G}} \subset \frakI_{F, G', G}$, so it only remains to show that
  \begin{equation*}
    \pi_v\big(\cal{H}[I]\big) \subset \frakI_{F, G', G}.
  \end{equation*}

  Let $E \in \cal{H}[I]$ be of the form $E = E_L$ for $L \in \frakI_{F^-, \tilde{G}', \tilde{G}}$ and recall that $\pi(E) = L$, so our goal is to show that
  \begin{equation}\label{eq:finalStepInMotivationForJPrime}
    L \cup \{v\} \in \frakI_{F, G', G},
  \end{equation}
  where $\pi_v(E) = L\cup\{v\}$ by the definition of $\pi_v$.
  As $L \in \frakI_{F^-, \tilde{G}', \tilde{G}}$, \eqref{eq:finalStepInMotivationForJPrime} follows from \Cref{stmt:defOfCalHIsCorrect} using \eqref{eq:projectionOfIota}, \eqref{eq:assumptionInMotivationForJPrime} and the fact that $E \subset I$.
\end{proof}

In analogy to the proof in \Cref{sec:warmUpExtension}, by \Cref{stmt:motivationForJprime} and the fact that the Janson property is increasing, \Cref{stmt:JansonIsDownset}, it suffices to have a family of containers $\cal{X}$ with the following property.
For all $\iota(G', G) = I \subset V(\cal{H})$ such that $\pi_v(\cal{H}[I]) \cup \frakI_{F, G', G}$ is not $(p, R' + 1)$-Janson, there is $X \in \cal{X}$ with $I \subset X$.
This is the statement of \Cref{stmt:containersForNonJanson}, the container \namecref{stmt:containersForNonJanson} that we need to prove \Cref{stmt:extensionOfJansonCollection}.
We state that \namecref{stmt:containersForNonJanson} below, but defer its proof, an implementation of the methods discussed in \Cref{sec:simplerContainerForNonJanson} tailored to this specific setting, to \Cref{sec:containersForNonJanson}.

Continuing the comparison with \Cref{stmt:warmUpExtension}, ideally each $X \in \cal{X}$ would be such that $\pi(\cal{H}[X])$ is not $(p, R)$-Janson.
We are unable to prove such a statement, because the inclusion of $v$ by $\pi_v$ adds a constraint in the one-degrees of the vertices in $\cal{H}$.
To deal with this extra constraint, we (roughly) delete a small proportion of vertices to reduce the maximum degree.

Implementing this modification to our method yields something slightly weaker that nonetheless suffices: we show that if $X \in \cal{X}$ is sufficiently large, then there is $Y$ covering almost all of $X$ such that $\pi(\cal{H}[Y])$ is not $(p, R)$-Janson.
Our final note before the statement is that, despite applying \Cref{stmt:containersForNonJanson} with the function $\pi$ being a projection, as we previously defined it in this \namecref{sec:extensionOfJansonCollection}, we state the \namecref{stmt:containersForNonJanson} in a slightly more general setting.

\begin{restatable}{thm}{containersForNonJanson}\label{stmt:containersForNonJanson}
  Let $n, r, s \in \NN$ with $n \ge s$ and $r \ge 2$, and let $q, p, R, R', \eta \in \RR$ satisfy
  \begin{equation}\label{eq:assumptionsOnContainerParams}
    0 < q < \frac{1}{8}, \quad 0 < p \le \frac{q}{2^{11} r s^2}, \quad R = 2^{-6} r^{-1} p n, \quad 0 \le R' \le \frac{R}{16} \quad \text{and} \quad \eta = p^4 \Big(\frac{\,q\,}{2}\Big)^{4 s}.
  \end{equation}
  Further let $\cal{F}$ be a $(s + 1)$-uniform hypergraph with vertex set $U$ that is $(p, R')$-Janson, let $\cal{H}$ be an $s$-uniform hypergraph with vertex set $V$, where $|V| = n$, and let $\pi: V \to U$ satisfy
  \begin{equation}\label{eq:assumptionsPi}
    |\pi(L)| \ge \frac{|L|}{2} \quad \text{for every } L \subset V
    \qquad \text{and} \qquad
    |\pi(E)| = |E| \quad \text{for every } E \in \cal{H}.
  \end{equation}
  Finally, let $v$ be a vertex not in $U$.
  There exists a family $\cal{X} \subset 2^V$ with
  \begin{equation}\label{eq:containersForNonJansonSmallFamily}
    |\cal{X}| \le \bigg(\frac{\,2\,}{q}\bigg)^{2 q n}
  \end{equation}
  such that the following hold.
  \begin{enumerate}[\normalfont (1)]
    \item If $I \subset V$ and $\pi_v(\cal{H}[I]) \cup \cal{F}$ is not $(p, R' + \eta R)$-Janson, then $I \subset X$ for some $X \in \cal{X}$. \label{item:containersForNonJansonInclusions}
    \item For each $X \in \cal{X}$ with $|X| \ge n/(8r)$, there exists $Y \subset X$ with
      \begin{equation}
        |Y| \ge |X| - 2^{-8}r^{-1} n
      \end{equation}
      such that $\pi(\cal{H}[Y])$ is not $(p, R)$-Janson. \label{item:containersForNonJansonAreNonJanson}
  \end{enumerate}
\end{restatable}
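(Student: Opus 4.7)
The proof will mirror the blueprint of \Cref{sec:simplerContainerForNonJanson}, adapted to handle three additional features: the union with the fixed $(p,R')$-Janson hypergraph $\cal{F}$, the edge-wise inclusion $\pi_v$, and the projection $\pi$. The first step is to define the auxiliary Janson hypergraph on $V$,
\begin{equation*}
  \cal{J} = \big\{ I \subset V : \pi_v(\cal{H}[I]) \cup \cal{F} \text{ is } (p/(q-p), R' + \eta' R)\text{-Janson} \big\},
\end{equation*}
where $\eta'$ is an internal parameter chosen only slightly larger than $\eta$. Because $I \mapsto \pi_v(\cal{H}[I]) \cup \cal{F}$ is monotone in $I$, \Cref{stmt:JansonIsDownset} implies that any $I$ for which $\pi_v(\cal{H}[I]) \cup \cal{F}$ fails to be $(p, R' + \eta R)$-Janson is an independent set of $\cal{J}$.

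The next step is to apply \Cref{stmt:containersHardcovers} to $\cal{J}$ with parameters $q$ and $\alpha = 1/2$, producing fingerprints $\cal{T}$ and cover hypergraphs $\cal{C}_T$ with the usual properties. Setting $\cal{C}'_T = \langle \cal{C}_T \rangle_{=s}$ and invoking \Cref{stmt:independentSetsWithUpsets}, each $I \in \cal{I}(\cal{J})$ satisfies $I \in \cal{I}(\cal{C}'_T)$ for $T = \varphi(I)$. Applying \hyperlink{general-cont-thm:containersCoversButJanson}{\Cref{stmt:containersCoversButJanson}} to the $s$-uniform hypergraph $\cal{C}'_T$ yields, for each $T$, a family $\cal{S}_T$ and containers $\psi_T(S)$. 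We then take $\cal{X} = \{\psi_T(S) : T \in \cal{T}, S \in \cal{S}_T\}$. The size bound \eqref{eq:containersForNonJansonSmallFamily} follows from enumerating pairs $(T, S)$ via the bounds in \Cref{stmt:containersHardcovers} and \hyperlink{general-cont-thm:containersCoversButJanson}{\Cref{stmt:containersCoversButJanson}}, just as in the derivation of \Cref{stmt:containersForNonJansonGeneral} from \Cref{stmt:containersForNonJansonWithFingerprints}. Property~\eqref{item:containersForNonJansonInclusions} follows from chaining the container inclusions.

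The heart of the proof is property~\eqref{item:containersForNonJansonAreNonJanson}. Given $X \in \cal{X}$ with $|X| \ge n/(8r)$, let $T \in \cal{T}$ be such that $X = \psi_T(S)$ for some $S$. We will exhibit $Y \subset X$ with $|Y| \ge |X| - 2^{-8} r^{-1}n$ and establish that $\pi(\cal{H}[Y])$ is not $(p,R)$-Janson by a measure-splitting argument. For any candidate measure $\mu$ on $\pi(\cal{H}[Y])$, lift it back to a measure $\nu$ on $\cal{H}[Y]$ using the injectivity of $\pi$ on individual edges (guaranteed by $|\pi(E)|=|E|$), then split $\nu = \nu' + \nu''$ with $\nu'$ supported on $\cal{C}'_T[Y]$. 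If $e(\nu') \ge e(\nu)/2$, the non-Janson property of $\cal{C}'_T[X]$ from \hyperlink{general-cont-thm:containersCoversButJanson}{(iii)} immediately yields $\Lambda_p(\nu) \ge e(\nu)^2/R$ (this step exactly parallels \Cref{stmt:nuPrimeHasLessThanHalfTheMeasureOfNu}). Otherwise, we mimic the conditional expectation argument of \Cref{stmt:finalBoundLambdaPQNuDoublePrimeQ}: introduce the random measure $\nu''_q$ weighted by $P_q(E)^{-1}\mathds{1}[E \subset V_q]$, condition on $V_q \in \cal{I}(\link{\cal{J}}{T})$, and use the definition of $\cal{J}$ together with the additivity of $\Lambda_p$ under the decomposition $\pi_v(\cal{H}[V_q]) \cup \cal{F}$ to extract the Janson bound for $\pi_v(\cal{H}[V_q])$ alone (subtracting off the $R'$ contribution due to $\cal{F}$). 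Harris' inequality, applied to the decreasing event of independence in the link and the increasing event $E_1 \cup E_2 \subset V_q$, combined with the lower bound $P_q(E) > (1-\alpha)^{|E|}q^{|E|}$ from \Cref{stmt:containersHardcovers}, then bounds $\EE[\Lambda_{p/(q-p)}(\nu''_q)]$ in terms of $\Lambda_p(\nu)$, closing the loop. The set $Y$ is obtained from $X$ by removing a small collection of vertices whose $\pi$-images coincide with other vertices of $X$, ensuring we can freely convert between measures on $\cal{H}[Y]$ and $\pi(\cal{H}[Y])$; the slack $2^{-8}r^{-1} n$ suffices since $|\pi(L)| \ge |L|/2$ bounds the number of such ``collision'' vertices.

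The main obstacle is tracking the cascade of losses. The very restrictive form $\eta = p^4 (q/2)^{4s}$ in \eqref{eq:assumptionsOnContainerParams} (much smaller than the $2^{-2s-2}$ appearing in Section~4) reflects these compounded losses: one factor of $(q/2)^{2s}$ comes from the Harris inequality applied to edges of size up to $2s$ after $\pi_v$-combination, another such factor comes from the additional structure imposed by the $\pi_v$-extension and union with $\cal{F}$, and the $p^4$ factor arises from the interaction of the $v$-extension with pairs of edges during the degree computation. The delicate part is ensuring that, after absorbing all these losses and the passage from $X$ to $Y$, the non-Janson parameter still reaches the target $R = 2^{-6}pn$, and this is precisely what forces the assumption $p \le q/(2^{10}r^2 s^2)$ in \eqref{eq:assumptionsOnContainerParams}.
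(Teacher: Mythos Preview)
Your outline correctly identifies the two-stage container construction (apply \Cref{stmt:containersHardcovers} to an auxiliary Janson hypergraph, then \Cref{stmt:containersCoversButJanson} to each $\cal{C}'_T$), and this part matches the paper. However, there is a genuine gap in your treatment of \cref{item:containersForNonJansonAreNonJanson}, and it is precisely the place where the argument departs from \Cref{sec:simplerContainerForNonJanson}.

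The role of $Y$ is not what you describe. The paper does not construct $Y$ by deleting ``collision'' vertices for $\pi$; instead, it proves \cref{item:containersForNonJansonAreNonJanson} by contradiction, assuming that \emph{every} $Y \subset X$ with $|Y| \ge |X| - 2^{-8}r^{-1}n$ has $\pi(\cal{H}[Y])$ $(p,R)$-Janson. This hypothesis is exactly what is needed to invoke \Cref{stmt:boundDegreeSquared}, which produces a measure $\mu$ on $\pi(\cal{H}[X])$ with $e(\mu)=\sqrt{R}$, $\Lambda_p(\mu)<1$, and crucially
\[
  \sum_{u \in \pi(X)} d_\mu(u)^2 \le \frac{2^{14}r^2 s^2 e(\mu)^2}{n}.
\]
This degree-squared bound is the whole point of the slack $2^{-8}r^{-1}n$, and your $\pi$-collision mechanism does not deliver it.

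The reason this bound is essential is the edge-wise inclusion $\pi_v$. Passing from a measure on $\pi(\cal{H}[I])$ to one on $\pi_v(\cal{H}[I])$ adds $v$ to every edge, so in $\Lambda_p(\bar{\mu}''_q)$ every singleton $\{u\}$ becomes the pair $\{u,v\}$ and contributes a term $p^{-2}\sum_u d_{\mu''_q}(u)^2$ (see \Cref{stmt:lambdaBarMuQEqualityLambdaMuQ} in the paper). Without control of $\sum_u d_\mu(u)^2$ this term alone can swamp everything; your sketch mentions ``the interaction of the $v$-extension with pairs of edges'' but does not say how to bound it, and the device you propose does not.

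Finally, the way you handle the union with $\cal{F}$ (``subtracting off the $R'$ contribution'') is not how the argument runs. The paper fixes a normalised witness $\rho$ for $\cal{F}$ with $e(\rho)=\sqrt{R'}$ and $\Lambda_p(\rho)<1$, forms the combined measure $\rho + \bar{\mu}''_q$ with a carefully chosen scaling $\gamma=\sqrt{8\eta}$, and bounds the cross terms in $\Lambda_p(\rho+\bar{\mu}''_q)$ by Cauchy--Schwarz. One then shows $\EE[\Lambda_p(\rho+\bar{\mu}''_q)] < \EE[e(\rho+\bar{\mu}''_q)^2]/(R'+\eta R)$, contradicting the definition of $\cal{J}'$. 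Note also that the paper's $\cal{J}'$ uses the parameter $p$ rather than $p/(q-p)$; because $\eta$ is so small here, the crude deterministic bound of \Cref{stmt:lambdaMuDoublePrimeQUpperBound} suffices and the $p/(q-p)$ refinement from \Cref{stmt:finalBoundLambdaPQNuDoublePrimeQ} is not needed.
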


We are now ready to prove \Cref{stmt:extensionOfJansonCollection}.

\begin{proof}[Proof of \Cref{stmt:extensionOfJansonCollection}]
  The first step is applying \Cref{stmt:containersForNonJanson} with $q = 2^{-15} r^{-2}$ and $\cal{F} = \frakI_{F, \tilde{G}', \tilde{G}}$, so we must check that these choices satisfy the assumptions of the \namecref{stmt:containersForNonJanson}.

  We assumed that $\cal{F} = \frakI_{F, \tilde{G}', \tilde{G}}$ is $(p, R')$-Janson and $(s + 1)$-uniform, and $\cal{H}$ being $s$-uniform follows from its definition and the fact that $v(F^-) = v(F) - 1$.
  To apply \Cref{stmt:containersForNonJanson} with this choice of $\cal{H}$, we implicitly set
  \[V = U \times \{0, 1\}, \qquad n = 2 m \qquad \text{ and } \qquad R = 2^{-5} r^{-1} p m = 2^{-6} r^{-1} p n\]
  and therefore the value of $R$ coincides in both statements, resulting in the condition $0 \le R' \le R/16$ also being satisfied by the identical assumption in \Cref{stmt:extensionOfJansonCollection}.
  Furthermore, our choices for the parameters $0 < q = 2^{-15}r^{-2} < 1/8$ and $p > 0$ satisfy
  \[p = \frac{1}{2^{25} k^2 r^4} < \frac{q}{2^{10} s^{2} r^{2}}\]
  because $s < k$, and we have checked that all the conditions in \eqref{eq:assumptionsOnContainerParams} hold.

  We now check that $\pi$ satisfies \eqref{eq:assumptionsPi}.
  The first assumption follows trivially from $V = U \times \{0, 1\}$ and $\pi : V \to U$ being a projection into the first coordinate, while the other requirement is \Cref{stmt:piSatisfiesAssumption}.
  This concludes the checking of the assumptions and requirements in \Cref{stmt:containersForNonJanson}.

  Applying \Cref{stmt:containersForNonJanson}, we obtain a family $\cal{X}$ satisfying \eqref{eq:containersForNonJansonSmallFamily} and \cref{item:containersForNonJansonInclusions,item:containersForNonJansonAreNonJanson} in its statement.
  The next claim, a simple combination of \Cref{stmt:motivationForJprime}, the definition of $\Psi_G$ and the choice of $q$, shows that \cref{item:containersForNonJansonInclusions} holds for $I = \iota(G', G)$ when $G' \in \Psi_G$, where, recall,
  \begin{equation*}
    \iota(G', G) = \big\{(u,0) : u  \in \mathrm{N}_G(v)^{\complement}\big\} \cup \big\{(u,1) : u \in \mathrm{N}_{G'}(v)\big\}.
  \end{equation*}

  \begin{claim}\label{stmt:containersAreInFactContainers}
    Let $G'$ and $G$ be graphs and let $I = \iota(G', G)$.
    If $G[U] = \tilde{G}$ and $G' \in \Psi_G$, then the hypergraph $\pi_v\big(\cal{H}[I]\big) \cup \cal{F}$ is not $(p, R' + \eta R)$-Janson.
  \end{claim}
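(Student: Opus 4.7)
The plan is a short chain of inclusions plus monotonicity. First, since $G' \in \Omega_G$ we know $G' \subset G$, $G'[U] = \tilde{G}'$ and $G[U] = \tilde{G}$, so the hypotheses of \Cref{stmt:motivationForJprime} are satisfied. Applying that \namecref{stmt:motivationForJprime} with $I = \iota(G', G)$ yields the key inclusion
\begin{equation*}
  \pi_v\big(\cal{H}[I]\big) \cup \cal{F} \;=\; \pi_v\big(\cal{H}[I]\big) \cup \frakI_{F, \tilde{G}', \tilde{G}} \;\subset\; \frakI_{F, G', G}.
\end{equation*}

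Next, I would argue by contradiction. Suppose that $\pi_v\big(\cal{H}[I]\big) \cup \cal{F}$ is $(p, R' + \eta R)$-Janson. Then \Cref{stmt:JansonIsDownset} applied to the containment above implies that $\frakI_{F, G', G}$ is also $(p, R' + \eta R)$-Janson. Provided that
\begin{equation*}
  R' + \eta R \;\ge\; R' + 1,
\end{equation*}
the monotonicity \namecref{stmt:jansonParametersMonotone} (\Cref{stmt:jansonParametersMonotone}) then gives that $\frakI_{F, G', G}$ is $(p, R' + 1)$-Janson, directly contradicting the defining property of $\Omega_G$ in \eqref{eq:eventOfGPrime}. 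Hence $\pi_v\big(\cal{H}[I]\big) \cup \cal{F}$ cannot be $(p, R' + \eta R)$-Janson.

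The only step that requires any work is the inequality $\eta R \ge 1$, which is a routine parameter check. Recall that in the application of \Cref{stmt:containersForNonJanson} we have $n = 2m$, $q = 2^{-15} r^{-2}$, $R = 2^{-6} p n = 2^{-5} p m$ and $\eta = p^4 (q/2)^{4s}$, so
\begin{equation*}
  \eta R \;=\; 2^{-5}\, p^{5} \Big(\tfrac{q}{2}\Big)^{4s} m.
\end{equation*}
Substituting $p = 2^{-25} k^{-2} r^{-4}$ and $q/2 = 2^{-16} r^{-2}$, every factor on the right is polynomially bounded in $k, r$ and exponential in $s \le k$. The lower bound $m \ge \delta\, r^{C(k+s)} = r^{300(k+s) - 50}$ from the hypotheses of \Cref{stmt:containersConsequenceRestated} dominates all such factors by a wide margin (since $C = 300$ is much larger than the constants $64$ and $8$ appearing in the exponents of $2$ and $r$ that come from $p^5 (q/2)^{4s}$), so $\eta R \ge 1$ holds comfortably. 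This completes the proof.
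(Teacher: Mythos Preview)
Your proof is correct and follows essentially the same route as the paper: apply \Cref{stmt:motivationForJprime} to get the inclusion $\pi_v(\cal{H}[I]) \cup \cal{F} \subset \frakI_{F, G', G}$, use \Cref{stmt:JansonIsDownset} to transfer the Janson property across this inclusion, and reduce to the parameter check $\eta R \ge 1$ via \Cref{stmt:jansonParametersMonotone}. The only cosmetic difference is that you phrase it as a contradiction while the paper argues contrapositively.
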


  \begin{claimproof}
    Recall that the definition of $\Psi_G$, \eqref{eq:eventOfGPrime}, implies that $G'[U] = \tilde{G}'$, so we can apply \Cref{stmt:motivationForJprime} to conclude that
    \begin{equation}\label{eq:inclusionInClaim}
      \pi_v\big(\cal{H}[I]\big) \cup \cal{F} \subset \frakI_{F, G', G},
    \end{equation}
    where we replaced $\cal{F} = \frakI_{F, \tilde{G}', \tilde{G}}$ in \eqref{eq:linkOfProjectionIsInducedCopiesWithV}.

    It also follows from $G' \in \Psi_G$ that $\frakI_{F, G', G}$ is not $(p, R' + 1)$-Janson, so we can combine \eqref{eq:inclusionInClaim} with the fact that being Janson is increasing, \Cref{stmt:JansonIsDownset}, to deduce that $\pi_v\big(\cal{H}[I]\big) \cup \cal{F}$ is also not $(p, R' + 1)$-Janson.
    But now, as we chose
    $$\eta = p^4 \Big(\frac{\,q\,}{2}\Big)^{4 s}, \qquad R = 2^{-5} r^{-1} p m, \qquad q = 2^{-15} r^{-2} \qquad \text{ and } \qquad p = 2^{-25} k^{-2} r^{-4},$$ one can verify that
    \begin{equation*}
      \eta R = 2^{-4s - 5} r^{-1} p^5 q^{4 s} m \ge 1
    \end{equation*}
    by $m \ge r^{C k}$, $s \le k$ and $C = 300$.
    We therefore conclude that $\pi_v\big(\cal{H}[I]\big) \cup \cal{F}$ is not $(p, R' + \eta R)$-Janson, because being $(p, R)$-Janson is decreasing in $R$ by \Cref{stmt:jansonParametersMonotone}.
  \end{claimproof}

  By \cref{item:containersForNonJansonInclusions} in \Cref{stmt:containersForNonJanson} and \Cref{stmt:containersAreInFactContainers}, for all graphs $G'$ and $G$ such that $G[U] = \tilde{G}$ and $G' \in \Psi_G$, there exists $X \in \cal{X}$ such that $\iota(G', G) \subset X$.
  Let
  \begin{equation*}
    \cal{X}' = \big\{X \in \cal{X} : |X^{(1)}| \ge m/(4r)\big\},
  \end{equation*}
  and we will show that there is also $X \in \cal{X}'$ such that $\iota(G', G) \subset X$.

  \begin{claim}\label{stmt:containersPrimeAreContainers}
    If $G[U] = \tilde{G}$ and $G' \in \Psi_G$, then there exists $X \in \cal{X}'$ such that $\iota(G', G) \subset X$.
  \end{claim}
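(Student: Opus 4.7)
The plan is to mimic the warm-up argument used immediately after \Cref{stmt:iotaIndepWarm}, specifically the refinement from $\cal{X}$ to $\cal{X}'$ carried out via equation~\eqref{eq:X1IsLargeWarm}. The idea is that the extra constraint defining $\cal{X}'$, namely $|X^{(1)}| \ge m/(4r)$, is automatically satisfied by any container that captures some $\iota(G',G)$ with $G' \in \Omega_G$, because the degree lower bound is already part of the definition of $\Omega_G$.

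More concretely, I would begin by invoking the previous \Cref{stmt:containersAreInFactContainers} together with \cref{item:containersForNonJansonInclusions} of \Cref{stmt:containersForNonJanson}: for the given $G'$ with $G[U] = \tilde{G}$ and $G' \in \Omega_G$, the hypergraph $\pi_v\big(\cal{H}[I]\big) \cup \cal{F}$ is not $(p, R' + \eta R)$-Janson, where $I = \iota(G', G)$, and therefore there exists $X \in \cal{X}$ such that $\iota(G', G) \subset X$. It remains only to verify that such an $X$ lies in $\cal{X}'$.

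For this verification, I would recall the identity \eqref{eq:projectionOfIota}, which states
\begin{equation*}
   I^{(1)} = \mathrm{N}_{G'}(v).
\end{equation*}
Since $\iota(G', G) = I \subset X$, this immediately yields $\mathrm{N}_{G'}(v) = I^{(1)} \subset X^{(1)}$, and consequently
\begin{equation*}
  |X^{(1)}| \ge |I^{(1)}| = d_{G'}(v) \ge \frac{m}{4r},
\end{equation*}
where the final inequality uses the degree condition in the definition \eqref{eq:eventOfGPrime} of $\Omega_G$. This exactly matches the criterion to be in $\cal{X}'$, completing the argument.

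There is really no main obstacle here: the claim is a bookkeeping step whose only content is recognising that the degree condition built into $\Omega_G$ translates, via $\iota$, into the size condition defining $\cal{X}'$. All the substantive work has already been done in \Cref{stmt:containersAreInFactContainers} and in the construction of $\iota$.
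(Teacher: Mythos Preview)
Your proposal is correct and essentially identical to the paper's proof: both obtain an $X \in \cal{X}$ containing $\iota(G',G)$ via \Cref{stmt:containersAreInFactContainers} and \cref{item:containersForNonJansonInclusions} of \Cref{stmt:containersForNonJanson}, then use $\mathrm{N}_{G'}(v) = I^{(1)} \subset X^{(1)}$ together with the degree bound $d_{G'}(v) \ge m/(4r)$ from the definition of $\Omega_G$ to conclude $X \in \cal{X}'$.
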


  \begin{claimproof}
    Fix $G$ with $G[U] = \tilde{G}$ and $G' \in \Psi_G$, and let $X \in \cal{X}$ be such that $I = \iota(G', G) \subset X$.
    By the definition of $\iota$, we have
    \begin{equation*}
      \mathrm{N}_{G'}(v) = I^{(1)} \subset X^{(1)} \qquad \text{ and } \qquad \mathrm{N}_G(v)^\complement = I^{(0)} \subset X^{(0)},
    \end{equation*}
    and therefore
    \begin{equation}\label{eq:containerForNeighborhoodMustBeLarge}
      |X^{(1)}| \ge d_{G'}(v) \ge \frac{m}{4r}
    \end{equation}
    where the last inequality is due to $G' \in \Psi_G$.
    We conclude that $X \in \cal{X}'$.
  \end{claimproof}

  Taking a union bound over choices of $\cal{X}'$, we can bound the probability in \eqref{eq:probOfGPrime} from above by
  \begin{equation}\label{eq:upperBoundViaContainers}
    \PP\big(\exists G' \subset G : G' \in \Psi_G \mid G[U] = \tilde{G}\big) \le \sum_{X \in \cal{X}'} \PP\big(\mathrm{N}_G(v)^\complement \subset X^{(0)}\big)
  \end{equation}
  using \Cref{stmt:containersPrimeAreContainers} and replacing the event $\{\iota(G', G) \subset X\}$ by $\big\{\mathrm{N}_G(v)^\complement \subset X^{(0)}\big\}$, which it implies by \eqref{eq:projectionOfIota}.
  We now want an upper bound for the probability of this event for each $X \in \cal{X}'$.

  \begin{claim}\label{stmt:UMinusX0IsLarge}
    For every $X \in \cal{X}'$, we have
    \begin{equation}\label{eq:UMinusX0IsLarge}
      |U \setminus X^{(0)}| \ge \frac{m}{16 r}.
    \end{equation}
  \end{claim}

  \begin{claimproof}
    By \cref{item:containersForNonJansonAreNonJanson} in \Cref{stmt:containersForNonJanson}, there is $Y \subset X$ with
    \begin{equation}\label{eq:YIsLarge}
      |Y| \ge |X| - \frac{n}{2^8r} = |X^{(0)}| + |X^{(1)}| - \frac{n}{2^8 r} \ge |X^{(0)}| + \Big(\frac{1}{4 r} - \frac{1}{2^7 r}\Big) m
    \end{equation}
    such that the hypergraph $\pi(\cal{H}[Y])$ is not $(p, R)$-Janson, where we used \eqref{eq:containerForNeighborhoodMustBeLarge} and $n = 2m$.

    Taking $W = Y^{(0)} \cap Y^{(1)}$, observe that $\frakI_{F^{-}, \tilde{G}', \tilde{G}}[W]$ is not $(p, R)$-Janson.
    To check that, recall that the hypergraph $\pi(\cal{H}[Y])$ is not $(p, R)$-Janson by \cref{item:containersForNonJansonAreNonJanson} in \Cref{stmt:containersForNonJanson}.
    It then follows from $$\frakI_{F^{-}, \tilde{G}', \tilde{G}}[W] = \pi(\cal{H})[W] \subset \pi\big(\cal{H}[Y]\big)$$ by \Cref{stmt:piCalHIsHypergraphOfInducedCopies} and \Cref{stmt:containmentHypergraphs} and the fact that being $(p, R)$-Janson is increasing, \Cref{stmt:JansonIsDownset}, that $\frakI_{F^{-}, \tilde{G}', \tilde{G}}[W]$ cannot be $(p, R)$-Janson.

    Since we have assumed in the statement of \Cref{stmt:extensionOfJansonCollection} that $\frakI_{F^{-}, \tilde{G}', \tilde{G}}[W]$ is $(p, R)$-Janson whenever $W \subset U$ satisfies $|W| \ge m / (8 r)$, the fact that choosing $W = Y^{(0)} \cap Y^{(1)}$ results in a subhypergraph that is not $(p, R)$-Janson implies that
    \begin{equation}\label{eq:boundOnTheSizeOfIntersection}
      |Y^{(0)} \cap Y^{(1)}| < \frac{m}{8 r}.
    \end{equation}
    Manipulating \eqref{eq:boundOnTheSizeOfIntersection}, we obtain
    \begin{equation*}
      |Y| = |Y^{(0)} \cup Y^{(1)}| + |Y^{(0)} \cap Y^{(1)}| < \Big(1+\frac{1}{8r}\Big)|U|,
    \end{equation*}
    which combined with \eqref{eq:YIsLarge} yields $$|X^{(0)}| < \Big(1 - \frac{1}{16 r}\Big)m,$$ and hence \eqref{eq:UMinusX0IsLarge}, as desired.
  \end{claimproof}

  Applying \Cref{stmt:UMinusX0IsLarge} to each term in \eqref{eq:upperBoundViaContainers}, we obtain
  \begin{equation}\label{eq:boundOnEachContainerEvent}
    \PP\big(\mathrm{N}_G(v)^\complement \subset X^{(0)}\big) = \PP\Big(\mathrm{N}_G(v)^\complement \cap \big(U\setminus X^{(0)}\big) = \emptyset\Big) = 2^{-|U \setminus X^{(0)}|} \le 2^{- m / (16 r)}
  \end{equation}
  for each $X \in \cal{X}'$, using that $G \sim \Gnp(m + 1, 1/2)$.
  Replacing \eqref{eq:boundOnEachContainerEvent} back in \eqref{eq:upperBoundViaContainers} yields
  \begin{equation*}
    \sum_{X \in \cal{X}'} \PP\big(\mathrm{N}_G(v)^\complement \subset X^{(0)}\big) \le |\cal{X}'| \, 2^{- m / (16 r)}.
  \end{equation*}

  Now, we can bound the size of $\cal{X}'$ using \eqref{eq:containersForNonJansonSmallFamily} and $2 q n \le m / (2^{13} r^2)$, where the latter holds by $n = 2m$ and our choice of $q = 2^{-15} r^{-2}$, to obtain
  \begin{equation*}
    \PP\big(\exists G' \subset G : G' \in \Psi_G \mid G[U] = \tilde{G}\big) \le \bigg(\frac{\,2\,}{q}\bigg)^{2 q n} 2^{- m / (16 r)} \le 2^{ m/(2^9 r) - m / (16 r)} \le 2^{ - m / (32 r)}
  \end{equation*}
  since $r \ge 2$.
\end{proof}

\section{Proof of \texorpdfstring{\Cref{stmt:key}}{\ref{stmt:key}}}\label{sec:proofOfKey}

The purpose of this \namecref{sec:proofOfKey} is to give a proof of \hyperlink{keyRestated}{\Cref{stmt:key}}, restated below.
We begin with an intuitive and informal overview of the proof, with the purpose of motivating the intermediate results in the \namecref{sec:proofOfKey}, and then introduce the details and technicalities in the \namecrefs{sec:changeToAnotherBadEvent} that follow.

\hypertarget{keyRestated}{\keyStmt*}

Our informal overview of the proof of \hyperlink{keyRestated}{\Cref{stmt:key}} starts with a statement of our setting and strategy.
We assume that $G \in \cal{B}(\mathbf{H}) \cap \cal{E}(\mathbf{s})$, i.e.\ $G$ admits a ``bad'' colouring $c : E(G) \to [r]$ in which the copies of $H_i \subset G_i$ that are induced in $G$ are not $(p, pN)$-Janson, even though $G$ satisfies the inductive assumption, represented here by the event $\cal{E}(\mathbf{s})$.
Our goal is to show that such $G$ are extremely rare when $G \sim \Gnp(N, 1/2)$, which we accomplish by applying \Cref{stmt:extensionOfJansonCollection} with $\tilde{G} = G[U]$ and $\tilde{G}' = G_\ell^{(c)}[U]$ for a certain vertex subset $U$ and a specific colour $\ell \in [r]$.
Proving the existence of this set $U$ is not difficult, and is the main purpose of the intermediate results in this \namecref{sec:proofOfKey}.

To reach a point where we can apply \Cref{stmt:extensionOfJansonCollection}, that is, to show that this set $U$ exists, we combine \Cref{stmt:existenceOfTupleForEachG} and \Cref{stmt:maximalTupleForG}.
The proof of the former \namecref{stmt:existenceOfTupleForEachG} uses the induction hypothesis to conclude that $\frakI_{H_i^-, G_i^{(c)}, G}[W]$ is $(p, p|W|)$-Janson for all ``bad'' colourings $c$ and all large $W \subset U$.

In the proof \Cref{stmt:maximalTupleForG}, we (roughly) construct a set $U \subset V(G)$ vertex-by-vertex, starting from an arbitrary vertex subset of size $\delta N$.
Adding a vertex $v$ to $U$ increments the Janson parameter of $H_i \subset G_i[U]$ for some colour $i$, in the sense that if $\frakI_{H_i, G_i, G}[U]$ was $(p, R_i)$-Janson for some $R_i \ge 0$, then $\frakI_{H_i, G_i, G}[U \cup \{v\}]$ is $(p, R_i + 1)$-Janson.
The set $U$ is complete when there are no more vertices whose addition to $U$ would increase the Janson parameter of $H_i$ for some $i \in [r]$, and we show that the final size of $U$ is at most $2 \delta N$.

These are the two preliminaries that we require before the proof of \hyperlink{keyRestated}{\Cref{stmt:key}}, which we briefly and informally discuss now.
With $U$ given by \Cref{stmt:existenceOfTupleForEachG} and \Cref{stmt:maximalTupleForG}, we will apply \Cref{stmt:extensionOfJansonCollection} to the graphs $\tilde{G} = G[U]$ and $\tilde{G}' = G_\ell^{(c)}[U]$ for a certain colour $\ell$ and many vertices $v \not \in U$ when the colouring $c$ is bad, relying on the independence of these events for each $v$ to obtain the required bound on their joint probability.
These vertices $v \not \in U$ are chosen first to ensure that their degree to $U$ is not small, so one colour $i_v \in [r]$ also has sufficiently many neighbours in $U$.
We then select $\ell$ as the majority colour among the $i_v$, and further restrict to those $v$ for which $i_v = \ell$.

To formally implement this outline, we first address a technicality: we are not able to directly prove that the final collection of copies of $H_\ell$ is $(p, p N)$-Janson, only $(p, 2^{-9}r^{-1}\delta pN)$-Janson\footnote{This is because we can only apply \Cref{stmt:extensionOfJansonCollection} for $R' \le R/16$, where the collection of copies of $H_\ell^-$ contained in $W$ is $(p, R)$-Janson for every $W \subset U$ with $|W| \ge |U|/(8r)$, and the event $\cal{E}(\mathbf{s})$ only tells us that this hypergraph is $(p,p|W|)$-Janson.}.
As we will see, this is not a problem, because we show in \Cref{stmt:badEventContainedInBadEventPrime}, using double counting, that a hypergraph $\cal{G}$ that is not $(p, p N)$-Janson contains a subset $S$ of $\delta^{2/3} N$ vertices such that $\cal{G}[S]$ is not $(p, 2^{-9}r^{-1}\delta pN)$-Janson, and we will be able to work entirely inside this set $S$.
The next \namecref{sec:changeToAnotherBadEvent} proves \Cref{stmt:badEventContainedInBadEventPrime}, and the rest of the section implements the above outline to show that $U$ exists, and finally to prove \hyperlink{keyRestated}{\Cref{stmt:key}} in \Cref{sec:proofOfKeyProper}.

\subsection{Changing to another bad event}\label{sec:changeToAnotherBadEvent}

First, recall the definition of the bad event $\cal{B}(\mathbf{H})$.

\badEvent*

As previously mentioned, the first stage in the proof is to show that we can replace $\cal{B}(\mathbf{H})$ by an alternative event $\cal{B}'(\mathbf{H})$.
The main advantage of this change is that it reduces the Janson parameter by a factor of order $r^{-1} \delta$, at the cost of assuming that it only holds for a single subset $S$ of size $|S| \ge \delta^{2/3}N$.

\begin{defi}\label{def:newBadEvent}
  Given a collection of graphs $H_1, \ldots, H_r$, let $\mathbf{H} = (H_i)_{i \in [r]}$ and let $\cal{B}'(\mathbf{H})$ be the family of graphs $G$ with the following property.
  There exists $S \subset V(G)$ with $|S| \ge \delta^{2/3} v(G)$ and a colouring $c : E(G[S]) \to [r]$ such that $\frakI_{H_i, G_i, G}[S]$ is not $(p, 2^{-9}r^{-1}\delta p \, v(G))$-Janson for all $i \in [r]$.
\end{defi}

It is crucial that the original bad event $\cal{B}(\mathbf{H})$ is contained in the variant $\cal{B}'(\mathbf{H})$, which we now prove with a simple double counting argument.

\begin{lem}\label{stmt:badEventContainedInBadEventPrime}
  For every $k \in \NN$ and graphs $H_1, \ldots, H_r$,
  \begin{equation*}
    \cal{B}(\mathbf{H}) \subset \cal{B}'(\mathbf{H}),
  \end{equation*}
  where $\mathbf{H}=(H_i)_{i \in [r]}$.
\end{lem}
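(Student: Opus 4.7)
I prove the contrapositive: assume $G \notin \cal{B}'(\mathbf{H})$ and show $G \notin \cal{B}(\mathbf{H})$. Fix an arbitrary colouring $c : E(G) \to [r]$ and set $N = v(G)$, $s = \lceil \delta^{2/3} N \rceil$, $M = \binom{N}{s}$; I must produce some $i \in [r]$ for which $\frakI_{H_i, G_i, G}$ is $(p, pN)$-Janson. For every $S \in \binom{V(G)}{s}$, applying the negation of $\cal{B}'$'s defining condition to the restriction of $c$ to $E(G[S])$ yields a colour $i(S) \in [r]$ such that $\frakI_{H_{i(S)}, G_{i(S)}, G}[S]$ is $(p, 2^{-9}r^{-1}\delta p N)$-Janson. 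Pigeonholing over $i(S)$, one colour $i^*$ works for a family $\cal{S} \subset \binom{V(G)}{s}$ of size at least $M/r$; write $\cal{F} = \frakI_{H_{i^*}, G_{i^*}, G}$.

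The heart of the argument is combining the individual witnesses into a single measure on $\cal{F}$. For each $S \in \cal{S}$, \Cref{stmt:normalisedJansonWitnesses} supplies a measure $\mu_S$ on $\cal{F}[S]$ with $e(\mu_S) = 1$ and $\Lambda_p(\mu_S) < 2^9 r/(\delta p N)$; extend each $\mu_S$ by zero and set $\mu = \sum_{S \in \cal{S}} \mu_S$, so that $e(\mu) = |\cal{S}| \geq M/r$. For every $L \subset V(G)$ with $|L| \geq 2$, the sum $d_\mu(L) = \sum_{S \in \cal{S}, \, L \subset S} d_{\mu_S}(L)$ has at most $N_L := \binom{N-|L|}{s-|L|} \leq M (s/N)^{|L|} = M \delta^{2|L|/3}$ nonzero terms, and so Cauchy--Schwarz yields
\[
d_\mu(L)^2 \leq N_L \sum_{S \in \cal{S}} d_{\mu_S}(L)^2 \leq M \delta^{2|L|/3} \sum_{S \in \cal{S}} d_{\mu_S}(L)^2.
\]
Multiplying by $p^{-|L|}$ and summing over $L$ with $|L| \geq 2$ gives $\Lambda_p(\mu) \leq M \sum_{S \in \cal{S}} \Lambda_{p/\delta^{2/3}}(\mu_S)$.

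The crucial gain now comes from the restriction $|L| \geq 2$ in the definition of $\Lambda$: for such $L$ we have $\delta^{2|L|/3} \leq \delta^{4/3}$, hence
\[
\Lambda_{p/\delta^{2/3}}(\mu_S) \leq \delta^{4/3} \Lambda_p(\mu_S) < \frac{2^9 r \, \delta^{1/3}}{p N}.
\]
Combining everything,
\[
\frac{\Lambda_p(\mu)}{e(\mu)^2} < \frac{M |\cal{S}| \cdot 2^9 r \delta^{1/3}/(pN)}{|\cal{S}|^2} \leq \frac{2^9 r^2 \delta^{1/3}}{p N} < \frac{1}{p N},
\]
where the last inequality uses $\delta = r^{-50}$ and $r \geq 2$ to obtain $2^9 r^{2 - 50/3} = 2^9 r^{-44/3} < 1$. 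This shows $\cal{F}$ is $(p, pN)$-Janson, so the arbitrary colouring $c$ does not witness $G \in \cal{B}(\mathbf{H})$, as required.

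The main obstacle I anticipate is producing the factor-$\delta^{-1}$ gain needed to pass from the subset parameter $2^{-9} r^{-1} \delta p N$ to the global parameter $pN$: a naive Jensen-type average of the $\mu_S$ only recovers $(p, \delta p N)$-Janson globally, which is insufficient. This gain is harvested entirely by pairing the standard estimate $N_L/M \leq (s/N)^{|L|} = \delta^{2|L|/3}$ with the fact that $\Lambda_p$ only counts $|L| \geq 2$, producing an effective $\delta^{4/3}$ that comfortably overcomes the $r^2$ loss from pigeonholing because $\delta$ is polynomially small in $r^{-1}$.
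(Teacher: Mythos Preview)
Your proof is correct and follows essentially the same approach as the paper's: both argue by contrapositive, pigeonhole over colours to find a large family $\cal{S}$ of $\delta^{2/3}N$-sets with normalised Janson witnesses $\mu_S$, sum these into a single measure, apply Cauchy--Schwarz to $d_\mu(L)^2$, and exploit $|L|\ge 2$ to extract the decisive $\delta^{4/3}$ factor from the bound $|\{S\in\cal{S}:L\subset S\}|\le M(s/N)^{|L|}$. The only cosmetic difference is that the paper bounds $|\mathbf{T}_L|\le\binom{N-2}{\delta^{2/3}N-2}\le\delta^{4/3}M$ directly, whereas you route the same estimate through the auxiliary quantity $\Lambda_{p/\delta^{2/3}}(\mu_S)$; the numerics and conclusion $2^9 r^2\delta^{1/3}<1$ are identical.
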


As we need to work with measures in the Janson property, the following \namecref{stmt:sumMeasureDegreeIsSumOfMeasureDegrees}, albeit a trivial consequence of the definitions, will be useful when proving \Cref{stmt:badEventContainedInBadEventPrime}.

\begin{restatable}{obs}{sumMeasureDegreeIsSumOfMeasureDegrees}\label{stmt:sumMeasureDegreeIsSumOfMeasureDegrees}
  For all $s \in \NN$ and hypergraphs $\cal{G}$, if $\vartheta_1, \ldots, \vartheta_s : \cal{G} \to \RR_{\ge 0}$ satisfy
  \begin{equation*}
    \vartheta = \sum_{i = 1}^s \vartheta_i,
  \end{equation*}
  then,
  \begin{equation*}
    e(\vartheta) = \sum_{i = 1}^s e(\vartheta_i) \qquad \text{and} \qquad d_\vartheta(L) = \sum_{i = 1}^s d_{\vartheta_i}(L)
  \end{equation*}
  for all $L \subset V(\cal{G})$.
\end{restatable}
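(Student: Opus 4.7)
The plan is to unfold the definitions of $e(\cdot)$ and $d_{(\cdot)}(L)$ from \eqref{eq:defEdgesAndLambda} and observe that both quantities are finite sums over the edges of $\cal{G}$, so linearity (i.e.\ swapping the order of two finite summations) immediately yields the claimed identities.

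More concretely, for the first identity, I would write
\begin{equation*}
    e(\vartheta) = \sum_{E \in \cal{G}} \vartheta(E) = \sum_{E \in \cal{G}} \sum_{i = 1}^s \vartheta_i(E) = \sum_{i = 1}^s \sum_{E \in \cal{G}} \vartheta_i(E) = \sum_{i = 1}^s e(\vartheta_i),
\end{equation*}
where the second equality uses the hypothesis $\vartheta = \sum_{i = 1}^s \vartheta_i$, the third swaps the two finite sums, and the last reapplies the definition of $e(\cdot)$. For the second identity, I would fix an arbitrary $L \subset V(\cal{G})$ and perform the exact same manipulation, restricting the inner summation to those $E \in \cal{G}$ with $L \subset E$:
\begin{equation*}
    d_\vartheta(L) = \sum_{\substack{E \in \cal{G} \\ L \subset E}} \vartheta(E) = \sum_{\substack{E \in \cal{G} \\ L \subset E}} \sum_{i = 1}^s \vartheta_i(E) = \sum_{i = 1}^s \sum_{\substack{E \in \cal{G} \\ L \subset E}} \vartheta_i(E) = \sum_{i = 1}^s d_{\vartheta_i}(L).
\end{equation*}
Since $L$ was arbitrary, this gives the second identity.

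There is no substantive obstacle here: the only potentially delicate point is that the sums are well-defined, but this is immediate because $\cal{G}$ is a (finite) hypergraph and each $\vartheta_i$ takes values in $\RR_{\ge 0}$, so all rearrangements are legitimate.
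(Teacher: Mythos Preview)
Your proposal is correct and takes essentially the same approach as the paper's proof: both simply unfold the definitions from \eqref{eq:defEdgesAndLambda} and interchange the two finite summations. The only difference is that the paper writes out the computation for $d_\vartheta(L)$ and declares the $e(\cdot)$ case ``even more elementary,'' whereas you spell out both.
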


We can now prove \Cref{stmt:badEventContainedInBadEventPrime}.

\begin{proof}[Proof of \Cref{stmt:badEventContainedInBadEventPrime}]
  Assume that $G \not \in \cal{B}'(\mathbf{H})$ and let $V = V(G)$ and $N = |V|$.
  Let $c : E(G) \to [r]$ be an arbitrary $r$-colouring of the edges of $G$, and observe that, by \Cref{def:newBadEvent}, for every $S \subset V$ with $|S| \ge \delta^{2/3} N$ there exists $j \in [r]$ such that $\frakI_{H_j, G_j, G}[S]$ is $(p, 2^{-9} r^{-1}\delta p N)$-Janson.
  Thus, if for each $j \in [r]$, we define
  $$\mathbf{V}_j = \Big\{S \subset V : |S| = \delta^{2/3} N \text{ and } \frakI_{H_j, G_j, G}[S] \text{ is } (p, 2^{-9} r^{-1}\delta p N)\text{-Janson}\Big\}$$
  then
  $$ \bigcup_{j = 1}^r \mathbf{V}_j = \binom{V}{\delta^{2/3} N}$$
  and hence there exists $i \in [r]$ such that
  \begin{equation}\label{eq:size-of-bfSi-is-big}
    |\mathbf{V}_i| \ge \frac{1}{r}\binom{N}{\delta^{2/3}N},
  \end{equation}
  so fix such an $i$.
  For each $S \in \mathbf{V}_i$, as $\frakI_{H_i, G_i, G}[S]$ is $(p, 2^{-9} r^{-1}\delta p N)$-Janson by assumption, there exists $\nu_S : \frakI_{H_i, G_i, G}[S] \to \RR_{\ge 0}$ satisfying
  \begin{equation}\label{eq:badEvents:edgesSndLambdaNuS}
    e(\nu_S) = 1 \qquad \text{and} \qquad \Lambda_p(\nu_S) < \frac{2^{9}r}{\delta pN}
  \end{equation}
  by \Cref{stmt:normalisedJansonWitnesses}.
  Now, define $\nu : \frakI_{H_i, G_i, G} \to \RR_{\ge 0}$ by
  \begin{equation*}
    \nu = \sum_{S \in \mathbf{V}_i} \nu_{S}
  \end{equation*}
  and note that if
  \begin{equation}\label{eq:badEvents:goal}
    \Lambda_p(\nu) < \frac{e(\nu)^2}{pN},
  \end{equation}
  then $\frakI_{H_i, G_i, G}$ is $(p, p N)$-Janson and therefore $G \not \in \cal{B}(\mathbf{H})$ by definition, since $c$ is an arbitrary $r$-colouring of the edges of $G$.

  Recall that $V = V(\frakI_{H_i, G_i, G})$ and also that, by definition,
  \begin{equation*}
    \Lambda_p(\nu) = \sum_{\substack{L \subset V \\ |L| \ge 2}} d_\nu(L)^2 p^{-|L|} = \sum_{\substack{L \subset V \\ |L| \ge 2}} \Big(\sum_{S \in \mathbf{V}_i} d_{\nu_S}(L) \Big)^2p^{-|L|}
  \end{equation*}
  where the last equality is due to \Cref{stmt:sumMeasureDegreeIsSumOfMeasureDegrees}.
  Further observe that since $\nu_S$ is supported only on $\frakI_{H_i, G_i, G}[S]$, then we can only have $d_{\nu_S}(L) > 0$ if $L \subset S$.
  Hence, denoting the subfamily
  $$\mathbf{T}_L = \{ S \in \mathbf{V}_i : L \subset S\}$$
  for every $L \subset V$, we have
  \begin{equation}\label{eq:badEvents:cauchySchwarz}
    \Lambda_p(\nu) = \sum_{\substack{L \subset V \\ |L| \ge 2}} \Big(\sum_{S \in \mathbf{T}_L} d_{\nu_S}(L) \Big)^2 p^{-|L|} \le \sum_{\substack{L \subset V \\ |L| \ge 2}} |\mathbf{T}_L| \sum_{S \in \mathbf{T}_L} d_{\nu_S}(L)^2 p^{-|L|},
  \end{equation}
  where the last step holds by the Cauchy--Schwarz inequality.
  Now, note that, as every $L$ in the sums of \eqref{eq:badEvents:cauchySchwarz} satisfies $|L| \ge 2$, we can bound $|\mathbf{T}_L|$ for these $L$ by
  \begin{equation*}
    |\mathbf{T}_L| \le \binom{N - 2}{\delta^{2/3}N - 2} \le \delta^{4/3} \binom{N}{\delta^{2/3}N} \le r \delta^{4/3} |\mathbf{V}_i|,
  \end{equation*}
  where we used \eqref{eq:size-of-bfSi-is-big} in the last inequality.
  Substituting this into \eqref{eq:badEvents:cauchySchwarz} and using \eqref{eq:badEvents:edgesSndLambdaNuS} together with the definition of $\Lambda_p(\nu)$ yields
  \begin{equation}\label{eq:badEvents:lambdaPNus}
    \Lambda_p(\nu) \le r \delta^{4/3} |\mathbf{V}_i| \sum_{S \in \mathbf{V}_i} \Lambda_p(\nu_S) < r \delta^{4/3} |\mathbf{V}_i|^2 \frac{2^9 r}{\delta p N} = 2^{9} r^2 \delta^{1/3} \frac{|\mathbf{V}_i|^2}{pN}.
  \end{equation}

  Note that, since $e(\nu_S) = 1$ for all $S \in \mathbf{V}_i$ by \eqref{eq:badEvents:edgesSndLambdaNuS}, it follows from \Cref{stmt:sumMeasureDegreeIsSumOfMeasureDegrees} that
  \begin{equation*}
    e(\nu) = |\mathbf{V}_i|,
  \end{equation*}
  which, replaced in \eqref{eq:badEvents:lambdaPNus}, results in our goal, \eqref{eq:badEvents:goal},
  \begin{equation*}
    \Lambda_p(\nu) < 2^{9} r^2 \delta^{1/3} \frac{e(\nu)^2}{pN} < \frac{e(\nu)^2}{p N}
  \end{equation*}
  where the last inequality is due to our choice of $\delta = r^{-50}$.
  Therefore, the hypergraph $\frakI_{H_i, G_i, G}$ is $(p, p N)$-Janson, which implies that $G \not \in \cal{B}(\mathbf{H})$ since the colouring $c : E(G) \to [r]$ was arbitrary.
\end{proof}

\subsection{Finding a set \texorpdfstring{$U$}{U} to apply \texorpdfstring{\Cref{stmt:extensionOfJansonCollection}}{\ref{stmt:extensionOfJansonCollection}}}

Using \Cref{stmt:badEventContainedInBadEventPrime}, we can bound
\begin{equation}\label{eq:changingOfBadEvents}
  \PP\big(G \in \cal{B}(\mathbf{H}) \cap \cal{E}(\mathbf{s})\big) \le \PP\big(G \in \cal{B}'(\mathbf{H}) \cap \cal{E}(\mathbf{s})\big)
\end{equation}
where $G \sim \Gnp(N, 1/2)$, which leads us to the second stage in the proof of \hyperlink{keyRestated}{\Cref{stmt:key}}.
In it, we will apply \Cref{stmt:extensionOfJansonCollection} to bound $\PP\big(G \in \cal{B}'(\mathbf{H})\cap \cal{E}(\mathbf{s})\big)$, but the setup of this application requires some work.
First we show, with a deterministic argument, that for any graph $G \in \cal{B}'(\mathbf{H})$ with a ``bad'' colouring $c$, we can find a set $U$ that satisfy the requirements of \Cref{stmt:extensionOfJansonCollection}.

Before describing the concrete properties of $U$, we establish a correspondence between $G \in \cal{B}'(\mathbf{H})$, the sets $S$ that appear in \Cref{def:newBadEvent}, and these bad colourings $c$.
To do that, it will be helpful to define the common setting for the rest of this \namecref{sec:extensionOfJansonCollection}.
Fix then $k \in \NN$ and $s_1, \ldots, s_r \in \NN$ such that $s_i \le k$ for each $i \in [r]$.
Further fix graphs $H_1, \ldots , H_r$ such that $v(H_i) \le s_i$ for all $i \in [r]$, let $ \mathbf{s} = (s_i)_{i \in [r]}$ and $ \mathbf{H} = (H_i)_{i \in [r]}$, and fix $N \in \NN$ satisfying \hyperlink{keyRestated}{\eqref{eq:boundOnNInKey}}.

\begin{defi}
  For a graph $G$, define the collection $\mathbf{S}(G)$ by
  \begin{equation*}
    \mathbf{S}(G) = \left\{ (S, c) : ~
    \begin{gathered}
      S \subset V(G) \text{ with } |S| \ge \delta^{2/3} N ~ \text{and} ~ c : E(G[S]) \to [r] \\
      \text{such that} ~ \forall i \in [r], ~ \frakI_{H_i, G_i, G}[S] \text{ is not } (p, 2^{-9} r^{-1}\delta p N)\text{-Janson}
    \end{gathered}
     \right\}.
  \end{equation*}
\end{defi}

Recall from \Cref{def:newBadEvent} that if $G \in \cal{B}'(\mathbf{H})$, then there exist $S \subset V(G)$ and $c : E(G[S]) \to [r]$ such that $(S, c) \in \mathbf{S}(G)$.
We will next show that if $(S, c) \in \mathbf{S}(G)$, then for every large subset $W \subset S$, the hypergraph $\frakI_{H_i^-, G_i, G}[W]$ is $(p, p|W|)$-Janson.
To prove that, we will require the event $\cal{E}(\mathbf{s})$, whose definition we recall for the reader's convenience.

\mainEvent*

The following \namecref{stmt:existenceOfTupleForEachG} is the only place in the proof of \Cref{stmt:key} where we will use the event $\cal{E}(\mathbf{s})$.
Since later we will choose the set $U$ to be a subset of $S$, the \namecref{stmt:existenceOfTupleForEachG} immediately implies that $\frakI_{H_i^-, G_i, G}[W]$ is $(p, p|W|)$-Janson for every large subset $W \subset U$.

\begin{lem}\label{stmt:existenceOfTupleForEachG}
  For all $G \in \cal{E}(\mathbf{s})$ and $(S, c) \in \mathbf{S}(G)$, the hypergraph $\frakI_{H_i^{-}, G_i, G}[W]$ is $(p, p |W|)$-Janson for every $i \in [r]$ and every $W \subset S$ with $|W| \ge \delta N/(8r)$.
\end{lem}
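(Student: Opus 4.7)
The plan is to apply the inductive event $\cal{E}(\mathbf{s})$ with a carefully chosen collection of test graphs $F_1, \ldots, F_r$ so that the existentially-quantified colour in its conclusion is forced to be the specific colour $i$ we are interested in. The key observation that makes this work is that, by the setup of \Cref{stmt:key}, we have $s_j = v(H_j)$ for every $j \in [r]$, which leaves us exactly one ``unit of budget'' to spend on removing a vertex from one of the $H_j$'s.

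Concretely, fix $G \in \cal{B}'(\mathbf{H}) \cap \cal{E}(\mathbf{s})$, a pair $(S, c) \in \mathbf{S}(G)$, a colour $i \in [r]$, and a subset $W \subset S$ with $|W| \ge \delta N/(8r)$. I would take $F_i = H_i^-$ (so $v(F_i) = s_i - 1$) and $F_j = H_j$ for every $j \ne i$ (so $v(F_j) = s_j$), giving $t' := \sum_j v(F_j) = t - 1$. The conditions $t' < t$ and $v(F_j) \le s_j$ clearly hold, and the size requirement in \Cref{def:mainEvent} becomes $|W| \ge (\delta/(8r))^{t - t'} v(G) = \delta N/(8r)$, matching exactly our hypothesis on $W$. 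Applying $\cal{E}(\mathbf{s})$ to the restriction of $c$ to $E(G[W])$ then yields some $j \in [r]$ with $\frakI_{F_j, G_j, G}[W]$ being $(p, p|W|)$-Janson.

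If $j = i$, then since $F_i = H_i^-$ this is exactly the desired conclusion. If instead $j \ne i$, then $F_j = H_j$, so $\frakI_{H_j, G_j, G}[W]$ is $(p, p|W|)$-Janson; since $W \subset S$ yields $\frakI_{H_j, G_j, G}[W] \subset \frakI_{H_j, G_j, G}[S]$, \Cref{stmt:JansonIsDownset} upgrades this to $\frakI_{H_j, G_j, G}[S]$ being $(p, p|W|)$-Janson. The chain of inequalities $p|W| \ge p \delta N/(8r) \ge 2^{-9} r^{-1} \delta p N$, together with \Cref{stmt:jansonParametersMonotone}, then implies that $\frakI_{H_j, G_j, G}[S]$ is in fact $(p, 2^{-9} r^{-1} \delta p N)$-Janson. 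But this contradicts the defining property of $(S, c) \in \mathbf{S}(G)$, which asserts the negation of this statement for every colour.

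I do not anticipate any substantial technical obstacle: the argument is essentially an unwinding of the definitions of $\cal{E}(\mathbf{s})$ and $\mathbf{S}(G)$, lubricated by the two monotonicity observations on $(p, R)$-Janson hypergraphs. The conceptual content is that the choice $F_j = H_j$ for $j \ne i$ pits any ``wrong'' colour returned by $\cal{E}(\mathbf{s})$ directly against the ``not $(p, 2^{-9} r^{-1} \delta p N)$-Janson on $S$'' property that placed $G$ in $\cal{B}'(\mathbf{H})$ to begin with.
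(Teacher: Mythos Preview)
Your proposal is correct and follows essentially the same approach as the paper's own proof: choose $F_i = H_i^-$ and $F_j = H_j$ for $j \ne i$, apply $\cal{E}(\mathbf{s})$, and use the defining property of $(S,c) \in \mathbf{S}(G)$ together with \Cref{stmt:JansonIsDownset} and \Cref{stmt:jansonParametersMonotone} to rule out every colour $j \ne i$. Your verification of the size condition $|W| \ge (\delta/(8r))^{t-t'} N$ and your explicit use of the restriction of $c$ to $E(G[W])$ are in fact slightly more careful than the paper's exposition.
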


\begin{proof}
  Fix an arbitrary $i \in [r]$, let $F_j = H_j$ for all $j \in [r] \setminus \{i\}$, and take $F_i = H_i^-$, a choice that satisfies
  \begin{equation}\label{eq:choiceSatisfiesEventsRequirements}
    \sum_{j = 1}^r v(F_j) = \sum_{j = 1}^r s_j - 1.
  \end{equation}
  By $G \in \cal{E}(\mathbf{s})$ and \eqref{eq:choiceSatisfiesEventsRequirements}, for every $W \subset S \subset V(G)$ with $|W| \ge \delta N / (8r)$, there exists a colour $\ell \in [r]$ such that $\frakI_{F_\ell, G_\ell, G}[W]$ is $(p, p |W|)$-Janson.
  However, as
  \begin{equation*}
    p |W| \ge 2^{-9} r^{-1}\delta p N,
  \end{equation*}
  we conclude that $\frakI_{F_\ell, G_\ell, G}[W]$ is $(p, 2^{-9} r^{-1}\delta p N)$-Janson.
  Since the Janson property is increasing by \Cref{stmt:JansonIsDownset}, it follows that $\frakI_{F_\ell, G_\ell, G}[S]$ is also $(p, 2^{-9} r^{-1}\delta p N)$-Janson.

  Now, recall that $\frakI_{H_j, G_j, G}[S]$ is not $(p, 2^{-9} r^{-1}\delta p N)$-Janson for all $j \in [r]$ since $(S, c) \in \mathbf{S}(G)$.
  Combining our choice of $F_j = H_j$ for all $j \neq i$ with the fact that $\frakI_{F_\ell, G_\ell, G}[S]$ is $(p, 2^{-9} r^{-1}\delta p N)$-Janson, the only remaining possibility is that $\ell = i$.
  It follows that, for every $W\subset S$ satisfying $|W| \ge \delta N/(8r)$, the hypergraph $\frakI_{H_i^-, G_i, G}[W]$ is $(p, p |W|)$-Janson.
  Since $i$ was arbitrary, this completes the proof of the \namecref{stmt:existenceOfTupleForEachG}.
\end{proof}

The remaining properties that $U$ will have are all guaranteed simultaneously by the way we construct it.
For each $i \in [r]$, the hypergraph $\frakI_{H_i, G_i, G}[U]$ will be $(p, R_i)$-Janson for some $R_i \ge 0$, and $\frakI_{H_i, G_i, G}[U \cup \{v\}]$ will not be $(p, R_i + 1)$-Janson for any $v \not \in U$.
The proof of \Cref{stmt:maximalTupleForG} shows that we can find such a $U$ with a routine induction.

\begin{lem}\label{stmt:maximalTupleForG}
  Let $G$ be a graph.
  If $(S, c) \in \mathbf{S}(G)$, then there exist $U \subset S$ and $R_1, \ldots, R_r \in \ZZ_{\ge 0}$ such that
  \begin{equation}\label{eq:BoundOnRi}
    |U| = \delta N + \sum_{i=1}^r R_i \qquad \text{and} \qquad R_1,\ldots, R_r \le \frac{p |U|}{2^9 r}
  \end{equation}
  which further satisfy, for all $i \in [r]$,
  \begin{enumerate}[\normalfont (a)]
    \item $\frakI_{H_i, G_i, G}[U]$ is $(p, R_i)$-Janson, and \label{item:maximalJanson}
      \smallskip
    \item $\frakI_{H_i, G_i, G}\big[U \cup \{v\}\big]$ is not $(p, R_i + 1)$-Janson for all $v \in S \setminus U$. \label{item:nonExtendableJanson}
  \end{enumerate}
\end{lem}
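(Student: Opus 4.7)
The plan is a direct greedy construction. I would start with any $U_0 \subset S$ of size $\delta N$, which exists because $|S| \ge \delta^{2/3}N \ge 2\delta N$ (using $\delta = r^{-50}$ with $r \ge 2$). Set $R_i^{(0)} = 0$ for every $i \in [r]$. The last clause of \Cref{def:janson} says that every hypergraph is $(p, 0)$-Janson, so condition \eqref{item:maximalJanson} holds for this initial choice.

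I would then iterate the following step: while there exist $v \in S \setminus U$ and $i \in [r]$ such that $\frakI_{H_i, G_i, G}[U \cup \{v\}]$ is $(p, R_i + 1)$-Janson, set $U \leftarrow U \cup \{v\}$ and $R_i \leftarrow R_i + 1$, leaving the other $R_j$ unchanged. Condition \eqref{item:maximalJanson} is preserved throughout: the incremented coordinate is witnessed by the very choice of $v$, and for $j \neq i$ the Janson property transfers from $\frakI_{H_j, G_j, G}[U]$ to $\frakI_{H_j, G_j, G}[U \cup \{v\}]$ via \Cref{stmt:JansonIsDownset}. When the process halts, condition \eqref{item:nonExtendableJanson} is precisely the termination criterion, and the first equation in \eqref{eq:BoundOnRi} is immediate because every step increases both $|U|$ and $\sum_i R_i$ by $1$.

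The only nontrivial thing to verify is the upper bound $R_i \le p|U|/(2^9 r)$. Suppose towards a contradiction that at some step we have $R_i > p|U|/(2^9 r)$ for some $i$. Since $|U| \ge \delta N$ throughout the process, this forces $R_i > 2^{-9} r^{-1} \delta p N$. By \Cref{stmt:jansonParametersMonotone}, the hypergraph $\frakI_{H_i, G_i, G}[U]$, which is $(p, R_i)$-Janson by the invariant \eqref{item:maximalJanson}, would then be $(p, 2^{-9}r^{-1}\delta p N)$-Janson as well; since $U \subset S$, \Cref{stmt:JansonIsDownset} would give that $\frakI_{H_i, G_i, G}[S]$ is $(p, 2^{-9}r^{-1}\delta p N)$-Janson, directly contradicting $(S, c) \in \mathbf{S}(G)$.

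The same bound also guarantees that the process stays inside $S$: from $|U| = \delta N + \sum_i R_i \le \delta N + p|U|/2^9$ one gets $|U| \le 2\delta N \le \delta^{2/3} N \le |S|$, so $S \setminus U$ is never exhausted before the process halts. I do not foresee a real obstacle; the construction is a routine greedy argument whose entire content is the monotonicity invariant verified above.
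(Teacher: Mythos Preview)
Your proposal is correct and matches the paper's argument essentially verbatim: the paper also starts from an arbitrary $U\subset S$ of size $\delta N$ with all $R_i=0$, takes a maximal (with respect to $\sum_i R_i$) configuration satisfying \cref{item:maximalJanson}, obtains \cref{item:nonExtendableJanson} from maximality together with \Cref{stmt:JansonIsDownset}, and derives the bound $R_i\le p|U|/(2^9 r)$ by the identical contradiction via \Cref{stmt:jansonParametersMonotone} and \Cref{stmt:JansonIsDownset} against $(S,c)\in\mathbf{S}(G)$. Your final paragraph about staying inside $S$ is harmless but unnecessary for the lemma as stated (if $U=S$ then \cref{item:nonExtendableJanson} is vacuous).
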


\begin{proof}
  Observe first that every set $U \subset S$ of size $\delta N$ satisfies \cref{item:maximalJanson} with $R_i = 0$ for all $i \in [r]$, since $\frakI_{H_i, G_i, G}[U]$ is $(p, 0)$-Janson by definition.
  Now take $U$ of size $\delta N + \sum_{i=1}^r R_i$ maximising $\sum_{i=1}^{r} R_i$ among the choices that satisfy \cref{item:maximalJanson}.
  Note that $\frakI_{H_i, G_i, G}[U \cup \{v\}]$ is $(p,R_i)$-Janson for every $v \in S$, since $\frakI_{H_i, G_i, G}[U]$ is $(p, R_i)$-Janson, and by \Cref{stmt:JansonIsDownset}, so \cref{item:nonExtendableJanson} holds by the maximality of $\sum_{i = 1}^{r} R_i$.
  It therefore remains to prove the inequality in \eqref{eq:BoundOnRi}.

  Suppose for a contradiction that $R_i > p |U|/(2^9 r)$ for some $i \in [r]$ and observe that
  \[R_i > \frac{p|U|}{2^9 r} \ge \frac{\delta p N}{2^9 r},\]
  so $\frakI_{H_i, G_i, G}[U]$ is $(p, 2^{-9}r^{-1} \delta p N)$-Janson by \Cref{stmt:jansonParametersMonotone}.
  Since $U \subset S$, \Cref{stmt:JansonIsDownset} implies that $\frakI_{H_i, G_i, G}[S]$ is also $(p, 2^{-9}r^{-1} \delta p N)$-Janson, contradicting the fact that $(S, c) \in \mathbf{S}(G)$.
\end{proof}

\subsection{The proof}\label{sec:proofOfKeyProper}

Having established that there is a $U$ satisfying most of the requirements of \Cref{stmt:extensionOfJansonCollection}, we will combine the following trivial consequence of the Chernoff bound with a pigeonhole argument to find many vertices $v$ whose degree to $U$ in some colour $\ell\in [r]$ is not small.

\begin{restatable}{obs}{lowDegreeVerticesInRandomGraph}\label{stmt:lowDegreeVerticesInRandomGraph}
  Let $G \sim \Gnp(N, 1/2)$, let $U \subset S \subset V(G)$ and set
  \begin{equation*}
    S' = \big\{v \in S \setminus U : d_G(v, U) > |U| / 4\big\}.
  \end{equation*}
  If $2^{10} \le |U| \le |S| /4 $, then
  \begin{equation*}
    \PP\big(|S'| \le |S| / 4\big) \le \exp\big(- 2^{-6} |U| \/ |S|\big).
  \end{equation*}
\end{restatable}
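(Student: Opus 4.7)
The plan is to exploit the independence of the random variables $\{d_G(v, U)\}_{v \in S \setminus U}$, each of which is distributed as $\Bin(|U|, 1/2)$ since the edges of $G$ from distinct vertices $v \notin U$ to $U$ depend on disjoint sets of i.i.d.\ $\text{Ber}(1/2)$ edge variables. With this independence, a per-vertex Chernoff bound followed by a union bound over ``failing'' sets will be enough.

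The first step is to apply a multiplicative Chernoff bound to each vertex $v \in S \setminus U$, giving
\[
\PP\bigl(d_G(v, U) \le |U|/4\bigr) \le \exp(-|U|/16),
\]
since the expected degree is $|U|/2$ and the deviation parameter is $1/2$. Next I would observe that the event $\{|S'| \le |S|/4\}$ forces at least $|S \setminus U| - |S|/4$ vertices of $S \setminus U$ to fail the degree threshold, and the assumption $|U| \le |S|/4$ yields $|S \setminus U| \ge 3|S|/4$, so at least $|S|/2$ vertices must fail.

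Putting these together: for each choice of a ``failing'' set $F \subseteq S \setminus U$ with $|F| \ge |S|/2$, the events $\{d_G(v, U) \le |U|/4\}_{v \in F}$ are independent and each has probability at most $\exp(-|U|/16)$, so summing over the at most $\binom{|S \setminus U|}{\lceil |S|/2\rceil} \le 2^{|S|}$ choices gives
\[
\PP\bigl(|S'| \le |S|/4\bigr) \le 2^{|S|} \exp\bigl(-|U|\,|S|/32\bigr) \le \exp\bigl(-|U|\,|S|/64\bigr),
\]
where the last inequality uses $|U| \ge 2^{10}$ to ensure that $|U|/32 - \ln 2 \ge |U|/64$. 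This is exactly the claimed bound with the stated constant $2^{-6}$.

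The argument is essentially routine and I do not expect a genuine obstacle. The two numerical hypotheses play exactly the expected roles: $|U| \le |S|/4$ is used to reformulate ``$|S'| \le |S|/4$'' as ``at least $|S|/2$ of the independent degree events fail'', and $|U| \ge 2^{10}$ is what makes the per-vertex Chernoff bound strong enough to absorb the entropy factor $2^{|S|}$ coming from the union bound, with room to spare.
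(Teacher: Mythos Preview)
Your proposal is correct and follows essentially the same approach as the paper: a per-vertex Chernoff bound giving $\PP(d_G(v,U)\le |U|/4)\le \exp(-|U|/16)$, the observation that $|S'|\le|S|/4$ forces at least $|S|/2$ failures (using $|U|\le|S|/4$), and a union bound over at most $2^{|S|}$ subsets, absorbed by the hypothesis $|U|\ge 2^{10}$.
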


\begin{proof}
  For a vertex $v \in S \setminus U$, the Chernoff bound implies that
  \begin{equation*}
    \PP( v \not\in S') = \PP\big( d_G(v, U) \le |U| / 4\big) \le \exp\big( - |U|/16\big),
  \end{equation*}
  and observe that these events are independent for distinct vertices of $S \setminus U$.
  If $|S'|\le |S|/4$, then at least $|S|/2$ vertices of $S \setminus U$ fail to be in $S'$.
  Taking a union bound then yields
  \begin{equation*}
    \PP\big(|S'| \le |S|/4\big)  \le 2^{|S|} \exp\bigg( - \frac{|S|}{2} \,\frac{|U|}{16}\bigg) \le \exp\big(-2^{-6}|U||S|\big),
  \end{equation*}
  where last inequality uses that $|U| \ge 2^{10}$.
\end{proof}

With all of the above, the proof of \hyperlink{keyRestated}{\Cref{stmt:key}} proceeds by fixing the sets $S$, $U$ and the colouring of $G[U]$, all of which we will take union bounds over.
We will use \Cref{stmt:lowDegreeVerticesInRandomGraph} to find many vertices whose degree to $U$ in colour $\ell$ is not small, and we will then be able to check that the graphs $\tilde{G}' = G_\ell^{(c)}[U]$ and $\tilde{G} = G[U]$ satisfy all the conditions required by \Cref{stmt:extensionOfJansonCollection}.
To complete the proof, it will then suffice to observe that for each $v$ the events bounded by \Cref{stmt:extensionOfJansonCollection} are independent, and hence the probabilities multiply.

\begin{proof}[Proof of \Cref{stmt:key}]
  Recall first that we fixed $p = \big(2^{25} k^2 r^4\big)^{-1}$ in \eqref{eq:fixedParams}.
  We claim that if $v(H_i) = 1$ for some $i \in [r]$, then trivially we have $\cal{B}(\mathbf{H}) = \emptyset$.
  Indeed, every non-empty 1-uniform hypergraph is $(p, R)$-Janson for all $p > 0$ and $R$, since $\Lambda_p(\nu) = 0$ regardless of the measure $\nu$.
  We may therefore assume that $v(H_i) \ge 2$ for all $i \in [r]$.

  By \Cref{stmt:badEventContainedInBadEventPrime}, we have $\cal{B}(\mathbf{H}) \subset \cal{B}'(\mathbf{H})$, and therefore
  \begin{equation*}
    \PP\big(G \in \cal{B}(\mathbf{H}) \cap \cal{E}(\mathbf{s})\big) \le \PP\big(G \in \cal{B}'(\mathbf{H}) \cap \cal{E}(\mathbf{s})\big),
  \end{equation*}
  where $G \sim \Gnp(N, 1/2)$, here and in every probability statement in this proof.
  It will also be convenient to assume that every such graph shares the same vertex set $V(G) = [N]$.

  Let $\mathbf{U}$ denote the collection of tuples $(S, U, (\tilde{G}_i)_{i \in [r]})$ with the following properties.
  The sets $U \subset S \subset [N]$ satisfy
  \begin{equation}\label{eq:bfUSetInequalities}
    |S| \ge \delta^{2/3} N \quad \text{ and } \quad |U| = \delta N + \sum_{i = 1}^{r} R_i, \quad \text{ with } \quad 0 \le R_1,\ldots , R_r \le \frac{p |U|}{2^9 r}.
  \end{equation}
  Moreover, letting
  \begin{equation}\label{eq:defTildeG}
    \tilde{G} = \bigcup_{i \in [r]} \tilde{G}_i,
  \end{equation}
  each tuple $(S, U, (\tilde{G}_i)_{i \in [r]}) \in \mathbf{U}$ satisfies, for all $i \in [r]$, that $V(\tilde{G}_i) = U$,
  \begin{enumerate}[(1)]
    \item \label{item:propsOfBfU:largeSetsAreJanson} $\frakI_{H_i^-, \tilde{G}_i, \tilde{G}}[W]$ is $(p, p |W|)$-Janson for every $W \subset U$ with $|W| \ge |U|/(8r)$, and
      \smallskip
    \item \label{item:propsOfBfU:baseSetIsJanson} $\frakI_{H_i, \tilde{G}_i, \tilde{G}}[U]$ is $(p, R_i)$-Janson.
  \end{enumerate}
  This collection is important because of the following \namecref{stmt:mappingEachGToAMaximalTuple}.
  Before stating it, define $f(G) = (S, c)$ to map each $G \in \cal{B}'(\mathbf{H}) \cap \cal{E}(\mathbf{s})$ to a fixed choice of $(S, c) \in \mathbf{S}(G)$.

  \begin{claim}\label{stmt:mappingEachGToAMaximalTuple}
    For each $G \in \cal{B}'(\mathbf{H}) \cap \cal{E}(\mathbf{s})$ and $(S, c) = f(G)$, there is $\sigma = \big(S, U, (\tilde{G}_i)_{i \in [r]}\big) \in \mathbf{U}$ for which $\tilde{G}_i = G_i^{(c)}[U]$ and the hypergraph $\frakI_{H_i, G_i, G}\big[U \cup \{v\}\big]$ is not $(p, R_i + 1)$-Janson for all $i \in [r]$ and all $v \in S \setminus U$.
  \end{claim}

  Note that the latter property of $U$ and every $v \in S \setminus U$ in \Cref{stmt:mappingEachGToAMaximalTuple} cannot be defined only in terms of $(\tilde{G}_i)_{i \in [r]}$, because it also depends on the colouring $c : E(G[S]) \to [r]$ in $(S, c) \in f(G)$.

  \begin{claimproof}[Proof of \Cref{stmt:mappingEachGToAMaximalTuple}]
    Fix $G \in \cal{B}'(\mathbf{H}) \cap \cal{E}(\mathbf{s})$ and $(S, c) = f(G)$.
    Let $U \subset S$ be given by \Cref{stmt:maximalTupleForG}, which implies that it satisfies \eqref{eq:bfUSetInequalities}.
    Further set $\tilde{G}_i = G_i^{(c)}[U]$ for every $i \in [r]$, and recall that $\tilde{G} = \bigcup_{i \in [r]} \tilde{G}_i$ by \eqref{eq:defTildeG}.
    It follows from \cref{item:maximalJanson} in \Cref{stmt:maximalTupleForG} that this choice satisfies \cref{item:propsOfBfU:baseSetIsJanson} in the definition of $\mathbf{U}$ and also that $\frakI_{H_i, G_i, G}\big[U \cup \{v\}\big]$ is not $(p, R_i + 1)$-Janson for all $i \in [r]$ and all $v \in S \setminus U$, where the values of each $R_i$ are given by \Cref{stmt:maximalTupleForG}.

    To prove that this choice also satisfies \cref{item:propsOfBfU:largeSetsAreJanson} in the definition of $\mathbf{U}$, observe first that \Cref{stmt:existenceOfTupleForEachG} implies that $\frakI_{H_i^{-}, G_i, G}[W]$ is $(p, p |W|)$-Janson for every $i \in [r]$ and every $W \subset S$ with $|W| \ge \delta N/(8r)$.
    As $U \subset S$, this conclusion also holds whenever $W \subset U$ and $|W| \ge |U| / (8 r)$, since $|U| \ge \delta N$ by \eqref{eq:bfUSetInequalities}.
    The final observation is that $$\frakI_{H_i^-, \tilde{G}_i, \tilde{G}}[W] = \frakI_{H_i^{-}, G_i, G}[W]$$ by our choice of $\tilde{G}_i = G_i^{(c)}[U]$, so the previous reasoning indeed establishes \cref{item:propsOfBfU:largeSetsAreJanson}.
  \end{claimproof}

  Now, for $\sigma = \big(S, U, (\tilde{G}_i)_{i \in [r]}\big)$, let $\cal{A}(\sigma)$ be the collection of pairs of graphs $G$ and colourings $c : E(G) \to [r]$ such that
  \begin{enumerate}[(a)]
    \item $ G_i[U] = G_i^{(c)}[U] = \tilde{G}_i $ for all $i \in [r]$, and
      \smallskip
    \item the hypergraph $\frakI_{H_i, G_i, G}\big[U \cup \{v\}\big]$ is not $(p, R_i + 1)$-Janson for all $i \in [r]$ and all $v \in S \setminus U$. \label{item:propsOfCalA:extensionIsNotJanson}
  \end{enumerate}
  By \Cref{stmt:mappingEachGToAMaximalTuple}, we know that if $G \in \cal{B}'(\mathbf{H}) \cap \cal{E}(\mathbf{s})$, then there exists $\sigma \in \mathbf{U}$ and a colouring $c : E(G) \to [r]$ such that $(G, c) \in \cal{A}(\sigma)$.
  Taking a union bound over choices of $\sigma \in \mathbf{U}$, but, crucially, \emph{not} over the choices of $c : E(G) \to [r]$, then yields
  \begin{equation}\label{eq:unionBound}
    \PP\big(G \in \cal{B}'(\mathbf{H}) \cap \cal{E}(\mathbf{s})\big) \le \sum_{\sigma \in \mathbf{U}} \PP\big(\exists c \in [r]^{E(G)} : (G, c) \in \cal{A}(\sigma) \big).
  \end{equation}

  Most of the remainder of the proof will be dedicated to proving the following \namecref{stmt:boundForEachSigmaInBfU}.

  \begin{claim}\label{stmt:boundForEachSigmaInBfU}
    For every $\sigma \in \mathbf{U}$,
    \begin{equation}\label{eq:boundForEachSigmaInBfU}
      \PP\big(\exists c \in [r]^{E(G)} : (G, c) \in \cal{A}(\sigma) \big) \le 2^{- 8 r \delta^2 N^2}.
    \end{equation}
  \end{claim}

  To prove \Cref{stmt:boundForEachSigmaInBfU}, we will modify $\cal{A}(\sigma)$ until the event(s) whose probability we need to bound become(s) $\big\{\exists G' \subset G : (G', G) \in \cal{M}_{v, \ell}\big\}$, where
  \begin{equation}\label{eq:mainEventInsideKey}
    \cal{M}_{v, \ell} = \left\{
      (G', G)
      : \,
      \begin{array}{@{}c@{}}
        G'[U] = \tilde{G}_\ell, ~ d_{G'}(v, U) \ge |U| / (4r) ~ \text{ and} \\
        \frakI_{H_\ell, G', G}\big[U \cup \{v\}\big] \,
        \text{ is not } \, (p, R_\ell + 1)\text{-Janson}
      \end{array}
      \right\}.
  \end{equation}
  We will then observe that not only the definition of $\cal{M}_{v, \ell}$ in \eqref{eq:mainEventInsideKey} corresponds to the event whose probability \Cref{stmt:extensionOfJansonCollection} bounds if we take $F = H_\ell$, $R' = R_\ell$ and $\tilde{G}' = \tilde{G}_\ell$, but also that the current setting satisfies the assumptions to apply that \namecref{stmt:extensionOfJansonCollection}.

  To make the proof easier to follow, we will first establish several intermediate \namecrefs{stmt:boundForEachSigmaInBfU} towards \Cref{stmt:boundForEachSigmaInBfU}.
  Because of that, we will fix $\sigma = \big(S, U, (\tilde{G}_i)_{i \in [r]}\big) \in \mathbf{U}$ and abbreviate $\cal{A}(\sigma) = \cal{A}$ until the proof of \Cref{stmt:boundForEachSigmaInBfU}.

  The first step towards proving \eqref{eq:boundForEachSigmaInBfU} is to replace the set $S$ with a large subset $S' \subset S$ by discarding vertices with small degree into $U$.
  By \Cref{stmt:lowDegreeVerticesInRandomGraph}, $S$ contains such a subset with very high probability.
  More precisely, let
  \begin{equation}\label{eq:definitionOfSPrime}
    S' = S'(G) = \big\{v \in S \setminus U : d_G(v, U) > |U| / 4\big\}
  \end{equation}
  for every graph $G$, and define
  $$\cal{A}' = \big\{(G, c) \in \cal{A} : |S'(G)| \ge |S|/4 \big\}.$$

  \begin{claim}\label{stmt:splitIntoCasesWrtSPrime}
    \begin{equation}\label{eq:splitIntoCasesWrtSPrime}
      \PP\big(\exists c \in [r]^{E(G)} : (G, c) \in \cal{A} \big) \le \PP\big(\exists c \in [r]^{E(G)} : (G, c) \in \cal{A}' \big) + \exp\big(- 2^{-6} \delta^{5/3} N^2\big).
    \end{equation}
  \end{claim}

  \begin{claimproof}
    Recalling \eqref{eq:bfUSetInequalities} in the definition of $\mathbf{U}$ and that $p \le 1$, we have
    \begin{equation*}
      |U| \le \delta N + \frac{p|U|}{2^{9}} \le \delta N + \frac{|U|}{2},
    \end{equation*}
    which, since we also have $|U| \ge \delta N$, implies that
    \begin{equation}\label{eq:sizeOfU}
      \delta  N \le |U| \le 2 \delta N.
    \end{equation}
    As $2\delta N \le |S|/4$, we can apply \Cref{stmt:lowDegreeVerticesInRandomGraph} to $S$ and $U$, obtaining as a result
    \begin{equation}\label{eq:complementOfAuxEvent2}
      \PP\big(|S'| \le |S| / 4\big) \le \exp\big(- 2^{-6} |U| \/ |S|\big) \le \exp\big(- 2^{-6} \delta^{5/3} N^2\big),
    \end{equation}
    where the last inequality follows from $|S| \ge \delta^{2/3} N $ by \eqref{eq:bfUSetInequalities} and $|U| \ge \delta N$ by \eqref{eq:sizeOfU}.
    The \namecref{stmt:splitIntoCasesWrtSPrime} now follows from splitting of $\cal{A}$ into $\cal{A}'$ and $\cal{A} \setminus \cal{A}'$ and bounding the latter using \eqref{eq:complementOfAuxEvent2}.
  \end{claimproof}

  We now focus our attention on bounding the first term in the right-hand side of \eqref{eq:splitIntoCasesWrtSPrime}.
  Recalling that our goal is to apply \Cref{stmt:extensionOfJansonCollection} and that this \namecref{stmt:extensionOfJansonCollection} requires only a single subgraph, instead of a colouring, we will restrict our attention to the subgraph $G_\ell$ whose colour $\ell \in [r]$ is the majority colour of the neighbourhood of most vertices in $S'$.

  \begin{claim}\label{stmt:unionBoundOverSetsSAndColoursL}
    \begin{equation*}
      \PP\big(\exists c \in [r]^{E(G)} : (G, c) \in \cal{A}' \big) \le \sum_{\ell \in [r]} \sum_{\substack{A \subset S \\ |A| \ge \frac{|S|}{4 r}}} \PP\Big(\exists G' \subset G : (G', G) \in \bigcap_{v \in A} \cal{M}_{v, \ell} \text{ and } G[U] = \tilde{G} \Big).
    \end{equation*}
  \end{claim}

  \begin{claimproof}
    Let $G$ be a graph, and suppose that there exists a colouring $c : E(G) \to [r]$ such that $(G, c) \in \cal{A}'$.
    We claim that $G[U] = \tilde{G}$, and that there exists a subgraph $G' \subset G$, a colour $\ell \in [r]$ and a subset $A \subset S$ with $|A| \ge |S|/(4 r)$, such that $(G', G) \in M_{v, \ell}$ for every $v \in A$.
    \Cref{stmt:unionBoundOverSetsSAndColoursL} will then follow by taking a union bound over the choices of $\ell$ and $A$.

    To show this, observe first that $G[U] = \tilde{G}$ holds because $(G, c) \in \cal{A}' \subset \cal{A}$, where $\tilde{G} = \bigcup_{i \in [r]} \tilde{G}_i$ was defined in \eqref{eq:defTildeG}.
    Next, note that for each $c : E(G) \to [r]$ and $v \in S'(G)$, the colouring $c$ partitions the edges connecting $v$ and $U$ into $r$ sets.
    As each $v \in S'(G)$ satisfies
    \begin{equation*}
      d_G(v, U) > \frac{|U|}{4}
    \end{equation*}
    by the definition of $S'(G)$, \eqref{eq:definitionOfSPrime}, there exists a colour $j(v) = j \in [r]$ such that
    \begin{equation}\label{eq:pigeonholeForDegrees}
      d_{G_j}(v, U) > \frac{|U|}{4r}.
    \end{equation}
    By the pigeonhole principle and \eqref{eq:pigeonholeForDegrees}, then, there exists a colour $\ell \in [r]$ such that, letting
    \begin{equation*}
      A = A(G, c) = \{v \in S'(G) : j(v) = \ell\},
    \end{equation*}
    we have, as a consequence of $(G, c) \in \cal{A}'$, that
    \begin{equation*}
      |A| \ge \frac{|S'(G)|}{r} \ge \frac{|S|}{4 r}.
    \end{equation*}

    Letting $G' = G_\ell$ be the graph of edges spanned by colour $\ell$ completes the proof of the \namecref{stmt:unionBoundOverSetsSAndColoursL} because $A \subset S \setminus U$ and $(G, c) \in \cal{A}$ satisfy \cref{item:propsOfCalA:extensionIsNotJanson} in the definition of the event $\cal{A}$, and this is the last property in the definition of $\cal{M}_{v, \ell}$ that we had yet to show holds for this choice of $G'$.
  \end{claimproof}

  The final \namecref{stmt:independence} that we need before the proof of \Cref{stmt:boundForEachSigmaInBfU} is the observation that the events $\cal{M}_{v, \ell}$ are independent for each $v \not \in U$ when conditioned on $\{G[U] = \tilde{G}\}$.
  This is a simple consequence of the conditioning causing $\cal{M}_{v, \ell}$ to depend only on the subgraph $G[v, U]$ corresponding to the edges between $v$ and $U$.
  Although \Cref{stmt:unionBoundOverSetsSAndColoursL} has $\{G[U] = \tilde{G}\}$ as an intersecting event, we can instead condition on it because its probability is non-zero.

  \begin{claim}\label{stmt:independence}
    For any set $A \subset U$, we have
    \begin{equation*}
      \PP\Big(\exists G' \subset G : (G', G) \in \bigcap_{v \in A} \cal{M}_{v, \ell} \, \Big \vert \, G[U] = \tilde{G} \Big) = \prod_{v \in A} \PP\Big(\exists G' \subset G: (G', G) \in \cal{M}_{v, \ell} \, \Big \vert \, G[U] = \tilde{G} \Big).
    \end{equation*}
  \end{claim}

  \begin{claimproof}
    First recall that $\cal{M}_{v, \ell}$ is defined in \eqref{eq:mainEventInsideKey} as
    \[
        \cal{M}_{v, \ell} = \left\{
      (G', G)
      :
      \begin{array}{@{}c@{}}
        G'[U] = \tilde{G}_\ell, ~ d_{G'}(v, U) \ge |U| / (4r) ~ \text{ and} \\
        \frakI_{H_\ell, G', G}\big[U \cup \{v\}\big] \,
        \text{ is not } \, (p, R_\ell + 1)\text{-Janson}
      \end{array}
      \right\}.
    \]
    Now, observe that after conditioning on $G[U]$, the existence of a subgraph $G'$ satisfying the three properties in the definition of $\cal{M}_{v, \ell}$ depends only on the edges of $G[v, U]$.
    Since these edges are chosen independently for each vertex $v \in A$, the \namecref{stmt:independence} follows.
  \end{claimproof}

  We can now combine the previous \namecrefs{stmt:independence} to prove \Cref{stmt:boundForEachSigmaInBfU}.

  \begin{claimproof}[Proof of \Cref{stmt:boundForEachSigmaInBfU}]
    Fix $\sigma = \big(S, U, (\tilde{G}_i)_{i \in [r]}\big) \in \mathbf{U}$ and let $\cal{A} = \cal{A}(\sigma)$.
    By \Cref{stmt:splitIntoCasesWrtSPrime}, we have
    \begin{equation}\label{eq:splitIntoCasesWrtSPrimeInsideProofOfFinalClaim}
      \PP\big(\exists c \in [r]^{E(G)} : (G, c) \in \cal{A} \big) \le \PP\big(\exists c \in [r]^{E(G)} : (G, c) \in \cal{A}' \big) + \exp\big(- 2^{-6} \delta^{5/3} N^2\big).
    \end{equation}
    \Cref{stmt:unionBoundOverSetsSAndColoursL} then implies that the first term in this right-hand side is at most
    \begin{equation}\label{eq:unionBoundInsideProofOfFinalClaim}
      \PP\big(\exists c \in [r]^{E(G)} : (G, c) \in \cal{A}' \big) \le \sum_{\ell \in [r]} \sum_{\substack{A \subset S \\ |A| \ge \frac{|S|}{4 r}}} \PP\Big(\exists G' \subset G : (G', G) \in \bigcap_{v \in A} \cal{M}_{v, \ell} \, \Big \vert \, G[U] = \tilde{G} \Big),
    \end{equation}
    where we moved the non-zero probability event $\{G[U] = \tilde{G}\}$ from the intersection to the conditioning, so each probability term inside the two sums of \eqref{eq:unionBoundInsideProofOfFinalClaim} satisfies
    \begin{equation}\label{eq:independenceOfNeighbourhoods}
      \PP\Big(\exists G' \subset G : (G', G) \in \bigcap_{v \in A} \cal{M}_{v, \ell} \, \Big \vert \, G[U] = \tilde{G} \Big) = \prod_{v \in A} \PP\Big(\exists G' : (G', G) \in \cal{M}_{v, \ell} \, \Big \vert \, G[U] = \tilde{G} \Big)
    \end{equation}
    as a consequence of \Cref{stmt:independence}.

    We now want to apply \Cref{stmt:extensionOfJansonCollection} to obtain an upper bound for the probabilities on the right-hand side of \eqref{eq:independenceOfNeighbourhoods}, recalling that \eqref{eq:mainEventInsideKey}, the definition of $\cal{M}_{v, \ell}$, corresponds to the event whose probability we bound in \eqref{eq:probOfGPrime} if we take $F = H_\ell$, $R' = R_\ell$ and $\tilde{G}' = \tilde{G}_\ell$.
    To check that the choice of parameters for this application is admissible, first observe that $s = v(H_\ell) - 1$ by assumption, that
    \begin{equation*}
      m = |U| \ge \delta N \ge \delta r^{C(k + t)} \ge r^{C k},
    \end{equation*}
    because $t \ge 1$, $C = 300$ and $\delta = r^{-50}$, and also that $v \not \in U$ by $v \in A \subset S \setminus U$.
    It follows from $(S, U, (\tilde{G}_i)_{i \in [r]}) \in \mathbf{U}$ and the definition of $\mathbf{U}$ that not only $$R_\ell \le \frac{p |U|}{2^9 r} = \frac{R}{16}$$ by \eqref{eq:bfUSetInequalities} and our choice of $R = 2^{-5} r^{-1} p |U|$ in \eqref{eq:paramsInExtensionOfJansonCollection}, but also that $\tilde{G}_\ell \subset \tilde{G}$ by \eqref{eq:defTildeG}.
    Finally, again by the properties of the tuples in $\mathbf{U}$, we have that
    \begin{enumerate}[\normalfont (1)]
      \item $\frakI_{H_\ell, \tilde{G}_\ell, \tilde{G}}[U]$ is $(p, R_\ell)$-Janson, and
        \smallskip
      \item $\frakI_{H_\ell^-, \tilde{G}_\ell, \tilde{G}}[W]$ is $(p, R)$-Janson for every $W \subset U$ with $|W| \ge |U|/(8r)$, because $$ p |W| \ge \frac{p |U|}{8 r} \ge 2^{-5} r^{-1} p |U| = R$$ and the Janson property is increasing (\Cref{stmt:jansonParametersMonotone}),
    \end{enumerate}
    so we can apply \Cref{stmt:extensionOfJansonCollection}.

    Applying \Cref{stmt:extensionOfJansonCollection}, we then obtain, for fixed $\ell \in [r]$ and $v \in S \setminus U$, that
    \begin{equation*}
      \PP\Big(\exists G' \subset G : (G', G) \in \cal{M}_{v, \ell} \, \Big \vert \, G[U] = \tilde{G} \Big) \le 2^{-|U|/(2^5 r)},
    \end{equation*}
    which, replaced in \eqref{eq:independenceOfNeighbourhoods}, yields
    \begin{equation}\label{eq:finalBoundForUnionBound}
      \PP\Big(\exists G' \subset G : (G', G) \in \bigcap_{v \in A} \cal{M}_{v, \ell} \, \Big \vert \, G[U] = \tilde{G} \Big) \le 2^{-|U| \/ |A| / (2^5 r)} \le 2^{-\delta^{5/3} N^2/(2^{7} r^2)}
    \end{equation}
    by $|A| \ge |S|/(4r) \ge \delta^{2/3} N / (4 r)$, where $A \subset S \setminus U$, and $|U| \ge \delta N$.
    Substituting \eqref{eq:finalBoundForUnionBound} in \eqref{eq:unionBoundInsideProofOfFinalClaim} and bounding the number of choices for $\ell$ and $A$ respectively by $r$ and $2^N$ then yields
    \begin{equation}\label{eq:thisBoundWasFinalButWeWantedToMakeItPrettier}
      \PP\big(\exists c \in [r]^{E(G)} : (G, c) \in \cal{A}' \big) \le r 2^N 2^{-\delta^{5/3} N^2/(2^{7} r^2)}.
    \end{equation}
    To deduce the \namecref{stmt:boundForEachSigmaInBfU}, we combine \eqref{eq:splitIntoCasesWrtSPrimeInsideProofOfFinalClaim} and \eqref{eq:thisBoundWasFinalButWeWantedToMakeItPrettier} to obtain
    \begin{equation*}
      \PP\big(\exists c \in [r]^{E(G)} : (G, c) \in \cal{A} \big) \le r 2^N 2^{-\delta^{5/3} N^2/(2^{7} r^2)} + \exp\big(- 2^{-6} \delta^{5/3} N^2\big) \le 2^{-8 r \delta^2 N^2}
    \end{equation*}
    by $r \ge 2$ and our assumptions that $N \ge r^{C (k + t)}$ and $\delta = r^{-50}$.
  \end{claimproof}

  We now apply \Cref{stmt:boundForEachSigmaInBfU} in every term of \eqref{eq:unionBound} to obtain
  \begin{equation}\label{eq:replacedInUnionBound}
    \PP\big(G \in \cal{B}'(\mathbf{H}) \cap \cal{E}(\mathbf{s})\big) \le \sum_{\sigma \in \mathbf{U}} 2^{- 8 r \delta^2 N^2}
  \end{equation}

  To bound the right-hand side of \eqref{eq:replacedInUnionBound}, we count the number of tuples $\sigma = \big(S, U, (\tilde{G}_i)_{i \in [r]}\big)$ in $\mathbf{U}$.
  There are at most $2^{2 N}$ choices for both $S \subset V$ and $U \subset V$, and at most $2^{|U|^2}$ choices for each $\tilde{G}_i$.
  It follows from $|U| \le 2 \delta N$ in \eqref{eq:sizeOfU} that there are at most
  $$2^{r |U|^2} \le 2^{4 r \delta^2 N^2}$$
  tuples $(\tilde{G}_i)_{i \in [r]}$, which replaced back in \eqref{eq:replacedInUnionBound} yields
  \begin{equation*}
    \PP\big(G \in \cal{B}'(\mathbf{H}) \cap \cal{E}(\mathbf{s})\big)
      \le 2^{2N + 4 r \delta^2 N^2 - 8 r \delta^2 N^2}
      \le 2^{- \delta^{2} N^2}
  \end{equation*}
  because we assumed that $N \ge r^{C (k + t)}$ and $\delta = r^{-50}$.
\end{proof}

\section{Containers for non-Janson sets}\label{sec:containersForNonJanson}

In this \namecref{sec:containersForNonJanson}, we prove our main technical result, and with it complete the proof of \Cref{stmt:multicolour}.
The statement of \Cref{stmt:containersForNonJanson} has three components that differ from \Cref{stmt:containersForNonJansonGeneral}: another hypergraph $\cal{F}$, which is $(p, R')$-Janson by assumption, a function $\pi$ and a vertex $v$ not in the set $U = V(\cal{F})$.
Recall that when applying this \namecref{stmt:containersForNonJanson}, we will take $\cal{F}$ to correspond to copies of $F$ completely contained in $U$, and $\pi$ to be the projection from $U \times \{0, 1\}$ onto $U$.
To explain the role of $v$ in the statement of \Cref{stmt:containersForNonJanson}, recall \Cref{def:edgeWiseInclusion},
\begin{equation*}
  \ilink{\cal{G}}{v} = \{E \cup \{v\} : E \in \cal{G}\},
\end{equation*}
the edge-wise inclusion of $v$ in a hypergraph $\cal{G}$.

Rather than obtaining containers for sets $L \subset V$ such that $\cal{H}[L]$ is not $(p / q, \eta R)$-Janson, \Cref{stmt:containersForNonJanson} provides containers for sets $L \subset V$ such that $\pi_v(\cal{H}[L]) \cup \cal{F}$ is not $(p, R' + \eta R)$-Janson, where $\pi_v = \ilink{\circ \pi}{v}$.
Another difference between this \namecref{stmt:containersForNonJanson} and \Cref{stmt:containersForNonJansonGeneral} is in the properties of the containers $X \in \cal{X}$.
Previously, we concluded that $\cal{H}[X]$ was not $(p, R)$-Janson, but we were not able to establish the same thing here due to vertices with high-degree.
Instead, what we show is that whenever $X \subset V$ has linear size, we have a set $Y \subset X$ containing almost all elements of $X$ such that $\pi(\cal{H}[Y])$ is not $(p, R)$-Janson.

\hypertarget{stmt:ContainersForNonJanson:restated:sec7}{\containersForNonJanson*}

We now highlight differences between the proof of \Cref{stmt:containersForNonJanson} and the one in \Cref{sec:simplerContainerForNonJanson}.
The first one is already in the auxiliary hypergraph $\cal{J}'$, which is defined here by
\begin{equation*}
  \cal{J}' = \Big\{ L \subset V : \pi_v\big(\cal{H}[L]\big) \cup \cal{F} \text{ is $(p, R'+\eta R)$-Janson} \Big\}.
\end{equation*}
Note that, as the Janson property is increasing by \Cref{stmt:JansonIsDownset}, each $L \subset V$ for which the hypergraph $\pi_v(\cal{H}[L]) \cup \cal{F}$ is not $(p, R' + \eta R)$-Janson is also an independent set in $\cal{J}'$.

The start of the proof of \Cref{stmt:containersForNonJanson} is analogous to the proof of \Cref{stmt:containersForNonJansonWithFingerprints}: we apply \Cref{stmt:containersHardcovers} with $\cal{G} = \cal{J}'$, define $\cal{C}'_T = \langle \cal{C}_T \rangle_{= s}$ for each $T \in \cal{T}$, and apply \Cref{stmt:containersCoversButJanson} with $\cal{G} = \cal{C}'_T$.
This application of \Cref{stmt:containersCoversButJanson} also provides our candidate containers $X$, and the bulk of the argument is proving that they fulfil the conclusions of the \namecref{stmt:containersForNonJansonWithFingerprints}, in particular that $\pi(\cal{H}[Y])$ is not $(p, R)$-Janson for some large $Y \subset X$.

Towards determining that $\cal{H}[X]$ was not $(p, R)$-Janson for fixed $X \in \cal{X}$, in the previous proof we took an arbitrary measure $\nu : \cal{H}[X] \to \RR_{\ge 0}$ and showed that
\begin{equation}\label{eq:originalGoalInEasierProof}
  \Lambda_p(\nu) \ge \frac{e(\nu)^2}{R},
\end{equation}
by considering two cases, depending on whether $e(\nu')$ was sufficiently large, where $\nu'$ was the restriction of $\nu$ to $\cal{C}'_T[X]$.
Our goal in \hyperlink{stmt:ContainersForNonJanson:restated:sec7}{\cref{item:containersForNonJansonAreNonJanson}} of \hyperlink{stmt:ContainersForNonJanson:restated:sec7}{\Cref{stmt:containersForNonJanson}} is to establish something like \eqref{eq:originalGoalInEasierProof} for measures $\mu$ supported on $\pi(\cal{H}[Y])$ for some large $Y \subset X$.
In order to do so, we will need to introduce some additional machinery, which will allow us to relate Janson properties of $\cal{H}$ and those of $\pi(\cal{H})$.

To reason about the Janson properties under the effect of $\pi$, we define the pullback of a measure $\vartheta$ with respect to $\pi$.
The resulting measure, denoted by $\vartheta  \normcomp \pi$, distributes the mass of $E \in \pi(\cal{G})$ equally among edges that are entirely contained in its pre-image.
\begin{defi}
  Let $\cal{G}$ be a hypergraph, $U$ be a set, and let $\pi : V(\cal{G}) \to U$.
  If $\vartheta : \pi(\cal{G}) \to \RR_{\ge 0}$ is a measure, then $\vartheta \normcomp \pi : \cal{G} \to \RR_{\ge 0}$ is defined by
  \begin{equation}\label{eq:defPullback}
    \vartheta \normcomp \pi(E) = \frac{\vartheta(\pi(E))}{\big|\{E_0 \in \cal{G} : \pi(E_0) = \pi(E)\}\big|}
  \end{equation}
  for all $E \in \cal{G}$.
\end{defi}

The crucial property of pullback measures is that, using them, we can show that for any hypergraph $\cal{G}$, if $\pi(\cal{G})$ is $(p, R)$-Janson, then so is $\cal{G}$.

\begin{restatable}{lem}{pullbackPreservesLambdaProperties}\label{stmt:pullbackPreservesLambdaProperties}
  Let $R > 0$ and $p > 0$.
  Further let $\cal{G}$ be a hypergraph, $U$ be a set, $\pi : V(\cal{G}) \to U$ be a function satisfying
  \begin{equation*}
    |\pi(E)| = |E| \qquad \text{for every } E \in \cal{G},
  \end{equation*}
  and $\vartheta : \pi(\cal{G}) \to \RR_{\ge 0}$ be a measure.
  If
  \begin{equation*}
    \Lambda_p(\vartheta) < \frac{e(\vartheta)^2}{R},
  \end{equation*}
  then
  \begin{equation*}
    \Lambda_p(\vartheta \normcomp \pi) < \frac{e(\vartheta \normcomp \pi)^2}{R}.
  \end{equation*}
  In particular, if $\pi(\cal{G})$ is $(p, R)$-Janson, then so is $\cal{G}$.
\end{restatable}

As the proof of \Cref{stmt:pullbackPreservesLambdaProperties} is just checking that the definitions fit nicely together, we will postpone it, and the proofs of intermediate results that we require, to Appendix~\ref{app:propertiesOfMeasures}.
Nevertheless, we reference two of those results, \Cref{stmt:pullbackIsUniform} and \Cref{stmt:edgesPullback}, because they will also be useful in the proof of \Cref{stmt:containersForNonJanson}.

\begin{restatable}{obs}{pullbackIsUniform}\label{stmt:pullbackIsUniform}
  Let $\cal{G}$ be a hypergraph and let $\pi : V(\cal{G}) \to U$ for some set $U$.
  Further let $\vartheta : \pi(\cal{G}) \to \RR_{\ge 0}$ be a measure.
  If $E' \in \pi(\cal{G})$, then
  \begin{equation*}
    \sum_{\substack{E \in \cal{G} \\ \pi(E) = E'}} \vartheta \normcomp \pi(E) = \vartheta(E').
  \end{equation*}
\end{restatable}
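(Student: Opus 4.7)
The plan is to simply unwind the definition of the pullback measure in~\eqref{eq:defPullback}. Fix $E' \in \pi(\cal{G})$ and consider the set $\cal{G}_{E'} = \{E \in \cal{G} : \pi(E) = E'\}$, which is non-empty precisely because $E' \in \pi(\cal{G})$. For every $E \in \cal{G}_{E'}$ we have $\pi(E) = E'$, so the numerator in~\eqref{eq:defPullback} satisfies $\vartheta(\pi(E)) = \vartheta(E')$; the denominator satisfies $|\{E_0 \in \cal{G} : \pi(E_0) = \pi(E)\}| = |\cal{G}_{E'}|$. Both are therefore constant across the range of the summation.

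The proof then amounts to factoring these constants out of the sum:
\begin{equation*}
  \sum_{\substack{E \in \cal{G} \\ \pi(E) = E'}} \vartheta \normcomp \pi(E)
  = \sum_{E \in \cal{G}_{E'}} \frac{\vartheta(E')}{|\cal{G}_{E'}|}
  = |\cal{G}_{E'}| \cdot \frac{\vartheta(E')}{|\cal{G}_{E'}|}
  = \vartheta(E'),
\end{equation*}
where the cancellation is well-defined since $|\cal{G}_{E'}| \ge 1$. There is no real obstacle here: the definition of $\vartheta \normcomp \pi$ was precisely engineered to distribute the mass $\vartheta(E')$ uniformly over the fibre $\cal{G}_{E'}$, so that summing over the fibre recovers $\vartheta(E')$. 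This is the sense in which $\vartheta \normcomp \pi$ is a ``pullback'' of $\vartheta$, and it is exactly the identity that will later be needed to relate $e(\vartheta \normcomp \pi)$ to $e(\vartheta)$ and, via a more careful bookkeeping of degrees, to compare $\Lambda_p$ quantities on $\cal{G}$ and $\pi(\cal{G})$.
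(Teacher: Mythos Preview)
Your proof is correct and takes essentially the same approach as the paper: both simply unwind the definition~\eqref{eq:defPullback}, observe that the numerator and denominator are constant over the fibre $\{E \in \cal{G} : \pi(E) = E'\}$, and cancel. The paper's version is slightly terser, but the argument is identical.
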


\Cref{stmt:pullbackIsUniform} almost immediately implies the second useful result about pullback measures that we use in the proof of \Cref{stmt:containersForNonJanson}.

\begin{restatable}{lem}{edgesPullback}\label{stmt:edgesPullback}
  Let $\cal{G}$ be a hypergraph.
  For all $\pi : V(\cal{G}) \to U$ and $\vartheta : \pi(\cal{G}) \to \RR_{\ge 0}$, we have
  \begin{equation*}
    e(\vartheta \normcomp \pi) = e(\vartheta).
  \end{equation*}
\end{restatable}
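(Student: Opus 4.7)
The plan is to unfold the definition of $e(\vartheta \normcomp \pi)$ and partition the sum over $E \in \cal{G}$ according to the value of $\pi(E)$. Concretely, I would write
\begin{equation*}
  e(\vartheta \normcomp \pi) = \sum_{E \in \cal{G}} \vartheta \normcomp \pi(E) = \sum_{E' \in \pi(\cal{G})} \sum_{\substack{E \in \cal{G} \\ \pi(E) = E'}} \vartheta \normcomp \pi(E),
\end{equation*}
where the second equality uses that $\{E \in \cal{G} : \pi(E) = E'\}$ partitions $\cal{G}$ as $E'$ ranges over $\pi(\cal{G})$ (by definition of $\pi(\cal{G})$, every $E \in \cal{G}$ has $\pi(E) \in \pi(\cal{G})$).

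Then I would apply \Cref{stmt:pullbackIsUniform} to each inner sum, which gives exactly $\vartheta(E')$ for each $E' \in \pi(\cal{G})$. Substituting yields
\begin{equation*}
  e(\vartheta \normcomp \pi) = \sum_{E' \in \pi(\cal{G})} \vartheta(E') = e(\vartheta),
\end{equation*}
where the last equality is simply the definition of $e(\vartheta)$ for the measure $\vartheta : \pi(\cal{G}) \to \RR_{\ge 0}$.

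There is no real obstacle here: the statement is essentially a bookkeeping consequence of how $\vartheta \normcomp \pi$ distributes mass, and \Cref{stmt:pullbackIsUniform} does all the work. The only care needed is to verify that the partition of $\cal{G}$ by fibres of $\pi$ is indexed by $\pi(\cal{G})$ rather than by all of $U$, which is immediate from the definition of $\pi(\cal{G})$.
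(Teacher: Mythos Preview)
Your proposal is correct and is essentially identical to the paper's proof: both partition the sum over $E \in \cal{G}$ by the fibres $\{E : \pi(E) = E'\}$ and invoke \Cref{stmt:pullbackIsUniform} on each fibre. The only cosmetic difference is that the paper writes the chain of equalities starting from $e(\vartheta)$ and ending at $e(\vartheta \normcomp \pi)$, whereas you go in the reverse direction.
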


With the definition of the pullback measure and its crucial property, we can state the main inequalities in the proof that $\pi(\cal{H}[Y])$ is not $(p, R)$-Janson.
We will start with a simplified view of the proof, and add details as we proceed.
For technical reasons, it will be useful to assume instead the converse of our goal, i.e.\ we will fix one $X \in \cal{X}$ such that, for all large $Y$, the hypergraph $\pi(\cal{H}[Y])$ is $(p, R)$-Janson, and reach a contradiction.
In particular, $\pi(\cal{H}[X])$ is $(p, R)$-Janson, so there is a measure $\mu : \pi(\cal{H}[X]) \to \RR_{\ge 0}$ such that
\begin{equation}\label{eq:overview:muSatisfiesLambdaIneq}
  \Lambda_p(\mu) < \frac{e(\mu)^2}{R}
\end{equation}
and the pullback $\nu = \mu \normcomp \pi$ satisfies
\begin{equation*}
  \Lambda_p(\nu) < \frac{e(\nu)^2}{R}
\end{equation*}
by \Cref{stmt:pullbackPreservesLambdaProperties}.

Recall that a critical step in the proof of the simpler version of our container \namecref{stmt:containersForNonJansonWithFingerprints} was the inequality
\begin{equation}\label{eq:recallLambdaPropertiesInExpectation}
  \EE\big[\Lambda_{p / q}(\nu''_q) \mid V_q \in \cal{I}(\link{\cal{J}}{T})\big] \ge \frac{\EE\big[e(\nu''_q)^2 \mid V_q \in \cal{I}(\link{\cal{J}}{T})\big]}{\eta R},
\end{equation}
established in \Cref{stmt:lambdaPropertiesWithConditionalExpectation} via the characterization of independent sets in $\cal{J}$.
The analogous statement here depends on the definition of $\cal{J}'$, which implies that when $I \subset V(\cal{J}')$ is independent, then $\pi_v\big(\cal{H}[I]\big) \cup \cal{F}$ is not $(p, R' + \eta R)$-Janson.
To avoid too many technical details at once, let us assume that $R' > 0$, and that we have $\pi(\cdot)$ instead of $\pi_v(\cdot)$.
These simplifications mean that $\pi\big(\cal{H}[I]\big) \cup \cal{F}$ is not $(p, R' + \eta R)$-Janson when $I \in \cal{I}(\cal{J}')$.

In this simpler setup, we now use the assumption that $\cal{F}$ is $(p, R')$-Janson and $R' > 0$ to choose $\rho : \cal{F} \to \RR_{\ge 0}$ with
\begin{equation}\label{eq:overview:rhoSatisfiesLambdaIneq}
  \Lambda_p(\rho) < \frac{e(\rho)^2}{R'}.
\end{equation}
A case analysis, the same as in \Cref{stmt:nuPrimeHasLessThanHalfTheMeasureOfNu} in the proof of \Cref{stmt:containersForNonJansonWithFingerprints}, will allow us to focus on $\cal{H}' = \cal{H}[X]\setminus\cal{C}_T'$, so we define $\mu_q : \pi(\cal{H}[X]) \to \RR_{\ge 0}$ by
\begin{equation*}
  \mu_q(E) =  \frac{\gamma \cdot \mathds{1}\big[E \in \pi\big(\cal{H}'[V_q]\big)\big]}{P'_q(E)} \, \mu(E),
\end{equation*}
where we will choose $\gamma = \sqrt{8\eta}$ and
\begin{equation*}
  P'_q(E) = \PP\big(E \in \pi\big(\cal{H}'[V_q]\big) \mid V_q \in \cal{I}(\link{\cal{J}'}{T})\big).
\end{equation*}
The definition of $\cal{H}'$ will allow us to assume that
\begin{equation*}
  P'_q(E) > \Big(\frac{\,q\,}{2}\Big)^{|E|},
\end{equation*}
cf.\ \eqref{eq:lowerBoundOnPOfE}, so $\mu_q$ is well-defined.

The inequality corresponding to \eqref{eq:recallLambdaPropertiesInExpectation} in this simplified overview of the proof will then be
\begin{equation}\label{eq:lambdaPropertiesInExpectationEasier}
  \EE\big[\Lambda_p(\rho + \mu_q) \mid V_q \in \cal{I}(\link{\cal{J}'}{T})\big] \ge \frac{\EE\big[e(\rho + \mu_q)^2 \mid V_q \in \cal{I}(\link{\cal{J}'}{T})\big]}{R' + \eta R},
\end{equation}
which, note, has $\Lambda_p$ instead of $\Lambda_{p / q}$ due to some still undiscussed numerics related to the fact that, in this case, the value of $\eta$ is much smaller here than in \Cref{stmt:containersForNonJansonGeneral}.
In the other direction, we would ideally like to show that
\begin{equation}\label{eq:overview:lambdaUpperBound}
  \EE\big[\Lambda_p(\rho + \mu_q) \mid V_q \in \cal{I}(\link{\cal{J}'}{T})\big] \le \Lambda_p(\rho) + 2 \gamma \sqrt{\Lambda_p(\rho) \Lambda_p(\mu)} + \gamma^2 \Big(\frac{\,q\,}{2}\Big)^{-2 s} \Lambda_p(\mu),
\end{equation}
and
\begin{equation}\label{eq:overview:massLowerBound}
  \EE\big[e(\rho + \mu_q)^2 \mid V_q \in \cal{I}(\link{\cal{J}'}{T})\big] \ge  \big( e(\rho) + \gamma e(\mu)\big)^2,
\end{equation}
both of which, when combined with \eqref{eq:overview:muSatisfiesLambdaIneq}, \eqref{eq:overview:rhoSatisfiesLambdaIneq} and our choice of $\gamma$, yield
\begin{equation*}
  \EE\big[\Lambda_p(\rho + \mu_q) \mid V_q \in \cal{I}(\link{\cal{J}'}{T})\big] < \frac{\EE\big[e(\rho + \mu_q)^2 \mid V_q \in \cal{I}(\link{\cal{J}'}{T})\big]}{R' + \eta R}
\end{equation*}
a direct contradiction of \eqref{eq:lambdaPropertiesInExpectationEasier}.
Although we can establish inequalities resembling \eqref{eq:overview:lambdaUpperBound} and \eqref{eq:overview:massLowerBound} with methods similar to those in \Cref{sec:simplerContainerForNonJanson}, we have not justified some of our assumptions.

The first detail that we overlooked in the preceding overview is the assumption that $R' > 0$.
In the case $R' = 0$, we can simply take $\rho = 0$.
To handle both cases together, we use \Cref{stmt:normalisedJansonWitnesses} when $R' > 0$ and assume instead that
\begin{equation*}
  e(\rho) = \sqrt{R'} \qquad \text{and} \qquad \Lambda_p(\rho) < 1,
\end{equation*}
which is sufficient for our purposes.

The other omission in our discussion so far is that we simplified the definition of $\cal{J}'$.
The correct definition means that, to satisfy something like \eqref{eq:lambdaPropertiesInExpectationEasier}, we require a measure supported on $\pi_v\big(\cal{H}[I]\big) \cup \cal{F}$, instead of $\pi\big(\cal{H}[I]\big) \cup \cal{F}$, where $I \in \cal{I}(\cal{J}')$.
To address this change, we extend $\mu : \pi\big(\cal{H}[X]\big) \to \RR_{\ge 0}$ to a measure $\bar{\mu} : \pi_v\big(\cal{H}[X]\big) \to \RR_{\ge 0}$ by setting
\begin{equation*}
  \bar{\mu}\big(E \cup \{v\}\big) = \mu(E)
\end{equation*}
for all $E \in \pi(\cal{H})$, recalling that $v \not \in U \supset \pi(V)$.

Adjusting for this seemingly innocuous change requires some technicalities.
To prove the appropriate version of \eqref{eq:lambdaPropertiesInExpectationEasier}, with $\bar{\mu}_q$ replacing $\mu_q$, we now need an upper bound for $\sum_{u \in \pi(X)} d_\mu(u)^2$.
This is how we use \Cref{stmt:boundDegreeSquared} below, and is the main reason why we prove \hyperlink{stmt:ContainersForNonJanson:restated:sec7}{\cref{item:containersForNonJansonAreNonJanson}} in \hyperlink{stmt:ContainersForNonJanson:restated:sec7}{\Cref{stmt:containersForNonJanson}} for a large $Y \subset X$ instead of all of $X \in \cal{X}$; it is also why it is simpler to prove that same \namecref{item:containersForNonJansonAreNonJanson} by contradiction.
When we account for this term in the final version of \eqref{eq:lambdaPropertiesInExpectationEasier}, it dominates the term for $\Lambda_p(\nu)$, so a simpler bound for the latter suffices, and we do not need to consider $\Lambda_{p / q}$.

The proof of \Cref{stmt:boundDegreeSquared} is straightforward: we simply restrict the measure to avoid vertices with high degree.
If we were only dealing with hypergraphs then implementing this idea would be a triviality, but working with measures involves a few tedious calculations, so we postpone the proof of this \namecref{stmt:boundDegreeSquared} to Appendix~\ref{app:propertiesOfMeasures}.

\begin{restatable}{lem}{boundDegreeSquared}\label{stmt:boundDegreeSquared}
  Let $s \in \NN$, $R, p, \beta> 0$, and $\cal{G}$ be an $s$-uniform hypergraph.
  If for every $W \subset V(\cal{G})$ with $|W| \ge (1 - \beta) v(\cal{G})$, we have that $\cal{G}[W]$ is $(p, R)$-Janson, then there exists $\mu : \cal{G} \to \RR_{\ge 0}$ with
  \begin{equation}
    e(\mu) = \sqrt{R}, \qquad \Lambda_p(\mu) < \frac{e(\mu)^2}{R} \qquad \text{and} \qquad \sum_{v \in V(\cal{G})} d_\mu(v)^2 \le \frac{2 s^2 e(\mu)^2}{\beta v(\cal{G})}.
  \end{equation}
\end{restatable}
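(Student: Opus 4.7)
The plan is to start with a Janson witness $\tilde\mu$ on $\cal{G}$ itself, normalised via \Cref{stmt:normalisedJansonWitnesses} so that $e(\tilde\mu) = \sqrt{R}$ and $\Lambda_p(\tilde\mu) < 1$; such a witness exists because the hypothesis applied with $W = V(\cal{G})$ tells us that $\cal{G}$ is $(p, R)$-Janson. Writing $n = v(\cal{G})$ and setting the threshold $T = 2s\sqrt{R}/(\beta n)$, let $B = \{v \in V(\cal{G}) : d_{\tilde\mu}(v) > T\}$ denote the atypically high-degree vertices. The degree-sum identity $\sum_v d_{\tilde\mu}(v) = s \cdot e(\tilde\mu) = s\sqrt{R}$ forces $|B| \cdot T \le s\sqrt{R}$, and hence $|B| \le \beta n / 2$; consequently $W := V(\cal{G}) \setminus B$ satisfies $|W| \ge (1 - \beta) n$, so by hypothesis $\cal{G}[W]$ is also $(p,R)$-Janson.

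The candidate measure is the truncation $\mu(E) = \tilde\mu(E) \mathbf{1}[E \subseteq W]$. By construction $d_\mu(v) = 0$ for $v \in B$ and $d_\mu(v) \le d_{\tilde\mu}(v) \le T$ for $v \in W$, and $\Lambda_p(\mu) \le \Lambda_p(\tilde\mu) < 1$ follows from monotonicity of $\Lambda_p$ in the support. The desired degree-squared bound is then immediate from the identity $\sum_v d_\mu(v) = s \cdot e(\mu)$, since
\[
\sum_v d_\mu(v)^2 \le T \sum_v d_\mu(v) = T \cdot s \cdot e(\mu) \le T s \sqrt{R} = \frac{2 s^2 R}{\beta n} = \frac{2 s^2 e(\mu)^2}{\beta n},
\]
as long as $e(\mu) = \sqrt{R}$ can be achieved.

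The main obstacle, and the source of the ``tedious calculation'' alluded to in the paper, is that the pure truncation can strictly decrease the total mass, so one may only have $e(\mu) < \sqrt{R}$ and then the Janson bound $\Lambda_p(\mu) < e(\mu)^2/R$ becomes the binding constraint. To repair this, I would invoke the hypothesis a second time on $\cal{G}[W]$ to obtain an auxiliary witness $\nu$ supported in $\cal{G}[W]$ with $e(\nu) = \sqrt{R}$ and $\Lambda_p(\nu) < 1$, then form a convex combination $\mu' = \alpha \mu + (1 - \alpha) \nu$ with $\alpha \in [0,1]$ tuned so that $e(\mu') = \sqrt{R}$ exactly. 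Convexity of $\Lambda_p$ (arising from its form as a sum of squared linear functionals of the measure) preserves $\Lambda_p(\mu') < 1$, and the degree-squared bound must be re-verified on the mixture, most likely by also truncating $\nu$ and absorbing the extra contributions into the constant. Balancing the three constraints $e(\mu') = \sqrt{R}$, $\Lambda_p(\mu') < e(\mu')^2/R$, and $\sum_v d_{\mu'}(v)^2 \le 2 s^2 e(\mu')^2/(\beta n)$ with the sharp constant $2$ is the delicate part of the argument; the underlying idea, however, is precisely the truncation above.
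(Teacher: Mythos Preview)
Your truncation intuition is exactly right, and the ingredients you list---the degree-sum identity to bound $|B|$, the hypothesis on $\cal{G}[W]$, and the Cauchy--Schwarz/quadratic bound on $\Lambda_p$ of a combination---are all present in the paper's argument. But the convex-combination step you propose to restore the mass cannot work as written. If $e(\mu) < \sqrt{R}$ and $e(\nu) = \sqrt{R}$, then $e(\alpha\mu + (1-\alpha)\nu) = \sqrt{R}$ forces $\alpha = 0$, i.e.\ $\mu' = \nu$. And $\nu$, being an arbitrary Janson witness on $\cal{G}[W]$, has no degree control at all; truncating $\nu$ just restarts the same problem one level down, and there is no reason the iteration terminates. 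So the ``tedious calculation'' you anticipate is not merely tedious: the construction as you describe it does not close.

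The missing idea is a variational one. The paper does not try to build $\mu$ directly; instead it takes, among all measures with $e(\mu) = \sqrt{R}$ and $\Lambda_p(\mu) < 1$, one that \emph{minimises} $\sum_v d_\mu(v)^2$, and then argues by contradiction. If the minimiser violated the degree bound, one defines $W$ as you do (low-degree vertices of $\mu$), takes a fresh witness $\mu'$ on $\cal{G}[W]$, and forms $\mu'' = (1-\tau)\mu + \tau\mu'$ for an \emph{infinitesimal} $\tau > 0$. The point is not to hit $e(\mu'') = \sqrt{R}$ by tuning $\tau$ (linearity of $e$ gives that for free, since both $e(\mu)$ and $e(\mu')$ equal $\sqrt{R}$), but to show that for small $\tau$ the degree-squared sum strictly decreases---the cross-term $\sum_v d_\mu(v) d_{\mu'}(v)$ is small because $\mu'$ is supported where $d_\mu$ is small---while the $\Lambda_p$ constraint is preserved by the Cauchy--Schwarz bound $\Lambda_p(\mu'') \le ((1-\tau)\sqrt{\Lambda_p(\mu)} + \tau\sqrt{\Lambda_p(\mu')})^2 < 1$. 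This contradicts minimality. The perturbation and the mass-fixing are decoupled, which is exactly the step your sketch conflates.
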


We are now ready to prove \hyperlink{stmt:ContainersForNonJanson:restated:sec7}{\Cref{stmt:containersForNonJanson}}; for ease of reference, let us restate \Cref{stmt:containersHardcovers}.

\hypertarget{stmt:containersHardcovers:restated}{\containersHardcovers*}

\begin{proof}[Proof of \Cref{stmt:containersForNonJanson}]
  \hypertarget{proof:containersForNonJanson}{Apply} \hyperlink{stmt:containersHardcovers:restated}{\Cref{stmt:containersHardcovers}} with $\cal{G} = \cal{J}'$, where
  \begin{equation*}
    \cal{J}' = \Big\{ L \subset V : \pi_v\big(\cal{H}[L]\big) \cup \cal{F} \text{ is $(p, R' + \eta R)$-Janson}\Big\},
  \end{equation*}
  and parameters $q$ and $\alpha = 1/2$ to obtain $\cal{T}$ and $\varphi$.
  Now, for each $T \in \cal{T}$, there is $\cal{C}_T$ satisfying \hyperlink{stmt:containersHardcovers:restated}{\cref{item:containersHardcoversAreCoverable} in \Cref{stmt:containersHardcovers}}, so we let $\cal{C}'_T = \langle \cal{C}_T \rangle_{= s}$ be the edges of $\langle \cal{C}_T \rangle$ of size $s$.
  Since $\cal{C}'_T$ is $s$-uniform and $p \le 1/(2^{11} r s^2)$ by assumption, we can apply \Cref{stmt:containersCoversButJanson} with $\cal{G} = \cal{C}'_T$ and $\zeta = 2^{-8} r^{-1}$.
  As in the proof of \Cref{stmt:containersForNonJansonWithFingerprints} we obtain $\varphi$, $\psi_T$, $\phi_T$ that satisfy
  \begin{enumerate}[(i)]
    \item For each $I \in \cal{I}(\cal{J}')$, we have $\phi_T(I) \subset I \subset \psi_T(\phi_T(I))$, for $T = \varphi(I)$. \label{item:containersCoversHaveFingerprints:restated}
    \item Each $S \in \cal{S}_T$ has at most $2^{11} r p s^2 n$ elements. \label{item:containersCoversHaveSmallFingerprints:restated}
    \hypertarget{item:containersAreNonJanson:restated}{\item For every $S \in \cal{S}_T$, letting $X = \psi_T(S)$, $\cal{C}'_T[X]$ is not $(p, 2^{-8} r^{-1} p |X|)$-Janson.}
  \end{enumerate}

  Setting $f(I) = X = \psi_T(S)$ for $T = \varphi(I)$ and $S = \phi_T(I)$, we define
  \begin{equation*}
    \cal{X} = \big\{f(I) : I \in \cal{I}(\cal{J})\big\},
  \end{equation*}
  which, by \cref{item:containersCoversHaveFingerprints:restated} and the fact that $I$ was arbitrary, is a definition that satisfies \hyperlink{stmt:ContainersForNonJanson:restated:sec7}{\cref{item:containersForNonJansonInclusions}} in \hyperlink{stmt:ContainersForNonJanson:restated:sec7}{\Cref{stmt:containersForNonJanson}}.

  As in the proofs of \Cref{stmt:containersForNonJansonGeneral} and \Cref{stmt:containersForNonJansonWithFingerprints} (cf.\ \eqref{eq:sizeOfContainerReferenceForTheFirstTime}, \eqref{eq:boundOnTheSizesOfFingerprintsForBinomial} and \eqref{eq:sizeOfFingerprintsInGeneralContainerJanson}), to show that the size of $\cal{X}$ is suitable, we count $X \in \cal{X}$ by choosing $T \in \cal{T}$ and then $S \in \cal{S}_T$.
  We therefore obtain
  \begin{equation}\label{eq:sizeOfContainerFamilyIsSuitable}
    |\cal{X}| \le \sum_{T \in \cal{T}} |\cal{S}_T| \le \sum_{m = 0}^{2^{11} p s^2 n} \binom{n}{m} \sum_{t = 0}^{2 q n} \binom{n}{t} \le \bigg(\frac{\,2\,}{q}\bigg)^{2 q n}
  \end{equation}
  combining \cref{item:containersCoversHaveSmallFingerprints:restated} above with \hyperlink{stmt:containersHardcovers:restated}{\cref{item:containersHardcoversHaveSmallFingerprints} in \Cref{stmt:containersHardcovers}} and $$2^{11}r p s^{2}n \le 2qn \le \frac{n}{4}.$$
  The bound in \eqref{eq:sizeOfContainerFamilyIsSuitable} matches \hyperlink{stmt:ContainersForNonJanson:restated:sec7}{\eqref{eq:containersForNonJansonSmallFamily}} in the statement, so it only remains to show that \hyperlink{stmt:ContainersForNonJanson:restated:sec7}{\cref{item:containersForNonJansonAreNonJanson}} holds.

  Now, assume by contradiction that there exists $I \in \cal{I}(\cal{J}')$ such that
  \begin{equation}
    \begin{gathered}
      f(I) = X \in \cal{X} \text{ satisfies } |X| \ge n/(8r), \text{ and} \\
      \pi\big(\cal{H}[Y]\big) \text{ is } (p, R)\text{-Janson} \text{ for every }
      Y \subset X \text{ with } |Y| \ge |X| - 2^{-8}r^{-1}n.
    \end{gathered}
    \label{item:negatedItem}\tag{$\ast$}
  \end{equation}
  This is the converse of \hyperlink{stmt:ContainersForNonJanson:restated:sec7}{\cref{item:containersForNonJansonAreNonJanson}} in \hyperlink{stmt:ContainersForNonJanson:restated:sec7}{\Cref{stmt:containersForNonJanson}}, so contradicting it will complete the proof.

  We want to apply \Cref{stmt:boundDegreeSquared} with $\cal{G} = \pi\big(\cal{H}[X]\big)$ and $\beta = 2^{-9}r^{-1}$.
  To do that, we first verify that this hypergraph satisfies the assumptions in the \namecref{stmt:boundDegreeSquared}.

  \begin{claim}
    For all
    \begin{equation}\label{eq:largePreImages:propertiesOfW}
      W \subset \pi(X) \qquad \text{with} \qquad |W| \ge (1 - 2^{-9}r^{-1}) |\pi(X)|,
    \end{equation}
    the hypergraph $\pi\big(\cal{H}[X]\big)[W]$ is $(p, R)$-Janson.
  \end{claim}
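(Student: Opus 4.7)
The plan is to reduce this claim to the contradiction hypothesis~\eqref{item:negatedItem}, by exhibiting a set $Y \subset X$ such that $\pi(\cal{H}[Y])$ both lies inside $\pi(\cal{H}[X])[W]$ and is large enough for~\eqref{item:negatedItem} to apply. Once we have such a $Y$, the fact that $\pi(\cal{H}[Y])$ is $(p,R)$-Janson will upgrade to $\pi(\cal{H}[X])[W]$ being $(p,R)$-Janson by the trivial monotonicity \Cref{stmt:JansonIsDownset}.

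The natural candidate is $Y = X \cap \pi^{-1}(W)$. First, to see $\pi(\cal{H}[Y]) \subset \pi(\cal{H}[X])[W]$, note that any edge $E \in \cal{H}$ with $E \subset Y$ automatically satisfies $E \subset X$, so $\pi(E) \in \pi(\cal{H}[X])$; moreover $\pi(E) \subset \pi(Y) \subset W$ by construction, so $\pi(E)$ lies in the induced hypergraph $\pi(\cal{H}[X])[W]$.

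Second, to control $|Y|$, I would bound $|X \setminus Y|$ using the first hypothesis on $\pi$ in~\eqref{eq:assumptionsPi}. Observe that $\pi(X \setminus Y) \subset \pi(X) \setminus W$ by the definition of $Y$, hence
\begin{equation*}
  |X \setminus Y| \le 2\,|\pi(X \setminus Y)| \le 2\,|\pi(X) \setminus W| \le 2 \cdot 2^{-9} r^{-1}\,|\pi(X)| \le 2^{-8} r^{-1} n,
\end{equation*}
where we used $|\pi(L)| \ge |L|/2$ for all $L \subset V$, the assumption on $|W|$, and $|\pi(X)| \le |X| \le n$. Therefore $|Y| \ge |X| - 2^{-8} r^{-1} n$, exactly the size threshold required to invoke the second part of~\eqref{item:negatedItem}, which yields that $\pi(\cal{H}[Y])$ is $(p,R)$-Janson.

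Combining the inclusion $\pi(\cal{H}[Y]) \subset \pi(\cal{H}[X])[W]$ with \Cref{stmt:JansonIsDownset} then gives that $\pi(\cal{H}[X])[W]$ is $(p,R)$-Janson, which is precisely the claim. There is no real obstacle here: the whole argument is essentially unpacking definitions, and the only non-trivial ingredient is the factor-of-two slack in $|\pi(L)| \ge |L|/2$, which is exactly calibrated against the factor $2^{-9} r^{-1}$ in the hypothesis on $|W|$ to land at the threshold $2^{-8} r^{-1} n$ of~\eqref{item:negatedItem}.
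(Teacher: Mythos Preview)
Your proposal is correct and follows essentially the same approach as the paper: define $Y = X \cap \pi^{-1}(W)$, show $\pi(\cal{H}[Y]) \subset \pi(\cal{H}[X])[W]$, use $|\pi(L)| \ge |L|/2$ to bound $|X \setminus Y| \le 2|\pi(X)\setminus W| \le 2^{-8}r^{-1}n$, invoke~\eqref{item:negatedItem}, and finish with \Cref{stmt:JansonIsDownset}.
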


  \begin{claimproof}
    Fix $W$ satisfying \eqref{eq:largePreImages:propertiesOfW} and observe that letting $Y = \pi^{-1}(W) \cap X$, we have
    \begin{equation*}
      \pi\big(\cal{H}[Y]\big) \subset \pi\big(\cal{H}[Y]\big)[W]\subset \pi\big(\cal{H}[X]\big)[W]
    \end{equation*}
    where the first containment holds because $\pi(E)\subset W$, for all $E\subset Y$, by our choice of $Y\subset \pi^{-1}(W)$ and the second holds since $Y\subset X$.

    As $\pi(X \setminus Y) = \pi(X) \setminus W$ and we assumed that $|L| \le 2|\pi(L)|$ for every $L \subset V$, we have that
    \begin{equation}\label{eq:sizeOfY}
      |Y| = |X| - |X\setminus Y| \ge |X| - 2|\pi(X\setminus Y)| = |X| - 2|\pi(X)\setminus W| \ge |X| - 2^{-8}r^{-1}n.
    \end{equation}
    It follows from \eqref{eq:sizeOfY} and \eqref{item:negatedItem} that $\pi\big(\cal{H}[Y]\big)$ is $(p, R)$-Janson, but this is an increasing property by \Cref{stmt:JansonIsDownset}, so $\pi\big(\cal{H}[X]\big)[W]$ is also $(p, R)$-Janson.
  \end{claimproof}

  Applying \Cref{stmt:boundDegreeSquared} with $\cal{G} = \pi\big(\cal{H}[X]\big)$ and $\beta = 2^{-9}r^{-1}$, we obtain $\mu : \pi\big(\cal{H}[X]\big) \to \RR_{\ge 0}$ satisfying
  \begin{equation}\label{eq:muIsJanson}
    e(\mu) = \sqrt{R}, \qquad \Lambda_p(\mu) < 1
  \end{equation}
  and
  \begin{equation}\label{eq:muHasBddDegrees}
    \sum_{u \in \pi(X)} d_\mu(u)^2 \le \frac{2 s^2 e(\mu)^2}{\beta |\pi(X)|} \le \frac{2^{10}r s^2 e(\mu)^2}{ |\pi(X)|} \le \frac{2^{14}r^2 s^2 e(\mu)^2}{n},
  \end{equation}
  where the last inequality follows from the assumptions that $|L|/2 \le |\pi(L)|$ for every $L \subset V$ and $|X| \ge n /(8r)$.

  Let $\nu : \cal{H}[X] \to \RR_{\ge 0}$ be the pullback measure of $\mu$ with respect to $\pi$, i.e.\ $\nu = \mu \normcomp \pi$.
  As $\pi$ satisfies
  \begin{equation*}
    |\pi(E)| = |E| \qquad \text{for all } E \in \cal{H}[X] \subset \cal{H}
  \end{equation*}
  by \hyperlink{stmt:ContainersForNonJanson:restated:sec7}{\eqref{eq:assumptionsPi}}, we can apply \Cref{stmt:pullbackPreservesLambdaProperties} with $\cal{G} = \cal{H}[X]$ and $\vartheta = \mu$ to conclude that
  \begin{equation}\label{eq:nuSatisfiesLambdaProperties}
    \Lambda_p(\nu) < \frac{e(\nu)^2}{R}.
  \end{equation}

  Now take $T = \varphi(I)$, and recall that $\cal{C}'_T = \langle \cal{C}_T \rangle_{= s}$, where $\cal{C}_T$ is the cover given by \hyperlink{stmt:containersHardcovers:restated}{\cref{item:containersHardcoversAreCoverable} in \Cref{stmt:containersHardcovers}}.
  Let $\nu'$ be the restriction of the measure $\nu$ to $\cal{H}[X] \cap \cal{C}'_T[X]$, that is, for each $E \in \cal{H}[X]$, let
  \begin{equation*}
    \nu'(E) = \begin{cases*}
      \nu(E) \quad \text{if } E \in \cal{C}'_T[X], \\
      0 \phantom{(E)} \quad \text{otherwise.}
    \end{cases*}
  \end{equation*}

  \begin{claim}\label{stmt:nuPrimeHasAtMostHalfTheMeasureOfNu}
    The measure $\nu'$ satisfies
    \begin{equation*}
      e(\nu') < \frac{e(\nu)}{2}.
    \end{equation*}
  \end{claim}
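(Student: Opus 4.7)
The plan is to argue by contradiction, mirroring the structure of \Cref{stmt:nuPrimeHasLessThanHalfTheMeasureOfNu} in the proof of \Cref{stmt:containersForNonJansonWithFingerprints}, but exploiting the specific equality $R = 2^{-6}pn$ fixed in \eqref{eq:assumptionsOnContainerParams} to obtain the stronger conclusion that $e(\nu') < e(\nu)/2$ always holds (rather than splitting into cases). Concretely, I would suppose for contradiction that $e(\nu') \geq e(\nu)/2$ and derive two incompatible bounds on $\Lambda_p(\nu')$.

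For the lower bound, $\nu'$ is a non-negative measure supported on $\cal{C}'_T[X]$, which by \hyperlink{item:containersAreNonJanson:restated}{\cref{item:containersAreNonJanson}} of \Cref{stmt:containersCoversButJanson} is not $(p, 2^{-8}p|X|)$-Janson. Hence $\Lambda_p(\nu') \geq 2^8 e(\nu')^2/(p|X|)$. For the upper bound, since $\nu'(E) \leq \nu(E)$ pointwise, the definition of $\Lambda_p$ immediately gives $d_{\nu'}(L) \leq d_\nu(L)$ for every $L$, so $\Lambda_p(\nu') \leq \Lambda_p(\nu)$; combining with \eqref{eq:nuSatisfiesLambdaProperties} yields $\Lambda_p(\nu') < e(\nu)^2/R$.

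Using the hypothetical $e(\nu') \geq e(\nu)/2$ and the trivial inclusion $X \subset V$ giving $|X| \leq n$, these bounds will chain to
\begin{equation*}
\frac{2^6 e(\nu)^2}{pn} \,\leq\, \frac{2^8 e(\nu')^2}{p|X|} \,\leq\, \Lambda_p(\nu') \,<\, \frac{e(\nu)^2}{R}.
\end{equation*}
Noting that $e(\nu) = e(\mu \normcomp \pi) = e(\mu) = \sqrt{R} > 0$ by \Cref{stmt:edgesPullback} and \eqref{eq:muIsJanson}, we can cancel $e(\nu)^2$ and deduce $R < pn/2^6$, which directly contradicts $R = 2^{-6}pn$. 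There is no serious obstacle here: the only thing to double-check is the monotonicity of $\Lambda_p$ under $\nu' \leq \nu$ and the positivity of $e(\nu)$, both of which are immediate. In particular, this places no burden on the relationship between $\cal{H}$ and $\pi$, which only enters the surrounding argument; the claim itself is a purely numerical tension between \eqref{eq:nuSatisfiesLambdaProperties} and the non-Janson property supplied by the container lemma.
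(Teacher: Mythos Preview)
Your proposal is correct and uses essentially the same ingredients as the paper: the inequality $\Lambda_p(\nu) < e(\nu)^2/R$ from \eqref{eq:nuSatisfiesLambdaProperties}, monotonicity $\Lambda_p(\nu') \le \Lambda_p(\nu)$, the non-Janson property of $\cal{C}'_T[X]$ giving $\Lambda_p(\nu') \ge 2^8 e(\nu')^2/(p|X|)$, and $R = 2^{-6}pn$ with $|X| \le n$. The paper runs the identical chain directly as
\[
\frac{e(\nu)^2}{R} > \Lambda_p(\nu) \ge \Lambda_p(\nu') \ge \frac{2^8 e(\nu')^2}{p|X|} \ge \frac{4 e(\nu')^2}{R},
\]
and reads off $e(\nu') < e(\nu)/2$ without framing it as a contradiction; your version is the same argument with an unnecessary contradiction wrapper.
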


  \begin{claimproof}
    Indeed, we have
    \begin{equation*}
      \frac{e(\nu)^2}{R} > \Lambda_p(\nu) \ge \Lambda_{p}(\nu') \ge \frac{2^8 r e(\nu')^2}{p |X|} \ge \frac{4 e(\nu')^2}{R}
    \end{equation*}
    first by \eqref{eq:nuSatisfiesLambdaProperties}, second because $\nu \ge \nu'$ and $\Lambda_p(\cdot)$ is monotone increasing, then since the hypergraph $\cal{C}'_T[X]$ is not $(p, 2^{-8} r^{-1} p |X|)$-Janson by \hyperlink{item:containersAreNonJanson:restated}{\cref{item:containersAreNonJanson} of \Cref{stmt:containersCoversButJanson}} and our choice of $\zeta = 2^{-8} r^{-1}$, and finally because $R = 2^{-6} r^{-1} p n$ by assumption.
  \end{claimproof}

  We now define the measure $\nu'' = \nu - \nu'$, which corresponds to the restriction of $\nu$ to the hypergraph ${\cal{H}' := \cal{H}[X] \setminus \cal{C}'_T} = \cal{H}[X] \setminus \cal{C}'_T[X]$.
  By \Cref{stmt:nuPrimeHasAtMostHalfTheMeasureOfNu}, we have
  \begin{equation}\label{eq:containersForNonJanson:nuDoublePrimeHasHalfTheMeasureOfNu}
    e(\nu'') = e(\nu) - e(\nu') > \frac{e(\nu)}{2}.
  \end{equation}
  Also define $\mu''$ to be the restriction of $\mu$ to $\pi(\cal{H}')$.
  Applying \Cref{stmt:pullbackIsUniform} with $\vartheta = \mu$ and $\nu = \mu \normcomp \pi$, then using \eqref{eq:containersForNonJanson:nuDoublePrimeHasHalfTheMeasureOfNu} and \Cref{stmt:edgesPullback}, yields
  \begin{equation}\label{eq:muDoublePrimeHasHalfTheMeasureOfMu}
    e(\mu'') = \sum_{E \in \pi(\cal{H}')} \mu(E) = \sum_{E \in \pi(\cal{H}')} \sum_{\substack{E_0 \in \cal{H} \\ \pi(E_0) = E}} \nu(E_0) \ge \sum_{E \in \cal{H}'} \nu''(E) = e(\nu'') > \frac{e(\nu)}{2} = \frac{e(\mu)}{2}.
  \end{equation}

  With the goal of defining a random measure $\mu''_q$, let, for all $E \in \pi(\cal{H}')$,
  \begin{equation}\label{eq:definitionOfPEPrime}
    P'_q(E) = \PP\big(E \in \pi(\cal{H}'[V_q]) \mid V_q \in \cal{I}(\link{\cal{J'}}{T})\big).
  \end{equation}

  \begin{claim}\label{stmt:lowerBoundOnPOfEPrime}
    For all $E \in \pi(\cal{H}')$,
    \begin{equation}\label{eq:lowerBoundOnPOfEPrime}
      P'_q(E) \ge \PP\big(E_0 \subset V_q \mid V_q \in \cal{I}(\link{\cal{J}'}{T})\big) > \Big(\frac{\,q\,}{2}\Big)^s
    \end{equation}
    where $E_0 \in \cal{H}'$ is fixed and satisfies $\pi(E_0) = E$.
  \end{claim}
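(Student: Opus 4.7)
The plan is to dispatch the two inequalities separately, each by a short routine argument that appeals directly to definitions and to the cover property supplied by \hyperlink{stmt:containersHardcovers:restated}{\Cref{stmt:containersHardcovers}}\cref{item:containersHardcoversAreCoverable}.

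For the first inequality, I would argue by inclusion of events. Fix $E \in \pi(\cal{H}')$ and any $E_0 \in \cal{H}'$ with $\pi(E_0) = E$. If $E_0 \subset V_q$, then $E_0 \in \cal{H}'[V_q]$, and applying $\pi$ gives $E = \pi(E_0) \in \pi(\cal{H}'[V_q])$. Thus the event $\{E_0 \subset V_q\}$ is contained in the event $\{E \in \pi(\cal{H}'[V_q])\}$, and conditioning on $\{V_q \in \cal{I}(\link{\cal{J}'}{T})\}$ preserves this inclusion, yielding the first inequality in \eqref{eq:lowerBoundOnPOfEPrime}.

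For the second inequality, I would apply \hyperlink{stmt:containersHardcovers:restated}{\Cref{stmt:containersHardcovers}}\cref{item:containersHardcoversAreCoverable} to $L = E_0$ with parameter $\alpha = 1/2$ (as fixed at the top of the proof of \Cref{stmt:containersForNonJanson}), which gives
\[
\PP\big(E_0 \subset V_q \mid V_q \in \cal{I}(\link{\cal{J}'}{T})\big) > (1-\alpha)^{|E_0|} q^{|E_0|} = \Big(\frac{q}{2}\Big)^s,
\]
once we verify the hypothesis $E_0 \notin \cal{C}_T$. This verification is the only content in the argument and is immediate: since $\cal{H}$ is $s$-uniform we have $|E_0| = s$, so if $E_0 \in \cal{C}_T$ then $E_0 \in \langle \cal{C}_T \rangle_{=s} = \cal{C}_T'$, contradicting the defining property $E_0 \in \cal{H}' = \cal{H}[X] \setminus \cal{C}_T'$.

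There is no genuine obstacle here; the lemma is essentially a bookkeeping step that packages the conclusion of the cover property into the form needed to control the normalising denominator of the random measure $\mu_q''$ defined immediately after. The only subtlety is being careful that $\cal{H}'$ was constructed precisely so that its edges lie outside $\cal{C}_T'$, and hence outside $\cal{C}_T$ at the uniformity $s$, which is exactly what makes the cover bound applicable.
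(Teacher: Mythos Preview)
Your proposal is correct and follows essentially the same approach as the paper: the first inequality by event inclusion, and the second by invoking \hyperlink{stmt:containersHardcovers:restated}{\Cref{stmt:containersHardcovers}}\cref{item:containersHardcoversAreCoverable} with $\alpha=1/2$ after checking $E_0\notin\cal{C}_T$ via the $s$-uniformity argument (which the paper handles by referring back to the identical reasoning for \eqref{eq:lowerBoundOnPOfE}).
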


  \begin{claimproof}
    The first inequality in \eqref{eq:lowerBoundOnPOfEPrime} follows from the fact that if $E_0\subset V_q$ for $E_0\in \cal{H}'$, then we have $\pi(E_0)\in \pi(\cal{H}')[V_q]$.
    Also notice that, by the same argument used to establish \eqref{eq:lowerBoundOnPOfE}, the hypergraphs $\cal{H}'$ and $\cal{C}_T$ are disjoint.
    Therefore, we can use \hyperlink{stmt:containersHardcovers:restated}{\eqref{eq:conditionalProbOfLInQSet}} in \hyperlink{stmt:containersHardcovers:restated}{\cref{item:containersHardcoversAreCoverable} of \Cref{stmt:containersHardcovers}} to obtain
    \[\PP\big(E_0 \subset V_q \mid V_q \in \cal{I}(\link{\cal{J}'}{T})\big) > \Big(\frac{\,q\,}{2}\Big)^s.
    \vspace{-15pt}
    \]
  \end{claimproof}

  Now, let $\gamma = \sqrt{8 \eta} > 0$ (a choice made with foresight) and $\mu_q'' : \pi(\cal{H}') \to \RR_{\ge 0}$ be defined by
  \begin{equation*}
    \mu''_q(E) = \mu''(E) \, \frac{\gamma \cdot \mathds{1}\big[E \in \pi(\cal{H}'[V_q])\big] }{P'_q(E)}.
  \end{equation*}
  Also define the extension $\bar{\mu}''_q$ supported on $\pi_v(\cal{H}'[X_q])$, where $X_q = V_q \cap X$, by
  \begin{equation*}
    \bar{\mu}''_q(E \cup \{v\}) = \mu''_q(E) \qquad \text{for all } E \in \pi(\cal{H}'[X_q]).
  \end{equation*}
  Our goal now is to show that analysing $\bar{\mu}''_q$ for our choice of $\gamma$ contradicts $I \in \cal{I}(\cal{J}')$.

  To reason about properties of $\cal{J}'$, we define a measure $\rho$ supported on $\cal{F}$.
  If $R' > 0$, then we can apply \Cref{stmt:normalisedJansonWitnesses} with $\cal{G} = \cal{F}$ and $y = \sqrt{R'}$, since $\cal{F}$ is $(p, R')$-Janson, to obtain $\rho : \cal{F} \to \RR_{\ge 0}$ satisfying
  \begin{equation}\label{eq:rhoIsJanson}
    \Lambda_p(\rho) < \dfrac{e(\rho)^2}{R'} \qquad \text{and} \qquad e(\rho) = \sqrt{R'}.
  \end{equation}
  If, on the other hand, $R' = 0$, then we take the measure
  \begin{equation}\label{eq:rhoIsZero}
    \rho = 0.
  \end{equation}
  Regardless of the value of $R'$, or the choice of $\rho$ as either \eqref{eq:rhoIsJanson} or \eqref{eq:rhoIsZero}, we have
  \begin{equation}\label{eq:rhoIsNormalisedJanson}
    \Lambda_p(\rho) < 1 \qquad \text{ and } \qquad e(\rho) = \sqrt{R'},
  \end{equation}
  which, together on being supported on $\cal{F}$, are the only properties that we will use of $\rho$.

  Our goal is to obtain inequalities bounding $\Lambda_p(\rho + \bar{\mu}_q'')$ from $e(\rho + \bar{\mu}_q'')^2$, so we start easily, relating the expected value of $e(\bar{\mu}''_q)$ to $e(\mu'')$.
  \begin{claim}\label{stmt:massOfBarMuDoublePrimeEqualsInExpectation}
    \begin{equation*}
      \EE\big[e(\bar{\mu}''_q) \mid V_q \in \cal{I}(\link{\cal{J}}{T})\big] = \gamma e(\mu'').
    \end{equation*}
  \end{claim}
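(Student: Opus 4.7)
The proof is a straightforward unfolding of definitions, so the plan is to compute $e(\bar{\mu}''_q)$ directly and then take the conditional expectation term-by-term.

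First, I would observe that, by construction, $\bar{\mu}''_q$ simply relabels each edge $E$ in $\pi(\cal{H}'[X_q])$ as $E \cup \{v\}$ while keeping the same mass, so
\begin{equation*}
  e(\bar{\mu}''_q) = \sum_{F \in \pi_v(\cal{H}'[X_q])} \bar{\mu}''_q(F) = \sum_{E \in \pi(\cal{H}'[X_q])} \mu''_q(E) = \sum_{E \in \pi(\cal{H}')} \mu''(E) \cdot \frac{\gamma \cdot \mathds{1}[E \in \pi(\cal{H}'[V_q])]}{P'_q(E)},
\end{equation*}
where in the last equality I plugged in the definition of $\mu''_q$ and used that the support of $\mu''_q$ is contained in $\pi(\cal{H}')$.

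Next, I would take the conditional expectation given the event $\{V_q \in \cal{I}(\link{\cal{J}'}{T})\}$, using linearity of expectation to move the conditional expectation inside the sum. The only random quantity in each summand is the indicator $\mathds{1}[E \in \pi(\cal{H}'[V_q])]$, and the conditional expectation of this indicator is, by the definition~\eqref{eq:definitionOfPEPrime} of $P'_q(E)$, exactly $P'_q(E)$. Thus the two factors involving $P'_q(E)$ cancel, yielding
\begin{equation*}
  \EE\big[e(\bar{\mu}''_q) \mid V_q \in \cal{I}(\link{\cal{J}'}{T})\big] = \gamma \sum_{E \in \pi(\cal{H}')} \mu''(E) = \gamma \, e(\mu''),
\end{equation*}
which is the desired identity. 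There is no substantive obstacle here; the only thing one has to be careful about is that $P'_q(E)$ is strictly positive for every $E \in \pi(\cal{H}')$ (so that the ratio in the definition of $\mu''_q$ is well-defined), which is already guaranteed by \Cref{stmt:lowerBoundOnPOfEPrime}.
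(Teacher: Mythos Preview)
Your proposal is correct and follows essentially the same approach as the paper: unfold the definitions of $\bar{\mu}''_q$ and $\mu''_q$ to express $e(\bar{\mu}''_q)$ as a sum over $\pi(\cal{H}')$, then take conditional expectation and use that $\EE[\mathds{1}[E \in \pi(\cal{H}'[V_q])] \mid V_q \in \cal{I}(\link{\cal{J}'}{T})] = P'_q(E)$ to cancel. Your extra remark about $P'_q(E) > 0$ (via \Cref{stmt:lowerBoundOnPOfEPrime}) is a harmless clarification that the paper handles implicitly.
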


  \begin{claimproof}
    The definitions of $e(\bar{\mu}''_q)$, $\bar{\mu}''_q$ and $\mu''_q$,
    \begin{equation*}
      e(\bar{\mu}''_q) = \sum_{E \cup \{v\} \in \pi_v(\cal{H}')} \bar{\mu}''_q\big(E \cup \{v\}\big) = \sum_{E \in \pi(\cal{H}')} \mu''(E) \, \frac{\gamma \cdot \mathds{1}\big[E \in \pi(\cal{H}'[V_q])\big]}{P'_q(E)},
    \end{equation*}
    imply that
    \begin{equation*}
      \EE\big[e(\bar{\mu}''_q) \mid V_q \in \cal{I}(\link{\cal{J}'}{T})\big] = \sum_{E \in \pi(\cal{H}')} \gamma \mu''(E) =  \gamma e(\mu''),
    \end{equation*}
    since, for all $E \in \pi(\cal{H}')$, we have that
    \begin{equation}\label{eq:indicatorAndPEPrime}
      \EE\big[\mathds{1}\big[E \in \pi(\cal{H}'[V_q])\big] \mid V_q \in \cal{I}(\link{\cal{J}'}{T})\big] = P'_q(E)
    \end{equation}
    from the definition of $P'_q(E)$, \eqref{eq:definitionOfPEPrime}.
  \end{claimproof}

  Combining \Cref{stmt:massOfBarMuDoublePrimeEqualsInExpectation} with \eqref{eq:muDoublePrimeHasHalfTheMeasureOfMu}, we have
  \begin{equation}\label{eq:boundone(mubard)}
    \EE\big[e(\bar{\mu}''_q) \mid V_q \in \cal{I}(\link{\cal{J}'}{T})\big] > \gamma \frac{e(\mu)}{2}.
  \end{equation}

  Recall that the definition of $d_\rho(L)$ for any set $L \subset U$ is
  \begin{equation*}
    d_\rho(L) = \sum_{L \subset E \in \cal{F}} \rho(E).
  \end{equation*}
  As $\rho$ is supported over $\cal{F}$, we have $\rho(E) = 0$ when $E \not \subset U = V(\cal{F})$.
  We can use this observation to obtain the expansion
  \begin{equation}\label{eq:changeFromUCupVToU}
    \Lambda_p(\rho + \bar{\mu}''_q) = \sum_{\substack{L \subset U \\ |L| \ge 2}} d_\rho(L)^2 p^{-|L|} + 2 \sum_{\substack{L \subset U \\ |L| \ge 2}} d_\rho(L) d_{\bar{\mu}''_q}(L) p^{-|L|} + \sum_{\substack{L \subset U \cup \{v\} \\ |L| \ge 2}} d_{\bar{\mu}''_q}(L)^2 p^{-|L|}
  \end{equation}
  where the first two terms range over $L \subset U$ instead of $L \subset U \cup \{v\}$ because if $v \in L$, then $d_\rho(L) = 0$ as $v \not \in V(\cal{F})$ by assumption.
  The first sum in \eqref{eq:changeFromUCupVToU} is now exactly $\Lambda_p(\rho)$, which we can immediately bound with \eqref{eq:rhoIsNormalisedJanson}, but we need some simple claims before we analyse the other two sums.
  We start with an observation that relates the degrees $d_{\bar{\mu}''_q}$ to the degrees $d_{\mu''_q}$.
  \begin{claim}\label{stmt:degreeOfBarMuC}
    For all $L \subset U \cup \{v\}$, we have
    \begin{equation*}
      d_{\bar{\mu}''_q}(L) = d_{\mu''_q}(L \setminus \{v\}).
    \end{equation*}
  \end{claim}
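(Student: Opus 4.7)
The plan is to prove the identity by unpacking the definitions of $d_{\bar{\mu}''_q}$, $\bar{\mu}''_q$, and $\pi_v$, and then reparametrising the sum. The key observation will be that, since $v \notin U$, any edge of $\pi_v(\cal{H}')$ has a unique decomposition as $E \cup \{v\}$ with $E \in \pi(\cal{H}') \subset 2^U$, and that $L \subset E \cup \{v\}$ is equivalent to $L \setminus \{v\} \subset E$ in this setting.

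First, I would write
\begin{equation*}
    d_{\bar{\mu}''_q}(L) = \sum_{\substack{F \in \pi_v(\cal{H}') \\ L \subset F}} \bar{\mu}''_q(F)
\end{equation*}
directly from the definition of the degree of a measure. By \Cref{def:edgeWiseInclusion}, every $F \in \pi_v(\cal{H}')$ is of the form $F = E \cup \{v\}$ for some unique $E \in \pi(\cal{H}')$, because $v \notin U \supseteq V(\pi(\cal{H}'))$, so this reindexing is a bijection and I can rewrite the sum as being over $E \in \pi(\cal{H}')$ with $L \subset E \cup \{v\}$. Using $E \subset U$ and $v \notin U$, the containment $L \subset E \cup \{v\}$ is then equivalent to $L \setminus \{v\} \subset E$.

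Finally, substituting the definition $\bar{\mu}''_q(E \cup \{v\}) = \mu''_q(E)$ from the construction of $\bar{\mu}''_q$ yields
\begin{equation*}
    d_{\bar{\mu}''_q}(L) = \sum_{\substack{E \in \pi(\cal{H}') \\ L \setminus \{v\} \subset E}} \mu''_q(E) = d_{\mu''_q}(L \setminus \{v\}),
\end{equation*}
which is the claim. There is no serious obstacle here: the whole statement is essentially a bookkeeping exercise, with the only point requiring care being the observation that $v$ lies outside of $U$ so that the decomposition $F = E \cup \{v\}$ is unique and the rewriting of the containment condition is valid.
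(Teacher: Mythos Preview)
Your proof is correct and follows essentially the same approach as the paper: both unpack the definitions, use the bijection $E \leftrightarrow E \cup \{v\}$ between $\pi(\cal{H}')$ and $\pi_v(\cal{H}')$ (valid since $v \notin U$), rewrite the containment $L \subset E \cup \{v\}$ as $L \setminus \{v\} \subset E$, and substitute $\bar{\mu}''_q(E \cup \{v\}) = \mu''_q(E)$. If anything, you are slightly more explicit than the paper about why the reindexing is a bijection.
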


  \begin{claimproof}
    Let $E_v = E \cup \{v\}$ for each $E\in \pi(\cal{H}')$.
    The definitions of $d_{\bar{\mu}''_q}$ and $\bar{\mu}''_q$ imply that, for every $L \subset U \cup \{v\}$,
    \begin{equation*}
      d_{\bar{\mu}''_q}(L) = \sum_{L \subset E_v \in \pi_v(\cal{H}')} \bar{\mu}''_q(E_v) = \sum_{L \subset E_v \in \pi_v(\cal{H}')} \mu''_q(E_v \setminus \{v\}) = \sum_{L \setminus \{v\} \subset E \in \pi(\cal{H}')} \mu''_q(E) = d_{\mu''_q}(L \setminus \{v\}),
    \end{equation*}
    where the last equality is the definition of $d_{\mu''_q}(L \setminus \{v\})$.
  \end{claimproof}

  We can now use \Cref{stmt:degreeOfBarMuC} to relate $\Lambda_p(\bar{\mu}''_q)$ and $\Lambda_p(\mu''_q)$.
  \begin{claim}\label{stmt:lambdaBarMuQEqualityLambdaMuQ}
    \begin{equation}\label{eq:lambdaBarMuQEqualityLambdaMuQ}
      \Lambda_p(\bar{\mu}''_q) = \left(1 + \frac{\,1\,}{p}\right) \Lambda_p(\mu''_q) + \frac{1}{p^2} \sum_{u \in U} d_{\mu''_q}(u)^2.
    \end{equation}
  \end{claim}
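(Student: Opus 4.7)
The plan is to expand the definition of $\Lambda_p(\bar{\mu}''_q)$ directly, split the sum according to whether the vertex $v$ belongs to the set $L$, and then apply \Cref{stmt:degreeOfBarMuC} to rewrite each degree in terms of $d_{\mu''_q}$. By definition,
\[
\Lambda_p(\bar{\mu}''_q) = \sum_{\substack{L \subset U \cup \{v\} \\ |L| \ge 2}} d_{\bar{\mu}''_q}(L)^2 \, p^{-|L|},
\]
and since $V(\pi_v(\cal{H}'))\subset U \cup \{v\}$, this is the relevant universe.

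The first step is to partition the sum into the terms with $v \notin L$ and those with $v \in L$. For $L \subset U$ with $|L|\ge 2$, \Cref{stmt:degreeOfBarMuC} gives $d_{\bar{\mu}''_q}(L) = d_{\mu''_q}(L)$, so this partial sum equals $\Lambda_p(\mu''_q)$ exactly. For $L$ containing $v$, we write $L = L' \cup \{v\}$ with $L' \subset U$ and $|L'| \ge 1$; by \Cref{stmt:degreeOfBarMuC}, $d_{\bar{\mu}''_q}(L) = d_{\mu''_q}(L')$, and $|L| = |L'|+1$. The contribution from these terms is therefore
\[
\sum_{\substack{L' \subset U \\ |L'| \ge 1}} d_{\mu''_q}(L')^2 \, p^{-|L'|-1} = \frac{1}{p}\sum_{\substack{L' \subset U \\ |L'| \ge 2}} d_{\mu''_q}(L')^2 \, p^{-|L'|} + \frac{1}{p^2}\sum_{u \in U} d_{\mu''_q}(u)^2,
\]
where we have separated the singletons $L' = \{u\}$ from the larger sets.

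Recognizing the first sum on the right as $\tfrac{1}{p}\Lambda_p(\mu''_q)$ and adding the contribution from $v \notin L$ yields
\[
\Lambda_p(\bar{\mu}''_q) = \Lambda_p(\mu''_q) + \frac{1}{p}\Lambda_p(\mu''_q) + \frac{1}{p^2}\sum_{u\in U} d_{\mu''_q}(u)^2 = \Bigl(1 + \tfrac{1}{p}\Bigr)\Lambda_p(\mu''_q) + \frac{1}{p^2}\sum_{u\in U} d_{\mu''_q}(u)^2,
\]
which is exactly \eqref{eq:lambdaBarMuQEqualityLambdaMuQ}. There is no real obstacle here: the identity is a bookkeeping consequence of the fact that $\bar{\mu}''_q$ adds the single vertex $v$ to each edge in the support of $\mu''_q$, so every subset $L$ containing $v$ has its degree in $\bar{\mu}''_q$ equal to the degree of $L\setminus\{v\}$ in $\mu''_q$, and the $p^{-|L|}$ factor simply gains one extra $1/p$. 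The only point requiring minor care is remembering that singletons $\{u\}$ with $u\in U$ do \emph{not} appear in $\Lambda_p(\mu''_q)$ but their companions $\{u,v\}$ do appear in $\Lambda_p(\bar{\mu}''_q)$, which is precisely what produces the extra term $p^{-2}\sum_u d_{\mu''_q}(u)^2$.
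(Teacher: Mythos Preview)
Your proof is correct and follows essentially the same approach as the paper: both split the defining sum for $\Lambda_p(\bar{\mu}''_q)$ according to whether $v\in L$, invoke \Cref{stmt:degreeOfBarMuC} to rewrite degrees as $d_{\mu''_q}(L\setminus\{v\})$, and then separate out the singleton case $L'=\{u\}$ to produce the extra $p^{-2}\sum_u d_{\mu''_q}(u)^2$ term.
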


  \begin{claimproof}
    First, recall the definition of $\Lambda_p(\bar{\mu}''_q)$,
    \begin{equation*}
      \Lambda_p(\bar{\mu}''_q) = \sum_{\substack{L \subset U \cup \{v\} \\ |L| \ge 2}} d_{\bar{\mu}''_q}(L)^2 p^{-|L|}.
    \end{equation*}
    Splitting that sum according to whether $v \in L$ or not, we obtain
    \begin{equation}\label{eq:lambdaBarMuQEqualityLambdaMuQStep}
      \Lambda_p(\bar{\mu}''_q) = \sum_{\substack{L \subset U \\ |L| \ge 2}} d_{\mu''_q}(L)^2 p^{-|L|} + \sum_{u \in U} d_{\mu''_q}(u)^2 p^{-2} + \sum_{\substack{L \subset U \\ |L| \ge 2}} d_{\bar{\mu}''_q}\big(L \cup \{v\}\big)^2 p^{-|L| - 1},
    \end{equation}
    where the second term corresponds to $L = \{u, v\}$ for $u \in U$, that is, the case $v \in L$ and $|L| = 2$.
    Applying \Cref{stmt:degreeOfBarMuC} in the third sum of \eqref{eq:lambdaBarMuQEqualityLambdaMuQStep} and using the definition of $\Lambda_p(\mu''_q)$ yields \eqref{eq:lambdaBarMuQEqualityLambdaMuQ}.
  \end{claimproof}

  Now, we prove a deterministic upper bound for $\Lambda_p(\mu''_{q})$ in terms of $\Lambda_p(\mu'')$.
  We do not optimise this bound, like the analogous one in \Cref{sec:simplerContainerForNonJanson} (cf. \Cref{stmt:finalBoundLambdaPQNuDoublePrimeQ}), since that will not be necessary in this proof.

  \begin{claim}\label{stmt:lambdaMuDoublePrimeQUpperBound}
    \begin{equation*}
      \Lambda_p(\mu''_{q}) \le \gamma^2 \Big(\frac{\,q\,}{2}\Big)^{-2 s} \Lambda_p(\mu'').
    \end{equation*}
  \end{claim}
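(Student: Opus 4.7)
The plan is to prove the bound pointwise, for every realisation of $V_q$, by controlling $\mu''_q$ edge-by-edge via the lower bound on $P'_q(E)$ already established in \Cref{stmt:lowerBoundOnPOfEPrime}. Since the inequality is deterministic and $\pi(\cal{H}')$ is $s$-uniform (each $E \in \cal{H}$ satisfies $|\pi(E)| = |E| = s$ by the assumption on $\pi$), there is no need for Harris or a binomial expansion of the kind used in \Cref{stmt:finalBoundLambdaPQNuDoublePrimeQ}; we only need a crude pointwise domination.

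Concretely, I would start from the defining formula
\begin{equation*}
  \mu''_q(E) \;=\; \mu''(E)\,\frac{\gamma\cdot\mathds{1}\big[E\in \pi(\cal{H}'[V_q])\big]}{P'_q(E)}
\end{equation*}
and use both $\mathds{1}[\,\cdot\,]\le 1$ and the lower bound $P'_q(E) > (q/2)^s$ from \Cref{stmt:lowerBoundOnPOfEPrime} (which applies to every $E\in\pi(\cal{H}')$) to deduce the pointwise inequality
\begin{equation*}
  \mu''_q(E) \;\le\; \gamma\Big(\frac{q}{2}\Big)^{-s}\mu''(E).
\end{equation*}
Summing this over all $E \in \pi(\cal{H}')$ containing a fixed set $L\subset U$ yields $d_{\mu''_q}(L)\le \gamma(q/2)^{-s} d_{\mu''}(L)$, and then squaring gives
\begin{equation*}
  d_{\mu''_q}(L)^2 \;\le\; \gamma^2\Big(\frac{q}{2}\Big)^{-2s} d_{\mu''}(L)^2.
\end{equation*}

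Finally, multiplying by $p^{-|L|}$ and summing over all $L\subset U$ with $|L|\ge 2$ (recalling \eqref{eq:defEdgesAndLambda}) yields exactly
\begin{equation*}
  \Lambda_p(\mu''_q) \;\le\; \gamma^2\Big(\frac{q}{2}\Big)^{-2s}\Lambda_p(\mu''),
\end{equation*}
as required. I do not anticipate any significant obstacle: the entire argument is a direct edge-by-edge comparison, with the only nontrivial input being the lower bound on $P'_q(E)$, which is precisely why the preceding \Cref{stmt:lowerBoundOnPOfEPrime} was isolated. The inequality is lossy, but this is acceptable since (as indicated in the surrounding discussion) the contribution of $\Lambda_p(\mu''_q)$ to the eventual argument will be dominated by the ``degree-squared'' term from \Cref{stmt:boundDegreeSquared} rather than by this term.
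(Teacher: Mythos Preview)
Your proposal is correct and follows essentially the same route as the paper: both arguments use the pointwise bound $\mu''_q(E)\le \gamma(q/2)^{-s}\mu''(E)$ coming from $\mathds{1}[\cdot]\le 1$ and $P'_q(E)>(q/2)^s$ (\Cref{stmt:lowerBoundOnPOfEPrime}), deduce $d_{\mu''_q}(L)^2\le \gamma^2(q/2)^{-2s}d_{\mu''}(L)^2$, and sum over $L$. The paper phrases the degree bound in one line (``by ignoring the indicators'') rather than passing through the edgewise inequality first, but the content is identical.
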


  \begin{claimproof}
    Recall that
    \begin{equation*}
      P'_q(E) > \Big(\frac{\,q\,}{2}\Big)^s
    \end{equation*}
    for all $E \in \pi(\cal{H}')$, by \Cref{stmt:lowerBoundOnPOfEPrime}, so by definition of $d_{\mu''_q}$ and $\mu''_q$, we deterministically have that
    \begin{equation}\label{eq:ignoringTheIndicator}
      d_{\mu''_q}(L)^2 = \bigg( \sum_{L \subset E \in \pi(\cal{H}')} \mu''(E) \, \frac{\gamma \cdot \mathds{1}\big[E \in \pi(\cal{H}'[V_q])\big]}{P'_q(E)} \bigg)^2 \le \gamma^2 \bigg(\frac{\,q\,}{2}\bigg)^{-2 s} d_{\mu''}(L)^2,
    \end{equation}
    holds for all $L \subset U$, by ignoring the indicators.
    The inequality in the \namecref{stmt:lambdaMuDoublePrimeQUpperBound} now follows from the definition of $\Lambda_p(\cdot)$:
    \begin{equation*}
      \Lambda_p(\mu''_q) = \sum_{\substack{L \subset U \\ |L| \ge 2}} d_{\mu''_q}(L)^2 p^{-|L|} \le \gamma^2 \Big(\frac{\,q\,}{2}\Big)^{-2 s} \sum_{\substack{L \subset U \\ |L| \ge 2}} d_{\mu''}(L)^2 p^{-|L|} = \gamma^2 \Big(\frac{\,q\,}{2}\Big)^{-2 s} \Lambda_p(\mu'').
      \vspace{-15pt}
    \end{equation*}
  \end{claimproof}

  We will now combine the bound given by \Cref{stmt:boundDegreeSquared} for the sum of the square of the $\mu$-degrees, \eqref{eq:muHasBddDegrees}, with \Cref{stmt:lambdaBarMuQEqualityLambdaMuQ,stmt:lambdaMuDoublePrimeQUpperBound} and another simple calculation to complete the proof of a deterministic inequality relating $\Lambda_p(\bar{\mu}''_q)$ and $e(\mu)^2$.

  \begin{claim}\label{stmt:lambdaMuBarDoublePrimeQUpperBound}
    \begin{equation*}
      \Lambda_p(\bar{\mu}''_{q}) < \frac{\gamma^2}{2 \sqrt{\eta}}.
    \end{equation*}
  \end{claim}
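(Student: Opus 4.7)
The plan is to apply \Cref{stmt:lambdaBarMuQEqualityLambdaMuQ} to reduce the bound to two tasks: controlling $\Lambda_p(\mu''_q)$, which is already handled by \Cref{stmt:lambdaMuDoublePrimeQUpperBound}, and controlling the vertex-degree sum $\sum_{u \in U} d_{\mu''_q}(u)^2$, which has no analogue in \Cref{sec:simplerContainerForNonJanson} and is exactly the reason we needed the degree bound \eqref{eq:muHasBddDegrees} from \Cref{stmt:boundDegreeSquared}.

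First I would bound $\Lambda_p(\mu''_q)$ directly: by \Cref{stmt:lambdaMuDoublePrimeQUpperBound} combined with $\mu'' \le \mu$ and the normalisation $\Lambda_p(\mu) < 1$ from \eqref{eq:muIsJanson}, we get $\Lambda_p(\mu''_q) \le \gamma^2 (q/2)^{-2s}$. Next, the same pointwise argument used in \eqref{eq:ignoringTheIndicator} applied to singleton sets $L = \{u\}$, together with \Cref{stmt:lowerBoundOnPOfEPrime}, yields $d_{\mu''_q}(u)^2 \le \gamma^2 (q/2)^{-2s} d_{\mu''}(u)^2 \le \gamma^2 (q/2)^{-2s} d_\mu(u)^2$ for every $u \in U$. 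Summing over $u \in \pi(X) \subset U$ (the other $u$ contribute nothing since $\mu$ is supported on $\pi(\cal{H}[X])$) and invoking \eqref{eq:muHasBddDegrees} together with $e(\mu)^2 = R = 2^{-6} p n$, I get
\begin{equation*}
\sum_{u \in U} d_{\mu''_q}(u)^2 \le \gamma^2 \Big(\frac{q}{2}\Big)^{-2s} \cdot \frac{2^{14} r^2 s^2 \cdot 2^{-6} p n}{n} = 2^8 r^2 s^2 \, p \, \gamma^2 \Big(\frac{q}{2}\Big)^{-2s}.
\end{equation*}

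Plugging these two estimates into the identity of \Cref{stmt:lambdaBarMuQEqualityLambdaMuQ} gives
\begin{equation*}
\Lambda_p(\bar{\mu}''_q) \le \gamma^2 \Big(\frac{q}{2}\Big)^{-2s} \left[ \Big(1 + \tfrac{1}{p}\Big) + \tfrac{2^8 r^2 s^2}{p} \right] \le \frac{2^{10} r^2 s^2 \, \gamma^2}{p} \Big(\frac{q}{2}\Big)^{-2s},
\end{equation*}
where the second inequality absorbs the lower order terms using $1/p \gg 1$. Comparing this with the target $\gamma^2/(2\sqrt{\eta}) = \gamma^2 / (2 p^2 (q/2)^{2s})$ (using $\eta = p^4 (q/2)^{4s}$), the claim reduces to $2^{11} r^2 s^2 p < 1$, which is immediate from the assumption $p \le q/(2^{10} r^2 s^2)$ with $q < 1/8$. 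The only mildly tricky step is the vertex-degree bound, and this is precisely where the strengthened conclusion of \Cref{stmt:boundDegreeSquared} (on large subsets $W \subset V(\cal{G})$) is being cashed in.
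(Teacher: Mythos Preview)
Your proposal is correct and follows essentially the same approach as the paper: you invoke \Cref{stmt:lambdaBarMuQEqualityLambdaMuQ}, bound $\Lambda_p(\mu''_q)$ via \Cref{stmt:lambdaMuDoublePrimeQUpperBound} together with $\Lambda_p(\mu'')\le\Lambda_p(\mu)<1$, bound the vertex-degree sum by applying the pointwise estimate $d_{\mu''_q}(u)\le\gamma(q/2)^{-s}d_\mu(u)$ and then \eqref{eq:muHasBddDegrees} with $e(\mu)^2=R=2^{-6}pn$, and finish with the parameter check $2^{11}r^2s^2p<1$. The paper does the same computation in a slightly different order but with identical ingredients and constants.
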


  \begin{claimproof}
    Repeating what we did in \eqref{eq:ignoringTheIndicator}, we have
    \begin{equation}\label{eq:firstBoundOnDegreesSquaredOfMuDoublePrimeQ}
      \sum_{u \in U} d_{\mu''_q}(u)^2 \le \gamma^2 \Big(\frac{\,q\,}{2}\Big)^{-2 s} \sum_{u \in U} d_{\mu''}(u)^2 \le \gamma^2 \Big(\frac{\,q\,}{2}\Big)^{-2 s} \sum_{u \in U} d_\mu(u)^2
    \end{equation}
    where the last step is using that $\mu'' \le \mu$.
    Now, recall that the support of $\mu$ is $\pi(\cal{H}[X])$, so if an edge $E \subset U$ is not fully contained in $\pi(X)$, then $\mu(E) = 0$.
    As so, $d_\mu(u) = 0$ if $u \not \in \pi(X)$ and thus
    \begin{equation*}
      \sum_{u \in U} d_\mu(u)^2 = \sum_{u \in \pi(X)} d_\mu(u)^2.
    \end{equation*}
    We conclude that \eqref{eq:firstBoundOnDegreesSquaredOfMuDoublePrimeQ} is at most
    \begin{equation}\label{eq:boundOnDegreesSquaredOfMuDoublePrimeQ}
      \sum_{u \in U} d_{\mu''_q}(u)^2 \le \gamma^2 \Big(\frac{\,q\,}{2}\Big)^{-2 s} \sum_{u \in U} d_\mu(u)^2 = \gamma^2 \Big(\frac{\,q\,}{2}\Big)^{-2 s} \sum_{u \in \pi(X)} d_\mu(u)^2 \le \gamma^2 \Big(\frac{\,q\,}{2}\Big)^{-2 s} \frac{2^{14}r^2 s^2 e(\mu)^2}{n}
    \end{equation}
    where the last step is
    \begin{equation*}
      \sum_{u \in \pi(X)} d_\mu(u)^2 \le \frac{2^{14} r^2 s^2 e(\mu)^2}{n},
    \end{equation*}
    the inequality in \eqref{eq:muHasBddDegrees}.

    Combining \eqref{eq:boundOnDegreesSquaredOfMuDoublePrimeQ} with \Cref{stmt:lambdaBarMuQEqualityLambdaMuQ,stmt:lambdaMuDoublePrimeQUpperBound} thus yields
    \begin{equation}\label{eq:boundOnLambdaPMuBarDoublePRimeQ}
      \begin{aligned}
      \Lambda_p(\bar{\mu}''_q)
        \le \gamma^2 \bigg(\frac{2}{q}\bigg)^{2 s} \left(\Big(1 + \frac{\,1\,}{p}\Big) \Lambda_p(\mu'') + \frac{2^{14} r^2 s^2 e(\mu)^2}{p^2n} \right).
      \end{aligned}
    \end{equation}
    Also recall that we chose
    \begin{equation*}
      p \le \frac{\,q\,}{2^{11} r s^2}, \qquad q < \frac{1}{8}, \qquad R = 2^{-6} r^{-1} p n, \qquad \text{and } \qquad \eta=p^4\Big(\frac{\,q\,}{2}\Big)^{4 s},
    \end{equation*}
    in \hyperlink{stmt:ContainersForNonJanson:restated:sec7}{\eqref{eq:assumptionsOnContainerParams}}, and that
    \begin{equation*}
      \Lambda_p(\mu'') \le \Lambda_p(\mu) < \frac{e(\mu)^2}{R} \qquad \text{and} \qquad e(\mu) = \sqrt{R}
    \end{equation*}
    by $\mu'' \le \mu$ and \eqref{eq:muIsJanson}, so \eqref{eq:boundOnLambdaPMuBarDoublePRimeQ} is at most
    \begin{equation*}
      \Lambda_p(\bar{\mu}''_q) < \gamma^2 \bigg(\frac{2}{q}\bigg)^{2 s} \left(1 + \frac{\,1\,}{p} + \frac{2^{8} r s^2}{p} \right) \le \gamma^2 \bigg(\frac{2}{q}\bigg)^{2 s} \, \frac{2^{10} r s^2}{p} \le \frac{\gamma^2}{2\sqrt{\eta}}
    \end{equation*}
    as desired.
  \end{claimproof}

  Replacing the bound of \Cref{stmt:lambdaMuBarDoublePrimeQUpperBound} in \eqref{eq:changeFromUCupVToU} and then taking the conditional expectation with respect to $\{V_q \in \cal{I}(\link{\cal{J}'}{T})\}$ yields
  \begin{equation}\label{eq:secondToLastBoundOnLambdaPRhoMuBarDoublePrimeQ}
    \EE\big[\Lambda_p(\rho + \bar{\mu}''_q) \mid V_q \in \cal{I}(\link{\cal{J}'}{T})\big] < 1 + 2 \sum_{\substack{L \subset U \\ |L| \ge 2}} d_\rho(L) \frac{\EE\big[d_{\bar{\mu}''_q}(L) \mid V_q \in \cal{I}(\link{\cal{J}'}{T})\big]}{p^{|L|}} + \frac{\gamma^2}{2\sqrt{\eta}}.
  \end{equation}
  In particular, this inequality motivates our final \namecref{stmt:aBoundForDegreeOfLSubsetUInRandomMeasure}.
  \begin{claim}\label{stmt:aBoundForDegreeOfLSubsetUInRandomMeasure}
    If $L \subset U$, then
    \begin{equation*}
      \EE\big[d_{\bar{\mu}''_q}(L) \mid V_q \in \cal{I}(\link{\cal{J}'}{T})\big] = \gamma d_{\mu''}(L).
    \end{equation*}
  \end{claim}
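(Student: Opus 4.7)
The proof will be a straightforward computation that unwinds the definitions of $\bar{\mu}''_q$ and $\mu''_q$ and then applies the already-established identity for the conditional expectation of the indicator $\mathds{1}[E \in \pi(\cal{H}'[V_q])]$.

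First, since $L \subset U$ and $v \notin U$, we have $L \setminus \{v\} = L$, so Claim~\ref{stmt:degreeOfBarMuC} reduces the problem to showing that
\begin{equation*}
  \EE\big[d_{\mu''_q}(L) \mid V_q \in \cal{I}(\link{\cal{J}'}{T})\big] = \gamma\, d_{\mu''}(L).
\end{equation*}
Then I would expand $d_{\mu''_q}(L)$ by the definition of the measure $\mu''_q$, which gives
\begin{equation*}
  d_{\mu''_q}(L) = \sum_{\substack{E \in \pi(\cal{H}') \\ L \subset E}} \mu''(E) \cdot \frac{\gamma \cdot \mathds{1}[E \in \pi(\cal{H}'[V_q])]}{P'_q(E)}.
\end{equation*}

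The only factor depending on $V_q$ is the indicator $\mathds{1}[E \in \pi(\cal{H}'[V_q])]$, so by linearity of conditional expectation I can pull out the deterministic factors and apply \eqref{eq:indicatorAndPEPrime}, which states that the conditional expectation of this indicator equals $P'_q(E)$. The $P'_q(E)$ terms then cancel, leaving
\begin{equation*}
  \EE\big[d_{\mu''_q}(L) \mid V_q \in \cal{I}(\link{\cal{J}'}{T})\big] = \gamma \sum_{\substack{E \in \pi(\cal{H}') \\ L \subset E}} \mu''(E) = \gamma\, d_{\mu''}(L),
\end{equation*}
by the definition of $d_{\mu''}(L)$.

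There is no real obstacle here; this claim is purely bookkeeping, matching in spirit the proof of Claim~\ref{stmt:nuDoublePrimeQLowerBound} in Section~\ref{sec:simplerContainerForNonJanson}. The substantive work has already been done in setting up the compensating factor $1/P'_q(E)$ in the definition of $\mu''_q$ precisely so that this kind of identity holds.
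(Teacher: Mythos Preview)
Your proof is correct and follows essentially the same route as the paper: reduce $d_{\bar{\mu}''_q}(L)$ to $d_{\mu''_q}(L)$ via Claim~\ref{stmt:degreeOfBarMuC} (using $v\notin L$), expand the definition of $\mu''_q$, and apply \eqref{eq:indicatorAndPEPrime} so that the $P'_q(E)$ factors cancel.
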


  \begin{claimproof}
    Note that $v \not \in L$ if $L \subset U$.
    It then follows from \Cref{stmt:degreeOfBarMuC} and the definition of $d_{\mu''_q}(L)$ that
    \begin{equation*}
      d_{\bar{\mu}''_q}(L) = d_{\mu''_q}(L) = \sum_{L \subset E \in \pi(\cal{H}')} \mu''(E) \, \frac{\gamma \cdot \mathds{1}\big[E \in \pi(\cal{H}'[V_q])\big]}{P'_q(E)},
    \end{equation*}
    so taking the conditional expectation yields, for all $L \subset U$,
    \begin{equation*}
      \EE\big[d_{\bar{\mu}''_q}(L) \mid V_q \in \cal{I}(\link{\cal{J}'}{T})\big] = \sum_{L \subset E \in \pi(\cal{H}')} \gamma \mu''(E) = \gamma d_{\mu''}(L)
    \end{equation*}
    since
    \begin{equation*}
      \EE\big[\mathds{1}\big[E \in \pi(\cal{H}'[V_q])\big] \mid V_q \in \cal{I}(\link{\cal{J}'}{T})\big] = P'_q(E),
    \end{equation*}
    \eqref{eq:indicatorAndPEPrime}, holds for all $E \in \pi(\cal{H}')$.
  \end{claimproof}

  Substituting the bound in \Cref{stmt:aBoundForDegreeOfLSubsetUInRandomMeasure} in the middle sum of \eqref{eq:secondToLastBoundOnLambdaPRhoMuBarDoublePrimeQ} and applying the Cauchy--Schwarz inequality, we obtain
  \begin{equation*}
    \gamma \sum_{\substack{L \subset U \\ |L| \ge 2}} \frac{d_\rho(L) d_{\mu''}(L)}{p^{|L|}}
    \le \gamma \Bigg(\sum_{\substack{L \subset U \\ |L| \ge 2}} \frac{d_\rho(L)^2}{p^{|L|}}\Bigg)^{1/2} \Bigg(\sum_{\substack{L \subset U \\ |L| \ge 2}} \frac{d_{\mu''}(L)^2}{p^{|L|}}\Bigg)^{1/2}
    = \gamma \sqrt{\Lambda_p(\rho)} \sqrt{\Lambda_p(\mu'')}
    \le \gamma,
  \end{equation*}
  which, replaced back in \eqref{eq:secondToLastBoundOnLambdaPRhoMuBarDoublePrimeQ}, yields
  \begin{equation}\label{eq:finalBoundOnLambda}
    \EE\big[\Lambda_p(\rho + \bar{\mu}''_q) \mid V_q \in \cal{I}(\link{\cal{J}'}{T})\big] < 1 + 2 \gamma + \frac{\gamma^2}{2 \sqrt{\eta}}.
  \end{equation}

  Now, to bound the expected edge mass of the measure $\rho + \bar{\mu}''_q$, observe that
  \begin{equation*}
    \EE\big[e(\rho + \bar{\mu}''_q) \mid V_q \in \cal{I}(\link{\cal{J}'}{T})\big] = e(\rho) + \EE\big[e(\bar{\mu}''_q) \mid V_q \in \cal{I}(\link{\cal{J}'}{T})\big] \ge \sqrt{R'} + \frac{\gamma\sqrt{R}}{2},
  \end{equation*}
  where the last inequality is due to \eqref{eq:rhoIsNormalisedJanson} and \eqref{eq:boundone(mubard)}.
  Jensen's inequality thus implies that
  \begin{equation}\label{eq:boundOnExpectedEdges}
    \EE\big[e(\rho + \bar{\mu}''_q)^2 \mid V_q \in \cal{I}(\link{\cal{J}'}{T})\big] \ge \Big(\sqrt{R'} + \frac{\gamma\sqrt{R}}{2}\Big)^2 > (1 + 4\gamma)\Big(R' + \frac{\gamma^2 R}{8}\Big)
  \end{equation}
  because $R \ge 16 R'$ by assumption, and we can choose $\gamma < 1/4$.
  We fix $\gamma = \sqrt{8 \eta}$, which is less than $1/4$ because
  \[\eta = p^4\Big(\frac{\,q\,}{2}\Big)^{4 s} < \frac{1}{2^7}.\]
  This choice, replaced in \eqref{eq:finalBoundOnLambda}, implies that
  \begin{equation*}
    \EE\big[\Lambda_p(\rho + \bar{\mu}''_q) \mid V_q \in \cal{I}(\link{\cal{J}'}{T})\big] < 1 + 4 \gamma = \frac{(1 + 4 \gamma)(R' + \gamma^2 R / 8)}{R' + \eta R} \le \frac{\EE\big[e(\rho + \bar{\mu}''_q)^2 \mid V_q \in \cal{I}(\link{\cal{J}'}{T})\big]}{R' + \eta R}
  \end{equation*}
  and the last step is \eqref{eq:boundOnExpectedEdges}.

  To reach a contradiction, observe that if $V_q \in \cal{I}(\link{\cal{J}'}{T})$ then $V_q \in \cal{I}(\cal{J}')$ follows from \Cref{stmt:independentInLinkImpliesIndependentInOriginal}, and hence $\pi_v(\cal{H}'[V_q]) \cup \cal{F}$ is not $(p, R' + \eta R)$-Janson, by the definition of $\cal{J}'$.
  In particular, it follows that
  \begin{equation*}
    \Lambda_{p}(\rho + \bar{\mu}''_q) \ge \frac{e(\rho + \bar{\mu}''_q)^2}{R' + \eta R}
  \end{equation*}
  since this measure is supported on $\pi_v(\cal{H}'[V_q]) \cup \cal{F}$, so we cannot have
  \begin{equation*}
    \EE\big[\Lambda_p(\rho + \bar{\mu}''_q) \mid V_q \in \cal{I}(\link{\cal{J}'}{T})\big] < \frac{\EE\big[e(\rho + \bar{\mu}''_q)^2 \mid V_q \in \cal{I}(\link{\cal{J}'}{T})\big]}{R' + \eta R}
  \end{equation*}
  like we previously established, and we have a contradiction.
  We conclude that \hyperlink{stmt:ContainersForNonJanson:restated:sec7}{\cref{item:containersForNonJansonAreNonJanson}} in \hyperlink{stmt:ContainersForNonJanson:restated:sec7}{\Cref{stmt:containersForNonJanson}} holds, and the proof is complete.
\end{proof}

\section*{Acknowledgements}

We would like to greatly thank Rob Morris for carefully reading the paper, and for the many improvements and corrections he suggested.
We are also grateful to Wojciech Samotij and Julian Sahasrabudhe for helpful comments and discussions on the presentation and proof.

\bibliographystyle{abbrvnat}
\def\bibfont{\footnotesize}
\bibliography{refs}

\appendix

\section{Proof of \texorpdfstring{\Cref{stmt:containersHardcovers}}{\ref{stmt:containersHardcovers}}}\label{app:containersHardcovers}

This \namecref{app:containersHardcovers} is dedicated to the proof of \Cref{stmt:containersHardcovers}, which we restate for the reader's convenience.

\hypertarget{appendix:stmt:containersHardcovers}{\containersHardcovers*}

The following proof is essentially a subset of the corresponding argument in \cite{CS24+}, but the exact statement of \hyperlink{appendix:stmt:containersHardcovers}{\Cref{stmt:containersHardcovers}} admits a simpler, self-contained proof that in particular does not assume the result of \citeauthor{CS24+}.

\begin{proof}
  Given $I \in \cal{I}(\cal{G})$, let $T \subset I$ be maximal with respect to
  \begin{equation}\label{eq:containersHardCoversDefinitionOfFingerprint}
    \PP(T \subset V_q \mid V_q \in \cal{I}(\cal{G})) \le (1 - \alpha)^{|T|} q^{|T|}.
  \end{equation}
  Set $\varphi(I) = T$ and $\cal{T} = \{\varphi(I) : I \in \cal{I}(\cal{G})\}.$
  We claim that these choices satisfy the requirements of \hyperlink{appendix:stmt:containersHardcovers}{\Cref{stmt:containersHardcovers}}.
  Notice that \hyperlink{appendix:stmt:containersHardcovers}{\cref{item:containersHardcoversHaveFingerprints}} trivially holds, since $\varphi(I)$ is defined as a subset of $I$.

  Now, take an arbitrary $T \in \cal{T}$.
  We have, on one hand,
  \begin{equation}\label{eq:lowerBoundOnYProb}
    (1 - q)^{n - |T|} q^{|T|} = \PP(V_q = T) \le \PP(T \subset V_q \wedge V_q \in \cal{I}(\cal{G})),
  \end{equation}
  where the inequality is using that $T \in \cal{I}(\cal{G})$, and, on the other hand,
  \begin{equation}\label{eq:upperBoundOnYProb}
    \PP(T \subset V_q \wedge V_q \in \cal{I}(\cal{G})) \le \PP(T \subset V_q \mid V_q \in \cal{I}(\cal{G})) \le (1-\alpha)^{|T|}q^{|T|}
  \end{equation}
  due to \eqref{eq:containersHardCoversDefinitionOfFingerprint}.
  Combining \eqref{eq:lowerBoundOnYProb} and \eqref{eq:upperBoundOnYProb}, we obtain
  \begin{equation*}
    (1 - q)^{n - |T|} q^{|T|} \le (1 - \alpha)^{|T|} q^{|T|},
  \end{equation*}
  which, as $x \mapsto (1 - x)^{1/x}$ is decreasing on $(0, 1)$, implies $|T| \le qn/\alpha$.
  But we took $T$ arbitrarily, so the above establishes \hyperlink{appendix:stmt:containersHardcovers}{\cref{item:containersHardcoversHaveSmallFingerprints}}.

  To show that \hyperlink{appendix:stmt:containersHardcovers}{\cref{item:containersHardcoversAreCoverable}} holds, take any $T \in \cal{T}$ and set
  \begin{equation}\label{eq:defOfCalCY}
    \cal{C}_T = \left\{L \subset V(\cal{G}):\, \PP\big(L \subset V_q \mid V_q \in \cal{I}(\link{\cal{G}}{T})\big) \le (1 - \alpha)^{|L|} q^{|L|} \right\}.
  \end{equation}
  Observe that not only $\cal{C}_T$ is a cover of $\cal{G}$, but also $\cal{G} \subset \cal{C}_T$: for all $E \in \cal{G}$, we have
  \begin{equation*}
    \PP\big(E \subset V_q \mid V_q \in \cal{I}(\link{\cal{G}}{T})\big) = 0,
  \end{equation*}
  because $V_q \in \cal{I}(\link{\cal{G}}{T})$ and $E \subset V_q$ together imply $(E  \setminus T) \in \cal{I}(\link{\cal{G}}{T})$, which directly contradicts the definition of $\link{\cal{G}}{T}$.
  The definition in \eqref{eq:defOfCalCY} also immediately implies that, for all $L \not \in \cal{C}_T$,
  \begin{equation*}
    \PP\big(L \subset V_q \mid V_q \in \cal{I}(\link{\cal{G}}{T})\big) > (1 - \alpha)^{|L|} q^{|L|}
  \end{equation*}
  holds, and that is exactly \hyperlink{appendix:stmt:containersHardcovers}{\eqref{eq:conditionalProbOfLInQSet} in \cref{item:containersHardcoversAreCoverable}}.
  It remains only to establish the ``moreover'' part of \hyperlink{appendix:stmt:containersHardcovers}{\cref{item:containersHardcoversAreCoverable}}.

  Our goal is to show that for all $I \in \cal{I}(\cal{G})$, if $\varphi(I) = T$, then $I \in \cal{I}(\cal{C}_T)$, so we take $L \in \cal{C}_T$ and must determine that $L \not \subset I$.
  Note that $L \not \subset T$ follows from $L \in \cal{C}_T$, as otherwise
  \begin{equation*}
    \PP\big(L\subset V_q \mid V_q \in \cal{I}(\link{\cal{G}}{T}) \big)=\PP(L\subset V_q)=q^{|L|}>(1-\alpha)^{|L|}q^{|L|},
  \end{equation*}
  because $\{V_q \in \cal{I}(\link{\cal{G}}{T})\}$ and $\{L \subset V_q\}$ are independent when $L \subset T$.

  We claim that
  \begin{equation}\label{eq:breakingProbabilityForYCupL}
    \PP\big(L \cup T \subset V_q \mid V_q \in \cal{I}(\cal{G})\big) = \PP(T \subset V_q \mid V_q \in \cal{I}(\cal{G})\big) \, \PP\big(L \setminus T \subset V_q \mid V_q \in \cal{I}(\link{\cal{G}}{T})\big).
  \end{equation}
  First, observe that we can reveal $T_q = T \cap V_q$ and then $V'_q = V_q \setminus T$, obtaining as a result
  \begin{equation*}
    \PP\big(L \cup T \subset V_q \mid V_q \in \cal{I}(\cal{G})\big) = \PP(T \subset V_q \mid V_q \in \cal{I}(\cal{G})\big) \, \PP\big(L \setminus T \subset V'_q \mid T \subset V_q \land V_q \in \cal{I}(\cal{G})\big).
  \end{equation*}
  But $\{T \subset V_q\} = \{T_q = T\}$ and $V_q \in \cal{I}(\cal{G})$ are together equivalent to $V'_q \in \cal{I}(\link{\cal{G}}{T})$, so we have
  \begin{equation*}
    \PP\big(L \setminus T \subset V_q \setminus T \mid T \subset V_q \land V_q \in \cal{I}(\cal{G})\big) = \PP\big(L \setminus T \subset V'_q \mid V'_q \in \cal{I}(\link{\cal{G}}{T})\big),
  \end{equation*}
  and therefore
  \begin{equation}\label{eq:breakingProbabilityForYCupLWithVPrime}
    \PP\big(L \cup T \subset V_q \mid V_q \in \cal{I}(\cal{G})\big) = \PP(T \subset V_q \mid V_q \in \cal{I}(\cal{G})\big) \, \PP\big(L \setminus T \subset V'_q \mid V'_q \in \cal{I}(\link{\cal{G}}{T})\big).
  \end{equation}
  Finally, $L \setminus T$ and $V_q \setminus V'_q \subset T$ are disjoint and $T_q$ is independent of $\{V'_q \in \cal{I}(\link{\cal{G}}{T})\}$, so
  \begin{equation}\label{eq:replacingVPrimeByV}
    \PP\big(L \setminus T \subset V'_q \mid V'_q \in \cal{I}(\link{\cal{G}}{T})\big) = \PP\big(L \setminus T \subset V_q \mid V_q \in \cal{I}(\link{\cal{G}}{T})\big)
  \end{equation}
  and substituting \eqref{eq:replacingVPrimeByV} in \eqref{eq:breakingProbabilityForYCupLWithVPrime} yields \eqref{eq:breakingProbabilityForYCupL}.

  Now, partitioning $L$ according to its intersection with $T$, we have
  \begin{equation*}
    \PP\big(L \subset V_q \mid V_q \in \cal{I}(\link{\cal{G}}{T})\big) = \PP\big(L \cap T \subset V_q\big) \, \PP\big(L \setminus T \subset V_q \mid V_q \in \cal{I}(\link{\cal{G}}{T})\big)
  \end{equation*}
  because $\{L \cap T \subset V_q\}$ and $\{L \setminus T \subset V_q\}$ are independent, and so are $\{L \cap T \subset V_q\}$ and $\{V_q \in \cal{I}(\link{\cal{G}}{T})\}$.
  Therefore,
  \begin{equation}\label{eq:breaking-prob-L-subset-Vq}
    \PP\big(L \setminus T \subset V_q \mid V_q \in \cal{I}(\link{\cal{G}}{T})\big) = \PP\big(L \subset V_q \mid V_q \in \cal{I}(\link{\cal{G}}{T})\big) q^{-|L \cap T|}.
  \end{equation}

  Combining \eqref{eq:breaking-prob-L-subset-Vq} with \eqref{eq:breakingProbabilityForYCupL}, we obtain
  \begin{equation*}
    \PP\big(L \cup T \subset V_q \mid V_q \in \cal{I}(\cal{G})\big) = \PP\big(L  \subset V_q \mid V_q \in \cal{I}(\link{\cal{G}}{T})\big) \, \PP(T \subset V_q \mid V_q \in \cal{I}(\cal{G})\big) q^{-|L \,\cap \, T|},
  \end{equation*}
  and now we can use that our choice of $T$ satisfies \eqref{eq:containersHardCoversDefinitionOfFingerprint}, and $L$ satisfies \eqref{eq:defOfCalCY} to obtain
  \begin{equation}\label{eq:tPrimeWouldBeBetterThanT}
    \PP\big(L \cup T \subset V_q \mid V_q \in \cal{I}(\cal{G})\big) \le (1 - \alpha)^{|T|+|L|} q^{|T|+|L|-|L \, \cap\, T|} \le (1-\alpha)^{|L \cup T|} q^{|L \cup T|}
  \end{equation}
  because $0 < \alpha < 1$.

  Taking $T' = L \cup T$, it is obvious that $T \subset T'$.
  We have picked $T \subset I$ to be maximal satisfying \eqref{eq:containersHardCoversDefinitionOfFingerprint}, so it follows from \eqref{eq:tPrimeWouldBeBetterThanT} that if $T' \subset I$, then we would have picked it instead of $T$.
  Therefore, $T' \not \subset I$ and thus $L \not \subset I$.
  As we took $L \in \cal{C}_T$ arbitrarily, we conclude that $I \in \cal{I}(\cal{C}_T)$ and so \hyperlink{appendix:stmt:containersHardcovers}{\cref{item:containersHardcoversAreCoverable}} holds, completing the proof.
\end{proof}

\section{Proof of \texorpdfstring{\Cref{stmt:containersCoversButJanson}}{\ref{stmt:containersCoversButJanson}}}\label{app:containersCoversButJanson}

We recall the statement of \Cref{stmt:containersCoversButJanson} for the reader's convenience.

\hypertarget{stmt:containersCoversButJanson:appendixB}{\containersCoversButJanson*}

To prove it, we need one of the container \namecrefs{stmt:containersCovers} of \citet[Theorem A]{CS24+}, which we state below as \Cref{stmt:containersCovers}.
Recall from \cite{CS24+} that when $\cal{C}$ is a hypergraph and $0 < p < 1$, we denote by
\begin{equation*}
  w_p(\cal{C}) = \sum_{E \in \cal{C}} p^{|E|}
\end{equation*}
what is called the $p$-weight of $\cal{C}$.

\begin{thm}[{\citet[Theorem A]{CS24+}}]\label{stmt:containersCovers}
  Let $\cal{G}$ be an $s$-uniform hypergraph with $n$ vertices.
  For every $0 < p' \le 1/(8 s^2)$, there exists a family $\cal{S} \subset 2^{V(\cal{G})}$ and functions
  \begin{equation}
    \phi : \cal{I}(\cal{G}) \to \cal{S} \quad \text{ and } \quad \psi : \cal{S} \to 2^{V(\cal{G})}
  \end{equation}
  such that:
  \begin{enumerate}[\normalfont (A)]
    \item For each $I \in \cal{I}(\cal{G})$, we have $\phi(I) \subset I \subset \psi(\phi(I))$.\label{item:containerscoverhavefingerprintappendix}
    \item Each $S \in \cal{S}$ has at most $8 s^2 p' n$ elements.\label{item:containerscoverissmallappendix}
    \item For every $S \in \cal{S}$, letting $X = \psi(S)$, there exists a hypergraph $\cal{C}$ on $X$ with
      \begin{equation}\label{eq:containersCoversCoverIsCheap}
        w_{p'}(\cal{C}) \le p' |X|
      \end{equation}
      that covers $\cal{G}[X]$ and satisfies $|E| \ge 2$ for all $E \in \cal{C}$. \label{item:containersCoversHaveCheapCovers}
  \end{enumerate}
\end{thm}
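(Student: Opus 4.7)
The plan is to prove \Cref{stmt:containersCovers} by running the now-standard container algorithm, but with the $p'$-weight of a shrinking cover playing the role of the potential function; this produces the ``cheap cover'' promised in \cref{item:containersCoversHaveCheapCovers} directly, without any separate post-processing. Given an independent set $I \in \cal{I}(\cal{G})$, the algorithm maintains a triple $(A_t, S_t, \cal{C}_t)$, where $A_t \subset V(\cal{G})$ is the set of vertices still undecided, $S_t \subset I$ is the fingerprint built so far, and $\cal{C}_t$ is a cover of $\cal{G}[A_t \cup S_t]$ by edges of size at least $2$. Starting from $A_0 = V(\cal{G})$, $S_0 = \emptyset$, $\cal{C}_0 = \cal{G}$, the process is driven by a deterministic selection rule that depends only on $(A_t, \cal{C}_t)$ and on one bit about the selected vertex; this lets me recover the whole trajectory from the final fingerprint $\phi(I) := S_\tau$ and thereby define $\psi$ on $\{\phi(I) : I \in \cal{I}(\cal{G})\}$ so that $\psi(\phi(I))$ is the final container $A_\tau \cup S_\tau$.

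At each non-terminal step, the rule picks the vertex $v_t \in A_t$ of maximum weighted degree $d_{\cal{C}_t}(v) = \sum_{E \in \cal{C}_t,\, v \in E} p'^{|E|-1}$, breaking ties by a canonical order on $V(\cal{G})$. If $v_t \in I$, it is appended to $S_{t+1}$, and every edge $E \in \cal{C}_t$ through $v_t$ is shrunk to $E \setminus \{v_t\}$; any resulting edge of size below $2$ forces the removal of its surviving endpoint from $A_{t+1}$, preserving the cover invariant. If $v_t \notin I$, the vertex is deleted from $A_{t+1}$ and every edge through it is discarded. The algorithm stops as soon as $w_{p'}(\cal{C}_t) \le p'(|A_t| + |S_t|)$, at which point $\cal{C}_t$ is the desired cheap cover of $\cal{G}[A_t \cup S_t]$.

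To bound the fingerprint size I would show that each non-terminal step decreases $w_{p'}(\cal{C}_t)$ by a definite amount. A handshake identity gives
\begin{equation*}
\sum_{v \in A_t} d_{\cal{C}_t}(v) \;=\; \sum_{E \in \cal{C}_t} |E| \, p'^{|E|-1} \;\ge\; \tfrac{2}{p'}\, w_{p'}(\cal{C}_t),
\end{equation*}
using $|E| \ge 2$, so some vertex has weighted degree at least $2 w_{p'}(\cal{C}_t)/(p'|A_t|)$. Combined with the failure of the stopping criterion and the hypothesis $p' \le 1/(8 s^2)$, this converts a ``picked'' vertex with $v_t \in I$ into a multiplicative decrement of $w_{p'}(\cal{C}_t)$ by a factor of roughly $1 - 1/(4 s^2 |A_t|)$, and an even larger drop in the deletion case. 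Telescoping against the trivial initial bound $w_{p'}(\cal{C}_0) \le p'^s \binom{n}{s}$ caps the number of in-$I$ steps at $8 s^2 p' n$, giving \cref{item:containerscoverissmallappendix}; \cref{item:containerscoverhavefingerprintappendix,item:containersCoversHaveCheapCovers} are built into the construction and the stopping criterion.

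The hard part will be the careful bookkeeping that keeps $\cal{C}_t$ a valid cover of $\cal{G}[A_t \cup S_t]$ -- and all of its edges of size at least $2$ -- through the interleaved shrinking and deletion operations, together with the precise calibration of the per-step decrement so that the constants $8 s^2$ and the hypothesis $p' \le 1/(8 s^2)$ match up cleanly. A secondary subtlety is ensuring that the selection rule really is a function of $(A_t, \cal{C}_t)$ alone, so that the replay defining $\psi$ is unambiguous; this is where the canonical tie-breaking and the decision to record ``$v_t \in I$'' implicitly via $S_\tau$ (rather than as a separate log) are essential. Once those invariants and the single-step decrement estimate are in hand, the theorem follows by direct summation.
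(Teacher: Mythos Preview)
The paper does not give a proof of this theorem: \Cref{stmt:containersCovers} is quoted as Theorem~A of \cite{containers2024} in Appendix~\ref{app:containersCoversButJanson} and invoked as a black box to establish \Cref{stmt:containersCoversButJanson}. There is therefore no ``paper's own proof'' to compare against.

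On your sketch itself: the algorithmic skeleton (deterministic max-weighted-degree selection, fingerprint $S$, replayable trajectory defining $\psi$) is in the right genre for container lemmas, but the potential-function accounting you propose has a concrete gap. When $v_t \in I$ and each edge $E \ni v_t$ is shrunk to $E \setminus \{v_t\}$, its contribution to $w_{p'}$ moves from $p'^{|E|}$ to $p'^{|E|-1}$, which is an \emph{increase} by a factor $1/p'$ since $p'<1$; the net effect of the shrinking operation alone on $w_{p'}(\cal{C}_t)$ is $+(1-p')\,d_{\cal{C}_t}(v_t)$, not a decrement, and the sporadic removal of ``surviving endpoints'' from size-$1$ edges does not obviously reverse the sign. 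Separately, even granting your claimed per-step factor $1 - 1/(4s^2|A_t|)$, telescoping from $w_{p'}(\cal{C}_0)\le p'^{s}\binom{n}{s}$ down to the stopping threshold gives a bound of order $s^2 n\log(w_{p'}(\cal{C}_0))$ on the number of in-$I$ steps, not $8s^2 p' n$; you would be off by a factor of roughly $\log(p'n)/p'$. Obtaining the linear-in-$p'$ fingerprint size in \cref{item:containerscoverissmallappendix} requires a sharper mechanism than a single monotone drop of $w_{p'}$.
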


\begin{proof}[Proof of \Cref{stmt:containersCoversButJanson}]
  We apply \Cref{stmt:containersCovers} to $\cal{G}$ with parameter $p' = p / \zeta \le 1/(8 s^2)$ and obtain $\phi$, $\psi$ and $\cal{S}$.
  \hyperlink{stmt:containersCoversButJanson:appendixB}{\Cref{item:containersCoversHaveFingerprints,item:containersCoversHaveSmallFingerprints}} in \hyperlink{stmt:containersCoversButJanson:appendixB}{\Cref{stmt:containersCoversButJanson}} are direct consequences of \cref{item:containerscoverhavefingerprintappendix,item:containerscoverissmallappendix} in \Cref{stmt:containersCovers}, so it remains only to show that \cref{item:containersCoversHaveCheapCovers} implies \hyperlink{stmt:containersCoversButJanson:appendixB}{\cref{item:containersAreNonJanson}}.
  Fix $S \in \cal{S}$ and the corresponding $X = \psi(S)$, and let $\cal{C}$ be the cover of $\cal{G}[X]$ given by \cref{item:containersCoversHaveCheapCovers} in \Cref{stmt:containersCovers}, and notice that by \eqref{eq:containersCoversCoverIsCheap} we have
  \begin{equation}\label{eq:containersCoversCoverIsCheap-p'}
    w_{p'}(\cal{C}) \le p'|X| = p |X| / \zeta.
  \end{equation}
  We can use \eqref{eq:containersCoversCoverIsCheap-p'} and combine the assumption $\zeta \le 1$ with the fact that every edge in $\cal{C}$ has size at least $2$ to bound the $p$-weight of $\cal{C}$ from its $p'$-weight:
  \begin{equation}\label{eq:changeofweights}
    w_{p}(\cal{C}) = \sum_{E \in \cal{C}} p^{|E|} = \sum_{E\in \cal{C}} (p / \zeta)^{|E|} \zeta^{|E|} \le \zeta^{2} \sum_{E \in \cal{C}} (p / \zeta)^{|E|} = \zeta^2 w_{p'}(\cal{C}) \le \zeta p|X|.
  \end{equation}
  To show that $\cal{G}[X]$ is not $(p, \zeta p|X|)$-Janson, take any measure $\nu : \cal{G}[X] \to \RR_{\ge 0}$, so our goal is to establish that
  \begin{equation*}
    \Lambda_p(\nu) \ge \frac{e(\nu)^2}{\zeta p|X|}.
  \end{equation*}
  Observe first that
  \begin{equation}\label{eq:appendix:firstLowerBoundOnLambda}
    \Lambda_p(\nu)=\sum_{\substack{L \subset V(\cal{H}), \\ |L| \ge 2}} d_{\nu}(L)^2 \, p^{-|L|}\ge \sum_{L \in \cal{C}} d_{\nu}(L)^2 p^{-|L|},
  \end{equation}
  since every edge in $\cal{C}$ has size at least $2$ by \cref{item:containersCoversHaveCheapCovers} in \Cref{stmt:containersCovers} and all the terms in the sum are non-negative.
  Massaging \eqref{eq:appendix:firstLowerBoundOnLambda}, we can apply the Cauchy--Schwartz inequality to obtain
  \begin{equation}\label{eq:appendix:secondLowerBoundOnLambda}
    \Lambda_p(\nu) \ge \frac{1}{w_p(\cal{C})} \left( \sum_{L \in \cal{C}} d_\nu(L)^2 \, p^{-|L|} \right) \left(\sum_{L \in \cal{C}} p^{|L|}\right)
    \ge \frac{1}{w_p(\cal{C})} \left( \sum_{L \in \cal{C}} d_\nu(L)\right)^2.
  \end{equation}
  Now, note that, as $\cal{C}$ is a cover of $\cal{G}[X]$, we have
  \begin{equation}\label{eq:appendix:preMuBoundFromCoverDegrees}
    \sum_{L \in \cal{C}} d_\nu(L) = \sum_{L \in \cal{C}} \sum_{L \subset E} \nu(E) \ge \sum_{E \in \cal{G}[X]} \nu(E)=e(\nu).
  \end{equation}
  Combining \eqref{eq:changeofweights} and \eqref{eq:appendix:preMuBoundFromCoverDegrees} with \eqref{eq:appendix:secondLowerBoundOnLambda}, we obtain
  \begin{equation*}
    \Lambda_p(\nu) \ge \frac{e(\nu)^2}{\zeta p|X|},
  \end{equation*}
  which completes the proof because $\nu$ was arbitrary.
\end{proof}

\section{Properties of measures}\label{app:propertiesOfMeasures}

\subsection{The pullback measure}\label{app:pullbackPreservesLambdaProperties}

The main goal of this \namecref{app:pullbackPreservesLambdaProperties} is to prove \Cref{stmt:pullbackPreservesLambdaProperties}, which we restate for convenience.
Observe that \Cref{stmt:wholeCopiesAreJanson} is a direct corollary of this statement.

\hypertarget{pullbackCrucialPropRestated}{\pullbackPreservesLambdaProperties*}

The missing proof of \hyperlink{pullbackCrucialPropRestated}{\Cref{stmt:pullbackPreservesLambdaProperties}} is a trivial combination of \hyperlink{edgesPullbackRestated}{\Cref{stmt:edgesPullback}}, which says that $e(\vartheta \normcomp \pi) = e(\vartheta)$, and \Cref{stmt:lambdaPullback}, which establishes $\Lambda_p(\vartheta \normcomp \pi) \le \Lambda_p(\vartheta)$.
Towards these two \namecrefs{stmt:edgesPullback}, we recall the statement of \Cref{stmt:pullbackIsUniform} about pullback measures.

\hypertarget{pullbackIsUniformRestated}{\pullbackIsUniform*}

\begin{proof}
  Fix $E' \in \pi(\cal{G})$.
  We have by \eqref{eq:defPullback} that
  \begin{equation*}
   \sum_{\substack{E \in \cal{G} \\ \pi(E) = E'}} \vartheta \normcomp \pi(E) =   \sum_{\substack{E \in \cal{G} \\ \pi(E) = E'}} \frac{\vartheta(E')}{\big|\{E \in \cal{G} : \pi(E) = E'\}\big|} = \vartheta(E')
  \end{equation*}
\end{proof}

We can now prove \hyperlink{edgesPullbackRestated}{\Cref{stmt:edgesPullback}}, an easy consequence of the definition of the pullback measure.

\hypertarget{edgesPullbackRestated}{\edgesPullback*}

\begin{proof}
  Expanding the definition of $e(\vartheta \normcomp \pi)$ and using \hyperlink{pullbackIsUniformRestated}{\Cref{stmt:pullbackIsUniform}}, we obtain
  \begin{equation*}
    e(\vartheta) = \sum_{E' \in \pi(\cal{G})} \vartheta(E') = \sum_{E' \in \pi(\cal{G})} \sum_{\substack{E \in \cal{G} \\ \pi(E) = E'}} \vartheta \normcomp \pi(E).
  \end{equation*}
  But each $E \in \cal{G}$ appears on the right-hand side exactly once, only when $E' \in \pi(\cal{G})$ satisfies $\pi(E) = E'$.
  Therefore,
  \begin{equation*}
    \sum_{E' \in \pi(\cal{G})} \sum_{\substack{E \in \cal{G} \\ \pi(E) = E'}} \vartheta \normcomp \pi(E) = \sum_{E \in \cal{G}} \vartheta \normcomp \pi(E) = e(\vartheta \normcomp \pi)
  \end{equation*}
  as we wanted to show.
\end{proof}

The next \namecref{stmt:degPullback} requires an assumption about $\pi$, motivating its appearance in the statements of \hyperlink{pullbackCrucialPropRestated}{\Cref{stmt:pullbackPreservesLambdaProperties}} and \Cref{stmt:containersForNonJanson}.
The proof is easy, and follows from expanding the definitions and a simple counting argument.
It also requires defining the uniformity-preserving pre-image of $L' \subset \pi(V(\cal{G}))$, the set
\begin{equation*}
  \uppi{\pi}(L') = \{L \subset V(\cal{G}) : \pi(L) = L' \text{ and } |L| = |L'|\}.
\end{equation*}

\begin{lem}\label{stmt:degPullback}
  Let $\cal{G}$ be a hypergraph, let $\pi : V(\cal{G}) \to U$ satisfy
  \begin{equation*}
    |\pi(E)| = |E| \qquad \text{for every } E \in \cal{G},
  \end{equation*}
  and let $\vartheta : \pi(\cal{G}) \to \RR_{\ge 0}$ be a measure.
  For all $L' \subset \pi(V(\cal{G}))$, if $\lambda = \vartheta \normcomp \pi$, then
  \begin{equation}\label{eq:degPullback}
    d_\vartheta(L') = \sum_{L \in \uppi{\pi}(L')} d_\lambda(L).
  \end{equation}
\end{lem}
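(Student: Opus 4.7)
The plan is a routine unpacking of definitions, with the key move being a bijection argument that uses the hypothesis $|\pi(E)| = |E|$. I would first expand the right-hand side: using the definition of $d_\lambda$ and of the pullback measure,
\begin{equation*}
  \sum_{L \in \uppi{\pi}(L')} d_\lambda(L) = \sum_{L \in \uppi{\pi}(L')} \sum_{\substack{E \in \cal{G} \\ L \subset E}} \frac{\vartheta(\pi(E))}{\big|\{E_0 \in \cal{G} : \pi(E_0) = \pi(E)\}\big|},
\end{equation*}
and then swap the order of summation, grouping by $E \in \cal{G}$.

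The main observation, which I see as the crux of the argument, is that for each fixed $E \in \cal{G}$, the restriction $\pi|_E : E \to \pi(E)$ is a bijection, because $|\pi(E)| = |E|$ by assumption. Consequently, if $L' \subset \pi(E)$, then there exists a \emph{unique} $L \subset E$ with $\pi(L) = L'$, namely $L = (\pi|_E)^{-1}(L')$, and this $L$ automatically satisfies $|L| = |L'|$, hence $L \in \uppi{\pi}(L')$. Conversely, if $L' \not\subset \pi(E)$, then no $L \in \uppi{\pi}(L')$ is contained in $E$. Therefore the inner count after swapping is
\begin{equation*}
  \big|\{L \in \uppi{\pi}(L') : L \subset E\}\big| = \mathds{1}[L' \subset \pi(E)].
\end{equation*}

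After swapping and applying this identity, the right-hand side of \eqref{eq:degPullback} becomes
\begin{equation*}
  \sum_{\substack{E \in \cal{G} \\ L' \subset \pi(E)}} \frac{\vartheta(\pi(E))}{\big|\{E_0 \in \cal{G} : \pi(E_0) = \pi(E)\}\big|}.
\end{equation*}
Finally, I would re-group this sum by $E' = \pi(E) \in \pi(\cal{G})$ with $L' \subset E'$; the inner sum over $E \in \cal{G}$ with $\pi(E) = E'$ then collapses by \hyperlink{pullbackIsUniformRestated}{\Cref{stmt:pullbackIsUniform}} (or, even more directly, by noting that $\vartheta(E')$ and the denominator are constant across the sum, so the quotient adds up to $\vartheta(E')$), yielding $\sum_{E' \supset L'} \vartheta(E') = d_\vartheta(L')$. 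Everything is bookkeeping; the only non-trivial input is the bijection forced by $|\pi(E)| = |E|$, and I expect no real obstacle beyond keeping the indices straight.
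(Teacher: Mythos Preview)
Your proof is correct and follows essentially the same approach as the paper: both hinge on the observation that $\pi|_E$ is a bijection (from $|\pi(E)| = |E|$), yielding a unique $L \in \uppi{\pi}(L')$ contained in each $E$ with $L' \subset \pi(E)$, and both reduce to the identity $d_\vartheta(L') = \sum_{E \in \cal{G},\, L' \subset \pi(E)} \lambda(E)$ via \Cref{stmt:pullbackIsUniform}. The only cosmetic difference is that the paper expands both sides separately before matching them, whereas you start from the right-hand side and transform it into the left.
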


\begin{proof}
  Fix $L' \subset \pi(V(\cal{G}))$.
  The definition of $d_\vartheta(L')$, combined with \hyperlink{pullbackIsUniformRestated}{\Cref{stmt:pullbackIsUniform}}, yields
  \begin{equation}\label{eq:degPullback:expansion}
    d_\vartheta(L') = \sum_{L' \subset E' \in \pi(\cal{G})} \vartheta(E') = \sum_{L' \subset E' \in \pi(\cal{G})} \sum_{\substack{E \in \cal{G} \\ \pi(E) = E'}} \lambda(E) = \sum_{\substack{E \in \cal{G} \\ L' \subset \pi(E)}} \lambda(E),
  \end{equation}
  where the last step holds because each $E \in \cal{G}$ with $L' \subset \pi(E)$ appears exactly once in the second-to-last sum.
  On the other hand, the right-hand side of \eqref{eq:degPullback} is, by definition, equal to
  \begin{equation}\label{eq:degPullback:expansionRhs}
    \sum_{L \in \uppi{\pi}(L')} d_\lambda(L) = \sum_{L \in \uppi{\pi}(L')} \, \sum_{L \subset E \in \cal{G}} \lambda(E).
  \end{equation}
  We also know that $|L| = |L'|$ for all $L \in \uppi{\pi}(L')$.
  It then follows from $|\pi(E)| = |E|$ for all $E \in \cal{G}$ that $\pi|_E$ is a bijection, so there is a unique $L \subset E$ satisfying $\pi(L) = L'$ and $|\pi(L)| = |L|$.
  The conclusion is that
  \begin{equation*}
    \sum_{L \in \uppi{\pi}(L')} \, \sum_{L \subset E \in \cal{G}} \lambda(E) = \sum_{\substack{E \in \cal{G} \\ L' \subset \pi(E)}} \lambda(E),
  \end{equation*}
  which, together with \eqref{eq:degPullback:expansion}, \eqref{eq:degPullback:expansionRhs} and the fact that $L'$ was arbitrary, completes the proof.
\end{proof}

The proof of \hyperlink{pullbackCrucialPropRestated}{\Cref{stmt:pullbackPreservesLambdaProperties}} will be complete once we establish \Cref{stmt:lambdaPullback}, which also admits a simple proof from \Cref{stmt:degPullback}.

\begin{lem}\label{stmt:lambdaPullback}
  Let $R > 0$ and $p > 0$.
  Further let $\cal{G}$ be a hypergraph, $\pi : V(\cal{G}) \to U$ be a function satisfying
  \begin{equation*}
    |\pi(E)| = |E| \qquad \text{for every } E \in \cal{G}.
  \end{equation*}
  For all $\vartheta : \pi(\cal{G}) \to \RR_{\ge 0}$, we have
  \begin{equation*}
    \Lambda_p(\vartheta \normcomp \pi) \le \Lambda_p(\vartheta).
  \end{equation*}
\end{lem}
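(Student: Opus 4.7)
The plan is to use \Cref{stmt:degPullback} to group the terms of $\Lambda_p(\vartheta \normcomp \pi)$ according to their projection under $\pi$, and then bound each group by the corresponding term of $\Lambda_p(\vartheta)$ via the elementary inequality $\sum a_i^2 \le (\sum a_i)^2$ for non-negative reals.

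Write $\lambda = \vartheta \normcomp \pi$. First I would observe that whenever $d_\lambda(L) > 0$ one has $|\pi(L)| = |L|$: indeed, a positive degree forces some $E \in \cal{G}$ with $L \subset E$ and $\lambda(E) > 0$, and the assumption $|\pi(E)| = |E|$ means $\pi$ is injective on $E$, hence on $L$. This lets me restrict the defining sum of $\Lambda_p(\lambda)$ to those $L$ with $|\pi(L)| = |L|$, which are exactly the sets in $\uppi{\pi}(\pi(L))$.

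Next, I would regroup the restricted sum by the common value $L' = \pi(L)$. Since $|L| = |L'|$ for each $L \in \uppi{\pi}(L')$, the factor $p^{-|L|}$ becomes $p^{-|L'|}$ and can be pulled outside, giving
\begin{equation*}
  \Lambda_p(\lambda) = \sum_{\substack{L' \subset \pi(V(\cal{G})) \\ |L'| \ge 2}} p^{-|L'|} \sum_{L \in \uppi{\pi}(L')} d_\lambda(L)^2.
\end{equation*}
The inner sum is a sum of squares of non-negatives, which is at most the square of the sum; combining this with \Cref{stmt:degPullback}, which identifies $\sum_{L \in \uppi{\pi}(L')} d_\lambda(L) = d_\vartheta(L')$, yields $\sum_{L \in \uppi{\pi}(L')} d_\lambda(L)^2 \le d_\vartheta(L')^2$.

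Finally, plugging this inequality back in bounds $\Lambda_p(\lambda)$ by $\sum_{L' \subset \pi(V(\cal{G})),\,|L'|\ge 2} d_\vartheta(L')^2 \, p^{-|L'|}$, which is precisely $\Lambda_p(\vartheta)$ (the full sum is over all $L' \subset V(\pi(\cal{G})) = \pi(V(\cal{G}))$, so nothing is lost). There is no real obstacle here; the only subtle point is noting that the uniformity-preservation assumption on $\pi$ is exactly what is needed to match the power $p^{-|L|}$ with $p^{-|L'|}$ when regrouping, so that the bound from \Cref{stmt:degPullback} can be applied cleanly.
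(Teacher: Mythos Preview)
Your proposal is correct and essentially identical to the paper's proof: both restrict to those $L$ with $|\pi(L)| = |L|$ (using that $\pi$ is injective on every edge), regroup by $L' = \pi(L)$, and then apply \Cref{stmt:degPullback} together with $\sum a_i^2 \le (\sum a_i)^2$. The only cosmetic difference is that the paper starts from $\Lambda_p(\vartheta)$ and bounds it from below, while you start from $\Lambda_p(\lambda)$ and bound it from above.
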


\begin{proof}
  Let $V = V(\cal{G})$, $\lambda = \vartheta \normcomp \pi$ and recall \eqref{eq:defEdgesAndLambda}, the definition of $\Lambda_p$,
  \begin{equation*}
    \Lambda_p(\vartheta) = \sum_{\substack{L' \subset \pi(V) \\ |L'| \ge 2}} d_\vartheta(L')^2 p^{-|L'|}.
  \end{equation*}
  As $|\pi(E)| = |E|$ for every $E \in \cal{G}$, we can use \Cref{stmt:degPullback} to conclude that
  \begin{equation}\label{eq:pullbackPreservesLambdaProperties:lambdaOfThetaInTermsOfLambdaOfLambda}
    \Lambda_p(\vartheta) = \sum_{\substack{L' \subset \pi(V) \\ |L'| \ge 2}} \bigg( \sum_{L \in \uppi{\pi}(L')} d_\lambda(L) \bigg)^2 p^{-|L'|} \ge \sum_{\substack{L' \subset \pi(V) \\ |L'| \ge 2}} \, \sum_{L \in \uppi{\pi}(L')} d_\lambda(L)^2 p^{-|L|}
  \end{equation}
  where the last inequality uses that $d_\lambda(L)$ is always non-negative and that $|L| = |L'|$ for $L \in \uppi{\pi}(L')$.

  Now, whenever $d_\lambda(L) > 0$ for $L \subset V$, we know that there is $L'\subset \pi(V)$ such that $L \in \uppi{\pi}(L')$.
  We conclude that the rightmost part of \eqref{eq:pullbackPreservesLambdaProperties:lambdaOfThetaInTermsOfLambdaOfLambda} is at least
  \begin{equation*}
    \Lambda_p(\vartheta) \ge \sum_{\substack{L' \subset \pi(V) \\ |L'| \ge 2}} \, \sum_{L \in \uppi{\pi}(L')} d_\lambda(L)^2 p^{-|L|} \ge \sum_{\substack{L \subset V \\ |L| \ge 2}} d_\lambda(L)^2 p^{-|L|} = \Lambda_p(\lambda)
  \end{equation*}
  where the last step is the definition, and the proof is complete.
\end{proof}

\subsection{General properties}

Now, we prove \Cref{stmt:boundDegreeSquared}, which, despite its length, is as simple as restricting the original measure to another that zeroes the mass of edges containing high degree vertices.

\boundDegreeSquared*

\begin{proof}
  Observe that taking $W = V(\cal{G})$, we conclude that $\cal{G} = \cal{G}[W]$ is $(p, R)$-Janson by assumption.
  Therefore, we can apply \Cref{stmt:normalisedJansonWitnesses} to obtain $\mu : \cal{G} \to \RR_{\ge 0}$ such that
  \begin{equation}\label{eq:muIsJansonWitness}
    e(\mu) = \sqrt{R} \qquad \text{and} \qquad \Lambda_p(\mu) < \frac{e(\mu)^2}{R}.
  \end{equation}
  Take such a $\mu$ minimizing $\sum_{v \in V(\cal{G})} d_\mu(v)^2$, and assume by contradiction that
  \begin{equation}\label{eq:contradictionAssumptionInDegControl}
    \sum_{v \in V(\cal{G})} d_\mu(v)^2 > \frac{2 s^2 e(\mu)^2}{\beta v(\cal{G})}.
  \end{equation}

  Now, take
  \begin{equation}\label{eq:choiceOfW}
    W = \{v \in V(\cal{G}): d_\mu(v) \le s \, e(\mu)/(\beta v(\cal{G}))\}
  \end{equation}
  observe that it satisfies $|W| \ge (1 - \beta) v(\cal{G})$ since $\cal{G}$ is $s$-uniform, and therefore $G[W]$ is $(p, R)$-Janson by assumption.
  We conclude that there is a measure $\mu' : \cal{G}[W] \to \RR_{\ge 0}$ which satisfies
  \begin{equation}\label{eq:muPrimeIsAlsoJansonWitness}
    \Lambda_p(\mu') < \frac{e(\mu')^2}{R} \qquad \text{and} \qquad e(\mu') = \sqrt{R},
  \end{equation}
  again by \Cref{stmt:normalisedJansonWitnesses} applied with $y = \sqrt{R} > 0$.
  Moreover, we claim that setting $$\mu'' = (1 - \tau)\mu + \tau \mu',$$ for a suitable $0 < \tau \le 1$, also yields a measure satisfying
  \begin{equation}\label{eq:muDoublePrimeIsAlsoJansonWitness}
    \Lambda_p(\mu'') < \frac{e(\mu'')^2}{R}, \qquad e(\mu'') = \sqrt{R}
  \end{equation}
  and
  \begin{equation}\label{eq:contradictionOfDegreeMinimality}
    \sum_{v \in V(\cal{G})} d_{\mu''}(v)^2 < \sum_{v \in V(\cal{G})} d_\mu(v)^2.
  \end{equation}
  If $\mu''$ satisfies both \eqref{eq:muDoublePrimeIsAlsoJansonWitness} and \eqref{eq:contradictionOfDegreeMinimality}, it contradicts the minimality of our original choice of $\mu$.

  We start the proof of our claims by showing that \eqref{eq:muDoublePrimeIsAlsoJansonWitness} holds.
  The equality $e(\mu'') = \sqrt{R}$ follows by linearity of $e(\cdot)$ and the fact that both $\mu$ and $\mu'$ have edge measure equal to $\sqrt{R}$.
  Expand $\Lambda_p(\mu'')$ as
  \begin{equation}\label{eq:expansionOfLambdaOfNuDoublePrime}
    \Lambda_p(\mu'') = (1 - \tau)^2 \Lambda_p(\mu) + 2\tau (1 - \tau) \sum_{\substack{L \subset V(\cal{G}) \\ |L| \ge 2}} d_{\mu}(L) d_{\mu'}(L) p^{-|L|} + \tau^2 \Lambda_p(\mu').
  \end{equation}
  Applying the Cauchy--Schwarz inequality to the second term in \eqref{eq:expansionOfLambdaOfNuDoublePrime}, we obtain
  \begin{equation}\label{eq:cauchySchwarzInSecondTerm}
    \sum_{\substack{L \subset V(\cal{G}) \\ |L| \ge 2}} \frac{d_{\mu}(L) d_{\mu'}(L)}{p^{|L|}} \le \bigg(\sum_{\substack{L \subset V(\cal{G}) \\ |L| \ge 2}} \frac{d_{\mu}(L)^2}{p^{|L|}}\bigg)^{1/2} \bigg(\sum_{\substack{L \subset V(\cal{G}) \\ |L| \ge 2}} \frac{d_{\mu'}(L)^2}{p^{|L|}}\bigg)^{1/2} = \sqrt{\Lambda_p(\mu) \Lambda_p(\mu')}.
  \end{equation}
  Replacing \eqref{eq:cauchySchwarzInSecondTerm} back in \eqref{eq:expansionOfLambdaOfNuDoublePrime} and simplifying yields
  \begin{equation*}
    \Lambda_p(\mu'') \le (1 - \tau)^2 \Lambda_p(\mu) + 2 \tau (1 - \tau) \sqrt{\Lambda_p(\mu) \Lambda_p(\mu')} + \tau^2 \Lambda_p(\mu') = \big( (1-\tau) \Lambda_p(\mu)^{1/2} + \tau \Lambda_p(\mu')^{1/2} \big)^2,
  \end{equation*}
  which, by \eqref{eq:muIsJansonWitness} and \eqref{eq:muPrimeIsAlsoJansonWitness}, establishes \eqref{eq:muDoublePrimeIsAlsoJansonWitness}:
  \begin{equation*}
    \Lambda_p(\mu'') < \frac{1}{R}\big((1 - \tau) e(\mu) + \tau e(\mu')\big)^2 = \frac{e(\mu'')^2}{R}.
  \end{equation*}

  Towards establishing \eqref{eq:contradictionOfDegreeMinimality}, we expand
  \begin{equation}\label{eq:expansionOfDegreeSquaredSum}
    \sum_{v \in V(\cal{G})} d_{\mu''}(v)^2 = (1 - \tau)^2 \sum_{v \in V(\cal{G})} d_{\mu}(v)^2 + 2 \tau(1 - \tau) \sum_{v \in V(\cal{G})} d_{\mu}(v) d_{\mu'}(v) +\tau^2 \sum_{v \in V(\cal{G})} d_{\mu'}(v)^2,
  \end{equation}
  an expression whose terms we will bound separately.
  Recall that $d_{\mu'}(v) = 0$ for $v \not \in W$, so
  \begin{equation*}
    \sum_{v \in V(\cal{G})} d_{\mu}(v)d_{\mu'}(v) = \sum_{v \in W} d_{\mu}(v) d_{\mu'}(v)
  \end{equation*}
  but now, \eqref{eq:choiceOfW} implies that
  \begin{equation}\label{eq:preBoundOnTheSecondTerm}
    \sum_{v \in W} d_\mu(v) d_{\mu'}(v) \le \frac{s \, e(\mu)}{\beta v(\cal{G})} \sum_{v \in W} d_{\mu'}(v) = \frac{s^2 e(\mu)^2}{\beta v(\cal{G})}
  \end{equation}
  since $\cal{G}$ is $s$-uniform and $e(\mu') = e(\mu)$.
  Using \eqref{eq:contradictionAssumptionInDegControl} in \eqref{eq:preBoundOnTheSecondTerm}, we obtain, for the second term,
  \begin{equation}\label{eq:boundOnTheSecondTerm}
    \sum_{v \in W} d_{\mu}(v) d_{\mu'}(v) < \frac{1}{2} \sum_{v \in V(\cal{G})} d_\mu(v)^2.
  \end{equation}
  The $s$-uniformity of $\cal{G}$ and $e(\mu) = e(\mu')$ also imply an easy bound on the third term in \eqref{eq:expansionOfDegreeSquaredSum}:
  \begin{equation}\label{eq:boundOnThirdTerm}
    \sum_{v \in V(\cal{G})} d_{\mu'}(v)^2 \le \bigg(\sum_{v \in V(\cal{G})} d_{\mu'}(v) \bigg)^2 = \big(e(\mu) s\big)^2 \le \dfrac{\beta v(\cal{G})}{2} \sum_{v \in V(\cal{G})} d_\mu(v)^2 \le  \beta v(\cal{G}) \sum_{v \in V(\cal{G})} d_\mu(v)^2,
  \end{equation}
  where the first inequality holds because $d_\mu'$ is always non-negative, and the second is \eqref{eq:contradictionAssumptionInDegControl}.

  We can now replace \eqref{eq:boundOnTheSecondTerm} and \eqref{eq:boundOnThirdTerm} in \eqref{eq:expansionOfDegreeSquaredSum} and use that $\tau$ is positive to obtain, after simplification, that
  \begin{equation*}
    \sum_{v \in V(\cal{G})} d_{\mu''}(v)^2 < (1 - \tau + \beta v(\cal{G}) \tau^2) \sum_{v \in V(\cal{G})} d_\mu(v)^2.
  \end{equation*}
  Choosing $\tau$ to satisfy
  \begin{equation*}
    0 < \tau < \min\Big\{1, \frac{1}{\beta v(\cal{G})}\Big\}
  \end{equation*}
  results in
  \begin{equation*}
    \sum_{v \in V(\cal{G})} d_{\mu''}(v)^2 < \sum_{v \in V(\cal{G})} d_\mu(v)^2
  \end{equation*}
  which contradicts the fact that $\mu$ minimises $\sum_{v \in V(\cal{G})} d_\mu(v)^2$ and completes the proof.
\end{proof}

\end{document}